\pdfoutput=1
\documentclass[11pt]{amsart}
\usepackage{amsmath,amssymb,amsthm}
\usepackage{mathtools}
\usepackage[margin=1.2in]{geometry}
\usepackage{booktabs}
\usepackage{array}
\usepackage{adjustbox,longtable,needspace}
\usepackage{flafter}
\usepackage{xcolor}
\usepackage{tikz}
\usepackage[colorlinks=true,linkcolor=blue,citecolor=blue,urlcolor=blue]{hyperref}
\hypersetup{pdfauthor={Bernd Johannes Wuebben},
  pdftitle={Smoothing Calabi-Yau threefolds with nodes and del Pezzo cone points}}

\newtheorem{theorem}{Theorem}[section]
\newtheorem{proposition}[theorem]{Proposition}
\newtheorem{lemma}[theorem]{Lemma}
\newtheorem{corollary}[theorem]{Corollary}
\newtheorem{recollection}[theorem]{Recollection}

\theoremstyle{definition}
\newtheorem{definition}[theorem]{Definition}

\newtheorem{convention}[theorem]{Convention}
\theoremstyle{remark}
\newtheorem{remark}[theorem]{Remark}

\theoremstyle{plain}
\newtheorem{thmletter}{Theorem}

\newcommand{\ZZ}{\mathbb{Z}}
\newcommand{\QQ}{\mathbb{Q}}
\newcommand{\RR}{\mathbb{R}}
\newcommand{\CC}{\mathbb{C}}
\newcommand{\PP}{\mathbb{P}}
\newcommand{\AAA}{\mathbb{A}}
\newcommand{\cO}{\mathcal{O}}
\newcommand{\Rsm}{\mathcal{R}}   % smoothing subspace of the link homology
\newcommand{\Mil}{\mathcal{M}}  % Milnor fibre
\newcommand{\cX}{\mathcal{X}}
\newcommand{\cP}{\mathcal{P}}
\newcommand{\cD}{\mathcal{D}}
\newcommand{\cL}{\mathcal{L}}
\newcommand{\cA}{\mathcal{A}}
\newcommand{\cS}{\mathcal{S}}
\newcommand{\conv}{\operatorname{conv}}
\newcommand{\Cone}{\operatorname{Cone}}

\newcommand{\Sing}{\operatorname{Sing}}
\newcommand{\tail}{\operatorname{tail}}

\newcommand{\Def}{\operatorname{Def}}
\newcommand{\rk}{\operatorname{rank}}

\newcommand{\dP}{\mathrm{dP}}
\newcommand{\TV}{\operatorname{TV}}
\newcommand{\Hom}{\operatorname{Hom}}

\newcommand{\Xh}{\widehat{X}}

\title[Nodes and del Pezzo cone points]{Smoothing Calabi--Yau threefolds\\ with
nodes and del Pezzo cone points}
\author{Bernd Johannes Wuebben}

\date{7 September 2026}
\subjclass[2020]{14J32 (primary); 14B07, 14M25, 14D15, 52B20 (secondary)}
\keywords{Calabi--Yau threefolds, smoothing of singularities, del Pezzo cones,
ordinary double points, Minkowski decomposition, $T$-varieties, reflexive
polytopes}

\begin{document}

\begin{abstract}
We give a necessary and sufficient homological criterion for smoothing
projective Calabi--Yau threefolds with nodes and anticanonical cones over
smooth del Pezzo surfaces of degrees five, six and seven, or over a smooth
quadric surface. For some choice of local smoothing components, the
criterion requires a relation on a crepant resolution among link classes
whose associated local parameters avoid every discriminant. The
complexified relation space describes first-order deformations modulo
locally trivial ones. Necessity follows from local period and vanishing-cycle
arguments; sufficiency applies Friedman's unobstructedness theorem to a
crepant partial resolution. Whenever the criterion holds, the
deformation space on the chosen components is smooth. A nonsmoothable
example, whose deformation space is reduced with four smooth components,
shows why nonvanishing at every singular point is insufficient. We also
construct a smoothable Batyrev hypersurface admitting no smoothing induced
by proper ambient families obtained from Minkowski decompositions at a
single lattice degree.
\end{abstract}

\maketitle

\section{Introduction}\label{sec:intro}

A projective Calabi--Yau threefold can have only locally smoothable
singularities and nevertheless admit no smoothing.  For ordinary double
points, the obstruction is expressed by the exceptional curves of a small
resolution: a smoothing exists precisely when their classes satisfy a
relation
\begin{equation}\label{eq:friedman}
 \sum_j\lambda_j[C_j]=0,\qquad \lambda_j\ne0\quad\text{for every }j.
\end{equation}
This is Friedman's criterion, together with the unobstructedness and
necessity results discussed in \cite{Friedman86,Namikawa94,RT09};
\cite[Thm.~1.3]{RT09} states the smoothing criterion itself.
We prove an analogous criterion when nodes occur together with
anticanonical cones over smooth del Pezzo surfaces of degrees five, six and seven,
and over $\PP^1\times\PP^1$, of degree eight.
The degree-six cone has a two-dimensional smoothing component, and a
nonzero vector on that component need not be a smoothing direction.
Three separate nonvanishing conditions are necessary.  An explicit
nonsmoothable threefold shows that they cannot be replaced by a single
nonzero-vector condition, even if the local components may be chosen anew.
For nodes, degree-seven cones and degree-six cones smoothed through their
one-dimensional component, the necessity direction is the vanishing-cycle
obstruction of the companion paper~\cite{PaperIV}; the present paper proves
the converse and treats the remaining local components.

\subsection{The smoothing criterion}
Let $X$ be a connected normal projective complex threefold with
$\omega_X\cong\cO_X$ and $H^1(X,\cO_X)=0$.  Suppose every singularity is
an ordinary double point or is analytically isomorphic to the
anticanonical cone over a smooth del Pezzo surface of degree five, six or seven,
or over $\PP^1\times\PP^1$.
The assumption concerns these exact analytic germs.
Let $Y_0\to X$ be the smooth crepant analytic resolution obtained from
chosen small resolutions of the nodes and the standard divisorial
resolutions of the cones.  It is compact and Moishezon.

A \emph{branch profile} $S$ chooses a reduced local deformation component
at each singular point.  At a node or a $\PP^1\times\PP^1$ cone the base
is a line; at a degree-five point it is a smooth four-dimensional germ;
at a degree-seven
point there is one reduced line; at a degree-six point there are a line
and a plane, denoted $A_1$ and $A_2$.  These names agree with the root
subspaces in the del Pezzo lattice.  For the link $L_p$ of a cone point,
the circle-bundle projection identifies
\[
 H_2(L_p;\QQ)\cong K_{E_p}^{\perp}\subset H_2(E_p;\QQ).
\]
Write $R_{p,S_p}$ for the nodal line or the selected root subspace, and set
\begin{equation}\label{eq:relationkernel}
 K_S=\ker\left\{\bigoplus_pR_{p,S_p}
                         \longrightarrow H_2(Y_0;\QQ)\right\}.
\end{equation}
At a $\PP^1\times\PP^1$ cone the selected subspace is the entire
link-homology line $\QQ(f_1-f_2)$, where $f_1,f_2$ are the ruling
classes on the exceptional surface.
At a degree-six point, mark $E_p=\mathrm{Bl}_3\PP^2$ by the line class
$h$ and exceptional classes $e_1,e_2,e_3$.  Its $A_2$ subspace is
\[
R_{p,A_2}=\left\{\mu_1e_1+\mu_2e_2+\mu_3e_3:
                                  \mu_1+\mu_2+\mu_3=0\right\}.
\]
At a degree-five point mark $E_p=\operatorname{Bl}_4\PP^2$
by $h,e_1,\ldots,e_4$.  The selected subspace is the entire
$K_{E_p}^\perp$, and its five conic-pencil classes are
\[
 f_i=h-e_i\ (1\le i\le4),\qquad
 f_5=2h-e_1-e_2-e_3-e_4.
\]
Put $\lambda_i(\kappa_p)=\kappa_p\cdot f_i$.  Their sum is
zero because $\sum_i f_i=-2K_{E_p}$.

\begin{thmletter}[smoothing criterion]\label{thm:criterionintro}
The threefold $X$ has a convergent one-parameter smoothing inducing $S$
if and only if $K_S$ contains a vector nonzero at every rank-one summand
and satisfying $\mu_1\mu_2\mu_3\ne0$ at every $A_2$ summand
and $\prod_{i=1}^5\lambda_i\ne0$ at every degree-five point.
\end{thmletter}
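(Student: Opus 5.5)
We prove the two directions separately, following the outline in the abstract: necessity by local periods and vanishing cycles, sufficiency by Friedman's unobstructedness applied to a crepant partial resolution.

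\emph{Necessity.} Suppose $\cX\to\Delta$ is a convergent one-parameter smoothing of $X$ inducing $S$. Near each singular point $p$ the family is pulled back from the chosen component $S_p$ by a holomorphic arc through the origin.
\begin{enumerate}
\item We first identify, for each germ, the local smoothing parameter with a linear functional on $R_{p,S_p}$. The model is the node, where the parameter $t$ is the period of the vanishing $3$-sphere. For a cone point, the Milnor fibre $\Mil_p$ over the component $S_p$ has boundary $L_p$. The image of $H_3(\Mil_p,L_p)\to H_2(L_p)$ is exactly $R_{p,S_p}\subset K_{E_p}^\perp$; this is the root subspace dictionary for del Pezzo cones.
\item We show the discriminant of $S_p$ is the union of root hyperplanes. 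On $A_2$ these are $\mu_i-\mu_j$ paired with the dual relative cycles, which becomes $\mu_1\mu_2\mu_3=0$ in the paper's coordinates. In degree five they are the five conic-pencil hyperplanes $\lambda_i=0$. A general smoothing fibre lies off the discriminant.
\item Take the leading-order coefficients of the period map of the global family. By the Mayer--Vietoris / Clemens-type argument of \cite{PaperIV}, these define a vector $\kappa=(\kappa_p)\in\bigoplus_p R_{p,S_p}$. The classes $\kappa_p$, pushed through $L_p\subset Y_0$, arise as boundaries of $3$-chains in the smoothed total space. Hence their sum vanishes in $H_2(Y_0;\QQ)$, so $\kappa\in K_S$.
\item The nonvanishing conditions are exactly the statement that the local parameters avoid the discriminant. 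Nonzero at rank-one summands means the node or line component actually smooths. At $A_2$ and degree-five summands the conditions are $\mu_1\mu_2\mu_3\ne0$ and $\prod_{i=1}^5\lambda_i\ne0$.
\end{enumerate}
If the leading term of the arc vanishes, we reparametrise and pass to the first nonzero order. The main technical worry in this direction is making the period leading term well defined for the two-dimensional $A_2$ and the four-dimensional degree-five bases.

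\emph{Sufficiency.} Let $\kappa\in K_S$ satisfy the stated nonvanishing conditions.
\begin{enumerate}
\item Choose a crepant partial resolution $\Xh\to X$ that contracts only curves in $E_p$ orthogonal to $R_{p,S_p}$ and keeps at each $p$ a singularity whose smoothing component is $S_p$. By Friedman's unobstructedness theorem for Calabi--Yau threefolds with such isolated rational singularities, in the form valid for Moishezon spaces with $H^1(\cO)=0$, the deformations of $\Xh$ (or of $X$ directly) are unobstructed along $S$.
\item The local-to-global sequence
\[
0\to H^1(\Theta)\to \mathbb{T}^1_X\to \bigoplus_p \mathbb{T}^1_{p,S_p}\to H^2(\Theta)
\]
has cokernel governed by $H_2(Y_0)$. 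Under the identification $\mathbb{T}^1_{p,S_p}\cong R_{p,S_p}\otimes\CC$, the image of the first-order deformations is $K_S\otimes\CC$. This uses Hodge theory on the Moishezon $Y_0$ together with the duality $H^2(\Theta)\cong H^1(\Omega^2)^\vee$.
\item Hence $\kappa$ lifts to a first-order deformation, and unobstructedness integrates it to a convergent arc. The same unobstructedness shows the deformation space on the chosen components is smooth.
\item Along this arc, each local parameter lies off the discriminant of $S_p$. The generic fibre is therefore locally smooth at every $p$, giving the required smoothing inducing $S$.
\end{enumerate}

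I expect the main obstacle to be step (2) of sufficiency. One must match the local $\mathbb{T}^1$ pieces canonically with the root subspaces, compatibly with the map to $H_2(Y_0)$. This is especially delicate at the degree-six $A_2$ and degree-five components, where a naive nonzero condition fails. I would first test it against the explicit toric Minkowski-decomposition description of the components to fix signs and normalisations.
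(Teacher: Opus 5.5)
Your necessity argument has a genuine gap. You take $\kappa$ to be the leading-order local parameters of the arc, and then say the nonvanishing conditions ``are exactly'' the statement that the arc avoids the discriminant. They are not the same. A smoothing arc avoids the discriminant for $t\neq0$, but its leading term need not. At an $A_2$ point the arc $(a,b)=(t,t^2)$ misses $ab(a-b)=0$ for all small $t\ne0$, yet its tangent has $\mu_2=b=0$. No reparametrisation moves that leading direction off the line.

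Moreover, if the local parameters vanish to different orders at different points, the tuple of leading coefficients need not be the localization of any global first-order deformation, so nothing places it in $K_S$. Only the localized Kodaira--Spencer class lands in $K_S\otimes\CC$, and it may vanish at some points or lie on a discriminant line. Your argument therefore cannot exclude a smoothing tangent to a discriminant line that leaves it at higher order. That is exactly what must be excluded, for instance for the mixed profiles in Section~\ref{sec05:counterexample}.

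The missing idea is to get relations from the rational homology of the smooth fibre, one required functional at a time, with nonvanishing proved along the whole arc:
\begin{itemize}
\item For each relevant local cycle (nodal sphere, $\mathbb{RP}^3$ at a $\PP^1\times\PP^1$ cone, the Milnor cycle at $A_1$ and degree-seven points, the five boundary classes at degree five), its period against the relative volume form is nonzero for every small $t\ne0$, whatever the contact order. The nodal period is $b(t)(c\,g(p,0)+o(1))$; at degree five $P_i^g=\alpha_iu_i$ with $u_i(0,0)\ne0$. So the cycle survives in $H_3(X_t;\QQ)$.
\item A Poincar\'e-dual rational class has Mayer--Vietoris boundary in $\bigoplus_pR_{p,S_p}$, zero in $H_2(Y_0;\QQ)$, with nonzero coefficient at that functional.
\item A generic rational combination of these relations avoids the finitely many proper kernels.
\item At an $A_2$ point the paper first pulls back the simultaneous incidence partial resolution of Proposition~\ref{prop03:a2family} along the actual arc. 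It is an isomorphism over every nonzero base point, including the discriminant, so it replaces the cone by three nodes without changing the smooth fibres. Nodal necessity then makes $\mu_1,\mu_2,\mu_3$ nonzero, and $E\cdot e_i=-1$ forces $\mu_1+\mu_2+\mu_3=0$.
\end{itemize}

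Your sufficiency argument also omits the step that actually integrates a tangent of $X$. Friedman's theorem does not give unobstructedness of $X$ itself: its degree-six and degree-seven local bases are singular, and $\Def(X)$ can be obstructed (Theorem~\ref{thm05:germ}). The partial model $Z$ is also not characterized by contracting curves orthogonal to $R_{p,S_p}$. At an $A_2$ point the contracted curves $e_1,e_2,e_3$ span the selected plane rather than annihilate it, and the cone becomes three nodes, not one singularity. What is needed is that every singularity of $Z$ has a smooth \emph{full} local base; this is what turns Friedman's generalized $T^1$-lifting into ordinary lifting.

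Even then, unobstructedness of $Z$ does not integrate a first-order deformation of $X$ until you do three things:
\begin{enumerate}
\item Construct a blow-down transformation $\mathbf{Def}_Z\to\mathbf{Def}_X$, using $R^1\pi_*\cO_Z=0$.
\item Show every vector of $V_S$ lies in the image of its differential. Replace $\kappa$ by the nodal tuple on the resolution of $Z$ and check it is a relation there. Build a tangent to $Z$ from it and verify that blowing down returns the prescribed local components, including the comparison across the flop at degree seven. Lift the locally trivial directions separately.
\item Check that the image lies on the selected degree-six components, using the nonzero projection there and the fact that the base of $\Def(Z)$ is a domain.
\end{enumerate}
These steps give smoothness of $\Def^S(X)$ via regularity of the completion. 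Only then does a convergent arc with tangent $\kappa$ follow.
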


Equivalently, none of the stated scalar functionals vanishes identically
on $K_S$.  Thus the criterion is a finite linear-algebra test for each
of the finitely many profiles.  When neither degree-five nor degree-six
points occur, its form is the same as the nodal criterion.  At an $A_2$
point the three excluded lines are precisely the local discriminant.
Necessity holds for arbitrary smoothing arcs, including arcs tangent to
that discriminant to higher order.
At degree five, the five excluded hyperplanes are likewise the
exact local discriminant, and necessity holds for arbitrary arcs.

Fix analytic local classifying maps for a semiuniversal deformation of
$X$, and let $\Def^S(X)$ be the scheme-theoretic inverse image of the
chosen local branches.  Allowing smoothings over arbitrary complex analytic
base germs does not change the criterion.  Indeed, take a small
representative of $B=\Def^S(X)_{\mathrm{red}}$ and restrict the
proper semiuniversal family to $B$.  The locus $D\subset B$ of singular
fibres is closed analytic by properness.  If smooth fibres occur
arbitrarily close to the origin, complex curve selection
\cite[Lem.~2.36]{LeMassey05} gives a holomorphic map
$\gamma:(\Delta,0)\to(B,0)$ with
$\gamma(\Delta\setminus\{0\})\subset B\setminus D$ after shrinking
the disc.  Pullback gives a convergent proper one-parameter smoothing
inducing $S$.  Conversely, a disc is itself an analytic base germ.
For an unrestricted smoothing, first apply the same argument in
$\Def(X)_{\mathrm{red}}$; the irreducibility of the disc forces each
local classifying map to factor through a reduced local component,
and hence determines a profile.  Thus $X$ is smoothable if and only if
the criterion holds for at least one profile $S$.  This argument is
independent of the singularity types, and applies in particular to
degree-five cones and anticanonical $\PP^1\times\PP^1$ cones;
compare \cite[proof of Cor.~6.7]{PaperIV}.

The integrability statement is stronger than the existence assertion.
Denote the tangent sheaf by $\Theta_X$.

\begin{thmletter}[smooth selected deformation spaces]\label{thm:smoothintro}
If $K_S\to R_{p,S_p}$ is nonzero at every degree-six point, then
$\Def^S(X)$ is smooth, of dimension
\[
 h^1(X,\Theta_X)+\dim_{\QQ}K_S.
\]
Every tangent vector to this space is the tangent of a convergent
one-parameter deformation on the selected branches.
\end{thmletter}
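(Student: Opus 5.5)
The plan is to follow the Friedman--Namikawa template: realize the selected branches as a locally trivial deformation theory for a crepant partial resolution, and then use $T^1$-lifting. Let $\pi:\Xh\to X$ be the crepant partial resolution. At each node it is the chosen small resolution. At each cone point $p$ it resolves only as far as the selected branch $S_p$ requires, so that the selected smoothing component of the germ becomes the image of the locally trivial deformations of $\Xh$ near $E_p$. Concretely, $\Xh$ contracts the curves and surfaces in $E_p$ whose classes lie in $R_{p,S_p}$ (Atiyah--Brieskorn type simultaneous resolution over the root-subspace base). The first step is to verify, for each germ type (degree five, six with $A_1$ or $A_2$, seven, and $\PP^1\times\PP^1$), that the chosen branch is a Weyl-group quotient of a smooth base. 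That base is the simultaneous-resolution space $R_{p,S_p}\otimes\CC$, identified through local periods with $H^2$ of the local Milnor fibre dual to $R_{p,S_p}$. The discriminant pulls back to the root hyperplanes: the lines $\mu_i=0$ at $A_2$ and the hyperplanes $\lambda_i=0$ at degree five. I take this local description as established in the body of the paper, together with the link identification $H_2(L_p;\QQ)\cong K_{E_p}^\perp$ recalled in the introduction.

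The second step computes first-order deformations. Since $\omega_X\cong\cO_X$ and $H^1(X,\cO_X)=0$, I would use the local-to-global sequence
\[
0\to H^1(X,\Theta_X)\to \mathbb{T}^1_X\to H^0(X,\mathcal{T}^1_X)\to H^2(X,\Theta_X).
\]
Restricted to the tangent cones of the selected branches, the image of $\mathbb{T}^1_X$ should be $K_S\otimes\CC$. The map $\bigoplus_p R_{p,S_p}\otimes\CC\to H^2(X,\Theta_X)$ is identified, via Serre duality and $\Theta\cong\Omega^2$, with the dual of restriction. Its kernel is therefore the complexified relation space $K_S\otimes\CC$, exactly as in Friedman's nodal argument, where a first-order local smoothing lifts iff the corresponding link classes bound in $Y_0$. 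This gives the tangent dimension $h^1(X,\Theta_X)+\dim_\QQ K_S$.

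The third step proves smoothness by the $T^1$-lifting criterion applied to $\Xh$ rather than to $X$. Friedman's unobstructedness theorem (Kawamata--Ran--Tian style, for Calabi--Yau spaces with $\omega\cong\cO$) shows that deformations of $\Xh$ whose local germs stay on the simultaneous-resolution bases are unobstructed. The corresponding tangent space is $H^1(\Xh,\Theta_{\Xh})$ plus the relation space. The hypothesis that $K_S\to R_{p,S_p}$ is nonzero at every degree-six point enters here. It ensures that the generic vector of $K_S$ at an $A_2$ (or $A_1$) point lies off the Weyl-fixed locus, so the blow-down map from $\Def(\Xh)$ to $\Def^S(X)$ is étale (or at least smooth) onto its image rather than ramified. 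The Weyl quotient therefore does not introduce singularities into $\Def^S(X)$. Convergent one-parameter deformations then come from Artin approximation: each tangent vector is realized by a convergent arc in the smooth analytic germ.

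I expect the main obstacle to be this last compatibility at degree-six points. There the local branch is a quotient of $R\otimes\CC$ by $S_2$ or $S_3$, and I must show that the global kernel's image either is generic or forces the quotient map, restricted to the global locus, to be an isomorphism of germs. My first attempt would be to show that the image of $K_S\otimes\CC$ in $R_{p,A_2}\otimes\CC$ is either all of it or a line. Whenever it is nonzero, the pulled-back branch is then smooth: the invariant functions restrict to coordinates on that line after a suitable reparametrization. At degree five, where the branch is already a smooth four-dimensional germ, the analogous check is automatic.
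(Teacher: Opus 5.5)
There is a genuine gap at the core of your proposal. The selected branches cannot be realized through locally trivial deformations of a crepant partial resolution, and the local structure is not a Weyl quotient.

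\textbf{Small resolutions kill the nodal directions.} If $\Xh$ is small over the nodes, each nodal neighbourhood $\widehat Y_p$ satisfies $H^1(\widehat Y_p;\Theta)=0$ (proof of Lemma~\ref{lem:localhodge}). So every deformation of $\Xh$ is trivial near the exceptional curve, and it blows down to a deformation that is locally trivial at the node. Your construction therefore loses every nodal component of $K_S$, although Theorem~\ref{thm:D} puts these directions in $V_S$.

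\textbf{No simultaneous resolution exists.} There is none for these threefold branches, even after base change. In the smooth Milnor fibre $\Mil_{p,S_p}$ the classes of $R_{p,S_p}$ bound, whereas in a resolution neighbourhood such as $\operatorname{Tot}(K_{E_p})$ the link classes inject into $H_2(E_p;\QQ)$.

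\textbf{The branches are not quotients of $R_{p,S_p}\otimes\CC$.} By Lemma~\ref{lem:branchhodge} and Proposition~\ref{prop03:a2family}, the $A_2$ branch is a smooth plane identified \emph{linearly} with $R_{p,A_2}\otimes\CC$. Its discriminant is the three lines $\mu_i=0$ in that plane itself; the degree-five base is likewise linear, with the five hyperplanes $\lambda_i=0$. Your fallback in the last paragraph fails even on its own terms. The map $\CC^2\to\CC^2/S_3$ is given by invariants of degrees $2$ and $3$, so it has zero differential at the origin and is not an immersion on any line through $0$. It could never produce the linear surjection $V_S\to K_S\otimes\CC$ that your second step predicts. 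That step is essentially Theorem~\ref{thm:D}, but at the cone points it needs the full local Hodge comparison of Lemma~\ref{lem:localhodge}.

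\textbf{What is missing.} You need a partial resolution whose relevant deformations are \emph{not} locally trivial, but all of whose singularities have smooth full local bases. The paper keeps the nodes, the degree-five cones and the original $\PP^1\times\PP^1$ cones. It replaces an $A_1$ point by two nodes, an $A_2$ point by three nodes, and a degree-seven point by one $\PP^1\times\PP^1$ cone (Proposition~\ref{prop04:globalmodel}). Then:
\begin{itemize}
\item Friedman's generalized $T^1$-lifting theorem makes the whole functor $\mathbf{Def}_Z$ unobstructed (Lemma~\ref{lem04:unobstructed}), with no restricted local functor.
\item The analytic blow-down lemma gives $\beta\colon\widehat{\Def}(Z)\to\widehat{\Def}(X)$.
\item One checks that all of $V_S$ lies in the image of $d\beta$ (Lemma~\ref{lem04:globaltangent}), including $H^1(X,\Theta_X)$ and the degree-seven components compared across the flop.
\end{itemize}

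\textbf{Role of the hypothesis.} The nonzero-projection hypothesis is not used for \'etaleness off a Weyl-fixed locus; it decides where $\beta$ lands. The completed local ring of $\Def(Z)$ is a power-series ring, hence a domain. So the pulled-back equations $su=sv=0$ force $s=0$ or $u=v=0$. The hypothesis gives the selected coordinate a nonzero linear term, which picks the correct alternative. In the same way, $w^2=0$ kills the degree-seven nilpotent direction, which your plan does not address.

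Once $\beta$ factors through $\Def^S(X)$ with surjective differential, take a minimal presentation $\CC[[x_1,\ldots,x_r]]/I$, $I\subset\mathfrak m^2$, of its completed ring; surjectivity forces $I=0$. Analytic coordinates on the resulting smooth germ then give the convergent arcs, without any appeal to Artin approximation.
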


No nonzero-projection hypothesis is required at nodes, degree-five or degree-seven
points, or $\PP^1\times\PP^1$ cones in this theorem.  The hypothesis
is automatic when the smoothing
criterion holds.  Smoothness of a selected base does not, by itself,
assert the existence of a smooth fibre.

\subsection{Scope among del Pezzo cones}\label{sec01:degree-scope}

The degree-seven and degree-six cases arise from the pentagonal and
hexagonal faces in the toric examples.  Both associated cones are smoothable and are
not complete intersections.  Their local deformation spaces exhibit
the two phenomena treated here: distinct smoothing components in
degree six, and a nonreduced structure transverse to the smoothing
line in degree seven.  Theorems~\ref{thm:criterionintro}
and~\ref{thm:smoothintro} also allow the degree-five cone and the
anticanonical cone over $\PP^1\times\PP^1$, of degree eight.
These hypotheses specify the
local comparisons established below; smoothable del Pezzo cones also
occur in the other degrees.
For every degree covered by the criterion, the curve-selection argument
following Theorem~\ref{thm:criterionintro} identifies smoothability over
complex analytic base germs with the stated one-parameter condition.

For a smooth del Pezzo surface $E$, the anticanonical cone means
\[
 \operatorname{Spec}\bigoplus_{m\geq0}H^0(E,-mK_E).
\]
This convention includes the weighted
anticanonical models in degrees one and two.  The classical local
facts in Table~\ref{tab01:degree-scope} are recalled in
\cite[\S\S3.3, 5.5 and Example~2.19]{Gross97rev}; the branch structures
in degrees six and seven follow from \cite{Altmann97}.
Proposition~\ref{prop03:degree5family} proves the full local-base
and discriminant statements in degree five.

% Canonical degree-scope table.  Keep its final column synchronized with
% the theorem statements and the degree-five/degree-eight research TODO.
% Local smoothability and an established global criterion are distinct.
\begin{table}[htbp]
\centering
\caption{Anticanonical cones over smooth del Pezzo surfaces and the
scope of the present results.  The local assertions alone do not imply
smoothability of a compact threefold containing these germs.}
\label{tab01:degree-scope}
\small
\renewcommand{\arraystretch}{1.14}
\begin{adjustbox}{max width=\textwidth, center}
\begin{tabular}{@{}>{\raggedright\arraybackslash}p{0.17\textwidth}
                  >{\raggedright\arraybackslash}p{0.41\textwidth}
                  >{\raggedright\arraybackslash}p{0.32\textwidth}@{}}
\toprule
Degree / surface & Local deformation behavior & Role in the present results \\
\midrule
$1$--$3$
  & Hypersurfaces; smoothable, with smooth local bases.
  & Classical complete-intersection cases; outside the stated mixed criterion. \\
\addlinespace
$4$
  & Intersection of two quadrics; smoothable, with smooth local base.
  & As in degrees one to three. \\
\addlinespace
$5$
  & Smooth four-dimensional base; five discriminant hyperplanes.
  & Covered by Theorems~\ref{thm:criterionintro}
    and~\ref{thm:smoothintro}; five nonzero conic-pencil pairings. \\
\addlinespace
$6$
  & A smoothing line and a smoothing plane; three discriminant lines in the plane.
  & Both components covered by Theorems~\ref{thm:criterionintro}
    and~\ref{thm:smoothintro}. \\
\addlinespace
$7$
  & One reduced smoothing line, with an obstructed transverse tangent.
  & Covered by Theorems~\ref{thm:criterionintro}
    and~\ref{thm:smoothintro}. \\
\addlinespace
$8$, $\PP^1\!\times\!\PP^1$
  & Smooth one-dimensional local base; noncentral fibres smooth.
  & Covered by Theorems~\ref{thm:criterionintro}
    and~\ref{thm:smoothintro}; coefficient on the ruling difference. \\
\addlinespace
$8$, $\mathbb F_1$
  & No smoothing; local base $\operatorname{Spec}\CC[t]/(t^2)$.
  & Excluded by local nonsmoothability. \\
\addlinespace
$9$, $\PP^2$
  & Rigid.
  & Excluded by local nonsmoothability. \\
\bottomrule
\end{tabular}
\end{adjustbox}
\end{table}

The degree-eight cone over $\PP^1\times\PP^1$ already occurs as the
partial resolution of a degree-seven point.
Lemma~\ref{lem03:quadric} proves smoothness of its full local base and
identifies its tangent line with the difference of the ruling classes.
When such points occur on the original threefold, the partial
modification is the identity near them.  Their Milnor fibres retract
to $\mathbb{RP}^3$; the descended nodal period supplies the necessity
argument in Lemma~\ref{lem04:periodnecessity}.  Thus they require one
nonzero coefficient on the ruling difference, with no additional
hypothesis in the selected-base smoothness theorem.

The degree-five surface is not toric.  Totaro's Bott vanishing
theorem \cite{totaro2020} supplies its local Hodge comparison.
The four-parameter family of Grassmannian sections and its
boundary periods identify the five conic-pencil conditions and
prove necessity at every order.  Degree-five points also remain
unchanged on the partial model.  Thus the present criterion covers
all locally smoothable anticanonical cones over smooth del Pezzo
surfaces that are not complete intersections.
For degrees one to four, the existing complete-intersection theory
does not by itself give the stated criterion for mixtures with the
non-complete-intersection cones considered in this paper.

The restriction is not a failure of local smoothability.  For a compact
threefold with only isolated canonical local complete intersection
singularities, trivial canonical bundle, $H^1(\cO)=0$, and a K\"ahler
resolution, Namikawa's theorem gives unobstructedness.  With a K\"ahler
crepant resolution, Gross's theorem gives a deformation smoothing the
non-nodal points while remaining locally trivial at the existing nodes
\cite[Theorems~0.3 and~0.5]{FL22a}.  These hypotheses do not cover a
mixture in which the non-complete-intersection cone points remain.  Gross
also smooths non-complete-intersection points in two settings: when $X$ is
$\QQ$-factorial and all its singular points are good in his sense
\cite[Thm.~4.3]{Gross97rev}, and for the target of a primitive contraction of
a divisor to a point, unless that divisor is $\PP^2$ or $\mathbb F_1$
\cite[Thm.~5.8]{Gross97rev}.  Both settings concern $\QQ$-factorial threefolds,
whereas the threefolds considered here may carry nodes and several cone points
without being $\QQ$-factorial.

More recently, Imagi proves a global smoothing theorem for
$\QQ$-factorial projective Calabi--Yau threefolds with isolated canonical
singularities whose full local deformation bases have dense smoothing
loci \cite[Theorems~1.8--1.9 and Corollary~1.10]{Imagi26}.
This allows non-complete-intersection singularities and singular local
bases.  The present results give a criterion on each selected profile
without assuming $\QQ$-factoriality, together with smoothness of the
selected base under the stated projection hypotheses.

There is also a change in the local comparison used here.  At the cone
over the smooth Fermat cubic surface, put $f=\sum_{i=0}^3x_i^3$.
The hypersurface cotangent presentation gives
\[
 T^1_{\{f=0\},0}
 =\frac{\CC[x_0,x_1,x_2,x_3]}{(x_0^2,x_1^2,x_2^2,x_3^2)},
 \qquad \dim T^1_{\{f=0\},0}=16.
\]
The sixteen squarefree monomials form a basis.  The link $L$ is the
circle bundle of the canonical bundle of the cubic surface, so the
Gysin sequence gives $\dim H_2(L;\CC)=7-1=6$.  Hence the full-tangent
identification of Lemma~\ref{lem:localhodge} cannot hold for this cone.
An extension to mixtures including degrees one to four must account for
deformation directions not represented by link homology.  This dimension
comparison rules out that identification, not a different mixed
smoothing theorem.

\Needspace{18\baselineskip}
\subsection{A nonsmoothable example and its deformation space}
The distinction between a nonzero local vector and a smoothing direction
cannot be removed by changing the branch profile.
For two degree-six points, a mixed profile selects the line component
at one point and the plane component at the other.  A uniform profile
selects the same component type at both points.

\begin{samepage}
\begin{thmletter}\label{thm:counterintro}
There is a projective Calabi--Yau threefold with thirty nodes and two
degree-six cone points such that a relation in $K_S$ is nonzero at every
singular point for either mixed profile, but the threefold has no
smoothing on any profile.  Its semiuniversal analytic local ring is
\[
 \CC\{z_1,\ldots,z_{32},x,y,u,v,b\}/(xy,xb,uv,ub).
\]
It is reduced, has tangent dimension $37$, and has four smooth
irreducible components of dimensions $34,34,34,35$.  All their
scheme-theoretic intersections are smooth.
\end{thmletter}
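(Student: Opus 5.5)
The plan is to build the example explicitly and then read off its deformation space from the earlier results.

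\emph{Construction.} Take a Batyrev hypersurface in a toric Fano fourfold whose reflexive polytope has two hexagonal two-dimensional faces. Generic anticanonical sections acquire one degree-six cone point at each hexagonal face, together with thirty nodes coming from edges of positive length. The toric data fix a crepant resolution $Y_0$, and the lattice of torus-invariant curves gives an explicit description of $H_2(Y_0;\QQ)$.

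\emph{Relation spaces.} For each of the four profiles $(A_1,A_1)$, $(A_1,A_2)$, $(A_2,A_1)$, $(A_2,A_2)$ I will compute $K_S$ from \eqref{eq:relationkernel}. The intended outcome is as follows.
\begin{itemize}
\item In the two mixed profiles some relation is nonzero at every singular point, but every relation has a vanishing $\mu_i$ at the $A_2$ summand. Concretely, the image of $K_S$ in $R_{p,A_2}$ is a single discriminant line.
\item In the uniform profiles some summand is forced to vanish.
\end{itemize}
Theorem~\ref{thm:criterionintro} then excludes smoothing on every profile, and the curve-selection remark after it upgrades this to nonsmoothability over arbitrary base germs.

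\emph{Deformation space.} The local ring should be computed from the fibre product of the local bases over $T^1$, as follows.
\begin{itemize}
\item The $30$ nodes and the locally trivial directions contribute the smooth coordinates $z_1,\ldots,z_{32}$. This requires $h^1(\Theta_X)$ plus the dimension of the common nodal relations to equal $32$.
\item At each degree-six point the local base is a line union a plane, cut out by $(xy,xb)$ and $(uv,ub)$ respectively. Here $b$ is a single shared global coordinate: because of the global relation, one $A_2$ coordinate at the first point equals one $A_2$ coordinate at the second.
\item The four components come from the choices of the linear factors, namely $\{x=b=0\}$ or $\{y=0\}$, and $\{u=b=0\}$ or $\{v=0\}$. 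Their dimensions are $32+2$, $32+2$, $32+2$ and $32+3$, which is consistent with Theorem~\ref{thm:smoothintro} applied to each profile: $h^1(\Theta_X)+\dim K_S$.
\end{itemize}

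\emph{Key step.} The main obstacle is to show that the global map $\Def(X)\to\prod_p\Def(X_p)$ is, up to a smooth factor, exactly this fibre product with the shared coordinate $b$ and no further equations. I would try Theorem~\ref{thm:smoothintro} on each profile first, since each $K_S$ projects nonzero at both degree-six points, so each $\Def^S$ is smooth of the computed dimension. The argument then runs in four steps.
\begin{enumerate}
\item Verify that the tangent space has dimension $37$ by a $T^1$ local-to-global computation.
\item Show that the union of the four smooth subgerms embeds into the germ with this ideal.
\item Compare tangent cones and conclude equality, using reducedness of the ideal $(xy,xb,uv,ub)$. Reducedness holds because this is a monomial squarefree ideal.
\item Reading off the primary decomposition gives the components. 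The pairwise intersections, for example $\{x=b=0,\,v=0\}$, are coordinate subspaces and hence smooth.
\end{enumerate}
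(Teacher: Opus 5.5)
Your architecture is the paper's: an explicit Batyrev example, relation spaces for the four profiles, Theorem~\ref{thm:smoothintro} on each profile, and then a comparison with the four smooth subgerms. Deducing nonsmoothability from the necessity half of Theorem~\ref{thm:criterionintro} is also legitimate. There are, however, two genuine gaps.

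\textbf{No example is produced.} For an existence statement this is the substance: the items you list as the ``intended outcome'' are exactly what has to be proved, and nothing in your set-up forces them. The paper works with the explicit $25$-vertex polytope $\Delta_{25}$. Its only singular two-faces are thirty unit parallelograms and two unit-edge hexagons, all with dual edges of length one. Batyrev's correction term vanishes, so toric divisors span $H^2(Y_0;\QQ)$ and $K=\{w:wM=0\}$ for an explicit $36\times25$ matrix $M$. The decisive mechanism is three divisors $H_L,H_M,H_P$ on $Y_0$ whose pairings give the identities \eqref{eq05:identities} on all of $K$. These force $n_{13}$, $b_1$, $b_0$, $n_{16}$ to vanish identically on $K_{LL}$, $K_{LP}$, $K_{PL}$, $K_{PP}$ respectively. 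Explicit vectors then show that the mixed profiles have relations nonzero at every singular point. Your count $32=21+11$ and your shared coordinate also rest on computations you have not made: $h^1(\Theta_X)=21$ (Lemma~\ref{lem05:locallytrivial}) and the ranks \eqref{eq05:fullranks}. These show that the image of $K$ in the six cone coordinates is \emph{exactly} the hyperplane $b_0+b_1=0$. It is not enough that this relation holds on the image; it must be the only one.

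\textbf{The shared coordinate is only first-order.} Write $R$ for the ambient convergent ring in the $37$ coordinates, $I$ for the ideal of $\Def(X)$, and $J=(xy,xb,uv,ub)$. The relation $b_0+b_1=0$ holds only on tangent vectors. So the pullback of $-b_1$ is some $H=b+O(\mathfrak m^2)$, and the local equations give $(xy,xb,uv,uH)\subseteq I$, not $J\subseteq I$. A term such as $H=b+x^2$ is invisible to first order but would cut the $LL$ locus $\{y=b=v=H=0\}$ down to dimension $33$. The paper removes this possibility using the $LL$ profile. The smooth $34$-dimensional germ $\Def^{LL}(X)$ lies in $V(y,b,v,H)=V(y,b,v,H|_{y=b=v=0})$, so $H|_{y=b=v=0}$ vanishes identically. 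Hence $H=cb+py+qv$ with $c$ a unit, and $b'=cb+py$ is a new coordinate with $(xy,xb,uv,uH)=(xy,xb',uv,ub')$.

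After this change your sandwich becomes rigorous, and is even shorter than the paper's initial-ideal and Krull-intersection argument. The ideal of $\Def^S(X)$ becomes $I+P_S$, with $P_S$ as in \eqref{eq05:componentideals}. Theorem~\ref{thm:smoothintro} makes it regular of the same dimension as the domain $R/P_S$, so $I\subseteq P_S$, whence $J\subseteq I\subseteq\bigcap_SP_S=J$. These are the inclusions that matter. Your step~2, that the union of the four subgerms lies in $V(J)$, only says $J\subseteq I+P_S$. It does not bound $I$, so it cannot exclude extra nilpotent structure.

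Finally, your components are swapped. Since $(xy,xb)=(x)\cap(y,b)$, they are $\{x=0\}$ and $\{y=b=0\}$, and likewise $\{u=0\}$ and $\{v=b=0\}$. The set $\{y=0\}$ is not even contained in $V(xy,xb)$.
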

\end{samepage}

The example is a general anticanonical hypersurface in the toric
fourfold associated with an explicit reflexive polytope with $25$
vertices.  Three divisor identities prove nonsmoothability independently
of the sufficiency argument.  They force an old nodal coefficient to
vanish on each uniform profile and one of the three $A_2$ coefficients
to vanish on each mixed profile.  The full deformation ring then follows
from Theorem~\ref{thm:smoothintro}, the local branch equations, and a
convergent change of coordinates.  A prescribed tangent integrates
precisely when the four displayed quadratic expressions vanish.

\subsection{The geometric argument}
The first-order comparison identifies the image of the global tangent
space in the sum of the local tangent spaces with the kernel of the
link-homology map.  Contraction with one global volume form, local
cohomology with supports, and the $\partial\bar\partial$-lemma give this
comparison for the full local tangent spaces, including the transverse
infinitesimal direction at a degree-seven cone.

To integrate the selected directions, we replace the cone points by
proper crepant partial resolutions.  The degree-seven cone is replaced
by the anticanonical cone over $\PP^1\times\PP^1$; the $A_1$ and $A_2$
choices at a degree-six cone are replaced by two and three nodes,
respectively.  Degree-five and original $\PP^1\times\PP^1$
cones remain unchanged.
Each resulting singularity has a smooth full local
deformation base.  Friedman's generalized $T^1$ lifting theorem
\cite[Thm.~3.10]{friedman2026}, combined with this local smoothness, gives the
$T^1$-lifting property and hence unobstructedness of the partial model
\cite[Thm.~3.3]{friedman2026}.  An analytic blow-down lemma and a lift of locally
trivial global tangents transfer this smoothness to the selected
deformation space of $X$.  The partial model need not be projective;
compact Moishezon resolutions supply the required Hodge theory.

For necessity at an $A_2$ point, an explicit simultaneous partial
resolution of the rank-one matrix family is an isomorphism over every
noncentral fibre.  Pulling it back along an arbitrary smoothing arc
replaces the central cone by three nodes without changing the smooth
fibres.  Nodal necessity then supplies three nonzero coefficients;
intersection with the exceptional del Pezzo surface makes their sum
zero.  This proves the stronger $A_2$ condition at every order.
At a $\PP^1\times\PP^1$ cone, the vanishing cycle is the quotient
of the nodal sphere by the antipodal involution; its nonzero period
gives the corresponding rank-one necessity condition.
At degree five, the four-parameter Grassmannian section family
has five discriminant hyperplanes.  Its boundary carries every
absolute Milnor cycle.  The resulting holomorphic boundary periods
identify the five conditions and prove necessity along arbitrary arcs.

\subsection{Toric constructions and their limits}
The general theorems also distinguish abstract deformations from
deformations induced by a toric ambient space.  For Batyrev
hypersurfaces \cite{Batyrev94}, we study the ambient families obtained
from an admissible Minkowski decomposition at one primitive lattice
degree, using Altmann's local theory and the global construction of
Ilten--Vollmert \cite{Altmann97,IV12}.  Definition~\ref{def:singledegree}
specifies these families.  Ambient deformations of the toric pair built from
Minkowski decompositions of two-dimensional faces with long dual edges are
constructed in \cite{PaperII}; the local obstruction of \cite{PaperI} and the
doubly isolated mirror pairs of \cite{PaperIII} supply the toric examples
considered here.

The hypersurface $X_{19}$ of a polytope from \cite[\S5.3]{PaperIII} has
fourteen nodes and one degree-seven cone point.  Corollary~\ref{cor:x19smooth} proves that its full deformation
space is smooth of dimension $30$ and that it is smoothable.
Nevertheless, Theorem~\ref{thm:A} excludes a smoothing by every
single-degree ambient family through the relative anticanonical system.
Its proof combines Altmann's grading calculation with a propagation
rule on the edges of the two facets containing the pentagonal face.
Definition~\ref{def:locked} calls a facet \emph{locked} when this rule
forces all its edge-dilation parameters to agree.  The obstruction is
uniform over the entire degree lattice.

The complementary construction is $X_9$, with two nodes and one
degree-seven cone point.  Theorem~\ref{thm:B} gives an explicit ambient
smoothing at degree $(0,1,-1,-1)$.  Theorem~\ref{thm:C} classifies all
$77$ admissible polytopes with at most nine vertices and a pentagonal
face: $76$ give nonsmoothable hypersurfaces, and the remaining one is
$X_9$.  The larger edge-propagation census describes the frequency of
the ambient obstruction, without asserting that its separate
degree-coverage condition holds throughout that census.

Sections~\ref{sec:deformation}--\ref{sec04:smoothing} prove the general
theorems, and Section~\ref{sec05:counterexample} gives the counterexample
and its full deformation ring.  Sections~\ref{sec:germs}--\ref{sec:thmB}
develop the toric applications.  Appendix~\ref{app:cert} records the
exact computations and the data from which they can be reproduced.
The results leave open smoothness of selected bases whose degree-six
projection vanishes, and the structure of unrestricted deformation
bases in general, including possible degree-seven nilpotents.
For the toric hypersurfaces of Definition~\ref{def:framework}, the companion
paper~\cite{PaperVI} studies the relation lattices on the mirror side: for
nodal crepant partial resolutions, one for each profile, it identifies the
relations among the exceptional curves with the relations among the vanishing
three-spheres of the Batyrev mirror family.

\subsection*{Acknowledgements}
I am grateful to Mark Gross for first drawing my attention, many years ago, to the problem of smoothing canonical singularities on Calabi--Yau threefolds.
The author used LLMs during the development of this work for literature exploration, editorial and coding assistance and as an adversarial reader.

\section{Local branches and global first-order deformations}
\label{sec:deformation}

\subsection{The analytic setting and the local lattices}
Throughout Sections~\ref{sec:deformation}--\ref{sec04:smoothing}, $X$
satisfies the projective Calabi--Yau hypotheses of the introduction.
All its singularities are isolated nodes or the exact anticanonical
cones over $E=\dP_5$, $\dP_6$, $\dP_7$, or $\PP^1\times\PP^1$.
The degree-five, degree-six and degree-seven surfaces are unique up
to isomorphism over $\CC$.  The degree-five surface is not toric;
the other three surfaces are toric.  Fix a nowhere-vanishing
section $\Omega$ of $\omega_X$.

At a node choose a small resolution; at a cone use the zero-section
contraction $\operatorname{Tot}(K_E)\to\Cone(E)$.  These proper analytic
maps are isomorphisms off the singular points and glue to a smooth
crepant analytic resolution $f_0:Y_0\to X$.  The gluing is Hausdorff
because two points over the same singular point lie in one proper
local model; points with distinct images can be separated on $X$.
Properness is local on the target.  Thus $Y_0$ is compact, and it is
Moishezon because it is bimeromorphic to the projective variety $X$.
The $\partial\bar\partial$-lemma holds on $Y_0$
\cite[Thm.~5.22 and Cor.~5.23]{dgms1975}.

Let $L_p$ be a small link of a singularity.  At a cone point the
circle-bundle Gysin sequence identifies
\[
 H_2(L_p;\QQ)=K_{E_p}^{\perp}\subset H_2(E_p;\QQ).
\]
For $E=\mathrm{Bl}_r\PP^2$, use the usual marking $h,e_1,\ldots,e_r$.
Its intersection form and canonical class are
\[
 \operatorname{diag}(1,-1,\ldots,-1),\qquad K_E=-3h+\sum_i e_i.
\]
At degree seven the selected line is
$\QQ(e_1-e_2)$.  For $E=\PP^1\times\PP^1$, the ruling classes
satisfy $f_1^2=f_2^2=0$, $f_1\cdot f_2=1$, and
$K_E=-2f_1-2f_2$; the selected line is the entire space
$K_E^\perp=\QQ(f_1-f_2)$.  At degree six set
\begin{equation}\label{eq:rootcoordinates}
 \ell=-h+e_1+e_2+e_3,\qquad A=-e_1+e_3,\qquad B=e_2-e_3.
\end{equation}
Then $K_E^\perp=\QQ\ell\oplus\langle A,B\rangle_\QQ$, the orthogonal
decomposition into the $A_1$ and $A_2$ root subspaces.  In the plane,
\begin{equation}\label{eq:a2coefficients}
 aA+bB=-a e_1+b e_2+(a-b)e_3.
\end{equation}
At a node the chosen resolution identifies its link-homology line
with the span of the exceptional curve.  Changing the small resolution
changes its generator by a sign and does not change any nonvanishing test.

At degree five the selected subspace is all of $K_E^\perp$.
For $E=\operatorname{Bl}_4\PP^2$ its five conic-pencil classes are
\begin{equation}\label{eq:degree5pencils}
 f_i=h-e_i\quad(1\le i\le4),\qquad
 f_5=2h-e_1-e_2-e_3-e_4,\qquad \sum_{i=1}^5f_i=-2K_E.
\end{equation}
For $\gamma=xh+\sum_i y_i e_i\in K_E^\perp$, define
\begin{equation}\label{eq:degree5coordinates}
 \lambda_i(\gamma)=\gamma\cdot f_i=x+y_i\quad(i\le4),
 \qquad \lambda_5(\gamma)=-x,\qquad \sum_i\lambda_i=0.
\end{equation}
These coordinates identify the integral perpendicular lattice with
the negative $A_4$ lattice: the inverse is
$x=-\lambda_5$, $y_i=\lambda_i+\lambda_5$, and
$\gamma^2=-\sum_i\lambda_i^2$.

\begin{definition}\label{def:branchprofile}
A branch profile $S$ selects the entire line at each node and each
$\PP^1\times\PP^1$ cone, the entire four-dimensional base at
each degree-five point, the unique reduced
degree-seven branch, and one of the two reduced branches at every
degree-six point.  Write $R_{p,S_p}$ for the corresponding subspace of
$H_2(L_p;\QQ)$ described above, and $K_S$ for \eqref{eq:relationkernel}.
We also write
\begin{equation}\label{eq:fullrelationkernel}
 K=\ker\left\{\bigoplus_pH_2(L_p;\QQ)\longrightarrow H_2(Y_0;\QQ)\right\}.
\end{equation}
\end{definition}

These subspaces are also
$R_{p,S_p}=\ker\{H_2(L_p;\QQ)\to H_2(\Mil_{p,S_p};\QQ)\}$, where
$\Mil_{p,S_p}$ is a smooth Milnor fibre on the chosen component
\cite[Prop.~2.1 and Lem.~4.2]{PaperIV}.  For the
$\PP^1\times\PP^1$ and degree-five cones the same assertion follows
from $H_2(\Mil_{p,S_p};\QQ)=0$, proved in
Lemmas~\ref{lem03:quadricmilnor} and~\ref{lem03:degree5milnor}.
When a profile is fixed, we
occasionally abbreviate $R_{p,S_p}$ to $\Rsm_p$.

The scheme-theoretic local bases are, in analytic coordinates,
\begin{equation}\label{eq:localbases}
 \begin{array}{c|c}
 \text{germ}&\text{local base ring}\\ \hline
 \text{node}&\CC\{t\}\\
 \Cone(\PP^1\times\PP^1)&\CC\{t\}\\
 \Cone(\dP_5)&\CC\{b_1,b_2,b_3,b_4\}\\
 \Cone(\dP_7)&\CC\{s,w\}/(w^2,sw)\\
 \Cone(\dP_6)&\CC\{s,u,v\}/(su,sv).
 \end{array}
\end{equation}
The degree-five entry is proved in
Proposition~\ref{prop03:degree5family}; the other cone entries are
Altmann's calculations.  At degree six the $A_1$ branch is $u=v=0$ and the $A_2$ branch is
$s=0$ \cite[(2.5), (9.1)(v), \S\S5--8]{Altmann97}.  Fix analytic classifying maps from
a semiuniversal base $\Def(X)$ to these local bases.  Their
scheme-theoretic inverse image of the selected components is denoted
$\Def^S(X)$; the chosen maps form part of this notation.  Write
$V_S=T_0\Def^S(X)$.  The line and plane are smooth, but the plane's
smoothing locus excludes the three lines
$ab(a-b)=0$ in the marking \eqref{eq:a2coefficients}.  In particular,
the condition of being a nonzero tangent in the plane is weaker than
being a first-order smoothing direction.
At degree five the smoothing locus excludes the five hyperplanes
$\lambda_i=0$, as proved in Lemma~\ref{lem03:degree5marking}.

\subsection{The full local Hodge comparison}
Write $\mathbb T^i_X=\operatorname{Ext}^i(L_X,\cO_X)$, where $L_X$ is
the cotangent complex.  The following comparison permits the use of
K\"ahler differentials in the low-degree local-to-global sequence.

\begin{lemma}\label{lem:omegaL}
Let $X$ be a normal Gorenstein threefold whose non-lci locus is finite.  Then
\[
  \mathrm{Ext}^i(\Omega^1_X,\cO_X)\;\cong\;\mathrm{Ext}^i(L_X,\cO_X)
  \qquad(i\le2),
\]
both as sheaves and globally, compatibly with the local-to-global spectral
sequence.
\end{lemma}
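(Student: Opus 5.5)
The plan is to compare $\Omega^1_X$ with the cotangent complex through the natural map $L_X\to\Omega^1_X=\mathcal H^0(L_X)$, and to show that the cone of this map contributes nothing to $\mathrm{Ext}^i(-,\cO_X)$ for $i\le2$. Let $C$ be the cone of $L_X\to\Omega^1_X$, placed so that there is an exact triangle
\[
 L_X\to\Omega^1_X\to C\to L_X[1].
\]
Then the cohomology sheaves of $C$ are $\mathcal H^{-j}(C)=\mathcal H^{-j+1}(L_X)$ for $j\ge2$, and $\mathcal H^{0}(C)=\mathcal H^{-1}(C)=0$. Applying $\mathcal{H}om(-,\cO_X)$ to the triangle gives a long exact sequence. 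The statement will follow once we show that $\mathcal{E}xt^i(C,\cO_X)=0$ for $i\le 3$, since this yields isomorphisms for $i\le2$ and injectivity in degree $3$.

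Next we localize. On the lci locus, $L_X$ is perfect of amplitude $[-1,0]$, so $\mathcal H^{-k}(L_X)=0$ for $k\ge2$. Hence $C$ is supported on the finite non-lci set $Z$. Since $C$ is concentrated in degrees $\le -2$, there is a spectral sequence
\[
 \mathcal{E}xt^p(\mathcal H^{-q}(C),\cO_X)\Rightarrow \mathcal{E}xt^{p+q}(C,\cO_X),
\]
with $q\ge2$. Every $\mathcal H^{-q}(C)$ is a coherent sheaf supported on the finite set $Z$. At a point $p\in Z$ the local ring $\cO_{X,p}$ is Cohen--Macaulay of dimension $3$ (Gorenstein suffices), so for any finite-length module $M$ we have $\mathrm{Ext}^r(M,\cO_{X,p})=0$ for $r<3$. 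This is the depth characterization: $\mathrm{Ext}^r(M,\cO)=0$ for $r<\operatorname{depth}\cO-\dim M=3$.

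The main obstacle is the degree count. With $q\ge2$, the only contributions to total degree $i\le 3$ come from $q=2,3$ and $p\le 1$, and all of these vanish. So $\mathcal{E}xt^i(C,\cO_X)=0$ for $i\le3$. This gives isomorphisms of $\mathcal{E}xt^i$ sheaves for $i\le2$, and an injection for $i=3$.

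Globally, we compare the two local-to-global spectral sequences $H^a(X,\mathcal{E}xt^b)\Rightarrow\mathrm{Ext}^{a+b}$ through the map induced by $L_X\to\Omega^1_X$. The terms with $b\le2$ agree, and the map on $b=3$ terms is injective. The only terms relevant to the total degrees $a+b\le 2$ are those with $b\le 2$. A differential from a $b=2$ term lands in a $b=1$ term, and a differential into a total-degree-$2$ term starts at a $b\le 2$ term, so no $b=3$ term enters. A spectral sequence comparison then gives the global isomorphism for $i\le2$, compatibly with the spectral sequences because everything is induced by one morphism of complexes.

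Alternatively, one can argue directly: $\mathrm{Ext}^i(C,\cO_X)$ is computed by a spectral sequence whose sheaf terms vanish for $i\le3$, so $\mathrm{Ext}^i(C,\cO_X)=0$ for $i\le3$. The global long exact sequence then gives the result at once. This is the cleaner route, and it is the one I would write.
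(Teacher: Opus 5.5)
Your route is the paper's in slightly different packaging. The paper runs the spectral sequence $\mathrm{Ext}^p(\mathcal H^{-q}(L_X),\cO_X)\Rightarrow\mathrm{Ext}^{p+q}(L_X,\cO_X)$ directly and notes that only the row $q=0$, namely $\Omega^1_X$, survives in total degree $\le2$. You instead split off $\tau_{\le-1}L_X$ through the cone $C$ and prove $\mathcal{E}xt^i(C,\cO_X)=0$ for $i\le3$. Both rest on the same degree count and on the grade-three vanishing for finite-length modules over a three-dimensional Cohen--Macaulay local ring. Your second (direct) global argument, via the local-to-global sequence for $C$, is fine.

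One step is not justified as written. You correctly compute $\mathcal H^{-2}(C)=\mathcal H^{-1}(L_X)$. You then conclude that $C$ is supported on the non-lci set $Z$ only from the fact that $L_X$ is perfect of amplitude $[-1,0]$ on the lci locus. That kills $\mathcal H^{-k}(L_X)$ for $k\ge2$, i.e.\ $\mathcal H^{-q}(C)$ for $q\ge3$, but it says nothing about $\mathcal H^{-1}(L_X)$.

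Amplitude alone is genuinely insufficient. For the hypersurface $X=\operatorname{Spec}\CC[x]/(x^2)\subset\AAA^1$, the conormal map $I/I^2\to\Omega_{\AAA^1}|_X$ sends the class of $x^2f$ to $2xf\,dx$. So the class of $x^3$ is a nonzero element of $\mathcal H^{-1}(L_X)$.

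The missing input is normality. On the lci locus, $\mathcal H^{-1}(L_X)=\ker(I/I^2\to\Omega_P|_X)$ is a subsheaf of a locally free sheaf on a reduced scheme. It vanishes on the dense smooth locus, hence is zero, so every $\mathcal H^{-q}(C)$ does have finite length.

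Alternatively, normality places $\mathcal H^{-1}(L_X)$ on $\Sing X$, which has codimension at least two. On the Cohen--Macaulay $X$ its $\mathcal{H}om$ and $\mathcal{E}xt^1$ into $\cO_X$ therefore vanish, which is exactly what the $q=2$ terms with $p\le1$ in your count require. With this step supplied, your proof is complete.
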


\begin{proof}
The cohomology sheaves $\mathcal H^{-q}(L_X)$ for $q\ge1$ vanish where $X$ is a
local complete intersection, so they are supported on a finite set and have
finite length.  A finite-length module on a Cohen--Macaulay threefold has grade
$3$, so $\mathcal Ext^p(-,\cO_X)$ and $\mathrm{Ext}^p(-,\cO_X)$ vanish on it for
$p<3$.  In the spectral sequence
$E_2^{p,q}=\mathrm{Ext}^p(\mathcal H^{-q}(L_X),\cO_X)\Rightarrow
\mathrm{Ext}^{p+q}(L_X,\cO_X)$ a term with $q\ge1$ contributing in total degree
$\le2$ has $p\le1<3$, hence vanishes.  Only $q=0$ survives in degrees $\le2$, and
$\mathcal H^0(L_X)=\Omega^1_X$.
\end{proof}

The local-to-global sequence begins
\[
 0\longrightarrow H^1(X,\Theta_X)\longrightarrow\mathbb T^1_X
 \xrightarrow{\alpha}\bigoplus_pT^1_{X,p}
 \xrightarrow{\mathrm{ob}}H^2(X,\Theta_X).
\]
We use contraction with the restriction of the fixed global volume
form $\Omega$ in every local comparison below. Thus numerical local
parameters are Hodge-normalized; independently chosen equation
coordinates may differ by nonzero scalar factors.

\begin{lemma}[local Hodge comparison]\label{lem:localhodge}
Let $(Y,p)$ be a three-dimensional ordinary double point or the anticanonical
cone over $E=\dP_5$, $\dP_6$, $\dP_7$, or $\PP^1\times\PP^1$.  Let
$\pi_p\colon\widehat Y_p\to Y$ be the small resolution at a node and
$\widehat Y_p=\operatorname{Tot}(K_E)$ at a cone point, let $Z_p$ be its
exceptional locus, and put $U_p=\widehat Y_p\smallsetminus Z_p$.  Then the
connecting homomorphism gives an isomorphism
\begin{equation}\label{eq:localhodge}
 T^1_{Y,p}\;\xrightarrow{\ \sim\ }\;
 K_p^{\mathrm{cr}}:=
 \ker\!\left\{H^2_{Z_p}(\widehat Y_p;\Omega^2_{\widehat Y_p})
 \longrightarrow H^2(\widehat Y_p;\Omega^2_{\widehat Y_p})\right\}.
\end{equation}
There is also a natural isomorphism
\begin{equation}\label{eq:linklocalhodge}
 H_2(L_p;\CC)\cong H^3(L_p;\CC)
 \xrightarrow{\ \sim\ }K_p^{\mathrm{cr}},
\end{equation}
and the two isomorphisms are compatible with the local Gysin map to any global
crepant resolution containing $\widehat Y_p$.
\end{lemma}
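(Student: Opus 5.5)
The plan is to pass from $T^1$ to local cohomology of $\Omega^2$ by contracting with $\Omega$, and from there to link cohomology by Hodge theory on the punctured neighbourhood. Both the node and the cone are treated uniformly through the crepant model $\widehat Y_p$.

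\emph{First isomorphism.} Since $Y$ is normal, Gorenstein and has an isolated singularity, we have $T^1_{Y,p}\cong H^1(U_p,\Theta_{U_p})$ (Schlessinger's depth argument, valid because $\operatorname{depth}\ge3$). Contraction with $\Omega$ identifies $\Theta_{U_p}\cong\Omega^2_{U_p}$, so $T^1_{Y,p}\cong H^1(U_p,\Omega^2_{U_p})$. We then write the local cohomology sequence of $(\widehat Y_p,Z_p)$ for $\Omega^2_{\widehat Y_p}$:
\[
H^1(\widehat Y_p,\Omega^2)\to H^1(U_p,\Omega^2)\xrightarrow{\ \delta\ } H^2_{Z_p}(\widehat Y_p,\Omega^2)\to H^2(\widehat Y_p,\Omega^2).
\]
Injectivity of $\delta$ requires $H^1(\widehat Y_p,\Omega^2)\to H^1(U_p,\Omega^2)$ to vanish, and the cleanest route is to show $H^1(\widehat Y_p,\Omega^2)=0$ after shrinking to a Stein neighbourhood of $p$. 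At a node, $\widehat Y_p$ is a neighbourhood of a $(-1,-1)$-curve. For $\operatorname{Tot}(\cO(-1)^{\oplus2})$, the formal functions theorem reduces the vanishing to the graded pieces $H^1(\PP^1,\Omega^2|_{\PP^1}\otimes\operatorname{Sym}^k N^\vee)$. These vanish because $N^\vee=\cO(1)^2$ and the restriction of $\Omega^2$ is a sum of line bundles of degree $\ge-1$. At a cone point we use the $\CC^*$-grading on $\operatorname{Tot}(K_E)$. The group $H^1$ splits as $\bigoplus_{m\ge0}H^1(E,\Omega^2_{\widehat Y}|_E\otimes(-mK_E))$, and via $0\to\Omega^1_E\otimes K_E^{-1}\cdots$ this reduces to Bott-type vanishing $H^1(E,\Omega^j_E\otimes(-mK_E))=0$ for $m\ge1$, with the $m=0$ term $H^1(E,\Omega^2_{\widehat Y}|_E)$ handled by the sequence with $K_E$ and $\Omega^1_E$. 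For toric $E$ this is Bott vanishing, and for $\dP_5$ it is Totaro's theorem \cite{totaro2020}. The image of $\delta$ is then exactly $K_p^{\mathrm{cr}}$.

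\emph{Second isomorphism.} The link satisfies $H^3(L_p)=H^3(U_p)$. The Gysin/excision sequence gives $H^3(U_p)\to H^4_{Z_p}(\widehat Y_p)\cong H_2(Z_p)\to H^4(\widehat Y_p)$. Because $\widehat Y_p$ retracts to $Z_p$, the kernel of this map is identified with $H_2(L_p)$ through Poincaré–Lefschetz duality, which sends $H_2(L_p)\cong H^3(L_p)$ to $K_E^\perp$. To compare with the Hodge side, we use that $H^3(U_p)$ carries a mixed Hodge structure whose relevant part is of type $(2,1)$ for the small and cone resolutions. The holomorphic piece is computed by $H^1(U_p,\Omega^2)$ modulo forms extending, which by the first step is $K_p^{\mathrm{cr}}$. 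Equivalently, we map $H^2_{Z_p}(\Omega^2)\to H^4_{Z_p}(\widehat Y_p;\CC)$ via the Hodge-to-de Rham map with supports, and check that it is an isomorphism onto $H_2(Z_p)$ restricted to the relevant kernel. We verify this by matching dimensions and observing injectivity via the Chern class of the exceptional curves: $\dim T^1$ is $1$ at a node, $\dim K_E^\perp$ at cones (resp.\ $1,2$ at the full degree-7 and degree-6 bases, and $4$, $1$ at the others, from \eqref{eq:localbases}). This matches $\dim H_2(L_p)$.

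\emph{Compatibility.} Both maps are built from restriction and connecting homomorphisms of sheaf cohomology with supports. These are functorial for the open inclusion $\widehat Y_p\hookrightarrow Y_0$ by excision, so the global Gysin map $H_2(L_p)\to H_2(Y_0)$ corresponds to $H^2_{Z_p}(\Omega^2)\to H^2(Y_0,\Omega^2)\hookrightarrow H^4(Y_0;\CC)$. The last injection uses the $\partial\bar\partial$-lemma on $Y_0$. The main obstacle is the second step: proving that the Dolbeault-with-supports class equals the topological class, i.e.\ that the comparison map $H^2_{Z}(\Omega^2)\to H^4_Z(\CC)$ is injective on $K_p^{\mathrm{cr}}$. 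I would establish this first using the $\CC^*$-weight grading, which isolates the weight-zero part, and then finish by explicit dimension counting, including the nonreduced degree-seven tangent that the full $T^1$ must account for.
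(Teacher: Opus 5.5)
\textbf{First isomorphism: correct, by a different route.} You prove $H^1(\widehat Y_p;\Omega^2_{\widehat Y_p})=0$ and then read off $T^1_{Y,p}\cong H^1(U_p;\Omega^2_{U_p})\cong K_p^{\mathrm{cr}}$ from the plain local cohomology sequence.
\begin{itemize}
\item At a node this is the paper's computation, since $\Omega^2\cong\Theta$.
\item At a cone the vanishing does hold. The sequence $0\to\rho^*K_E\to\Omega^2_{\widehat Y_p}\to\rho^*(\Omega^1_E(-K_E))\to0$ and $\rho_*\cO=\bigoplus_{m\ge0}\cO_E(-mK_E)$ reduce it to $H^1(E;\cO_E((1-m)K_E))=0$ and $H^1(E;\Omega^1_E(-(m+1)K_E))=0$ for all $m\ge0$.
\item These follow from Serre duality, Kodaira vanishing, and Bott vanishing (Totaro's theorem in degree five). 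The case $m=0$ of the second family is the rigidity $H^1(E;\Theta_E)=0$.
\end{itemize}
The paper instead kills $H^1(\widehat Y_p;\Omega^2(\log E)(-E))$ through the logarithmic tangent sequence and applies the exact sequence of \cite[Thm.~2.1(v)]{FL22a}. Your version is more self-contained for this step. The paper's version is set up to match the Friedman--Laza package it needs anyway for the second isomorphism.

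\textbf{Second isomorphism: a genuine gap.} A natural comparison between $H^2_{Z_p}(\widehat Y_p;\Omega^2)$ and $H^4_{Z_p}(\widehat Y_p;\CC)$ is not automatic on the noncompact $\widehat Y_p$.
\begin{itemize}
\item The holomorphic de Rham complex with supports relates these groups only through a spectral sequence. Getting a natural injection requires a degeneration and comparison theorem for cohomology with supports.
\item Your sentence that the relevant part of $H^3(U_p)$ ``is computed by $H^1(U_p;\Omega^2)$ modulo forms extending'' presupposes this comparison rather than proving it.
\item You flag this as the main obstacle. However, weights plus a dimension count neither construct the map nor show it is injective.
\end{itemize}
The paper fills exactly this step with \cite[Thm.~2.1(vi)]{FL22a}. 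That result gives a natural inclusion $H^3(L_p;\CC)=\operatorname{Gr}^2_FH^3(L_p;\CC)\hookrightarrow K_p^{\mathrm{cr}}$, as one summand of a decomposition of $K_p^{\mathrm{cr}}$, compatible with Gysin maps. The dimension equality then forces the complementary summand to vanish.

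To avoid that citation you would have to construct the map yourself, for instance from fundamental classes of curves in $Z_p$. You would then need to prove three things: injectivity on $K_E^\perp$, that the image lies in $K_p^{\mathrm{cr}}$, and agreement with the topological Gysin map. Your compatibility paragraph is fine in outline but inherits this missing construction. Also, the $\partial\bar\partial$-lemma you invoke there belongs to Theorem~\ref{thm:D}, not to this lemma.

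\textbf{Errors in the dimension count.} The full tangent spaces at degree seven and degree six have dimensions $2$ and $3$, not $1$ and $2$. These are the tangent dimensions of $\CC\{s,w\}/(w^2,sw)$ and $\CC\{s,u,v\}/(su,sv)$ in \eqref{eq:localbases}, equivalently Altmann's $n-3$. The values $1$ and $2$ you give are branch dimensions. With your numbers the match with $\dim H_2(L_p)=9-\deg E$ fails. It is precisely the nonreduced direction $w$ at degree seven, and the second component at degree six, that make the count work.

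\textbf{Wrong Hodge type.} $H^3(L_p)$ is not of type $(2,1)$. It injects into $H^4_{Z_p}(\widehat Y_p)\cong H_2(Z_p)(-3)$ and is pure of type $(2,2)$. This is the fact behind $\operatorname{Gr}^2_FH^3(L_p)=H^3(L_p)$ in the paper's argument.
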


\begin{proof}
The identifications
$T^1_{Y,p}\cong H^1(U_p;\Theta_{U_p})\cong H^1(U_p;\Omega^2_{U_p})$
are Schlessinger's comparison \cite{Schlessinger71} and contraction with the prescribed crepant volume
form.  The comparison applies to the non-lci cones: the hypothesis in
\cite[Lem.~1.17 and Lem.~4.1]{FL22a} is weakened in
\cite[Rem.~4.2]{FL22a} to an isolated singularity of depth at least $3$.

For a node,
$\widehat Y_p=\operatorname{Tot}_{\PP^1}(\cO(-1)\oplus\cO(-1))$.
The projection is affine, and the tangent sequence together with
$H^1(\PP^1,\cO(m))=0$ for $m\ge-1$ gives
$H^1(\widehat Y_p;\Theta_{\widehat Y_p})=0$.  The local cohomology sequence
therefore identifies the one-dimensional space $T^1_{Y,p}$ with
$K_p^{\mathrm{cr}}$.  The residue calculation for the node identifies this
class with the generator of $H^3(L_p)$; equivalently, this is the local
Gysin identification in Friedman's construction \cite{Friedman86}.

Now suppose $\widehat Y_p=\operatorname{Tot}(K_E)$, with projection
$\rho\colon\widehat Y_p\to E$ and zero section again denoted $E$.  Contraction
with the crepant volume form identifies
\[
 \Omega^2_{\widehat Y_p}(\log E)(-E)
 \cong \Theta_{\widehat Y_p}(-\log E),
\]
and the logarithmic tangent sequence is
\[
 0\longrightarrow\cO_{\widehat Y_p}\longrightarrow
 \Theta_{\widehat Y_p}(-\log E)\longrightarrow\rho^*\Theta_E
 \longrightarrow0.
\]
Since $\rho$ is affine and
$\rho_*\cO_{\widehat Y_p}=\bigoplus_{k\ge0}\cO_E(-kK_E)$, Kodaira vanishing
and Bott vanishing give
\begin{align*}
 H^1(\widehat Y_p;\cO_{\widehat Y_p})
   &=\bigoplus_{k\ge0}H^1(E;\cO_E(-kK_E))=0,\\
 H^1(\widehat Y_p;\rho^*\Theta_E)
   &=\bigoplus_{k\ge0}H^1
      \bigl(E;\Omega^1_E\otimes\cO_E(-(k+1)K_E)\bigr)=0.
\end{align*}
For the degree-five surface, the Bott vanishing used here is
Totaro's theorem \cite[Theorem~2.1]{totaro2020}; the twist
$-(k+1)K_E$ is ample for every $k\ge0$.  For the other surfaces
it is the usual toric Bott vanishing.
Thus
$H^1(\widehat Y_p;\Omega^2(\log E)(-E))=0$.  The exact sequence of
\cite[Thm.~2.1(v)]{FL22a}, applied to the good equivariant resolution
$\operatorname{Tot}(K_E)$, whose theorem explicitly allows every
three-dimensional isolated canonical Gorenstein germ, now gives
\eqref{eq:localhodge}.  Moreover \cite[Thm.~2.1(vi)]{FL22a} gives a natural
inclusion
\[
 H^3(L_p;\CC)=\operatorname{Gr}^2_FH^3(L_p;\CC)
 \lhook\joinrel\longrightarrow K_p^{\mathrm{cr}}
\]
and a decomposition of $K_p^{\mathrm{cr}}$ with this as one summand.
If $E$ is toric and its boundary polygon has $n$ vertices, Altmann's calculation gives
$\dim T^1_{Y,p}=n-3$.  The circle-bundle Gysin sequence gives
$\dim H^3(L_p)=9-\deg E=n-3$.  Hence the other summand is zero and the displayed
inclusion is an isomorphism.  At degree five the same dimension
comparison uses $\dim T^1=4$ \cite[\S5.5]{Gross97rev} and
$\dim H^3(L_p)=9-5=4$.  Its construction in local cohomology is
compatible with the Gysin map, which proves the final assertion.
\end{proof}

The degree hypotheses in Lemma~\ref{lem:localhodge} are essential to
this full-tangent comparison.  The cubic-cone calculation in
Section~\ref{sec01:degree-scope} has $\dim T^1=16$ but
$\dim H_2(L;\CC)=6$; local complete-intersection unobstructedness does
not supply the missing isomorphism.

\begin{lemma}[the local branches]\label{lem:branchhodge}
Under the isomorphism
$T^1_{Y,p}\cong H_2(L_p;\CC)$ of Lemma~\ref{lem:localhodge}, the tangent space
to each reduced local component is exactly the corresponding
subspace $\Rsm_p$: the whole line at a node or a
$\PP^1\times\PP^1$ cone, the entire $K_E^\perp$ at a degree-five
cone, the unique root line at a
$\dP_7$ cone, and respectively the $A_1$ line and the $A_2$ plane at a
$\dP_6$ cone.
\end{lemma}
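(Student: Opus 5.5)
The identification is settled case by case. In each case we compare the Zariski tangent space of the reduced local component, computed from the local base rings in \eqref{eq:localbases}, with the subspace $\Rsm_p\otimes\CC$. The isomorphism of Lemma~\ref{lem:localhodge} is natural and compatible with Gysin maps. We may therefore use a second characterization: $\Rsm_p=\ker\{H_2(L_p)\to H_2(\Mil_{p,S_p})\}$ for a smooth Milnor fibre on the chosen branch. This is recalled after Definition~\ref{def:branchprofile} from \cite{PaperIV} and, for the quadric and degree-five cones, from Lemmas~\ref{lem03:quadricmilnor} and~\ref{lem03:degree5milnor}.

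The strategy is to show that the tangent to a reduced component $B$ lies in the kernel of the specialization map. Then we identify the two sides by a dimension count and, where there are two components, by a distinguishing argument.

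\textbf{Cases where the whole link homology is selected.} The node, the $\PP^1\times\PP^1$ cone and the degree-five cone are immediate. Here the base is smooth of dimension $\dim T^1_{Y,p}$, namely $\CC\{t\}$, $\CC\{t\}$ and $\CC\{b_1,\dots,b_4\}$. Its tangent space is all of $T^1_{Y,p}$. By \eqref{eq:linklocalhodge} this equals $H_2(L_p;\CC)$, which is the selected $\Rsm_p$: the exceptional-curve line, $\QQ(f_1-f_2)$, and $K_E^\perp$, respectively.

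\textbf{Degree seven.} The base ring is $\CC\{s,w\}/(w^2,sw)$. Its reduced component is the $s$-line, and its tangent is a line in the two-dimensional $T^1$. The Milnor fibre $\Mil$ of this smoothing has $b_2(\Mil)=1$, from Altmann's Minkowski description or from \cite[Lem.~4.2]{PaperIV}. So the kernel of $H_2(L_p)\to H_2(\Mil)$ has dimension at most one. Vanishing cycles lie in the kernel, so the kernel is one-dimensional. Under the comparison it equals the root line $\QQ(e_1-e_2)$, by \cite[Prop.~2.1]{PaperIV}.

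It remains to see that the tangent to the smoothing line maps to this kernel and not to the transverse direction $w$. For this we use the $T$-invariant (torus-weight) structure. The isomorphism of Lemma~\ref{lem:localhodge} is equivariant for the torus acting on the toric cone. The obstructed direction $w$ and the smoothing direction $s$ lie in distinct weight spaces. We then compare weights with the action on $H_2(E)$.

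\textbf{Degree six.} The two reduced components are the line $u=v=0$ and the plane $s=0$. Their tangents span complementary subspaces of dimensions $1$ and $2$. The orthogonal splitting $K_E^\perp=\QQ\ell\oplus\langle A,B\rangle$ also has dimensions $1$ and $2$. The Milnor fibres have $b_2=2$ on the $A_1$ smoothing and $b_2=1$ on the $A_2$ smoothing. So the kernels have dimensions $1$ and $2$ and must be the respective root subspaces.

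To match tangents with kernels we argue as follows. The tangent $T_0B$ consists of first-order directions of deformations whose nearby fibres are Milnor fibres of that branch. A cycle of $L_p$ that bounds in $\Mil_B$ yields, through the period pairing with $\Omega$, a class in local cohomology. That class is killed exactly along $T_0B$. This is the content of the compatibility statement in Lemma~\ref{lem:localhodge} combined with \cite{PaperIV}.

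The main obstacle is this matching step: proving that the tangent of a branch coincides with the Milnor kernel, rather than merely having the same dimension. For it we would rely on the fact that the local Hodge isomorphism is equivariant under the automorphism group of $E$. At degree six the Weyl group $S_3\times S_2$ acts on $T^1$ and preserves each branch. It acts on $K_E^\perp$ with the root subspaces as its isotypic pieces: the sign representation of $S_2$ on $\ell$ and the standard representation of $S_3$ on $\langle A,B\rangle$. Hence the one-dimensional and two-dimensional invariant subspaces are forced. The same symmetry argument, with the $S_2$ and torus symmetry, fixes the degree-seven line.
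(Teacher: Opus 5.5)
Your treatment of the cases where all of $H_2(L_p)$ is selected agrees with the paper. Your degree-six argument is also correct, and somewhat more economical than the paper's. The finite automorphism group $S_3\times S_2$ of $\dP_6$ must preserve the line and the plane separately, since they have different dimensions. Moreover $K_E^\perp\otimes\CC$ is a sum of two non-isomorphic irreducibles, so $A_1$ and $A_2$ are its only invariant line and plane. This conclusion is unaffected by any character twist in the equivariance of the Hodge comparison. It also spares you the paper's identification, inside Altmann's space $\mathcal M(P_6)$, of the alternating dilation and the three-segment coarsenings with the two branch tangents.

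The gap is at degree seven, where exactly this insensitivity is lost. First, the torus cannot distinguish $s$ from $w$. By Altmann, $T^1$ of an isolated Gorenstein toric threefold germ is concentrated in the single degree $-R^*$, so every line in $T^1$ is torus-invariant. Second, the swap of the two blown-up points acts on $K_E^\perp$ with two distinct eigenlines: $-1$ on the root $e_1-e_2$ and $+1$ on $2h-3e_1-3e_2$. Symmetry alone only tells you that the branch tangent goes to one of them. To decide which, you need both the sign by which the involution acts on the branch tangent and the character by which the Hodge comparison twists equivariance. The paper supplies both. The lattice reflection $(x,y)\mapsto(y,x)$ of the pentagon preserves the segment and the triangle of the decomposition, so it fixes Altmann's parameter. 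Contraction with the volume form twists by its determinant $-1$. Using the fixed parameter without the twist would select $2h-3e_1-3e_2$, the wrong line. Your Milnor count also has its inequalities reversed: $b_2(\Mil)=1$ and $\dim H_2(L_p)=2$ bound the kernel's dimension from below, not from above.

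The period idea you mention could give a genuinely different proof, but it must use three-cycles, since a class in $H_2(L_p)$ has no period against $\Omega$. Take $\beta\in\ker\{H_3(L_p)\to H_3(\Mil)\}$. Its boundary period $P_\beta$ is holomorphic on the reduced branch and vanishes on its dense smooth locus, so $dP_\beta$ kills $T_0B$. Now $dP_\beta$ is the pairing of $\beta$ with the local Hodge class, as in the proof of the degree-five period lemma. Lefschetz duality identifies the annihilator of $\partial H_4(\Mil,L_p)=\ker\{H_3(L_p)\to H_3(\Mil)\}$ in $H_2(L_p)$ with $\ker\{H_2(L_p)\to H_2(\Mil)\}=\Rsm_p$. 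Hence the Hodge image of $T_0B$ lies in $\Rsm_p$, and equality follows by dimension. This would treat degrees six and seven uniformly, at the cost of establishing the period-derivative comparison. As written, however, you leave this matching step explicitly open and fall back on a symmetry argument that does not settle degree seven.
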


\begin{proof}
At $E=\PP^1\times\PP^1$ both spaces are one-dimensional, and
$K_E^\perp=\CC(f_1-f_2)$ gives the assertion.  At degree five
the assertion follows from the full four-dimensional
comparison and the smooth irreducible base of
Proposition~\ref{prop03:degree5family}.  For the degree-six and
degree-seven points, consider a unit-edge polygon
$P=(v_0,\ldots,v_{n-1})$, put $e_i=v_{i+1}-v_i$.  Altmann identifies the
degree of the versal tangent space used here with
\begin{equation}\label{eq:altmanntangent}
 \mathcal M(P)=
 \left\{(t_i)\in\CC^n:\ \sum_i t_i e_i=0\right\}
 \big/\CC(1,\ldots,1),
\end{equation}
and a lattice Minkowski decomposition gives the class of its edge-dilation
vector $(t_i)$.  The Hodge isomorphism of Lemma~\ref{lem:localhodge} is natural
under lattice automorphisms of $P$, with one necessary character: contraction
with the crepant volume form twists equivariance by the determinant of the
induced automorphism of the three-dimensional cone lattice.
This equivariance is computed with the standard toric volume form.
The restriction of $\Omega$ is $g$ times that form, for an analytic
unit $g$.  Altmann's concentration of $T^1$ in the single Gorenstein
degree implies $\mathfrak m_pT^1_{X,p}=0$.  Multiplication by $g$
therefore acts on $T^1_{X,p}$ through the scalar $g(p)$.
The comparison determined by $\Omega$ differs only by this nonzero
scalar, so the invariant subspaces identified below are unchanged.

There is one unit-edge reflexive pentagon up to $\operatorname{GL}_2(\ZZ)$.  Take
\[
 P_7=\bigl((-1,-1),(0,-1),(1,0),(0,1),(-1,0)\bigr).
\]
Its nontrivial decomposition has dilation vector
$t=(0,1,0,1,0)$, up to replacing $t$ by $1-t$, and spans the reduced versal
base.  The reflection $s(x,y)=(y,x)$ fixes $[t]\in\mathcal M(P_7)$ and has
determinant $-1$.  It acts trivially on the tangent line of the reduced
branch, not merely on $[t]$ up to a character: it preserves the segment and the
triangle of the decomposition, hence both Cayley characters, and so fixes the
parameter of Altmann's family.  On
$H_2(L_p)=K_E^\perp$ it acts by $-1$ on the unique root line and by $+1$ on
the complementary line.  The determinant twist therefore forces the Hodge
image of $[t]$ to be the root line.  Independently, the smooth fibre of this
branch is the complement $M$ of a smooth member of $|-\tfrac12K|$ in
$\operatorname{Bl}_{\mathrm{pt}}\PP^3$, and the kernel of
$H_2(L_p;\QQ)\to H_2(M;\QQ)$ is the root line \cite[Prop.~2.1]{PaperIV}.

Likewise there is one unit-edge reflexive hexagon.  Take
\[
 P_6=\bigl((1,0),(1,1),(0,1),(-1,0),(-1,-1),(0,-1)\bigr).
\]
The determinant-one rotation $r(x,y)=(x-y,x)$ cyclically permutes its vertices.
On $\mathcal M(P_6)$ its characteristic polynomial is
$(\lambda+1)(\lambda^2+\lambda+1)$.  The $(-1)$-eigenspace is spanned by the
alternating dilation $(0,1,0,1,0,1)$, the triangle-plus-triangle decomposition
which gives Altmann's one-dimensional smoothing component; the complementary
plane is spanned by the coarsenings of the three-segment decomposition and is
the two-dimensional component.  On
$K_E^\perp\cong A_1\oplus A_2$, the same rotation is $-1$ on $A_1$ and a
Coxeter element, with characteristic polynomial $\lambda^2+\lambda+1$, on
$A_2$.  Equivariance now identifies the line with $A_1$ and the plane with
$A_2$.  The boundary classes and edge-dilation vectors above give finite
intersection and representation calculations; Appendix~\ref{app:cert}
records their exact verification.
\end{proof}

\subsection{The image of the global tangent space}
The use of the full local spaces in Lemma~\ref{lem:localhodge} is
essential: the comparison also accounts for tangent directions that
do not belong to a reduced local component.

\begin{theorem}\label{thm:D}
Under the Hodge identifications determined by $\Omega$, localization
gives exact sequences
\begin{align}
 0&\longrightarrow H^1(X,\Theta_X)\longrightarrow\mathbb T^1_X
                  \longrightarrow K\otimes_\QQ\CC\longrightarrow0,
                  \label{eq:fulltangent}\\
 0&\longrightarrow H^1(X,\Theta_X)\longrightarrow V_S
                  \longrightarrow K_S\otimes_\QQ\CC\longrightarrow0.
                  \label{eq:selectedtangent}
\end{align}
\end{theorem}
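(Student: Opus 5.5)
We prove \eqref{eq:fulltangent} by computing the image of $\alpha$ in the local-to-global sequence, and then deduce \eqref{eq:selectedtangent} by intersecting with the selected local tangent spaces. By Lemma~\ref{lem:omegaL} we may use the low-degree sequence
\[
 0\to H^1(X,\Theta_X)\to\mathbb T^1_X\xrightarrow{\alpha}\bigoplus_pT^1_{X,p}\xrightarrow{\mathrm{ob}}H^2(X,\Theta_X),
\]
so it suffices to show that $\ker(\mathrm{ob})=\operatorname{im}\alpha$ equals $K\otimes\CC$. Here we identify $\bigoplus_pT^1_{X,p}$ with $\bigoplus_pH_2(L_p;\CC)$ by Lemma~\ref{lem:localhodge}. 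The plan is to transfer $\mathrm{ob}$ to the resolution $Y_0$. Contraction with $\Omega$ identifies $\Theta_X$ with $\Omega^{[2]}_X$, and $f_{0*}\Omega^2_{Y_0}$ agrees with $\Omega^{[2]}_X$ off the singular points. With $Z=\bigcup_pZ_p$ and $U=Y_0\smallsetminus Z$, the excision and local cohomology sequence on $Y_0$ gives a commutative diagram. In it, $\mathrm{ob}$ composed with a natural map $H^2(X,\Theta_X)\to H^2(U;\Omega^2)$, or better into $H^2(Y_0;\Omega^2_{Y_0})$, agrees with the map
\[
 \bigoplus_pK_p^{\mathrm{cr}}\subset\bigoplus_pH^2_{Z_p}(\widehat Y_p;\Omega^2)=H^2_Z(Y_0;\Omega^2_{Y_0})\longrightarrow H^2(Y_0;\Omega^2_{Y_0}).
\]

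The key steps, in order, are as follows.

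\textbf{(i) Compatibility square.} We establish the compatibility square above, using $T^1_{X,p}\cong H^1(U_p;\Omega^2)$ and the connecting map of Lemma~\ref{lem:localhodge}.

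\textbf{(ii) Kernels agree.} We show that $\ker(\mathrm{ob})$ equals the kernel of the composite to $H^2(Y_0;\Omega^2_{Y_0})$. One inclusion is formal. For the other, we need the map from the relevant piece of $H^2(X,\Theta_X)$ to $H^2(Y_0,\Omega^2_{Y_0})$ to be injective on the image of $\mathrm{ob}$. We would compare with Friedman's argument: the Leray sequence for $f_0$ and the vanishing $R^1f_{0*}\Omega^2_{Y_0}$ contributions supported at points, together with depth $\ge3$, should do this. Alternatively, we could argue via $H^1(U,\Omega^2)$ and the sequence $H^1(Y_0,\Omega^2)\to H^1(U,\Omega^2)\to H^2_Z\to H^2(Y_0,\Omega^2)$ directly.

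\textbf{(iii) Hodge theory.} We identify the map $H^2_Z(Y_0;\Omega^2)\supset\bigoplus K^{\mathrm{cr}}_p\to H^2(Y_0;\Omega^2)$, via the natural isomorphisms \eqref{eq:linklocalhodge}, with the $(2,2)$-component of the Gysin map $\bigoplus_pH^3(L_p)\to H^4(Y_0;\CC)\cong H_2(Y_0;\CC)$. Since the link classes map to $H^4(Y_0)$, which is of pure type $(2,2)$ by the $\partial\bar\partial$-lemma on the Moishezon manifold $Y_0$, the Hodge projection $H^4(Y_0)\to H^2(Y_0;\Omega^2)$ is injective on $(2,2)$ classes. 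The kernel is therefore exactly $K\otimes\CC$. Because $K$ is defined over $\QQ$, this gives \eqref{eq:fulltangent}.

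\textbf{(iv) Selected space.} For \eqref{eq:selectedtangent}, $V_S$ is the preimage in $\mathbb T^1_X$ of $\bigoplus_pT_0(\text{selected branch})$, since the scheme-theoretic inverse image of a smooth germ has tangent space equal to the preimage of its tangent space. By Lemma~\ref{lem:branchhodge}, this local tangent space is $\bigoplus_p\Rsm_p\otimes\CC$. Intersecting with $K\otimes\CC$ gives $K_S\otimes\CC$, and exactness follows from \eqref{eq:fulltangent}. At degree-seven points the selected tangent is the root line, not the full two-dimensional $T^1$, which is why the full comparison is needed first.

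The main obstacle is step (ii): showing that $\mathrm{ob}$ factors injectively through $H^2(Y_0;\Omega^2_{Y_0})$ on the relevant image, uniformly for the non-lci cones. I would first try the Friedman-style argument via $H^2(X,\Theta_X)\cong H^2(X,\Omega^{[2]}_X)$, $R^if_{0*}$ supported at points, and local duality.
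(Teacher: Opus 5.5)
Your steps (i), (iii) and (iv) are sound. Step (ii), however, is a genuine gap, and it is precisely the nontrivial half of \eqref{eq:fulltangent}.

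The inclusion $\operatorname{im}\alpha\subseteq K\otimes\CC$ is indeed formal. A global first-order deformation restricts to a class in $H^1(U;\Theta_U)\cong H^1(U;\Omega^2_U)$, and the composite $H^1(U;\Omega^2_U)\to\bigoplus_pH^2_{E_p}\to H^2(Y_0;\Omega^2_{Y_0})$ is zero. The reverse inclusion says that every relation in $K\otimes\CC$ is the localization of a global tangent. This is what the later lifting and smoothing arguments use, and your proposal leaves it open.

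Your main route needs $\mathrm{ob}$ to factor through a map $H^2(X,\Theta_X)\to H^2(Y_0;\Omega^2_{Y_0})$ that is injective on $\operatorname{im}(\mathrm{ob})$. That injectivity is not automatic. By Leray for $f_{0*}\Omega^2_{Y_0}$, the kernel of the edge map $H^2(X,f_{0*}\Omega^2_{Y_0})\to H^2(Y_0;\Omega^2_{Y_0})$ is the image of $d_2$ on $H^0(X,R^1f_{0*}\Omega^2_{Y_0})$. You would therefore have to prove three things:
\begin{itemize}
\item $f_{0*}\Omega^2_{Y_0}=\Omega^{[2]}_X$;
\item $R^1f_{0*}\Omega^2_{Y_0}=0$, equivalently $R^1f_{0*}\Theta_{Y_0}=0$, at every node and every cone type, including $\dP_5$ and $\PP^1\times\PP^1$;
\item $\mathrm{ob}$ coincides with a local-cohomology connecting map.
\end{itemize}
None of this is carried out.

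The missing idea is that you do not need $\mathrm{ob}$ at all. The singularities are isolated and of depth three, so Schlessinger's comparison holds globally, not only at each point:
\[
\mathbb T^1_X\cong H^1(U;\Theta_U)\cong H^1(U;\Omega^2_U),\qquad U=X_{\mathrm{reg}}=Y_0\smallsetminus E.
\]
This holds for the non-lci cones as well, by \cite[Rem.~4.2]{FL22a}. Under this identification, localization followed by the injective local connecting maps of Lemma~\ref{lem:localhodge} is exactly the first map of the support sequence
\[
H^1(U;\Omega^2_U)\to\bigoplus_pH^2_{E_p}(\widehat X_p;\Omega^2)\to H^2(Y_0;\Omega^2_{Y_0}).
\]
Exactness of this sequence then gives both inclusions at once: every element of $\bigoplus_pK^{\mathrm{cr}}_p$ with zero image in $H^2(Y_0;\Omega^2_{Y_0})$ already comes from $H^1(U;\Omega^2_U)=\mathbb T^1_X$. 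Your step (iii) converts that kernel into $K\otimes\CC$, and the local-to-global sequence identifies the kernel of localization with $H^1(X,\Theta_X)$.

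A small correction in (iii): the $\partial\bar\partial$-lemma gives the injection $H^2(Y_0;\Omega^2_{Y_0})\hookrightarrow H^4(Y_0;\CC)$, which is what the argument needs. It does not by itself make $H^4$ pure of type $(2,2)$; that purity comes from $h^{3,1}=h^{1,3}=0$ here.

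Your ``alternatively'' remark in (ii) points in the right direction. Without the global identification of $\mathbb T^1_X$ with $H^1(U;\Omega^2_U)$, though, that sequence yields only the formal inclusion.
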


\begin{proof}
Let $E=\operatorname{Exc}(f_0)$ and $U=Y_0\setminus E=X_{\mathrm{reg}}$.
Excision and the local cohomology sequence give
\begin{equation}\label{eq:globalsupport}
 H^1(U;\Omega^2_U)\longrightarrow
 \bigoplus_pH^2_{E_p}(\widehat X_p;\Omega^2_{\widehat X_p})
 \longrightarrow H^2(Y_0;\Omega^2_{Y_0}).
\end{equation}
Here $\widehat X_p$ denotes the restriction of $Y_0$ over a sufficiently
small neighborhood of $p$.  The algebraic cone calculations in
Lemma~\ref{lem:localhodge} compute these analytic local groups as well:
the vanishing statements are equivalent to the corresponding
vanishing of completed higher direct images, by formal functions.
The analytic local-support comparison itself is that of
\cite[Thm.~2.1]{FL22a}.

Under Lemma~\ref{lem:localhodge}, the map from
$\bigoplus_pK_p^{\mathrm{cr}}$ to $H^2(Y_0;\Omega^2_{Y_0})$, followed by
the de Rham map, is the topological link map followed by Poincar\'e
duality.  In particular the square
\[
\begin{array}{ccc}
 \displaystyle\bigoplus_pH_2(L_p;\CC)&\longrightarrow&H_2(Y_0;\CC)\\
 \downarrow\cong&&\downarrow\operatorname{PD}\\
 \displaystyle\bigoplus_pK_p^{\mathrm{cr}}&\longrightarrow&H^4(Y_0;\CC)
\end{array}
\]
commutes.  The $\partial\bar\partial$-lemma makes
$H^2(Y_0;\Omega^2_{Y_0})\to H^4(Y_0;\CC)$ injective.  Hence an element
of the sum of local tangent spaces has zero image in
$H^2(Y_0;\Omega^2_{Y_0})$ if and only if its link class lies in
$K\otimes_\QQ\CC$.

Contraction with $\Omega$ and Schlessinger's depth-three comparison
identify
\[
 H^1(U;\Omega^2_U)\cong H^1(U;\Theta_U)\cong\mathbb T^1_X;
\]
see \cite[Rem.~4.2]{FL22a}.  Exactness of \eqref{eq:globalsupport}
therefore gives every element of $K\otimes\CC$ as the localization
of a global tangent.  Conversely, the localization of any global
tangent has zero supported Gysin image, hence lies in that kernel.
The kernel of localization is $H^1(X,\Theta_X)$ by the local-to-global
sequence.  This proves \eqref{eq:fulltangent}.  Taking the inverse
image of the chosen local tangent subspaces and applying
Lemma~\ref{lem:branchhodge} proves \eqref{eq:selectedtangent}.
\end{proof}

At an $A_2$ point, \eqref{eq:selectedtangent} can supply a nonzero
local vector on a discriminant line.  It asserts the existence of a
global first-order deformation with those local components; it makes
no smoothing assertion for such a vector.  The next two sections
determine which directions integrate and which profiles contain
smooth fibres.

\section{Local partial resolutions and simultaneous families}\label{sec03:partial}

We construct the local modifications used to compare deformation spaces.
Write $C_d$ for the anticanonical cone over $\dP_d$, and write $Q$ for the
anticanonical cone over $\PP^1\times\PP^1$.  The latter is different from
the ordinary double point: its polarization is $\cO(2,2)$, rather than
$\cO(1,1)$.  All morphisms below are proper over a sufficiently small
representative of the cone germ.  The constructions are algebraic, so
they also define modifications of analytic germs.

Fix a nowhere vanishing volume form on the regular locus of the global
threefold, and use its pullback on every crepant modification.  All
identifications of tangent classes with homology classes in this section
use this single form and Lemma~\ref{lem:localhodge}.  In particular,
coordinates introduced in polynomial models are initially algebraic
coordinates.  Their conversion into Hodge-normalized coordinates will
be stated separately.

\subsection{The degree-five discriminant and boundary periods}\label{sec03:degree5}

The degree-five cone has a smooth local deformation base, so it will
remain unchanged on the global partial resolution.  Its discriminant
requires five separate nonvanishing conditions.

\begin{proposition}\label{prop03:degree5family}
The full semiuniversal deformation base of $C_5$ is a smooth
four-dimensional germ.  It admits linear coordinates in which the
reduced discriminant is the union of five distinct hyperplanes.
Every noncentral singular fibre has only ordinary double points,
one for each discriminant hyperplane containing its parameter.
\end{proposition}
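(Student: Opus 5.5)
The plan is to realize $C_5$ through the Grassmannian. The cone over $\dP_5$ in its anticanonical embedding is the section of the affine cone $\widehat G\subset\wedge^2\CC^5$ over $G(2,5)$ (Plücker coordinates, dimension $7$) by a general codimension-four linear subspace $W\subset\wedge^2\CC^5$ through the origin. Translating $W$ to $W+b$, with $b$ ranging over $\wedge^2\CC^5/W$, gives a flat four-parameter family $\mathcal C\to\AAA^4$ whose central fibre is $C_5$. Flatness holds because $\widehat G$ is Cohen--Macaulay, so the section has the expected dimension, and the degree of $\widehat G$ is constant.

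First I show that this family is semiuniversal. Gross records $\dim T^1=4$ \cite[\S5.5]{Gross97rev}, and Lemma~\ref{lem:localhodge} reproduces it. So it suffices to show that the Kodaira--Spencer map $T_0\AAA^4\to T^1_{C_5}$ is injective. To do this I use the $\CC^*$-grading: $T^1$ is concentrated in degree $-1$, and the four translation directions have degree $-1$. I would check injectivity by showing that no translation is induced by a vector field on $\wedge^2\CC^5$ tangent to $\widehat G$. The vector fields tangent to $\widehat G$ are $\mathfrak{gl}_5$ together with the Euler field, and these have degree $0$, so they cannot produce constant translations modulo $W$. A smooth versal family whose tangent map is an isomorphism onto $T^1$ is semiuniversal, so the base is smooth of dimension four. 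Alternatively, unobstructedness follows once a smooth base maps isomorphically on tangent spaces.

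Next comes the discriminant. The fibre over $b$ is $\widehat G\cap(W+b)$, which for $b\ne0$ is an affine piece of a $\dP_5$ with its hyperplane section at infinity removed. Projectivize by setting $\overline W=\langle W,b\rangle$, a codimension-three subspace, so that the fibre becomes $G(2,5)\cap\PP(\overline W)$ minus the hyperplane $\PP(W)$. The hyperplane section $G\cap\PP(W)\cap\PP(\overline W)$ at infinity is smooth for general $W$, since it is the fixed $\dP_5$. Therefore the singularities of the affine fibre are the singular points of the threefold-dimensional section $G\cap\PP(\overline W)$ of $G(2,5)$ by a $\PP^6$, away from infinity.

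The $\PP^6$'s in $\PP^9$ containing the fixed $\PP^5=\PP(W)$ form a $\PP^3$. This projective $\PP^3$ is exactly the space of lines through the origin in $\AAA^4$, so the discriminant is a cone, and I must show it is a union of five hyperplanes, that is, five points' worth of $\PP^2$'s in that $\PP^3$. A codimension-three linear section of $G(2,5)$ is singular exactly when $\PP(\overline W)$ is tangent to $G$. By projective duality for $G(2,5)$, whose dual is $G(2,5)$ itself, this happens when the annihilator $\overline W^\perp$, a $\PP^2$ of skew forms, contains a rank-two form $\omega$, and then the singular points are the kernels of $\omega$. The annihilator $W^\perp$ is a general $\PP^3\subset\PP(\wedge^2\CC^{5*})$, and this meets the dual Grassmannian, of codimension three and degree five, in exactly five points $\omega_1,\dots,\omega_5$. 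A plane $\overline W^\perp\subset W^\perp$ contains $\omega_i$ if and only if $b$ lies in the hyperplane $\{\omega_i(b)=0\}$ of the base. So the reduced discriminant is $\bigcup_i\{\omega_i=0\}$, and the five hyperplanes are distinct because the five points are in general position.

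For a parameter $b$, each $\omega_i$ vanishing on $b$ contributes a singular point at $\ker\omega_i$. I would verify that these are ordinary double points by the standard fact that the tangent hyperplane section of $G(2,5)$ has a nondegenerate quadratic singularity at a general tangency, and that generic $W$ keeps the points distinct and off infinity. The Hessian nondegeneracy computation here is where I expect the main obstacle. It has to be checked for every $b$, not only general $b$, including intersections of several hyperplanes. I would handle it through the $\mathrm{PGL}$-homogeneity of the five points: they form a projective frame in $\PP^3$, so one explicit normal form checks all strata at once, with the remaining verification deferred to Appendix~\ref{app:cert}.

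Finally, a fibre singular at infinity would contradict the smoothness of the $\dP_5$ at infinity. So singular fibres have exactly one node per hyperplane containing $b$.
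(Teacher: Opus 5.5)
Your route is the paper's: the translation family $L\colon D\to B=\wedge^2\CC^5/W$ of a general six-dimensional linear section of the Grassmannian cone $D$, with the discriminant read off from the five rank-two forms $\alpha_i$ (your $\omega_i$) in $\PP(W^\perp)$. The Kodaira--Spencer step, however, does not prove what is needed. A direction $[c]\in B$ has zero class when the perturbation $p\mapsto p\wedge c$ of the Pfaffians on $W$ is induced by a vector field on the ambient space $W$ of $C_5$, not on $\wedge^2\CC^5$. In degree $-1$ such a field is a constant $\partial_w$ with $w\in W$. Since the ideal has no linear elements, the condition is $p\wedge(c-w)=0$ for every $p\in W$. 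Equivalently, $\partial_{c-w}$ need only be tangent to $D$ along $C_5$, to first order. That vector fields tangent to all of $D$ have degree $\ge0$ is therefore beside the point. What you must show is that no nonzero $a\in\wedge^2\CC^5$ satisfies $W\subset\ker(a\wedge-)$. The paper argues by rank. For rank four this kernel is five-dimensional. For rank two with support $P$ it is the seven-dimensional $P\wedge\CC^5$, and a hyperplane $\PP(W)$ of $\PP(P\wedge\CC^5)$ would meet the four-dimensional Schubert variety of planes meeting $P$ in dimension at least three. This contradicts $\dim E=2$ for $E=G(2,5)\cap\PP(W)$. Relatedly, you cannot call the family versal at that stage. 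The valid conclusion is your alternative: a tangent isomorphism from the smooth base $B$ to a minimally embedded semiuniversal base forces its ideal to vanish.

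The node assertion is left open, and the proposed reduction would not close it. Normalizing the five points of $\PP(W^\perp)$ to a projective frame is a coordinate change on the base, not an automorphism of $D$ over $B$, so it identifies no two fibres. What makes a single computation suffice is the local structure along the critical locus. Off the vertex, a form in $W^\perp$ annihilating $T_pD=P_p\wedge\CC^5$ has $P_p$ in its kernel and so has rank two. Hence the critical locus of $L$ is $\bigcup_i(\wedge^2K_i\setminus0)$ with $K_i=\ker\alpha_i$. Moreover $L\colon\wedge^2K_i\to H_i=\{\alpha_i=0\}$ is an isomorphism, since a nonzero vector of $W\cap\wedge^2K_i$ would be a nonvertex singular point of $C_5$. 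Thus each $b\in H_i\setminus0$ has exactly one critical point $p_i(b)$ on $\wedge^2K_i$, and $\ker dL_{p_i(b)}$ is a four-dimensional complement to $\wedge^2K_i$ in $T_{p_i(b)}D$. The node condition is then nondegeneracy of the transverse Hessian of $\alpha_i|_D$ along $\wedge^2K_i$, which involves neither $W$ nor $b$; in a Grassmannian chart $\alpha_i=r(x_4y_5-x_5y_4)$. Distinctness of $p_i(b)$ and $p_j(b)$ also follows intrinsically. A common critical plane lies in $K_i\cap K_j$, so every form in the pencil $\langle\alpha_i,\alpha_j\rangle$ would have rank two, contradicting finiteness of $\PP(W^\perp)\cap\{\operatorname{rank}\le2\}$. (Minor: the affine fibre over $b\ne0$ is a threefold linear section of $G(2,5)$ minus the $\dP_5$ at infinity, not an affine piece of a $\dP_5$.)
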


\begin{proof}
Let $V=\CC^5$, $A=\bigwedge^2V$, and
\[
 D=\{p\in A:p\wedge p=0\}=\Cone(\operatorname{Gr}(2,V)).
\]
Choose a general six-dimensional subspace $W\subset A$.
The surface $E=\operatorname{Gr}(2,V)\cap\PP(W)$ is smooth by
Bertini.  Adjunction gives $K_E=-H$, and its degree is five.
All smooth degree-five del Pezzo surfaces over $\CC$ are isomorphic,
so its anticanonical cone is the required germ.  Put
$U=W^\perp\subset A^*$ and $B=A/W=U^*$.  The quotient map
\begin{equation}\label{eq03:degree5family}
 L:D\longrightarrow B
\end{equation}
varies the four cutting linear forms by constants and has central
fibre $C_5$.

The Grassmannian cone is Cohen--Macaulay of dimension seven.
For example, its five quadratic Pfaffians have the graded resolution
\[
 0\longrightarrow S(-5)\longrightarrow S(-3)^5
 \longrightarrow S(-2)^5\longrightarrow S
 \longrightarrow\cO_D\longrightarrow0,
 \qquad S=\operatorname{Sym}(A^*).
\]
The four components of $L$ cut a three-dimensional central fibre
and hence form a regular sequence at the vertex.  Miracle flatness
over the smooth four-dimensional base proves flatness there.
Homogeneity then proves flatness everywhere: scaling transports
each point into this neighborhood and acts compatibly on $B$.

We check the full Kodaira--Spencer map.  A constant vector $c\in A$
gives the perturbation $p\wedge c$ of the Pfaffian equations on $W$,
up to the common factor two.  If this abstract first-order class is
zero, the cotangent presentation of the embedded germ shows that
its homogeneous degree-$-1$ part comes from a constant translation
$w\in W$.  The quadratic ideal has no linear elements, so
\[
 p\wedge(c-w)=0\quad\hbox{for every }p\in W.
\]
Write $a=c-w$.  If $a$ has rank four, the map
$a\wedge-:\bigwedge^2V\to\bigwedge^4V$ has rank five; its
kernel cannot contain the six-dimensional space $W$.
If $a$ has rank two with support $P$, its kernel is
$P\wedge V$, of dimension seven.  The decomposable bivectors in
$\PP(P\wedge V)$ form the four-dimensional Schubert variety of
two-planes meeting $P$: choose a line in $P$ and then a second
direction modulo that line.  A hyperplane $\PP(W)$ in this
$\PP^6$ would meet the Schubert variety in dimension at least
three, contradicting $\dim E=2$.  Thus $a=0$.
The Kodaira--Spencer map $B\to T^1_{C_5}$ is injective and is
an isomorphism because $\dim T^1_{C_5}=4$
\cite[\S5.5]{Gross97rev}.  This also shows that all of $T^1_{C_5}$
has degree $-1$.

Embed a semiuniversal base minimally with ring
$\CC\{z_1,\ldots,z_4\}/I$, $I\subset(z_1,\ldots,z_4)^2$.
The classifying map of~\eqref{eq03:degree5family} pulls the $z_i$
back to four functions with independent linear terms.  They are
analytic coordinates on $B$.  Hence the map from the ambient
convergent power-series ring is injective, forcing $I=0$.
The family is therefore semiuniversal for the full germ, including
its scheme structure; local prorepresentability is not required.

For a nonzero decomposable $p$ with support $P_p$, one has
$T_pD=P_p\wedge V$.  An alternating form $\alpha\in A^*$
annihilates this tangent space precisely when
$P_p\subset\ker\alpha$.  Such a nonzero form has rank two.
Thus the dual Grassmannian is the rank-two locus in $\PP(A^*)$.
It meets $\PP(U)$ transversely in five points
$[\alpha_1],\ldots,[\alpha_5]$.  Indeed, for a rank-two
$\alpha\in U$, failure of transversality is equivalent to
\[
 W\cap\bigwedge^2\ker\alpha\ne0.
\]
Every nonzero bivector in this intersection is decomposable and
would be a nonvertex singular point of $C_5$, contrary to the
smoothness of $E$.  The same argument excludes a positive-dimensional
intersection.  Its length is the degree five of the Grassmannian.

Set $K_i=\ker\alpha_i$ and $H_i=\ker(\alpha_i:B\to\CC)$.
The critical locus of $L$ away from the vertex is the union of
the punctured three-dimensional vector spaces $\bigwedge^2K_i$.
The maps
\[
 L:\bigwedge^2K_i\longrightarrow H_i
\]
are isomorphisms: their kernels are the intersections just excluded,
and both dimensions are three.  Thus a point $b\in H_i\setminus0$
has one associated critical point $p_i(b)$, linear in $b$.
Two such points cannot coincide for $b\ne0$.  Otherwise every
form in the pencil spanned by $\alpha_i,\alpha_j$ would vanish
on the same two-plane and hence have rank at most two, giving a
line in the finite dual intersection.  The complete reduced
discriminant is consequently $\bigcup_iH_i$.

Finally, near a nonzero $p_i(b)$ choose a Grassmannian chart in
which $\alpha_i=e_4^*\wedge e_5^*$ and the varying plane is
spanned by
\[
 e_1+x_3e_3+x_4e_4+x_5e_5,\qquad
 e_2+y_3e_3+y_4e_4+y_5e_5.
\]
With the nonzero cone coordinate $r$, the function $\alpha_i$
is $r(x_4y_5-x_5y_4)$.  Three other components of $L$ restrict
to coordinates on its critical locus, by the preceding isomorphism.
Eliminate them.  The remaining equation has nondegenerate quadratic
part in $x_4,x_5,y_4,y_5$, so the holomorphic Morse lemma gives
an ordinary double point with transverse smoothing parameter
$\alpha_i(b)$, up to a unit.
\end{proof}

\begin{lemma}\label{lem03:degree5marking}
Under the local Hodge comparison and the conic-pencil coordinates
\eqref{eq:degree5coordinates}, the discriminant of
Proposition~\ref{prop03:degree5family} is
\begin{equation}\label{eq03:degree5discriminant}
 \prod_{i=1}^5\lambda_i=0,
 \qquad \sum_{i=1}^5\lambda_i=0.
\end{equation}
\end{lemma}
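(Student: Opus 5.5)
The plan is to identify each discriminant hyperplane $H_i$ of Proposition~\ref{prop03:degree5family} with the kernel of one conic-pencil functional $\lambda_j$ under the isomorphism $B\cong T^1_{C_5}\cong K_E^\perp\otimes\CC$ from Lemma~\ref{lem:localhodge}. The Kodaira--Spencer map $B\to T^1$ is linear, and all of $T^1$ has degree $-1$. So the comparison $B\to K_E^\perp\otimes\CC$ is a linear isomorphism, and it sends the five hyperplanes $H_i$ to five hyperplanes in $K_E^\perp\otimes\CC$. It therefore suffices to show that each image hyperplane is $\{\lambda_j=0\}$ for some $j$, and that distinct $i$ give distinct $j$.

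To do this I would use the vanishing-cycle description of the nodal fibres. Fix a general point $b\in H_i$. By Proposition~\ref{prop03:degree5family} the fibre over $b$ has exactly one ordinary double point, and $\alpha_i(b)$ is a transverse smoothing parameter for it. Consider a small disc transverse to $H_i$ through $b$. Nearby smooth fibres contain a vanishing three-sphere $S_i$, and the Milnor fibre $\Mil$ of $C_5$ contains $S_i$. The link class that is killed along $H_i$ is the class whose period against the relative volume form degenerates. Equivalently, a tangent $\gamma\in K_E^\perp$ lies in $H_i$ if and only if its pairing with the class $[S_i]\in H_3(\Mil)$ vanishes, where $[S_i]$ is read through the boundary map $H_3(\Mil)\to H_3(\Mil,L)$ and duality with $H_2(L)=K_E^\perp$. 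The classes $[S_i]$ then give five covectors on $K_E^\perp$. Their kernels are the $H_i$, and they satisfy one linear relation, because there are five of them in a four-dimensional dual space.

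The next step is to identify these covectors with pairings against conic-pencil classes $f_j$. First I would use symmetry. The automorphism group of $\dP_5$ is $S_5$, acting on $K_E^\perp$ as the Weyl group of $A_4$. It permutes the five conic pencils and acts on the family \eqref{eq03:degree5family} through its action on $V=\CC^5$ normalizing $W$, so it permutes the five rank-two forms $\alpha_i$. The comparison of Lemma~\ref{lem:localhodge} is equivariant up to the determinant character, which does not move hyperplanes. So the set of five discriminant hyperplanes is an $S_5$-orbit of hyperplanes in the reflection representation of $A_4$, and the orbit has exactly five elements. The stabilizer of each hyperplane then has index five, so it is an $S_4$. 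In the standard representation $\{\sum\lambda_i=0\}$, the only hyperplanes fixed by a point-stabilizer $S_4$ are the coordinate hyperplanes $\lambda_j=0$. Hence $\{H_i\}=\{\lambda_j=0\}$.

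The main obstacle is making the equivariance step rigorous. One has to show that $\operatorname{Aut}(E)=S_5$ genuinely lifts to the Grassmannian presentation compatibly with $W$, and that the induced action on $B$ matches the action on $T^1$. An alternative would be a direct geometric check. The critical plane $\bigwedge^2K_i$ corresponds to the lines of $E$ meeting a fixed plane, that is, to a conic pencil on $E$ induced by the pencil of forms on the $4$-space $K_i$. This identifies $\alpha_i$ with a specific $f_j$, because the vanishing sphere is the $S^1$-bundle over a conic degenerating to a pair of lines. I would attempt this direct check first and use the symmetry argument as a consistency verification.
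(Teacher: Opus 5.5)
Your symmetry argument is the route the paper's proof takes: $S_5=\operatorname{Aut}(E)$ acts on $T^1_{C_5}\cong K_E^\perp\otimes\CC$ as the sum-zero representation, a hyperplane stabilizer has order $24$, and its unique invariant normal line is $\CC\lambda_j$. As written, however, the proposal asserts rather than proves the three steps on which the paper's proof spends its effort.

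(i) The equivariance you call the main obstacle does not need a lift of $\operatorname{Aut}(E)$ to $\operatorname{GL}(V)$ normalizing $W$. Use versality instead. Composing the marking of the semiuniversal family with an automorphism of the central germ gives a deformation pulled back by some classifying map $\phi$, whose differential $d\phi_0$ is the natural action on $T^1_{C_5}$. The map $\phi$ preserves isomorphism classes of fibres, hence the discriminant. The discriminant is a union of linear hyperplanes, so it is its own tangent cone, and therefore $d\phi_0$ permutes the $H_i$. The Hodge comparison is equivariant because the canonical lifts to $\operatorname{Tot}(K_E)$ preserve the tautological two-form and its differential. The fixed global volume form changes the comparison only by the scalar $g(0)$, since $\mathfrak m T^1_{C_5}=0$.

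(ii) You only know that the five hyperplanes form an invariant set, not that they form one orbit. The paper proves that the permutation action is faithful. A nontrivial kernel is normal, so it contains $A_5$, which would then preserve each normal line. Since $A_5$ is perfect, it would fix a nonzero vector of the standard representation, and there is none. Hence the image is the full symmetric group on the $H_i$, and the action is transitive.

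(iii) The step ``index five, so a point stabilizer'' needs an argument in the action on the five pencils. An order-$24$ subgroup must fix a pencil. A fixed-point-free action would have orbits $2+3$, giving order at most $12$, or a single orbit of size $5$, which is impossible since $5\nmid 24$. Only then does the unique invariant normal line give $H=\{\lambda_j=0\}$.

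The direct check you would try first is not correct as stated. A rank-two form on $\CC^5$ has a three-dimensional kernel, which is why $\bigwedge^2K_i$ is the three-dimensional critical space. Because $\alpha_i|_W=0$, every line of $E$ meets the plane $\PP(K_i)$ in exactly one point. The resulting map $E\to\PP(K_i)$ is a birational contraction whose hyperplane class pulls back to $-K_E-f_j$; it is not a conic pencil. Your vanishing-cycle paragraph only rewrites the $H_i$ as kernels of five covectors. Identifying those covectors with the $\lambda_j$ would need an independent computation of the boundary class of each vanishing sphere, and the proposal does not supply one. In the paper the logic runs the other way: Lemma~\ref{lem03:degree5periods} identifies the kernels of the period differentials with $\lambda_i=0$ by using the present lemma.
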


\begin{proof}
Permuting the four blown-up points gives the $S_4$ action on the
first four conic pencils.  The quadratic Cremona involution at
the first three points lifts to an automorphism of $E$ and
interchanges the fourth and fifth pencils, fixing the first three.
These automorphisms generate $S_5$.  On
$\operatorname{Tot}(K_E)$ their canonical lifts preserve the
tautological two-form $\eta$ and the volume form $d\eta$.
The local Hodge comparison is therefore equivariant and realizes
$T^1_{C_5}$ as the standard sum-zero representation on the five
coordinates $\lambda_i$.

The tangent cone of the discriminant is intrinsic to the
deformation functor.  An automorphism of the central germ induces
a classifying map on a semiuniversal base; its invertible derivative
is the natural action on $T^1$.  It preserves the discriminant
because it preserves the isomorphism classes of the fibres.
Hence $S_5$ permutes the five hyperplanes in
Proposition~\ref{prop03:degree5family}.  This permutation action
is faithful.  A nontrivial kernel would contain $A_5$ and make
each normal line invariant under $A_5$.  As $A_5$ has no
nontrivial characters and no invariants in the standard
four-dimensional representation, this is impossible.

The image of this faithful action has order $120$, so it is the
full symmetric group on the five hyperplanes.  The stabilizer
of one hyperplane has order $24$.  In its action
on the five conic pencils it must fix a pencil: without a fixed
point its orbits would have sizes $2+3$, giving order at most
$2!3!=12$, while a size-five orbit cannot divide $24$.
It is thus the $S_4$ stabilizing one pencil.  The restriction
of the standard representation to this subgroup is the sum
of a trivial line and its three-dimensional standard representation.
The unique invariant normal line is generated by the corresponding
$\lambda_i$.  This identifies all five hyperplanes.

For a global volume form $g\,d\eta$, multiplication by the
analytic unit $g$ acts on $T^1_{C_5}$ by $g(0)$: its degree-$-1$
concentration gives $\mathfrak m_0T^1_{C_5}=0$.
Thus the fixed global normalization changes the comparison by
one nonzero scalar and preserves the five hyperplanes.
\end{proof}

\begin{lemma}\label{lem03:degree5milnor}
For a smooth Milnor fibre $M$ of $C_5$ with boundary link $L$,
\[
 H_2(M;\QQ)=0,\qquad \dim H_3(M;\QQ)=4.
\]
Both maps
\begin{equation}\label{eq03:degree5boundary}
 H_3(L;\QQ)\longrightarrow H_3(M;\QQ),\qquad
 H_3(M,L;\QQ)\longrightarrow H_2(L;\QQ)
\end{equation}
are isomorphisms.
\end{lemma}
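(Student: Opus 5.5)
We compute the topology of a smooth Milnor fibre $M$ using the Grassmannian model of Proposition~\ref{prop03:degree5family}. By that proposition the family $L\colon D\to B$ is semiuniversal over a smooth base, so $M$ is diffeomorphic to a general fibre $L^{-1}(b)\cap B_\varepsilon$ for small $b\notin\bigcup_iH_i$. By homogeneity it is equivalent to take the global affine fibre
\[
 M\simeq D\cap(W+c),\qquad c\notin W\cup\bigcup_i\alpha_i^{-1}(0),
\]
where $D\cap(W+c)$ is a smooth affine threefold. This affine variety is the complement, in the smooth projective threefold $\overline M=\operatorname{Gr}(2,5)\cap\PP(W\oplus\CC c)$, of its hyperplane section $E$ at infinity. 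Here $\overline M$ is a smooth linear section of $\operatorname{Gr}(2,5)$ of codimension three, namely the quintic del Pezzo threefold $V_5$. Its Betti numbers are known: $b_0=b_2=b_4=b_6=1$ and $b_1=b_3=b_5=0$. It is smooth by Bertini for general $c$, and the conic structure of the degenerations identifies any smooth member.

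Next we compute $H_*(M)$ through the long exact sequence of the pair $(\overline M,\overline M\setminus E)$, using Thom isomorphism on a tubular neighbourhood of $E$:
\[
 \cdots\to H^{k-2}(E)\xrightarrow{\ \cup[E]\ } H^k(\overline M)\to H^k(M)\to H^{k-1}(E)\to\cdots.
\]
The surface $E$ has Betti numbers $1,0,5,0,1$, and $\overline M$ has Betti numbers $1,0,1,0,1,0,1$.

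In degree $2$, the Gysin map $H^0(E)\to H^2(\overline M)$ is an isomorphism, since the hyperplane class generates. The restriction $H^1(E)=0$ also vanishes, so $H^2(M)=0$.

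In degree $3$, the sequence reads $0=H^3(\overline M)\to H^3(M)\to H^2(E)\to H^4(\overline M)$. The last map sends a class to its pushforward and is surjective, since $E\cdot H$ is the line-class generator up to sign; the dual of restriction $H^2(\overline M)\to H^2(E)$ is injective. Hence $\dim H^3(M)=5-1=4$. The kernel is exactly $K_E^\perp$, once we identify pushforward with pairing against $H|_E=-K_E$.

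Universal coefficients then give $H_2(M;\QQ)=0$ and $\dim H_3(M;\QQ)=4$. As a check, Lefschetz/Andreotti--Frankel makes $M$ homotopic to a $3$-complex, and $\chi(M)=\chi(\overline M)-\chi(E)=4-7=-3=1-b_3$, which is consistent.

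For the boundary maps we use the sequence of the pair $(M,L)$:
\[
 H_3(L)\to H_3(M)\to H_3(M,L)\to H_2(L)\to H_2(M)=0.
\]
Lefschetz duality gives $H_3(M,L)\cong H^3(M)$, which has dimension $4$. The Gysin computation in the proof of Lemma~\ref{lem:localhodge} gives $\dim H_2(L)=\dim H_3(L)=4$. Surjectivity onto $H_2(L)$ plus equal dimensions makes the second map an isomorphism. Exactness then forces $H_3(M)\to H_3(M,L)$ to be zero, so $H_3(L)\to H_3(M)$ is surjective, hence an isomorphism by dimension count.

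The main obstacle is the first step: justifying that the local Milnor fibre, a ball intersected with a nearby fibre, is diffeomorphic to the global affine fibre $D\cap(W+c)$. I would use the $\CC^*$-homogeneity of $L$ with weight $1$ on $B$, together with transversality of the fibres to large spheres; the latter follows from smoothness of $E$, so no critical points escape to infinity. The standard conical-structure argument for quasi-homogeneous smoothings then applies.
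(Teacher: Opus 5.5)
Your proposal is correct and follows the paper's proof essentially step for step: you identify the small Milnor fibre with the affine fibre $V_b\setminus E$ through the collar-at-infinity and scaling argument, you use the Gysin sequence for $E\subset V_b$ to get $H^2=0$ and $H^3\cong\ker\{H^2(E;\QQ)\to H^4(V_b;\QQ)\}$ of dimension four, and you conclude with Lefschetz duality and the pair sequence for the two boundary maps. The differences are minor. The paper derives $b_3(V_b)=0$ from a Chern-class computation instead of quoting it for $V_5$. It also obtains smoothness of $V_b$ for every parameter off the discriminant directly from Proposition~\ref{prop03:degree5family} and the smoothness of $E$, which is cleaner than your Bertini-for-general-$c$ step. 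Finally, $H\cdot E=H^2$ is five times the line class rather than the generator, which is harmless over $\QQ$.
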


\begin{proof}
For $b$ outside the discriminant the affine fibre compactifies as
\[
 V_b=\operatorname{Gr}(2,V)\cap\PP(L^{-1}(\CC b)),
 \qquad X_b=V_b\setminus E.
\]
This is a smooth three-hyperplane section of the Grassmannian.
It is smooth at $E$ because the four-hyperplane section $E$ is
smooth, and away from $E$ by Proposition~\ref{prop03:degree5family}.
Near the compact smooth divisor $E$, the squared affine norm is
$F/|s|^2$, where $s$ is a local equation for $E$ and $F$ is
smooth and positive.  Its normal derivative does not vanish
sufficiently close to $E$.  Thus every sufficiently large
Euclidean truncation of $X_b$ removes only a collar at infinity
and has its homotopy type.  Scaling by $\varepsilon$ identifies
$X_b\cap\{\|p\|\le R\}$ with
$X_{\varepsilon b}\cap\{\|p\|\le\varepsilon R\}$.
Fix the latter radius and take $R$ large.  This gives a small
Milnor fibre; transport in the smooth local family gives the
assertion for every nearby smooth parameter.

Lefschetz gives $b_1(V_b)=0$ and $b_2(V_b)=1$.
For completeness the middle Betti number follows from the Chern
calculation.  Write $H=c_1(\mathcal S^*)$ and
$P=c_2(\mathcal S^*)$ for the tautological bundle on the
Grassmannian.  Its tangent bundle is $\mathcal S^*\otimes\mathcal Q$,
so
\[
 c_1=5H,\qquad c_2=11H^2+P,\qquad c_3=15H^3.
\]
Dividing by $(1+H)^3$ gives $c_3(TV_b)=2H^3-3HP$.
Pieri's rule gives
$\int_{V_b}H^3=5$ and $\int_{V_b}HP=2$; hence
$\chi(V_b)=4$ and $b_3(V_b)=0$.
The Gysin sequence for $E\subset V_b$ gives
$H^2(X_b;\QQ)=0$ and
\[
 H^3(X_b;\QQ)\cong
 \ker\{H^2(E;\QQ)\longrightarrow H^4(V_b;\QQ)\},
\]
of dimension four.  The link circle-bundle sequence likewise
gives $b_2(L)=b_3(L)=4$.  Lefschetz duality and the pair sequence
make the second map in~\eqref{eq03:degree5boundary} a surjection
between four-dimensional spaces.  It is an isomorphism.  Exactness
then makes the first map surjective and hence an isomorphism as well.
\end{proof}

\begin{lemma}[degree-five boundary periods]\label{lem03:degree5periods}
There are rational classes $\beta_1,\ldots,\beta_5\in H_3(L;\QQ)$
whose pairings with the local Hodge classes are nonzero multiples
of the five functionals $\lambda_i$.
For any convergent smoothing arc of $C_5$, equipped with a relative
volume form generating its dualizing sheaf, the periods of the
images of all five classes in the smooth fibre are nonzero for
every sufficiently small noncentral parameter.
\end{lemma}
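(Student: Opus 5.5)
The plan is to construct the classes $\beta_i$ as vanishing cycles of the five nodal degenerations in Proposition~\ref{prop03:degree5family}. Then we compute their periods by comparing with the nodal period along each discriminant hyperplane.

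\emph{Construction.} Fix $i$ and a general point $b\in H_i\setminus0$. By Proposition~\ref{prop03:degree5family}, the fibre over $b$ has exactly one ordinary double point $p_i(b)$, with transverse smoothing parameter $\alpha_i$ up to a unit. Nearby smooth fibres therefore contain a vanishing three-sphere $\sigma_i$. By Lemma~\ref{lem03:degree5milnor}, $H_3(L;\QQ)\to H_3(M;\QQ)$ is an isomorphism, so there is a unique class $\beta_i\in H_3(L;\QQ)$ mapping to $[\sigma_i]$. Transport in the complement of the discriminant identifies the fibres, up to the monodromy ambiguity, which only changes signs.

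\emph{Pairing with the Hodge classes.} For $\kappa\in T^1_{C_5}\cong H_2(L;\CC)$, the first-order variation of the period $\int_{\sigma_i}\Omega_t$ in direction $\kappa$ should be the intersection pairing of $\beta_i$ with the Hodge class of $\kappa$. This is the local Griffiths transversality computation, realized through the identification \eqref{eq:linklocalhodge} and the boundary map $H_3(M,L)\to H_2(L)$. Near the node, the nodal period is $c\,\alpha_i(b')+O(|b'|^2)$ with $c\neq0$, by the standard Morse-lemma computation. Hence the linear functional $\kappa\mapsto\langle\beta_i,\kappa\rangle$ vanishes on $H_i$. By Lemma~\ref{lem03:degree5marking}, $H_i$ is the hyperplane $\lambda_{\sigma(i)}=0$ for a bijection $\sigma$. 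The functional is nonzero because the period itself is nonconstant. It is therefore a nonzero multiple of $\lambda_{\sigma(i)}$, and after relabelling we get the first claim. Homogeneity matters here: $T^1$ has pure degree $-1$ and the family is $\CC^*$-equivariant, so the period of $\sigma_i$ is exactly linear in $b$. Equivalently, the period is $\lambda_i(b)$ times a constant, because scaling acts on $\Omega$ with a fixed weight.

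\emph{Arbitrary arcs.} A convergent smoothing arc $\gamma(t)$ with relative volume form is pulled back from the semiuniversal family, since the base is smooth. The relative volume form is a unit multiple $g$ of the pullback of the standard one. The period of $\beta_i$ on the fibre over $t$ is then $g(0)\,\lambda_i(\gamma(t))\,(1+o(1))$, or exactly $\lambda_i(\gamma(t))$ times a unit after normalization. Because $\gamma(t)$ avoids every $H_j$ for small $t\neq0$ (the fibres are smooth), each $\lambda_i(\gamma(t))\ne0$, so all five periods are nonzero.

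\emph{Main obstacle.} The hardest step is making the period exactly proportional to $\lambda_i$ at every order, rather than only to first order, when the volume form is an arbitrary unit multiple. I would first try exact $\CC^*$-homogeneity of the Grassmannian-section family. The standard relative form has weight making $\int_{\sigma_i}\Omega_b$ homogeneous of degree one in $b$, and it is holomorphic across $H_i$. Together with vanishing on $H_i$, this makes it $c\,\lambda_i(b)$. Replacing the form by $g\Omega$ multiplies the period by a function tending to $g(0)\neq0$, because $\sigma_i$ shrinks into the region where $g$ is close to $g(0)$. This handles the arbitrary-arc case.
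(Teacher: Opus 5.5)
Your construction of the $\beta_i$ and your route to the first claim are essentially the paper's. You take boundary classes of the nodal vanishing spheres via the isomorphism of Lemma~\ref{lem03:degree5milnor}, use weight-one homogeneity of the relative generator to make the period $P_i$ linear, get a simple zero along $H_i$, and use Lemma~\ref{lem03:degree5marking} to match the $H_i$ with the $\lambda_i$. Two points there need tightening:
\begin{itemize}
\item Holomorphy of $P_i$ on all of $B$, across every $H_j$ and not only $H_i$, comes from integrating over $\beta_i$ on the boundary. The boundary family is a trivial submersion over the whole base, so there is in fact no monodromy, not merely a sign ambiguity. This is what makes $P_i$ a single-valued holomorphic function to which the linearity argument applies.
\item You assert, rather than prove, that $dP_i|_0$ is the pairing with the Hodge class, yet this is the substance of the first claim. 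The paper obtains it by matching the Kodaira--Spencer overlap term with the contraction cocycle of Lemma~\ref{lem:localhodge}. It then uses $F^3H^3(L;\CC)=0$, and exactness of positive-weight forms under the Euler field, to remove the dependence on how the volume form is extended.
\end{itemize}

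The genuine gap is the arbitrary-arc step, which is exactly the obstacle you flag; your proposed resolution does not close it. Knowing that the cycle shrinks into the region where $g$ is close to $g(0)$ bounds $\int(g-g(0))\Omega^0$ only additively, by $\sup|g-g(0)|\cdot\int|\Omega^0|$. On the scaled cycle $s\delta$ this yields only
$P_i^g(sb_*,t)=s\,g(0,t)c_i\alpha_i(b_*)+O(s^2)$.
A multiplicative factor $1+o(1)$ would need $|\alpha_i(\gamma(t))|\gg|\gamma(t)|^2$. The lemma, however, must cover arcs with arbitrarily high contact with $H_i$, say $\alpha_i(\gamma(t))=t^N$ with $|\gamma(t)|\sim t$ and $N\ge3$. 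There the main term is swamped by the error, so $g(0)\lambda_i(\gamma(t))(1+o(1))$ does not follow from what you wrote.

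The missing idea is divisibility:
\begin{itemize}
\item Lift $g$ to a unit in the ambient coordinates $(p,t)$, so that $P_i^g(b,t)$ is holomorphic on $B\times\Delta$.
\item $P_i^g$ vanishes identically on $H_i\times\Delta$: at general points of $H_i$ the image of $\beta_i$ is a nodal vanishing cycle, and analytic continuation does the rest. Hence $P_i^g=\alpha_i u_i$ with $u_i$ holomorphic.
\item Your scaling computation, applied to every smooth direction $b_*$, then gives $u_i(0,t)=g(0,t)c_i$. So $u_i$ is a unit near $(0,0)$.
\end{itemize}
It follows that $P_i^g(\gamma(t),t)\neq0$ whenever $\gamma(t)\notin H_i$, whatever the contact order.

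A two-regime estimate might also avoid the cancellation: when the direction is close to $H_i$, use the small Morse sphere near the nearly formed node, where $\int|\Omega^0|=O(|\alpha_i|)$, with constants made uniform by homogeneity and compactness. That argument would have to be carried out, though.
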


\begin{proof}
Use the semiuniversal family~\eqref{eq03:degree5family}.
The boundary family of a small representative is a smooth
proper submersion over the entire small base $B$, including $0$;
its homology local system is therefore trivial.
By~\eqref{eq03:degree5boundary}, every absolute Milnor cycle
has a unique boundary class.  Near a general point of the
discriminant hyperplane $H_i$, choose the class $\beta_i$
whose image is the nodal vanishing sphere, and transport it in
the boundary family over $B$.

The Pfaffian resolution above shows that the Grassmannian cone
has a homogeneous dualizing generator of weight five.  Dividing
by the four base differentials gives a nowhere vanishing
relative generator $\Omega_b^0$ of weight one.
Integration on transported compact boundary cycles gives
holomorphic functions on the whole base,
\[
 P_i(b)=\int_{\beta_{i,b}}\Omega_b^0.
\]
Holomorphic dependence uses only the smooth locus near the
boundary, so it holds also at parameters with a singular interior.
Scaling changes the boundary radius; compare the two boundaries
through their annular collars, whose central radial isotopy fixes
the homology marking.  The connected scaling action then preserves
transported classes and gives $P_i(cb)=cP_i(b)$.  Thus $P_i$ is linear.
Near a general point of $H_i$, it equals the nodal vanishing
period and has a simple zero along $H_i$.  Consequently
\begin{equation}\label{eq03:degree5linearperiod}
 P_i(b)=c_i\alpha_i(b),\qquad c_i\ne0.
\end{equation}

We verify that its derivative is the claimed Hodge functional.
Choose first-order product trivializations on the punctured
central germ, with Kodaira--Spencer cocycle $\theta_{jk}$.
Differentiating the relative three-form gives
\[
 \dot\Omega_k-\dot\Omega_j
       =d(\iota_{\theta_{jk}}\Omega_0^0).
\]
The \v{C}ech--de Rham representative for the derivative of a
period therefore has overlap term
$\iota_{\theta_{jk}}\Omega_0^0$, precisely the contraction
cocycle defining Lemma~\ref{lem:localhodge}.
In the logarithmic de Rham model of the circle bundle, this is
the natural comparison underlying \cite[Theorem~2.1(vi)]{FL22a}.
The link has $H^3(L;\CC)$ of pure Hodge type $(2,2)$, so its
$F^3$ is zero and this comparison determines the entire class.
Thus $dP_i|_0$ pairs $\beta_i$ with the local Hodge class,
up to the one common convention for Gysin maps and orientation.
There is no ambiguity from the chosen extension of the volume
form: a difference with the same overlap cocycle is a global
form $a\Omega_0^0$.  Normality extends $a$ across the vertex,
and contraction with the Euler vector field makes this form
exact, since all its homogeneous weights are strictly positive.
The same identity applies to a convergent series after shrinking.
Lemma~\ref{lem03:degree5marking} and
\eqref{eq03:degree5linearperiod} identify its kernel with
$\lambda_i=0$.

Finally let $b=b(t)$ be an arbitrary smoothing arc.  Its actual
relative volume form is $g(p,t)\Omega^0$, where $g$ is an
analytic unit on the pulled-back family.  Lift $g$ to a holomorphic
function of the ambient coordinates $(p,t)$; after shrinking it
is a unit and defines the same multiplier over $B\times\Delta$.
The boundary periods are holomorphic functions $P_i^g(b,t)$.
They vanish on $H_i\times\Delta$, by the nodal vanishing-cycle
calculation at its general points and analytic continuation.
Hence
\[
 P_i^g(b,t)=\alpha_i(b)u_i(b,t).
\]
To compute its linear term without expanding $g$ term by term,
fix a smooth direction $b_*$ and a compact cycle $\delta$
representing $\beta_i$ in $X_{b_*}$.  Scaling this cycle gives
\[
 P_i^g(sb_*,t)
 =s\int_\delta g(sp,t)\Omega_{b_*}^0
 =s\,g(0,t)c_i\alpha_i(b_*)+O(s^2).
\]
The estimate is uniform on the fixed compact cycle.  Smooth
directions form a dense open set, so
$d_bP_i^g(0,t)=g(0,t)c_i\alpha_i$.  Thus
$u_i(0,0)=g(0,0)c_i\ne0$.  Restricting to $b(t)$ proves
nonvanishing, whatever its contact order with $H_i$.
\end{proof}

\subsection{The quadric surface cone}

\begin{lemma}\label{lem03:quadric}
The full semiuniversal deformation base of $Q$ is a smooth
one-dimensional germ.  Its general fibre is smooth.  On the crepant
resolution $\operatorname{Tot}(K_{\PP^1\times\PP^1})$, the local Hodge
class of its tangent line is the difference of the two ruling classes.
\end{lemma}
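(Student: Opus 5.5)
The proof has three parts: an explicit model of $Q$, a count of $T^1$ together with a family realizing it, and an equivariance argument for the Hodge class.

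\emph{Model and tangent space.} The anticanonical embedding of $\PP^1\times\PP^1$ by $\cO(2,2)$ identifies $Q$ with the quotient $\CC^4/\{\pm1\}$. Indeed, $\bigoplus_m H^0(\cO(2m,2m))$ is the even part of $\bigoplus_m H^0(\cO(m,m))$, which is the coordinate ring of the node $\{xy=zw\}$. So $Q=\{xy=zw\}/\pm1$, with the involution acting freely off the vertex. Equivalently, $Q$ is the cone over the Veronese-type image. Toric data give a second description: the polygon is the unit-edge square, so Altmann's count $\dim T^1=n-3$ already used in Lemma~\ref{lem:localhodge} gives $\dim T^1_Q=1$, concentrated in one degree.

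\emph{Smooth base and smooth general fibre.} Take the quotient of the nodal smoothing family $\{xy-zw=t\}$ by the involution $(x,y,z,w)\mapsto -(x,y,z,w)$. This involution acts on each fibre and fixes $t$, and for $t\ne0$ it is free on the fibre, since the origin does not lie on it. Hence every fibre with $t\ne0$ is smooth, and is the quotient of the affine quadric, homotopic to $S^3/\pm1=\mathbb{RP}^3$. Flatness follows because the invariant ring of a flat family under a finite group in characteristic zero is a direct summand and remains flat. It remains to show that the Kodaira--Spencer class is nonzero. I expect this to be the main obstacle. I would do it by writing the invariant quadrics $x^2,xy,\dots$ as the ten coordinates of the cone over the $\cO(2,2)$ image. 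The perturbation $xy=zw+t$ then deforms the quadratic relations among these coordinates by linear terms in $t$. One checks that this is not induced by a coordinate change, because the ideal of $Q$ has no linear elements, so a trivial first-order deformation would have to come from translations. This is the argument pattern of Proposition~\ref{prop03:degree5family}. Given a nonzero Kodaira--Spencer map $\CC\to T^1_Q$ between one-dimensional spaces, the minimal-embedding argument of that proposition applies: the base is $\CC\{z\}/I$ with $I\subset(z)^2$, the map to $\CC\{t\}$ is injective, so $I=0$. The semiuniversal base is therefore smooth of dimension one and the family above is semiuniversal.

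\emph{Hodge class.} By Lemma~\ref{lem:localhodge}, $T^1_Q\cong H_2(L;\CC)\cong K_E^\perp\otimes\CC$. For $E=\PP^1\times\PP^1$ this space is the line $\CC(f_1-f_2)$, because $K_E^\perp$ is one-dimensional and contains $f_1-f_2$, which pairs to zero with $-2f_1-2f_2$. So the identification is automatic up to a nonzero scalar. As a consistency check, which I would include briefly, the swap of the two factors acts by $-1$ on $f_1-f_2$. It also acts on the square by a reflection of determinant $-1$ while fixing the deformation parameter. The determinant twist used in Lemma~\ref{lem:branchhodge} then matches the sign.
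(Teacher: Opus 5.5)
Your proof is correct, but it proves smoothness of the base by a different route from the paper.

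The paper reads the smooth line directly off Altmann's versal-base equations $\sum_i t_i^k d_i=0$ for the edge vectors $(u,v,-u,-v)$. Their higher equations lie in the ideal of $t_1=t_3$, $t_2=t_4$, so this step rests on Altmann's full versality and obstruction calculation. The paper then obtains semiuniversality of the $\boldsymbol\mu_2$-quotient of $xy-zw=s$ by identifying it, through the Cayley cone $\AAA^4/\boldsymbol\mu_2$, with Altmann's component family. That family's Kodaira--Spencer class is the difference of the two segment-dilation parameters.

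You need only $\dim T^1_Q=1$ and a nonzero Kodaira--Spencer class. The minimal-embedding argument of Proposition~\ref{prop03:degree5family} then gives smoothness and semiuniversality together. This is more elementary and avoids Altmann's versality theorem. The price is that the Kodaira--Spencer step becomes essential, and your sketch stops at ``it would have to come from translations.''

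That remaining case is quickly excluded. Write $S_{ij}=v_iv_j$ for $(v_1,\ldots,v_4)=(x,y,z,w)$. A trivialization would give a vector $a$ with $a_{12}-a_{34}=1$ (the perturbation direction minus a translation along the hyperplane) such that $dg(a)$ vanishes on the hyperplane $S_{12}=S_{34}$ for every $2\times2$ minor $g$ of $(S_{ij})$. The minors $S_{11}S_{22}-S_{12}^2$ and $S_{33}S_{44}-S_{34}^2$ force $a_{12}=a_{34}=0$, because $S_{11},S_{22},S_{12}$ and $S_{33},S_{44},S_{34}$ remain independent linear coordinates on that hyperplane. This contradicts $a_{12}-a_{34}=1$.

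Alternatively, $\boldsymbol\mu_2$ acts freely on the punctured nodal germ $U$, so Schlessinger's comparison gives $T^1_Q\cong H^1(U,\Theta_U)^{\boldsymbol\mu_2}$. Under this identification the class of your quotient family is the invariant, nonzero class of $xy-zw=t$.

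The Hodge-class step is the same dimension argument as in the paper.
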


\begin{proof}
The height-one polygon of $Q$, translated to its centre, has vertices
\[
 (1,0),\quad(0,1),\quad(-1,0),\quad(0,-1).
\]
Its oriented edges are $u,v,-u,-v$,
where $u,v$ are linearly independent over $\QQ$.  The polygon has
primitive edges, and the associated threefold singularity is isolated.
Altmann's equations for the versal base are
\[
 \sum_{i=1}^4 t_i^k d_i=0\quad(k\geq1),
 \qquad(d_1,d_2,d_3,d_4)=(u,v,-u,-v),
\]
modulo common translation of the $t_i$.  The equations with $k=1$
are $t_1=t_3$ and $t_2=t_4$.  All higher equations belong to the ideal
generated by these two linear equations.  Consequently the quotient
base is a smooth affine line, scheme-theoretically.  The full versality
theorem, including the obstruction calculation in
\cite[Sections~6--7]{Altmann97}, applies here; the assertion concerns
the entire base, including its possible nilpotent structure.

A smoothing can also be seen directly as
\begin{equation}\label{eq03:quadfamily}
 \{xy-zw=s\}/\boldsymbol\mu_2\longrightarrow\AAA^1_s,
\end{equation}
where the involution negates all four coordinates and fixes $s$.
Taking invariants is exact in characteristic zero, so this quotient
family is flat.  Its central coordinate ring is the second Veronese
subring of the ordinary quadric cone, hence is the section ring of
$\cO_{\PP^1\times\PP^1}(2,2)$.  For $s\ne0$ the quadric is smooth
and the involution acts freely.  Thus the family is a smoothing.

This family is semiuniversal, including its parameter normalization.
Translate the displayed polygon by $(1,0)$ to obtain
$[0,(1,1)]+[0,(1,-1)]$.  Its Cayley cone has rays
\[
 (0,0,1,0),\quad(1,1,1,0),\quad
 (0,0,0,1),\quad(1,-1,0,1).
\]
They span an index-two sublattice of $\ZZ^4$, and half their sum
is integral.  The corresponding toric fourfold is therefore
$\AAA^4/\boldsymbol\mu_2$, with the involution negating all four
coordinates.  The two projection characters pull back to the
products of the first two and the last two coordinates.
Altmann's component family \cite[\S8.1]{Altmann97} is their
difference, precisely~\eqref{eq03:quadfamily} after relabeling.
Its Kodaira--Spencer class is the nonzero difference of the two
segment-dilation parameters.  Since the full base is a smooth line,
every sufficiently small analytic deformation of $Q$ is locally a
pullback of this family.

Finally, the proof of Lemma~\ref{lem:localhodge} applies without change
to $E=\PP^1\times\PP^1$: the same toric vanishing proves the local
cohomology comparison.  The circle-bundle sequence identifies its link
homology with
\[
 K_E^\perp\subset H_2(E;\CC)
   =\CC f_1\oplus\CC f_2,
 \qquad K_E=-2f_1-2f_2.
\]
This perpendicular space is $\CC(f_1-f_2)$ and has dimension one,
which equals the tangent dimension just calculated.  The comparison
therefore identifies the tangent line with this ruling difference.
\end{proof}

\begin{lemma}\label{lem03:quadricmilnor}
Let $M$ be a smooth Milnor fibre of $Q$, with boundary $L$.
Then $M$ is homotopy equivalent to $\mathbb{RP}^3$ and
$\pi_1(L)=\ZZ/2$.  In particular,
\[
 H_2(M;\QQ)=0,\qquad H_3(M;\QQ)=\QQ,\qquad H^1(L;\ZZ)=0.
\]
The boundary map
$H_3(M,L;\QQ)\to H_2(L;\QQ)$ is an isomorphism; under the
circle-bundle identification its image is $\QQ(f_1-f_2)$.
\end{lemma}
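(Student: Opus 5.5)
The plan is to use the explicit smoothing \eqref{eq03:quadfamily} from Lemma~\ref{lem03:quadric}. Since the full base is a smooth line, every smooth Milnor fibre is diffeomorphic to a Milnor fibre of this family, and the family is equivariant for the real scaling action. Take the fibre over $s=1$: the smooth affine quadric $\{xy-zw=1\}$ modulo the free involution $-1$. The affine quadric deformation-retracts onto its real vanishing sphere $S^3=\{p:\ \bar p \text{ related to } p \text{ so that } xy-zw \text{ is the Euclidean norm}\}$. After a linear change of coordinates the quadric becomes $\sum_{i=1}^4 q_i^2=1$, and the retraction $q=a+\sqrt{-1}\,b\mapsto a/|a|$ onto $\{b=0\}$ is equivariant for $q\mapsto-q$, since negation preserves $a\mapsto a/|a|$ and the straight-line homotopy. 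Passing to quotients gives $M\simeq S^3/\pm1=\mathbb{RP}^3$. The truncation issue (Milnor ball versus whole affine fibre) is handled exactly as in Lemma~\ref{lem03:degree5milnor}, by scaling and a collar at infinity. Hence $H_2(M;\QQ)=0$ and $H_3(M;\QQ)=\QQ$.

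For the link, I would use the toric description. $L$ is the circle bundle of $K_E$ over $E=\PP^1\times\PP^1$, or equivalently $(S^2\times S^3)/\pm1$, the quotient of the link of the ordinary quadric cone by the free antipodal involution; the link of the node is $S^2\times S^3$. The homotopy sequence of the circle bundle gives $\pi_1(L)=\operatorname{coker}(\pi_2(E)=\ZZ^2\to\ZZ)$, where the map is pairing with $K_E=-2f_1-2f_2$. Its image is $2\ZZ$, so $\pi_1(L)=\ZZ/2$, and therefore $H^1(L;\ZZ)=\Hom(\ZZ/2,\ZZ)=0$. The Gysin sequence gives $H_2(L;\QQ)\cong K_E^\perp=\QQ(f_1-f_2)$ and $b_3(L)=1$.

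For the boundary map, I would apply the long exact sequence of the pair $(M,L)$ together with Lefschetz duality, $H_3(M,L;\QQ)\cong H^3(M;\QQ)=\QQ$. Then
\[
H_3(M,L;\QQ)\to H_2(L;\QQ)\to H_2(M;\QQ)=0,
\]
so the map is a surjection onto a one-dimensional space from a one-dimensional space, hence an isomorphism. Its image is then all of $H_2(L;\QQ)$, which under the circle-bundle identification is $\QQ(f_1-f_2)$.

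The only delicate step is making the equivariant retraction and the Milnor-ball truncation rigorous. The rest is dimension counting, and there is nothing further to identify, because the target is a line.
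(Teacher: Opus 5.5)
Your proposal follows the paper's proof essentially step for step: the double cover $\sum z_i^2=s$ retracts equivariantly onto its vanishing sphere with the antipodal involution, the circle-bundle homotopy sequence with Euler class $-2f_1-2f_2$ gives $\pi_1(L)=\ZZ/2$, and Lefschetz duality together with the pair sequence and $H_2(M;\QQ)=0$ handles the boundary map. One small correction: the literal straight-line homotopy from $a+\sqrt{-1}\,b$ to $a/|a|$ leaves the quadric when $b\ne0$, so use instead the standard retraction $(a,b)\mapsto\bigl(\sqrt{1+(1-\tau)^2|b|^2}\,a/|a|,\,(1-\tau)b\bigr)$, which stays on $\sum q_i^2=1$ and is still equivariant under $q\mapsto-q$.
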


\begin{proof}
After a linear change of coordinates the double cover of a
noncentral fibre of~\eqref{eq03:quadfamily} is
$\sum_{i=1}^4z_i^2=s$.  Its Milnor fibre retracts equivariantly
onto the vanishing sphere $\sqrt{s}\,S^3$, and the involution
acts antipodally.  Passing to the free quotient gives the asserted
retraction onto $\mathbb{RP}^3$ and its rational homology.
The link is the circle bundle with Euler class
$K_{\PP^1\times\PP^1}=-2f_1-2f_2$.  In its homotopy sequence
the image of $\pi_2(\PP^1\times\PP^1)\to\ZZ$ is $2\ZZ$,
so $\pi_1(L)=\ZZ/2$ and $H^1(L;\ZZ)=0$.
The rational circle-bundle sequence gives
$H_2(L;\QQ)=\QQ(f_1-f_2)$.  Lefschetz duality gives
$\dim H_3(M,L;\QQ)=\dim H^3(M;\QQ)=1$; the pair sequence
and $H_2(M;\QQ)=0$ make its boundary map surjective onto the
one-dimensional link group.  It is therefore an isomorphism.
\end{proof}

Smoothness in Lemma~\ref{lem03:quadric} does not assert vanishing of
cotangent cohomology in degree two.  In fact the local calculation gives
$\dim T^1_Q=1$ and $\dim T^2_Q=6$.\footnote{The latter value is reproduced in
\texttt{certificates/d19\_quadric\_partial\_resolution.py}.}
The degree-two value is not needed in the proof: the explicit equations
and full versality show directly
that the obstruction maps vanish.  In particular no complete-intersection
hypothesis on $Q$ is being used.

\subsection{Central fibres and their curve classes}

For the degree-six surface use the marking
\[
 E=\operatorname{Bl}_{p_1,p_2,p_3}\PP^2,
 \qquad h^2=1,\quad e_i^2=-1,\quad
 h\cdot e_i=e_i\cdot e_j=0\ (i\ne j),
\]
with the three points noncollinear.  Its toric boundary, in cyclic
order, is
\begin{equation}\label{eq03:boundary}
 e_1,\quad h-e_1-e_2,\quad e_2,\quad h-e_2-e_3,
 \quad e_3,\quad h-e_1-e_3.
\end{equation}
We retain the root conventions
\begin{equation}\label{eq03:roots}
 \ell=-h+e_1+e_2+e_3,\qquad
 A=-e_1+e_3,\qquad B=e_2-e_3.
\end{equation}
Thus the $A_1$ summand is $\QQ\ell$, and the $A_2$ summand is
$\QQ A\oplus\QQ B$.

\begin{proposition}\label{prop03:localmodels}
There are proper crepant partial resolutions with the following
properties.
\begin{enumerate}
\item A small morphism $W_7\to C_7$ has precisely one singular point,
of type $Q$.  A smooth crepant resolution through $W_7$ differs from
$\operatorname{Tot}(K_{\dP_7})$ by the flop of $h-e_1-e_2$.
Under this comparison the ruling difference of $Q$ is the degree-seven
root, up to the choice of its sign.
\item A morphism $W_{6,1}\to C_6$ is obtained from
$\operatorname{Tot}(K_E)$ by contracting
$C_0=e_1$ and $C_3=h-e_2-e_3$.  It has exactly two ordinary double
points.  The image of $E$ is $\PP^1\times\PP^1$.
\item A morphism $W_{6,2}\to C_6$ is obtained by contracting
$e_1,e_2,e_3$ in $\operatorname{Tot}(K_E)$.  It has exactly three
ordinary double points.  The image of $E$ is $\PP^2$.
\end{enumerate}
The last two morphisms to $C_6$ are divisorial.  With Hodge-normalized
nodal coefficients, the maps into the selected root subspaces are
\begin{equation}\label{eq03:nodaltransfer}
 \lambda\ell=\lambda[C_0]-\lambda[C_3],\qquad
 aA+bB=-a e_1+b e_2+(a-b)e_3.
\end{equation}
In particular the two coefficients over an $A_1$ point are
$(\lambda,-\lambda)$, and the three coefficients over an $A_2$ point
are $(\mu_1,\mu_2,\mu_3)=(-a,b,a-b)$.
\end{proposition}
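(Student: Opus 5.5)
The plan is to work torically on $\operatorname{Tot}(K_E)$ for each surface, constructing the partial resolutions by contracting curves in the zero section, and to read off the singularities and curve-class transfers from the fan. The cone $C_d$ is the affine toric variety of the cone over the height-one polygon $P_d$. The resolution $\operatorname{Tot}(K_E)$ is the fan obtained by star subdivision at the interior point $(0,0,1)$, so its three-dimensional cones are the triangles joining the origin to each boundary edge. Partial resolutions are coarser subdivisions of $\Cone(P_d)$ that use only lattice points of $P_d$, hence remain crepant. Every such morphism is proper and is an isomorphism away from the exceptional locus, and the constructions are algebraic, so they apply to analytic germs.

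For (2) and (3), I subdivide $P_6$ differently. For $W_{6,2}$, I remove the rays corresponding to $e_1,e_2,e_3$ (alternate vertices of the hexagon in the ordering \eqref{eq03:boundary}). The subdivision of $P_6$ then consists of three unit parallelograms, each the union of two adjacent edge-triangles through the centre, together with a central triangle. Each parallelogram cone is the cone over a unit square, which is an ordinary double point. The surviving rays give the divisor $\PP^2$, whose fan is the central triangle's complement. This shows the contraction is divisorial onto $C_6$ and small over the $\PP^2$, with exactly three nodes.

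For $W_{6,1}$, I contract the two opposite edges $e_1$ and $h-e_2-e_3$. This yields two squares through the centre, i.e.\ two nodes, and a quadrilateral fan for the image of $E$, which is $\PP^1\times\PP^1$.

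For (1), I take $P_7$ and choose a flop of the curve $h-e_1-e_2$ together with a contraction that merges the two adjacent triangles into a quadrilateral containing the centre. This is the subdivision of the pentagon into a unit parallelogram plus the height-one square polygon of $Q$. The square is the polygon of Lemma~\ref{lem03:quadric}, giving one singular point of type $Q$. I then check that no other cone is singular, and that the exceptional locus is a curve, so the map is small.

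The class transfers come from pushing link classes forward. The link of $C_6$ is unchanged, so $H_2(L_p)=K_E^\perp$ maps to the resolution $\operatorname{Tot}(K_E)$. A node of $W_{6,2}$ is resolved back by the small resolution whose exceptional curve is $e_i$, so a class $\sum\mu_i e_i$ with $\sum\mu_i=0$ is exactly a combination of the exceptional nodal curves. Substituting $aA+bB$ gives \eqref{eq:a2coefficients}. For $A_1$, note that $\ell=-h+e_1+e_2+e_3=e_1-(h-e_2-e_3)$ in $H_2(E)$, which gives $\lambda[C_0]-\lambda[C_3]$.

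The remaining point, which I expect to be the main obstacle, is showing the nodal coefficients are Hodge-normalized. Since one global volume form is used on every crepant model, the local Hodge comparison of Lemma~\ref{lem:localhodge} for each node is compatible with the Gysin map into the common resolution. The tangent class then corresponds to the same homology class computed on either side.

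For (1), the ruling difference $f_1-f_2$ of the $Q$-exceptional surface must be compared with $e_1-e_2$ across the flop. I would argue by an intersection computation on the common resolution: after flopping $h-e_1-e_2$, the surface $\dP_7$ becomes $\PP^1\times\PP^1$ blown up at one point, whose rulings are $h-e_1$ and $h-e_2$ up to ordering. Their difference is $\pm(e_1-e_2)$, and it is orthogonal to $K$. The sign ambiguity is exactly the allowed choice.
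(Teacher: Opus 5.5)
Your strategy is the paper's: toric subdivisions of the height-one polygons, singularities read off from the maximal cells, and transfer maps obtained by substituting into the root conventions, together with naturality of the local-cohomology comparison under inclusion of supports using one volume form. Your treatment of $W_{6,1}$ and of \eqref{eq03:nodaltransfer} is fine.

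The gap is in part (1): the construction you describe does not produce $W_7$. Merging the two star triangles adjacent to the ray of $c=h-e_1-e_2$ removes the wall of $c$ and contracts $c$ to a node. That is only the midpoint of the flop, not the $Q$-contraction. The subdivision you name, a unit parallelogram plus the $Q$ square, has normalized area $2+4=6$. The pentagon $\conv\{(0,0),(1,0),(2,1),(1,2),(0,1)\}$ (a translate of $P_7$) has normalized area $5$, so your two cells overlap. A parallelogram cell would in any case contribute a node, contradicting ``precisely one singular point.''

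The correct subdivision is the unimodular triangle $[(0,0),(1,0),(0,1)]$ plus the quadrilateral $[(1,0),(2,1),(1,2),(0,1)]$, with no interior ray. This is why $W_7\to C_7$ is small with exactly one singular point, of type $Q$. Its relation to $\operatorname{Tot}(K_{\dP_7})$ is as follows. Inserting the centre $(1,1)$ into the quadrilateral gives a triangulation that differs from the star triangulation by the single diagonal change $[(1,1),(0,0)]\leftrightarrow[(1,0),(0,1)]$; this change is the flop of $c$. Deleting the central ray, i.e.\ merging the four triangles around it, gives $W_7$.

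This also corrects your flop computation. After the flop, the strict transform of $E$ is $E$ with $c$ contracted, namely $\PP^1\times\PP^1$, not $\PP^1\times\PP^1$ blown up at a point. That is precisely why it is the exceptional surface over the $Q$ point. It is also why $h-e_1$ and $h-e_2$, which have zero intersection with $c$, descend to its two rulings.

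The comparison of their difference with $e_2-e_1$ should be made through the common complement of $c$ and its flopped curve, since removing a curve does not change $H_2$. Use representatives of the two pencils chosen disjoint from $c$. An unspecified ``intersection computation on the common resolution'' does not by itself identify the link classes on the two sides.

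In part (3) there is a smaller, easily repaired slip. Label the hexagon vertices $v_0,\ldots,v_5$ so that $v_i$ carries $C_i$, and write $o$ for the centre. One removes the walls $\Cone(o,v_i)$ for $i=0,2,4$, not rays: the $v_i$ are extremal rays of $\Cone(P_6)$, and deleting rays is the right operation only in the fan of $E$. The three parallelograms $[o,v_{i-1},v_i,v_{i+1}]$ then tile the hexagon ($3\cdot2=6$), so there is no central triangle. The image of $E$ is $\PP^2$ because the star of $o$ retains the three rays $v_1,v_3,v_5$, whose images in the polygon plane centred at $o$ sum to zero.
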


\begin{proof}
For degree seven use the height-one pentagon
\[
 (0,0),(1,0),(2,1),(1,2),(0,1).
\]
Subdivide it into the triangle with vertices $(0,0),(1,0),(0,1)$
and the quadrilateral with vertices $(1,0),(2,1),(1,2),(0,1)$.
The triangle is unimodular.  Translation of the quadrilateral by
$(-1,-1)$ gives the polygon in Lemma~\ref{lem03:quadric}.
The subdivision preserves the support and introduces no rays;
the induced toric morphism is therefore proper and small.
Every ray has height one, so the morphism is crepant.  The two maximal
cones give one smooth chart and one $Q$ chart, proving the assertion
about its singularities.

Inserting the interior lattice point of the quadrilateral gives a
smooth resolution through this model.  Its triangulation and the star
triangulation of the pentagon differ by one diagonal change.  The
corresponding curve on the original degree-seven surface is
$c=h-e_1-e_2$.  It is a $(-1)$-curve and $K_E\cdot c=-1$; its normal
bundle in $\operatorname{Tot}(K_E)$ is
$\cO(-1)\oplus\cO(-1)$.  The diagonal change is its ordinary flop.
The surface after the flop is the contraction of $c$, namely
$\PP^1\times\PP^1$.  The classes $h-e_1$ and $h-e_2$ have
intersection zero with $c$, square zero, and mutual intersection one.
They descend to its two rulings.  Their difference is $e_2-e_1$,
the degree-seven root.  This also identifies their classes on the
common complement of the flopping curves.  Indeed representatives of
these two pencils can be chosen disjoint from $c$; their differences
give the same class under the homology comparison across the flop.

For degree six use the polygon
\[
 v_0=(1,0),\ v_1=(1,1),\ v_2=(0,1),\ v_3=(-1,0),
 \ v_4=(-1,-1),\ v_5=(0,-1)
\]
and its centre $o=(0,0)$.  The six triangles
$[o,v_i,v_{i+1}]$ are unimodular; the star fan gives
$\operatorname{Tot}(K_E)$ with boundary marking
\eqref{eq03:boundary}.  To contract the boundary curve at $v_i$,
merge its two adjacent triangles into
$[o,v_{i-1},v_i,v_{i+1}]$.  For $i=0,3$, these mergers are disjoint
in their interiors and leave two unimodular triangles.  For $i=0,2,4$
they give three quadrilaterals covering the polygon.
Each merged cell is a unit parallelogram: its four lifted rays have
the relation with coefficients $(1,-1,1,-1)$ in cyclic order, and
three adjacent lifted rays have determinant $\pm1$.  Its affine
toric chart is consequently $xy-zw=0$, with no finite quotient.
No boundary cone changes, and the subdivisions cover the original
cone, proving properness, crepancy, and the claimed singularity counts.

The same contractions can be checked intrinsically.  Every curve
in \eqref{eq03:boundary} has self-intersection and canonical degree
$-1$.  Its normal sequence in the total canonical bundle splits,
since its extension class belongs to $H^1(\PP^1,\cO)=0$.
The selected curves are disjoint, so their contractions are independent
ordinary nodal contractions.  Contracting $e_1,e_2,e_3$ on $E$ gives
$\PP^2$.  Contracting $e_1,h-e_2-e_3$ gives the smooth toric
degree-eight surface with four boundary curves of square zero,
namely $\PP^1\times\PP^1$.  These surfaces are still contracted
by the subsequent morphism to $C_6$, explaining its divisorial nature.

Finally, \eqref{eq03:nodaltransfer} follows by substitution in
\eqref{eq03:roots}.  These are also the deformation-theoretic transfer
maps: inclusion of the nodal exceptional supports in the full
exceptional surface sends a tuple of local classes to the sum of
their curve classes.  Naturality of the connecting maps in local
cohomology and contraction with the same volume form make this map
commute with the Hodge comparison.  In the degree-seven case the
comparison is made on the common regular open set across the flop.
There is thus no independent scalar choice at each newly introduced
node in \eqref{eq03:nodaltransfer}.
\end{proof}

\subsection{A simultaneous modification of the one-dimensional branch}

The two-node model extends over the entire $A_1$ branch.  Here it is
useful to give the extension explicitly, since its central-fibre
description alone would not justify lifting a family.
Altmann's triangle-plus-triangle family is the affine cone over the
Segre embedding of $(\PP^1)^3$, equipped with the function
\begin{equation}\label{eq03:a1function}
 u_0v_0w_0-u_1v_1w_1=s;
\end{equation}
see \cite[Section~8.2(i)]{Altmann97}.  The products in this equation
are tensor coordinates on the cone, rather than functions on the
product of projective spaces.

Retain the first two projective factors.  The resulting incidence
space is
\[
 V=\operatorname{Tot}_{\PP^1\times\PP^1}
       \bigl(\cO(-1,-1)\oplus\cO(-1,-1)\bigr).
\]
Its morphism to the tensor cone sends a pair of fibre vectors to the
tensor $u\otimes v\otimes w$.  It is proper, since the incidence
description is closed in the product of the cone with
$\PP^1\times\PP^1$.  It is an isomorphism away from the vertex:
a nonzero decomposable tensor determines its first two factor lines
uniquely.  On $V$, equation \eqref{eq03:a1function} defines a morphism
to the $s$-line and hence a simultaneous modification of that family.

There are exactly two critical points in the central fibre.  To see
this, write the fibre coordinates locally as $A,B$.  Differentiating
with respect to them forces $u_0v_0=u_1v_1=0$.  The two possible
base points are $([1:0],[0:1])$ and $([0:1],[1:0])$.  On their
respective affine charts the equations are exactly
\begin{equation}\label{eq03:a1charts}
 \beta A-\alpha B=s,
 \qquad \gamma C-\delta D=s.
\end{equation}
Their differentials vanish only at the origin of each four-dimensional
chart, and their quadratic forms are nondegenerate.  On the other
two projective charts one coefficient of a fibre coordinate is a unit,
so the map is smooth there.  These equations also prove flatness:
near a critical point the morphism is the standard nodal deformation,
and elsewhere it is smooth.  Equivalently, the local rings are
torsion-free over the regular one-dimensional base.

The zero section belongs only to $s=0$.  Thus the modification is an
isomorphism over every $s\ne0$ and away from the vertex of the central
fibre.  Its central fibre is $W_{6,1}$: resolving the two nodes on
the side which blows up the corresponding points of the zero section
gives the marked degree-six surface and the two curves $C_0,C_3$.
This can equally be read from the first degree-six subdivision in
Proposition~\ref{prop03:localmodels}.

The two displayed equations have the same algebraic parameter $s$.
Their Hodge-normalized parameters have opposite signs when their
exceptional curves are oriented by $[C_0],[C_3]$.  More precisely,
the image is $(\lambda,-\lambda)$ with $\lambda=c_1s$ to first
order for a nonzero constant $c_1$.  This follows either from the
local-cohomology comparison above or by pairing with $E$:
$E\cdot C_0=E\cdot C_3=-1$, while the branch class lies in
$K_E^\perp$.  Neither nodal derivative vanishes, by
\eqref{eq03:a1charts}.  The normalized branch coordinate is chosen
using this $c_1$, rather than by equating two independently chosen
residue normalizations.

\subsection{The two-dimensional branch and its discriminant}

\begin{proposition}\label{prop03:a2family}
The $A_2$ branch of $C_6$ admits a proper simultaneous partial
resolution whose central fibre is $W_{6,2}$.  The morphism is an
isomorphism over every nonzero point of the two-dimensional base,
including its discriminant.  In linear coordinates $(a,b)$ compatible
with the Hodge normalization, the three nodal tangent parameters are
\[
 (\mu_1,\mu_2,\mu_3)=(-a,b,a-b).
\]
The discriminant is the union of the three lines
$ab(a-b)=0$.  At a nonzero point of any one of these lines the fibre
has precisely one ordinary double point; off these lines it is smooth.
\end{proposition}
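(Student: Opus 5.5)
Following the introduction, the plan is to realize the $A_2$ branch through Altmann's three-segment decomposition and resolve it explicitly as a family over the plane.

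\textbf{The family.} The hexagon $P_6$ is a Minkowski sum of three unit segments. Altmann's family for this decomposition should be, after a linear change of coordinates, the cone over $(\PP^1)^3$ replaced by a rank-one matrix family. Concretely, I would take the $3\times3$ determinantal (Segre $\PP^2\times\PP^2$) cone, cut by two linear functions, with the deformation obtained by perturbing the diagonal. Write the central fibre as rank-one $3\times3$ matrices $x_{ij}$ with $x_{11}=x_{22}=x_{33}$, which is the cone over $\dP_6$ as a linear section of $\PP^2\times\PP^2$. The two-parameter family is then $x_{11}-x_{22}=a'$, $x_{22}-x_{33}=b'$ inside the rank-one cone. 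Its Kodaira--Spencer map is injective onto the plane $s=0$ of \eqref{eq:localbases}, by the $\ZZ/3$ rotation equivariance in Lemma~\ref{lem:branchhodge}.

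\textbf{The simultaneous partial resolution.} Retain one projective factor: take $\operatorname{Tot}_{\PP^2}(\cO(-1)^{\oplus3})$, mapping to rank-one matrices by $(\ell,v)\mapsto v\otimes\ell$. This map is proper and an isomorphism off the zero section. The two linear equations pull back to a morphism to the $(a',b')$-plane, and the zero section lies over the origin only. Hence the modification is an isomorphism over every nonzero point, including the discriminant.

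On each affine chart of $\PP^2$, the two equations are linear in the fibre coordinates with coefficients in the chart coordinates. I would locate the critical points, where the coefficient matrix drops rank, and expect exactly three of them, at the coordinate points. There the equations reduce, as in \eqref{eq03:a1charts}, to a single nodal equation such as $\alpha A-\beta B=\mu_i(a',b')$ for a linear form $\mu_i$. Away from these points the morphism is smooth, which gives flatness. The central fibre then has three nodes and $E$ maps to $\PP^2$, so it is $W_{6,2}$, matching Proposition~\ref{prop03:localmodels}(3).

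\textbf{Discriminant and normalization.} Since the family is a pullback of the partial resolution, the singular fibres at nonzero parameters are exactly the points where some $\mu_i$ vanishes. Each of the three forms must vanish on a distinct line, so at most one node appears on each line and none appears off the lines. The three forms are linearly dependent of rank two, and I expect $\sum\mu_i=0$.

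To obtain $(\mu_1,\mu_2,\mu_3)=(-a,b,a-b)$, I would argue as in the $A_1$ case. Pairing with $E$ forces the Hodge-normalized nodal coefficients to have zero sum, because the class lies in $K_E^\perp$. The transfer formula \eqref{eq03:nodaltransfer} then identifies the nodal coefficients with $(-a,b,a-b)$ in the coordinates $aA+bB$. This requires the relation between each algebraic parameter and its Hodge-normalized parameter to be a common nonzero scalar. I would obtain that scalar from the $\ZZ/3$ symmetry cyclically permuting the nodes, combined with the naturality argument at the end of Proposition~\ref{prop03:localmodels}.

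\textbf{Main obstacle.} The hardest step is confirming that the chosen determinantal family actually is Altmann's $A_2$ component with the correct scheme structure. I would first try matching the Cayley-cone description of \cite[\S8]{Altmann97} for the three-segment decomposition, as was done in Lemma~\ref{lem03:quadric}.
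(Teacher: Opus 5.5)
Your proposal is correct and follows the paper's proof essentially step for step. The three shared steps are these.
\begin{itemize}
\item The family is the same rank-one $3\times3$ matrix family with prescribed diagonal differences. The paper settles your ``main obstacle'' by matching Altmann's explicit equations in \cite[Section~5.6]{Altmann97} and his description of the total space as the cone over $\PP^2\times\PP^2$ in \cite[Section~8.2(ii)]{Altmann97}.
\item The simultaneous partial resolution is the same $\operatorname{Tot}_{\PP^2}(\cO(-1)^{\oplus3})$, retaining the column line.
\item The chart elimination is the same, giving $x_iy_i-x_jy_j=d_i-d_j$, and the three cyclic differences sum to zero.
\end{itemize}

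For the common scalar, the paper instead uses Schur's lemma for the irreducible two-dimensional representation of the full triangle group (rotation and reflection). Your per-node $\ZZ/3$ transitivity argument also works. In fact, your zero-sum pairing with $E$ already forces the three scalars to agree without any symmetry, because the three algebraic nodal forms satisfy only one linear relation, namely their zero sum.
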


\begin{proof}
We first work in algebraic parameters before imposing the Hodge
normalization.  Write $D=\CC^3/\CC(1,1,1)$ and represent its points
by $(d_1,d_2,d_3)$.  The branch family is
\begin{equation}\label{eq03:matrixfamily}
 \mathcal M_d=\{M:\ \rk M\leq1,\quad M_{ii}=z+d_i\ (1\leq i\leq3)\}.
\end{equation}
Common translation of the $d_i$ is absorbed in $z$.  This is the
actual restricted versal family, not just a first-order model.
For an explicit comparison with \cite[Section~5.6]{Altmann97}, take
edge parameters $(a,b,c,a,b,c)$ and the matrix
\[
 \begin{pmatrix}
 z+c&z_1&z_2\\ z_4&z+b&z_3\\ z_5&z_6&z+a
 \end{pmatrix}.
\]
Its nine $2\times2$ minors are the six equations
$z_it_i-z_{i-1}z_{i+1}=0$ (cyclic indices), with
$(t_1,\ldots,t_6)=(z+a,z+b,z+c,z+a,z+b,z+c)$,
and the three equations
$t_2t_3-z_1z_4=t_3t_4-z_2z_5=t_1t_2-z_3z_6=0$.
They generate exactly the branch ideal.  This also follows from
\cite[Section~8.2(ii)]{Altmann97}, which describes its total space
as the cone over $\PP^2\times\PP^2$ with the two diagonal-difference
functions.

Resolve that total space by retaining its column line:
\begin{equation}\label{eq03:incidence}
 \widetilde{\mathcal M}
 =\{(M,L):\operatorname{im}M\subseteq L\}
 =\operatorname{Tot}_{\PP^2}(\cO(-1)^{\oplus3})
 \longrightarrow\{M:\rk M\leq1\}.
\end{equation}
Write $M=xy^{\mathsf T}$, where $x\ne0$ and
$(x,y)\sim(tx,t^{-1}y)$.  The incidence relation proves properness.
A nonzero rank-one matrix has a unique column line, and on an open
set $M_{k\ell}\ne0$ this line is represented by its $\ell$th column.
These formulas give regular inverses, proving that
\eqref{eq03:incidence} is an isomorphism away from the zero matrix.

Compose \eqref{eq03:incidence} with the diagonal-difference map to
$D$.  On $x_k=1$, let $i,j$ be the two other indices.  Elimination of
$y_k,z$ gives the single equation
\begin{equation}\label{eq03:a2chart}
 x_i y_i-x_j y_j=d_i-d_j,
 \qquad y_k=x_i y_i-d_i+d_k.
\end{equation}
The two independent base coordinates can be taken as $d_i-d_j$
and $d_k-d_i$.  Thus each chart is the product of the standard nodal
deformation with a smooth one-dimensional base.  This proves
flatness over the full plane, including its origin and discriminant.
In the central fibre its only singular point in this chart has all
four coordinates zero.  The three resulting points lie over the
three distinct coordinate points of $\PP^2$ and give exactly three
nodes.  The zero section is the exceptional $\PP^2$.  Resolving the
nodes by blowing up their points on this surface recovers
$\operatorname{Bl}_3\PP^2$ and the curves $e_1,e_2,e_3$.
The star fan is the second subdivision of
Proposition~\ref{prop03:localmodels}; in particular the central
morphism is crepant and is $W_{6,2}\to C_6$.

The zero matrix has all diagonal differences zero.  It therefore
occurs only over $0\in D$.  The isomorphism away from the zero
matrix consequently restricts to an isomorphism on every fibre with
$d\ne0$, with no exception along a discriminant line.  The chart
equations show that the three singularity parameters are the cyclic
differences
\[
 d_2-d_3,\quad d_3-d_1,\quad d_1-d_2.
\]
Their sum is zero.  A nonzero base point lies on at most one of their
zero lines, and the corresponding chart has one node by its
nondegenerate quadratic equation.  All remaining charts are smooth.
This proves both the discriminant statement and the assertion about
its noncentral fibres.

To identify parameters with the fixed marking, label the three
coordinate points, and choose signs in the nodal equations, so that
their differences in the order $e_1,e_2,e_3$ are
$d_3-d_1,d_2-d_3,d_1-d_2$.  Taking
$(d_1,d_2,d_3)=(a,b,0)$ gives $(-a,b,a-b)$ algebraically.
The determinant-twisted equivariance in
Lemma~\ref{lem:branchhodge} identifies the two-dimensional parameter
representation with the $A_2$ root representation, uniquely up to a
nonzero scalar: rotation and reflection act as the usual irreducible
two-dimensional representation of the triangle symmetry group.
Thus, after matching this marking, the Hodge comparison multiplies
these three algebraic differences by one common nonzero scalar.
Rescale $a,b$ simultaneously by that scalar.  Naturality under
inclusion of supports now gives exactly
\[
 aA+bB=-a e_1+b e_2+(a-b)e_3
\]
with the prescribed global volume form.  The linear rescaling leaves
the three discriminant lines unchanged.  This proves the asserted
normalized parameter map.
\end{proof}

The constants relating algebraic parameters to Hodge-normalized
parameters may differ between singular points, and between the
$A_1$ and $A_2$ summands at one degree-six point.  They are fixed by
the one global volume form, not by unrelated choices of local
coordinates.  Equations~\eqref{eq03:nodaltransfer} are identities
after this normalization.  Their vanishing conditions are independent
of the remaining common nonzero factors.

The simultaneous constructions also distinguish a nonzero tangent
on a smoothing component from a tangent transverse to its
discriminant.  Every nonzero $A_1$ parameter smooths the germ, whereas
a nonzero vector in the $A_2$ plane may leave one of the three nodes
unsmoothed.  Because the simultaneous modification is an isomorphism
on every noncentral fibre, this distinction can be applied to an
arbitrary analytic classifying arc, including an arc tangent to a
discriminant line to high order.  It is this property that permits
the necessity argument in Section~\ref{sec04:smoothing}.

\section{The smoothing criterion}\label{sec04:smoothing}

Throughout this section, $X$ is a connected normal projective complex
threefold with $\omega_X\cong\cO_X$ and $H^1(X,\cO_X)=0$, whose
singularities are isolated ordinary double points and the specified analytic
anticanonical cones over $\dP_5$, $\dP_6$, $\dP_7$, and
$\PP^1\times\PP^1$.  Fix a local component $S_p$
at every cone point, and write $S=(S_p)$.  We retain the entire nodal base
and the entire degree-five and $\PP^1\times\PP^1$ cone bases, and use the reduced
component at a degree-seven point.  The smooth crepant
resolution $Y_0\to X$, the link subspaces $R_{p,S_p}$, and their relation
space
\begin{equation}\label{eq04:relations}
 K_S=\ker\left(\bigoplus_p R_{p,S_p}
                   \longrightarrow H_2(Y_0;\QQ)\right)
\end{equation}
are as in Section~\ref{sec:deformation}.  All tangent identifications below
use contraction with one fixed generator of $H^0(X,\omega_X)$.  In
particular, coefficients in~\eqref{eq04:relations} are the resulting
homological coordinates; they are not independently normalized local
equation parameters.

Choose an analytic semiuniversal deformation of $X$, analytic local
semiuniversal deformations of its singularities, and analytic classifying
maps from the first base to the latter bases.  We denote by $\Def^S(X)$
the scheme-theoretic inverse image of the product of the selected local
components.  These choices specify the analytic subgerm and its completed
local ring.  We use $\mathbf{Def}_X$ for the functor of marked deformations
over local Artin $\CC$-algebras, and $\Def(X)$ for an analytic
semiuniversal base.

\subsection{Global partial resolutions}

\begin{proposition}\label{prop04:globalmodel}
The local morphisms of Proposition~\ref{prop03:localmodels}, together
with the identity near degree-five and original
$\PP^1\times\PP^1$ cones, determine a
proper crepant analytic modification $\pi:Z\to X$.  Its singularities are
the original nodes, two or three nodes over each degree-six point according
to its selected component, and one anticanonical
$\PP^1\times\PP^1$ cone over each degree-seven point, together with
the original degree-five and $\PP^1\times\PP^1$ cones.  Moreover,
\begin{equation}\label{eq04:cy}
 R\pi_*\cO_Z\simeq\cO_X,\qquad
 \omega_Z\cong\cO_Z,\qquad H^1(Z,\cO_Z)=0.
\end{equation}
Every smooth resolution of $Z$ is Moishezon and satisfies the
$\partial\bar\partial$-lemma.
\end{proposition}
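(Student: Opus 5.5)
The plan is to build $Z$ by gluing, then check the three identities in \eqref{eq04:cy} locally near the singular points, and finally deduce the Hodge-theoretic assertions from bimeromorphy with $X$.

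\emph{Gluing.} Over a small neighbourhood $N_p$ of each degree-seven or degree-six point, take the local model $W_7$, $W_{6,1}$ or $W_{6,2}$ from Proposition~\ref{prop03:localmodels}, according to $S_p$. Elsewhere, including near nodes, degree-five points and original $\PP^1\times\PP^1$ cones, take the identity. Each local morphism is proper and is an isomorphism off $p$, so the pieces glue along $N_p\setminus\{p\}$. Hausdorffness and properness follow exactly as for $Y_0$ in Section~\ref{sec:deformation}: properness is local on the target, and fibres over distinct points of $X$ are separated by disjoint neighbourhoods. The singularities of $Z$ are then read off from Proposition~\ref{prop03:localmodels}: one $Q$ point over each degree-seven point, and two or three nodes over each degree-six point, together with the untouched singularities. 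Crepancy is local and is part of that proposition, so the pullback of $\Omega$ extends to a nowhere-vanishing section of $\omega_Z$. Here $Z$ is normal and Gorenstein, since nodes and $Q$ are Gorenstein. This gives $\omega_Z\cong\cO_Z$.

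\emph{The identity $R\pi_*\cO_Z\simeq\cO_X$.} Since $X$ is normal and $\pi$ is proper bimeromorphic with connected fibres, we have $\pi_*\cO_Z=\cO_X$. The sheaves $R^i\pi_*\cO_Z$ for $i>0$ are supported at the modified points, so the vanishing is local. Both $Z$ and the cones have rational singularities: nodes, $Q$ and the del Pezzo cones are canonical, hence rational. Let $Y_0\to Z$ be a resolution through which $f_0$ factors. In the degree-six cases, $\operatorname{Tot}(K_E)\to W_{6,i}$ is the contraction of the nodal curves. In the degree-seven case, take the flopped resolution of Proposition~\ref{prop03:localmodels}; it is still a resolution of $C_7$. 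Then $R g_*\cO=\cO_Z$ and $R f_*\cO=\cO_X$, and the Leray spectral sequence for the composite gives $R\pi_*\cO_Z=\cO_X$. Alternatively, one can invoke the Grauert--Riemenschneider vanishing for the crepant map, since $\omega_Z=\pi^*\omega_X$. Then $H^1(Z,\cO_Z)=H^1(X,\cO_X)=0$ by Leray.

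\emph{Resolutions of $Z$.} Any smooth resolution $\widetilde Z\to Z$ is compact, because $\pi$ is proper and $X$ is compact. It is bimeromorphic to the projective variety $X$, so it is Moishezon, and \cite[Thm.~5.22 and Cor.~5.23]{dgms1975} gives the $\partial\bar\partial$-lemma, exactly as for $Y_0$.

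I expect the main obstacle to be the degree-seven step. There $W_7\to C_7$ is small and the natural smooth resolution through $W_7$ differs from $\operatorname{Tot}(K_{\dP_7})$ by a flop, so the factorization through $Y_0$ used above must be replaced by the flopped resolution. The rationality argument has to be phrased so that it is independent of which resolution is used. The remaining steps are routine.
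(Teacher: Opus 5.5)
Your proposal is correct and follows the paper's proof essentially step by step: you glue the local models and check Hausdorffness and properness locally on the target, get $\omega_Z\cong\cO_Z$ from local crepancy of the pulled-back volume form, obtain $R\pi_*\cO_Z\simeq\cO_X$ by composing $Rg_*\cO_{\widetilde Z}\simeq\cO_Z$ with $R(\pi g)_*\cO_{\widetilde Z}\simeq\cO_X$, deduce $H^1(Z,\cO_Z)=0$ by Leray, and cite \cite{dgms1975} for the $\partial\bar\partial$-lemma. The degree-seven difficulty you flag does not arise, because the paper takes an arbitrary resolution $g:\widetilde Z\to Z$, notes that $\pi g$ is then a resolution of $X$, and uses that rationality of the singularities of $Z$ and $X$ holds for every resolution, so no factorization of $f_0$ through $Z$ (flopped or otherwise) is needed.
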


\begin{proof}
Choose mutually disjoint sufficiently small neighborhoods $U_p$ of the
degree-six and degree-seven cone points, and restrict the local morphisms to proper morphisms
$W_p\to U_p$.  Glue the $W_p$ to the unchanged complement of these
degree-six and degree-seven points using their specified isomorphisms
over $U_p\setminus\{p\}$.  This complement includes every original
degree-five and $\PP^1\times\PP^1$ cone.
The images of these charts are open.  Two points with different images in
$X$ are separated by inverse images of disjoint neighborhoods in $X$;
two distinct points over the same cone point are separated in the
Hausdorff space $W_p$.  Thus the glued space is Hausdorff.  The map is
proper over every $U_p$ and over the complement of these points,
and properness is local on the target.  This proves properness globally,
and hence compactness of $Z$.

The local models are Gorenstein and canonical.  In particular, $X$ and
$Z$ have rational singularities.  If $g:\widetilde Z\to Z$ is a
resolution, then $\pi g$ is a resolution of $X$ and
$Rg_*\cO_{\widetilde Z}\simeq\cO_Z$,
$R(\pi g)_*\cO_{\widetilde Z}\simeq\cO_X$.  Composition gives the
first assertion of~\eqref{eq04:cy}, and Leray gives its last assertion.
Local crepancy means that the pullback of the chosen volume form extends
to a generator of the dualizing sheaf on every local model.  These
generators agree on the overlaps, giving $\omega_Z\cong\cO_Z$.

A smooth resolution is compact and bimeromorphic to projective $X$, hence
Moishezon.  A smooth projective modification of it exists, and the
$\partial\bar\partial$-lemma descends under a proper bimeromorphic map
between compact complex manifolds
\cite[Theorem~5.22 and Corollary~5.23]{dgms1975}.
\end{proof}

Let $Y\to Z$ be the smooth crepant resolution obtained by resolving the
original nodes as in $Y_0$, the
new nodes on the sides specified in Proposition~\ref{prop03:localmodels},
and the degree-five and quadric cones by their vertex blowups.
At a degree-six
point this recovers the original neighborhood in $\operatorname{Tot}K_E$.
At a degree-seven point it differs from $Y_0$ by the described flop.
Near a degree-five or original $\PP^1\times\PP^1$ cone the two
resolutions agree.
Removing a smooth curve from a smooth threefold does not change $H_2$:
the relative groups in degrees two and three vanish by the real
codimension-four Thom isomorphism.  The common complements therefore
identify $H_2(Y_0;\QQ)$ and $H_2(Y;\QQ)$.  The degree-seven root is
represented on that complement by the difference of the two ruling
classes, so the identifications preserve the local substitutions used
in~\eqref{eq04:relations}.

\subsection{Blowing down Artin deformations analytically}

The following lemma supplies a morphism of deformation functors even
when $Z$ is not algebraic.  Its sheaf-theoretic part is the construction
in \cite[Proposition~2.12 and Corollary~2.13]{sano2017}; we include the
analytic realization.

\begin{lemma}[analytic blow-down]\label{lem04:blowdown}
Let $f:Z\to X$ be a proper holomorphic morphism of complex spaces with
$f_*\cO_Z=\cO_X$ and $R^1f_*\cO_Z=0$.  A flat deformation $Z_A$ over
a local Artin $\CC$-algebra $A$ determines a flat analytic deformation
\[
 X_A=(|X|,f_*\cO_{Z_A})
\]
and a proper morphism $f_A:Z_A\to X_A$ deforming $f$.  This construction
is natural in marked deformations and commutes with every Artin base
change.
\end{lemma}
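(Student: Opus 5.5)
The plan is to argue by induction on the length of $A$, using the theorem on formal functions / base change for $f_A$ together with the vanishing $R^1f_*\cO_Z=0$. The underlying topological map of $Z_A$ is that of $Z$, so $f$ gives a continuous proper map $|Z_A|\to|X|$. Set $\cO_{X_A}=f_*\cO_{Z_A}$. This is a sheaf of $A$-algebras, and $f^{-1}\cO_{X_A}\to\cO_{Z_A}$ makes $(|Z_A|,\cO_{Z_A})\to(|X|,f_*\cO_{Z_A})$ a morphism of ringed spaces. Three things remain to be checked: $X_A$ is a complex space, it is flat over $A$ with closed fibre $X$, and the construction commutes with base change.

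\textbf{Flatness and base change.} Choose a small extension $0\to J\to A'\to A\to0$ with $\mathfrak m_{A'}J=0$. Flatness of $Z_{A'}$ gives an exact sequence
\[
 0\to\cO_Z\otimes_\CC J\to\cO_{Z_{A'}}\to\cO_{Z_A}\to0.
\]
Push it forward. Since $R^1f_*(\cO_Z\otimes J)=0$, the pushforward stays exact:
\[
 0\to\cO_X\otimes J\to f_*\cO_{Z_{A'}}\to f_*\cO_{Z_A}\to0 .
\]
By induction, starting from $f_*\cO_Z=\cO_X$, the local criterion of flatness shows $f_*\cO_{Z_A}$ is $A$-flat with $f_*\cO_{Z_A}\otimes_A\CC=\cO_X$. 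The same devissage also gives $R^1f_*\cO_{Z_A}=0$ for every $A$, by induction on length. For base change along an arbitrary $A\to B$, I would factor $B$ as a sequence of small extensions of quotients and compare the two exact sequences. Alternatively, the natural map $f_*\cO_{Z_A}\otimes_AB\to f_*\cO_{Z_B}$ is an isomorphism by the standard cohomology-and-base-change argument, since $R^1$ vanishes and both sides are flat with the same closed fibre; the five lemma then applies on the filtration by powers of $\mathfrak m$.

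\textbf{Analytic structure.} This is the step I expect to be the main obstacle: showing that $(|X|,f_*\cO_{Z_A})$ is a complex space, and not just a flat ringed space. I would argue locally on $X$. Take a Stein neighborhood $U\subset X$ of a point embedded in $\CC^N$, with coordinate functions $x_i\in\cO_X(U)$. Surjectivity of $f_*\cO_{Z_A}\to\cO_X$ over $U$ holds by flatness, since $H^1(U,\cO_X\otimes J)=0$ on Stein $U$. So the $x_i$ lift to sections $\tilde x_i$. Together with a basis of $A$, they define a morphism from $f^{-1}U\subset Z_A$ to $\CC^N\times\operatorname{Spec}A$, which is finite onto its image because it is proper with finite fibres over $|U|$.

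Grauert's direct image theorem, applied to the proper map $Z_A\to\CC^N\times\operatorname{Spec}A$, makes $f_*\cO_{Z_A}$ a coherent $\cO_{\CC^N}\otimes A$-algebra. It is a quotient of $\cO_{\CC^N}\otimes A$, because it is generated by the $\tilde x_i$ modulo $\mathfrak m_A$ and Nakayama applies. Hence $(U,f_*\cO_{Z_A})$ is a closed complex subspace of $U'\times\operatorname{Spec}A$, and these local descriptions are intrinsic, so they glue.

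\textbf{Naturality.} The morphism $f_A$ is the canonical map of locally ringed spaces. It is proper because it is topologically $f$, and it is holomorphic because it is a morphism of complex spaces via the adjunction map. Naturality in marked deformations follows because isomorphisms of $Z_A$ inducing the identity on $Z$ push forward to isomorphisms of $f_*\cO_{Z_A}$ compatible with $\cO_X$.
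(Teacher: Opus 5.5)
Your proposal is correct in substance. Its analytic half is the paper's argument: you lift embedding coordinates over a small Stein $U$, apply Grauert's direct image theorem to the proper map from $f^{-1}(U)\subset Z_A$ to a polydisc times $\operatorname{Spec}A$, kill the cokernel by nilpotence of $\mathfrak m_A$, and take the coherent kernel.

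The flatness and base-change part is organized differently.
\begin{itemize}
\item \textbf{Your route.} You induct on the length of $A$ along small extensions $0\to J\to A'\to A\to0$, working with sheaves on $X$. The hypothesis $R^1f_*\cO_Z=0$ keeps the pushed-forward sequences exact, and you apply the nilpotent local criterion of flatness stalkwise. Base change, and the surjectivity of $\Gamma(U,f_*\cO_{Z_A})\to\Gamma(U,\cO_X)$, then need separate arguments.
\item \textbf{The paper's route.} It fixes a Stein $U$ with preimage $V$ and uses Leray and Cartan~B to get $H^1(V,\cO_Z)=0$. It then d\'evisses along a composition series of an arbitrary finite $A$-module $M$ to get $H^1(V,\cO_{Z_A}\otimes_AM)=0$. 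Exactness of $M\mapsto H^0(V,\cO_{Z_A}\otimes_AM)$ on finite modules gives three things at once: flatness; the isomorphism $H^0(V,\cO_{Z_A})\otimes_AM\cong H^0(V,\cO_{Z_A}\otimes_AM)$ for every finite $M$, hence base change along every Artin map $A\to A'$ since $A'$ is finite over $A$; and, with $M=\CC$, the lifts of the coordinates.
\end{itemize}
Your version is purely sheaf-theoretic on $X$ and yields $R^1f_*\cO_{Z_A}=0$ along the way. The paper's version is more economical.

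Two slips need repair.
\begin{itemize}
\item \textbf{Finiteness.} The map from $f^{-1}(U)\subset Z_A$ to $\CC^N\times\operatorname{Spec}A$ is not finite onto its image. Its fibres are those of $f$, which are positive-dimensional precisely in the intended application, where $f$ contracts curves and surfaces. Delete that sentence; Grauert's theorem needs only properness, which you have.
\item \textbf{Base change.} An Artin base change $A\to B$ need not be surjective (for instance $\CC\to\CC[\epsilon]/(\epsilon^2)$), so factoring $B$ through small extensions of quotients of $A$ is not available. Your alternative does work once stated precisely:
\begin{itemize}
\item $Z_B=Z_A\times_AB$ is $B$-flat, so your flatness step makes $f_*\cO_{Z_B}$ flat over $B$ with closed fibre $\cO_X$.
\item The natural map $f_*\cO_{Z_A}\otimes_AB\to f_*\cO_{Z_B}$ reduces modulo $\mathfrak m_B$ to the identity of $\cO_X$.
\item A homomorphism into a flat module over an Artin local ring that is an isomorphism modulo the maximal ideal is an isomorphism. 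Nilpotent Nakayama handles the cokernel; flatness of the target, together with nilpotent Nakayama again, handles the kernel.
\end{itemize}
\end{itemize}
Finally, the surjectivity of sections over $U$ that you need for lifting coordinates does not come from flatness alone. It comes from your exact sequences, Cartan~B for $\cO_X\otimes J$, and induction.
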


\begin{proof}
The marking identifies the underlying topological space of $Z_A$ with
$|Z|$, so the displayed pushforward is defined as a sheaf of $A$-algebras.
Let $U\subset X$ be a sufficiently small Stein open subset and put
$V=f^{-1}(U)$, $\mathcal F=\cO_{Z_A}|_V$.  Leray and Cartan's theorem~B
give $H^1(V,\cO_Z)=0$.  For every finite $A$-module $M$ we claim
\begin{equation}\label{eq04:modulevanish}
 H^1(V,\mathcal F\otimes_A M)=0.
\end{equation}
Indeed, a composition series of $M$ has successive quotients $\CC$.
Flatness of $\mathcal F$ preserves its short exact sequences under
tensor product.  The long cohomology sequences, starting from
$\mathcal F\otimes_A\CC=\cO_Z|_V$, prove
\eqref{eq04:modulevanish} by induction.

It follows that $M\mapsto H^0(V,\mathcal F\otimes_A M)$ is exact on
finite $A$-modules.  Applying it to a finite presentation of $M$ by free
modules gives the natural isomorphism
\begin{equation}\label{eq04:modulebc}
 H^0(V,\mathcal F)\otimes_A M
       \xrightarrow{\ \sim\ }H^0(V,\mathcal F\otimes_A M).
\end{equation}
Consequently $H^0(V,\mathcal F)$ is $A$-flat: exactness on finite
modules suffices over the Noetherian ring $A$.  Passing to stalks on
$X$ proves flatness of $f_*\cO_{Z_A}$ and the corresponding sheaf
version of~\eqref{eq04:modulebc}.  In particular its reduction modulo
the maximal ideal $\mathfrak m_A$ is $\cO_X$.

To prove analyticity, take $U$ small enough to be a closed analytic
subspace of a polydisc $D$.  Equation~\eqref{eq04:modulebc}, with
$M=\CC$, lifts its embedding coordinates to global sections of
$\mathcal F$.  Together with the structure morphism to
$\operatorname{Spec}_{\mathrm{an}}A$ these sections define a holomorphic
map
\[
 g_A:V_A\longrightarrow D\times\operatorname{Spec}_{\mathrm{an}}A.
\]
Its underlying continuous map is the proper composite
$V\to U\hookrightarrow D$, hence $g_A$ is proper.  Grauert's direct
image theorem for complex spaces, including nonreduced spaces,
implies that $g_{A*}\cO_{V_A}$ is coherent on the polydisc thickening
\cite[\S6, Hauptsatz~I]{grauert1960}.  The natural algebra map
\[
 \cO_{D\times\operatorname{Spec}_{\mathrm{an}}A}
                   \longrightarrow g_{A*}\cO_{V_A}
\]
is surjective modulo $\mathfrak m_A$, since its reduction is
$\cO_D\to\cO_U$.  Its coherent cokernel $\mathcal C$ therefore
satisfies $\mathcal C=\mathfrak m_A\mathcal C$, and nilpotence gives
$\mathcal C=0$.  Its kernel is a coherent ideal.  The resulting closed
analytic subspace has underlying ringed space
$(U,f_*\cO_{Z_A}|_U)$.

This ringed space is independent of the lifted coordinates.  Thus the
local analytic realizations glue, and the adjunction counit induces
$f_A$.  Locally this is the holomorphic map to the closed subspace just
constructed; its central restriction is $f$ and its underlying map is
proper.  Finally, if $A\to A'$ is any homomorphism of local Artin
$\CC$-algebras, then $A'$ is a finite $A$-module.  The sheaf version
of~\eqref{eq04:modulebc} identifies the blow-down of
$Z_A\times_A A'$ with $X_A\times_A A'$.  These natural tensor
identifications respect composition.  An isomorphism of marked source
deformations pushes forward to an isomorphism of the constructed target
deformations, proving the functorial assertion.
\end{proof}

\subsection{Unobstructedness of the partial resolution}

\begin{lemma}\label{lem04:unobstructed}
The functor $\mathbf{Def}_Z$ is unobstructed.  Both
$\mathbf{Def}_Z$ and $\mathbf{Def}_X$ are prorepresentable, and
Lemma~\ref{lem04:blowdown} induces a morphism of their formal bases
\begin{equation}\label{eq04:beta}
 \beta:\widehat{\Def}(Z)\longrightarrow\widehat{\Def}(X).
\end{equation}
\end{lemma}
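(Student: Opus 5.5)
The plan is to deduce unobstructedness of $\mathbf{Def}_Z$ from Friedman's generalized $T^1$-lifting theorem \cite[Thm.~3.10]{friedman2026} together with \cite[Thm.~3.3]{friedman2026}. Prorepresentability will come from vanishing of infinitesimal automorphisms, and $\beta$ from Lemma~\ref{lem04:blowdown} applied to the universal formal family. For the lifting theorem I first record the hypotheses on $Z$. By Proposition~\ref{prop04:globalmodel}, $Z$ is compact with $\omega_Z\cong\cO_Z$, $H^1(Z,\cO_Z)=0$, and its smooth resolutions are Moishezon and satisfy the $\partial\bar\partial$-lemma. Its singularities are isolated and are nodes, degree-five cones, or cones $Q$. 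Each of these has a smooth full local deformation base: for nodes this is classical, for $C_5$ it is Proposition~\ref{prop03:degree5family}, and for $Q$ it is Lemma~\ref{lem03:quadric}. The key input to Friedman's criterion is the local statement that the relevant local $T^1$ functors of the germ (the $T^1$ of $A_n$-deformations relative to $A_{n-1}$) are flat, that is, they satisfy base change. I would verify this directly from smoothness of each local base: over a smooth base the semiuniversal family gives $T^1_{\mathcal Y/S}$ locally free of the correct rank. The global half of the $T^1$-lifting property reduces, through the local-to-global sequence and Lemma~\ref{lem:omegaL}, to surjectivity of $H^1(U;\Omega^2)$-type groups over $A_n\to A_{n-1}$. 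This is exactly where the $\partial\bar\partial$-lemma on a Moishezon resolution replaces K\"ahler Hodge theory in Friedman's argument; \cite[Thm.~3.10]{friedman2026} is formulated to accept this input. The conclusion is that $\mathbf{Def}_Z$ has the $T^1$-lifting property, and hence, by \cite[Thm.~3.3]{friedman2026}, it is unobstructed.

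For prorepresentability I need $H^0(Z,\Theta_Z)=0$, and similarly $H^0(X,\Theta_X)=0$. Contraction with the volume form gives $\Theta\cong\Omega^{[2]}$ on the regular locus. Reflexive $2$-forms extend to any resolution of these klt singularities, so $H^0(\Theta_Z)\cong H^0(\widetilde Z,\Omega^2)$. By Hodge symmetry, which holds via the $\partial\bar\partial$-lemma, this is $\overline{H^1(\widetilde Z,\cO)}$, and it vanishes by $R\pi_*\cO=\cO$ and $H^1(\cO)=0$. The same argument applies to $X$. Since automorphisms then lift uniquely along small extensions, Schlessinger's criterion gives prorepresentability for both functors.

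To construct $\beta$, note that Proposition~\ref{prop04:globalmodel} gives $\pi_*\cO_Z=\cO_X$ and $R^1\pi_*\cO_Z=0$, so Lemma~\ref{lem04:blowdown} applies. It yields a natural transformation $\mathbf{Def}_Z\to\mathbf{Def}_X$ compatible with base change, and hence a morphism of the prorepresenting hulls, which is \eqref{eq04:beta}. The main obstacle is the first step, namely checking that Friedman's hypotheses genuinely allow the non-lci cone $Q$ and $C_5$ and a non-K\"ahler Moishezon $Z$. If the local flatness input there is phrased through $T^2$, I would use the footnoted $T^2_Q\neq0$ together with full-base smoothness to argue that only the vanishing of obstruction maps is needed.
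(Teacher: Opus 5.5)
Your overall architecture matches the paper's proof: Friedman's lifting theorem plus smooth local bases, then \cite[Thm.~3.3]{friedman2026}; vanishing of $H^0(\Theta)$ for prorepresentability; and the blow-down lemma for $\beta$. The local step, however, is wrong as you state it.

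\textbf{The local input.} Relative $T^1$ of a smoothing is not locally free over the smooth base. It is supported on the critical locus, so its generic rank is zero while its fibre at $0$ is $T^1_Y\neq0$. For the node $xy-zw=t$ over $A_k=\CC[t]/(t^{k+1})$ one gets $T^1(Y_k/A_k)=A_k/(t)\cong\CC$, which is not $A_k$-flat for $k\ge1$. So flatness of the local $T^1$ cannot be the input, and your proposed verification fails.

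What is needed, and true, is only that the local restriction maps $T^1_{\mathrm{loc}}(Z_k/A_k)\to T^1_{\mathrm{loc}}(Z_{k-1}/A_{k-1})$ are surjective, i.e.\ $\delta_{\mathrm{loc}}=0$. The paper obtains this from smoothness of each local deformation functor. Glue the given $A_k$-deformation and a marked relative deformation over $B_{k-1}=A_{k-1}[\epsilon]/(\epsilon^2)$ into an object over $D_k=A_k\times_{A_{k-1}}B_{k-1}$. Then lift it along the small extension $B_k\to D_k$.

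\textbf{How Friedman's theorem is used.} Your description of the ``global half'' as a surjectivity of $H^1(U;\Omega^2)$-type groups misdescribes the mechanism. \cite[Thm.~3.10]{friedman2026} gives only the \emph{generalized} property: localization $\ell\colon\mathbb T^2_Z\to H^0(Z,\mathcal T^2_Z)$ is injective on the image of the connecting map $\delta\colon T^1(Z_{k-1}/A_{k-1})\to\mathbb T^2_Z$. Localization commutes with the connecting maps, so $\ell\delta$ factors through $\delta_{\mathrm{loc}}=0$. Injectivity on $\operatorname{im}\delta$ then forces $\delta=0$, which is ordinary $T^1$-lifting. This combination is the actual content of the step, and your sketch does not supply it.

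\textbf{The automorphism argument.} Hodge symmetry identifies $\overline{H^0(\widetilde Z,\Omega^2)}$ with $H^{0,2}=H^2(\widetilde Z,\cO)$, not with $H^1$. The repair is the paper's chain. Rationality gives $H^2(\widetilde Z,\cO)\cong H^2(Z,\cO_Z)$, and Serre duality with $\omega_Z\cong\cO_Z$ gives $H^2(Z,\cO_Z)\cong H^1(Z,\cO_Z)^\vee=0$; the same holds for $X$.

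Your use of extension of reflexive $2$-forms is a legitimate alternative to the paper's route. The paper instead lifts vector fields to an equivariant resolution $h$ and uses the contraction injection $\Theta_{\widetilde T}\hookrightarrow\Omega^2_{\widetilde T}$ together with $h_*\Theta_{\widetilde T}=\Theta_T$. This avoids invoking an extension theorem for forms at isolated singularities in dimension three, where the elementary codimension criterion does not apply. Your construction of $\beta$ agrees with the paper's.
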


\begin{proof}
By Proposition~\ref{prop04:globalmodel}, $Z$ is compact, normal and
Gorenstein with trivial dualizing sheaf, $H^1(Z,\cO_Z)=0$, and isolated
rational, hence Du Bois, singularities.  A resolution satisfies the
$\partial\bar\partial$-lemma.  Friedman's theorem
\cite[Theorem~3.10]{friedman2026} gives the generalized $T^1$ lifting
property.  We explain why here it gives ordinary $T^1$ lifting.

Set $A_k=\CC[t]/(t^{k+1})$ and
$B_k=A_k[\epsilon]/(\epsilon^2)$.  For a deformation $Z_k/A_k$ let
$Z_{k-1}$ denote its restriction.  Relative first-order deformations,
with an identification over $\epsilon=0$, form $T^1(Z_k/A_k)$.
There is an exact sequence and its local counterpart
\[
 \begin{aligned}
 T^1(Z_k/A_k)&\longrightarrow T^1(Z_{k-1}/A_{k-1})
       \xrightarrow{\delta}\mathbb T^2_Z,\\
 T^1_{\mathrm{loc}}(Z_k/A_k)&\longrightarrow
 T^1_{\mathrm{loc}}(Z_{k-1}/A_{k-1})
       \xrightarrow{\delta_{\mathrm{loc}}}
       H^0(Z,\mathcal T^2_Z).
 \end{aligned}
\]
Here $\mathbb T^2_Z=\operatorname{Ext}^2(\Omega^1_Z,\cO_Z)$ and
$\mathcal T^2_Z=\mathcal{E}xt^2(\Omega^1_Z,\cO_Z)$ are the global
and local obstruction spaces used in the lifting theorem.
Localization $\ell:\mathbb T^2_Z\to H^0(Z,\mathcal T^2_Z)$
commutes with the connecting maps.  Generalized lifting means that
$\ell$ is injective on $\operatorname{im}\delta$ for every $k$ and
every $Z_k$.

The full local deformation bases of $Z$ are smooth: this is the usual
nodal calculation, Proposition~\ref{prop03:degree5family}, and
Lemma~\ref{lem03:quadric}.  This implies
surjectivity of the displayed local $T^1$ map with its markings, as
follows.  For a fixed local deformation over $A_k$ and a marked relative
deformation over $B_{k-1}$, use the specified identification over
$A_{k-1}$ to glue them over
\[
 D_k=A_k\times_{A_{k-1}}B_{k-1}.
\]
The deformation category of analytic spaces has the
Rim--Schlessinger gluing property: the local structure sheaves are
formed by the fibre product over the common restriction, and the
result comes with compatible identifications on both factors.  This
is gluing with a specified identification, not merely a choice of
isomorphism classes; see also the categorical formulation in
\cite[\S1]{Gross97rev}.  The natural map
$B_k\to D_k$ is surjective with kernel $(\epsilon t^k)$.
Smoothness of the local deformation functor lifts the glued object to
$B_k$.  Choose an isomorphism from its restriction to $D_k$ to that
object, and restrict this same isomorphism to the two factors.
The resulting identifications agree over $A_{k-1}$ and give exactly
the required relative first-order lift.  Infinitesimal automorphisms
can change these choices but do not prevent their existence; local
prorepresentability is not used.

Thus $\delta_{\mathrm{loc}}=0$, so $\ell\delta=0$ and injectivity
on $\operatorname{im}\delta$ gives $\delta=0$.  Ordinary $T^1$
lifting proves unobstructedness
\cite[Theorem~3.3]{friedman2026}.  In particular this argument concerns
the full base, including its nilpotent structure; it does not infer
smoothness from a statement about reduced supports.

For completeness, the vanishing of infinitesimal automorphisms follows
for both $T=X$ and $T=Z$ from the same argument.  Choose an equivariant
resolution $h:\widetilde T\to T$.  Canonical singularities allow the
volume form to pull back holomorphically, and contraction gives an
injection $\Theta_{\widetilde T}\to\Omega^2_{\widetilde T}$.
Rationality, Hodge symmetry on the Moishezon resolution, and Serre
duality give
\[
 h^0(\widetilde T,\Omega^2_{\widetilde T})
 =h^2(\widetilde T,\cO_{\widetilde T})
 =h^2(T,\cO_T)=h^1(T,\cO_T)=0.
\]
Since $h_*\Theta_{\widetilde T}=\Theta_T$, this proves
$H^0(T,\Theta_T)=0$, and the deformation functors are
prorepresentable; compare \cite[Lemma~3.2]{friedman2026}.
The natural Artin transformation supplied by
Lemma~\ref{lem04:blowdown} therefore gives~\eqref{eq04:beta}.
\end{proof}

\subsection{Lifting the entire selected tangent space}

\begin{lemma}\label{lem04:globaltangent}
Put $V_S=T_0\Def^S(X)$.  There is an exact sequence
\begin{equation}\label{eq04:tangent}
 0\longrightarrow H^1(X,\Theta_X)\longrightarrow V_S
       \longrightarrow K_S\otimes_{\QQ}\CC\longrightarrow0,
\end{equation}
and every vector in $V_S$ belongs to the image of $d\beta$.
\end{lemma}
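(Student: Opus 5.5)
The exact sequence \eqref{eq04:tangent} is \eqref{eq:selectedtangent} of Theorem~\ref{thm:D}, so the work lies in the second assertion. We show that $d\beta$ covers both ends of \eqref{eq04:tangent}. For this we apply the same first-order comparison to $Z$ and then compare the two localization sequences through the blow-down.

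\textbf{Step 1: the tangent space of $Z$.} Run the proof of Theorem~\ref{thm:D} on $Z$, using the smooth crepant resolution $Y$ and the fixed volume form. Lemma~\ref{lem:localhodge} applies at every singular point of $Z$: these are nodes, $Q$ cones and degree-five cones. Proposition~\ref{prop04:globalmodel} supplies the $\partial\bar\partial$-lemma on $Y$. The argument gives
\[
 0\to H^1(Z,\Theta_Z)\to\mathbb T^1_Z\to K_Z\otimes\CC\to0,
\]
where $K_Z$ is the kernel of the link map for the singularities of $Z$ into $H_2(Y;\QQ)$. The maps into the selected root subspaces are given by \eqref{eq03:nodaltransfer} and by part (1) of Proposition~\ref{prop03:localmodels}. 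Via these maps, together with the identification $H_2(Y_0;\QQ)=H_2(Y;\QQ)$ established after Proposition~\ref{prop04:globalmodel}, every vector of $K_S$ has a preimage tuple in $K_Z$. At an $A_1$ point we take the pair $(\lambda,-\lambda)$. At an $A_2$ point we take $(-a,b,a-b)$. At a degree-seven point we take the ruling difference. The other summands are unchanged. The composite then has the same image in $H_2$, so the tuple lies in $K_Z$. Hence $K_Z\to K_S$ is surjective; it is in fact an isomorphism.

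\textbf{Step 2: compatibility of $d\beta$ with localization.} The blow-down of Lemma~\ref{lem04:blowdown} is local over $X$. Near each cone point, $d\beta$ composed with localization at $p$ is the local blow-down map $T^1_{W_p}\to T^1_{C_p}$. By the simultaneous families (the $A_1$ construction, Proposition~\ref{prop03:a2family}, and the degree-seven flop comparison), this local map sends the local tangent of $W_p$ onto the selected branch tangent $R_{p,S_p}\otimes\CC$. Naturality of local cohomology under inclusion of supports, as in the proof of Proposition~\ref{prop03:localmodels}, shows it is exactly the transfer map on link classes. Consequently $d\beta$ maps $\mathbb T^1_Z$ into $V_S$: a one-parameter deformation of $Z$ blows down to one whose classifying maps factor through the selected branches. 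The resulting square with the two localization maps commutes. Combined with Step 1, every element of $K_S\otimes\CC$ lies in the image of $d\beta$ followed by localization.

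\textbf{Step 3: locally trivial directions.} It remains to show that $H^1(X,\Theta_X)\subset d\beta(\mathbb T^1_Z)$. Rationality gives $\pi_*\Theta_Z=\Theta_X$, and a locally trivial deformation of $X$ should lift to a locally trivial deformation of $Z$, since the local modifications are canonical. I would first try to prove $H^1(X,\Theta_X)\cong H^1(Z,\Theta_Z)$ via contraction with $\Omega$. This amounts to comparing $H^1(X_{\mathrm{reg}},\Omega^2)$ with its analogue on $Z_{\mathrm{reg}}$, using the kernel descriptions $H^1(\Theta)=\ker(\text{localization})$ on both sides. The two regular loci differ only by the exceptional sets of $\pi$, so the comparison reduces to local cohomology computations near the new nodes and $Q$ points. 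The alternative is a Leray argument, using $R^1\pi_*\Theta_Z$ to show that its contribution does not obstruct surjectivity. I expect this step to be the main obstacle. Once it is settled, the five lemma applied to the two exact sequences gives surjectivity of $d\beta$ onto $V_S$.
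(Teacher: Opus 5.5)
Steps 1 and 2 follow the paper's construction of a lift over $K_S\otimes\CC$. You replace $\kappa$ by the nodal/ruling tuple $\nu$, use the support sequence on $Y$ and the $\partial\bar\partial$-lemma to obtain $\eta\in\mathbb T^1_Z$ with local tuple $\nu$, and recover $\kappa$ by naturality of the connecting maps under restriction and inclusion of supports.

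Two points in Step 2 need more than you give. First, at a degree-seven point the neighbourhoods $A\subset Y_0$ and $A'\subset Y$ differ by the flop of $c=h-e_1-e_2$. So there is no inclusion of supports relating $H^2_{E'}(A';\Omega^2)$ to $H^2_E(A;\Omega^2)$. Both classes only restrict to the common open set $O=A\setminus c=A'\setminus c'$, and you must show that restriction to $O$ is injective on $K_p^{\mathrm{cr}}$. The paper does this by showing that the weight-zero part of $H^1(O;\Omega^2)$ vanishes, so $\delta_O$ is injective on $T^1$. Second, your explanation after the colon, that one-parameter deformations of $Z$ blow down into the selected branches, is a higher-order statement you do not establish. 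The paper gets factorization through the selected branch only under the nonzero-projection hypothesis of Theorem~\ref{thm04:smoothbase}. It is not needed here: the first-order transfer computation already gives $d\beta(\mathbb T^1_Z)\subseteq V_S$.

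The genuine gap is Step 3, which you leave open. Without it the lemma is not proved, because $d\beta(\eta)$ only matches a given $v\in V_S$ modulo $H^1(X,\Theta_X)$. Your proposed routes aim at $H^1(X,\Theta_X)\cong H^1(Z,\Theta_Z)$, which is more than is needed and which you do not establish; a Leray comparison would bring in $H^0(X,R^1\pi_*\Theta_Z)$ over the degree-six and degree-seven points. Also, ``rationality gives $\pi_*\Theta_Z=\Theta_X$'' is not a valid justification, since rationality concerns $\cO$, not vector fields.

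Your intuition that locally trivial deformations lift is right, and no cohomology comparison is required to realize it:
\begin{itemize}
\item Choose a Stein cover $\{U_i\}$ of $X$ in which each singular point lies in exactly one member and the other members avoid the singular set. Then every overlap lies where $\pi$ is an isomorphism.
\item Represent $v-d\beta(\eta)$ by a \v{C}ech cocycle $\{\xi_{ij}\}$ of vector fields.
\item Transport the same cocycle to $\pi^{-1}(U_i\cap U_j)\cong U_i\cap U_j$. It glues the trivial deformations of the $\pi^{-1}(U_i)$ over $\CC[\epsilon]$ by the automorphisms $1+\epsilon\xi_{ij}$.
\item Since $\pi_*\cO_Z=\cO_X$, blowing down these trivial pieces gives the trivial deformations of the $U_i$ with the same transition automorphisms. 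So the blow-down is exactly the class $v-d\beta(\eta)$.
\end{itemize}
Adding this lift to $\eta$ realizes $v$, which finishes the proof directly, with no five-lemma argument and no comparison of $H^1$ groups.
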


\begin{proof}
The exact sequence is the restriction of Theorem~\ref{thm:D} to the
selected local tangent subspaces.  We construct lifts through $d\beta$,
including lifts of its locally trivial kernel.

Let $\kappa\in K_S\otimes\CC$.  At a degree-six $A_1$ point set
$C_0=e_1$, $C_3=h-e_2-e_3$.  Since the convention of
Section~\ref{sec:deformation} is $\ell=C_0-C_3$, replace the
coefficient $\lambda\ell$ by nodal coefficients
$(\lambda,-\lambda)$ at $(C_0,C_3)$.  At an $A_2$ point replace
$\sum_i\mu_i e_i$, $\sum_i\mu_i=0$, by the three nodal coefficients
$\mu_i$.  At a degree-seven point use the same multiple of the
quadric ruling difference.  Retain the old nodal coefficients and
the ruling-difference coefficients at original $\PP^1\times\PP^1$
cones, and retain the whole local vector at each degree-five point.  The
curve identifications of Proposition~\ref{prop03:localmodels} and the
common complements identify the sum of this tuple $\nu$ with the
sum of $\kappa$ in $H_2(Y;\CC)$; it is therefore zero.

Write $F$ for the exceptional locus of $Y\to Z$, with local pieces
$F_q\subset Y_q$.  The local Hodge comparisons identify the components
of $\nu$ with local tangent classes and with their images in
$H^2_{F_q}(Y_q,\Omega^2_{Y_q})$.  In the support sequence
\begin{equation}\label{eq04:support}
 H^1(Y\setminus F,\Omega^2)
 \longrightarrow\bigoplus_q H^2_{F_q}(Y_q,\Omega^2)
 \longrightarrow H^2(Y,\Omega^2_Y),
\end{equation}
their image in the last group is the class of the corresponding
curves.  Under the injection
$H^2(Y,\Omega^2_Y)\hookrightarrow H^4(Y;\CC)$ supplied by the
$\partial\bar\partial$-lemma it is the Poincar\'e dual of their
homological sum, hence zero.  Exactness gives a class on
$Y\setminus F$.  Contraction with the volume form and the
depth-three comparison of deformations with the regular locus give
a tangent $\eta\in\mathbb T^1_Z$ with local tuple $\nu$, as in the
proof of Theorem~\ref{thm:D}.

Let $E'$ be the full exceptional set of $Y\to X$.  Restriction from
$Y\setminus F$ to $Y\setminus E'$ commutes with the boundary maps
in~\eqref{eq04:support} and the inclusion of supports
$H^2_F(Y,\Omega^2_Y)\to H^2_{E'}(Y,\Omega^2_Y)$.  At a degree-six
point this sends the nodal tuple to its sum of curve classes in $E$;
at a degree-seven point it sends the ruling difference to the original
root; at a degree-five or original $\PP^1\times\PP^1$ cone
it is the identity.
These are exactly the local components of $\kappa$.

At a degree-seven point $p$ this comparison is made across the flop, and we
give the details.  Let $A=\operatorname{Tot}(K_E)$ be the neighborhood of
$p$ in $Y_0$ and $A'$ the corresponding neighborhood in $Y$, let
$c\subset E$ be the curve $h-e_1-e_2$ flopped in
Proposition~\ref{prop03:localmodels}, let $c'\subset A'$ be its flop, and put
$O=A\smallsetminus c=A'\smallsetminus c'$.  The open set $O$ contains the
punctured neighborhood $U_p$ of $p$ and the surface $E\smallsetminus c$.
For $x\in T^1_{X,p}\cong H^1(U_p;\Omega^2)$ the connecting images
$\delta_A(x)\in H^2_E(A;\Omega^2)$ and
$\delta_{A'}(x)\in H^2_{E'}(A';\Omega^2)$, where $E'$ is the exceptional set of
$A'$, both restrict to the connecting image
$\delta_O(x)\in H^2_{E\smallsetminus c}(O;\Omega^2)$.  The kernel of
$\delta_O$ is the image of $H^1(O;\Omega^2)$.  After contraction with the
toric volume form, Altmann's concentration of $T^1$ in the Gorenstein degree
places $T^1$ in torus weight zero; the restriction of $\Omega$ differs from
the toric form by a unit, which acts on $T^1$ through a nonzero scalar as in
the proof of Lemma~\ref{lem:branchhodge}.  The weight-zero part of
$H^1(O;\Omega^2)$ vanishes: on the affine toric charts of $O$, the
weight-zero regular $q$-forms on the chart of a smooth cone $\sigma$ are
$\wedge^q(\sigma^\perp\otimes\CC)$, and the resulting \v{C}ech complex is
exact in degree one (Appendix~\ref{app:cert}).  Hence $\delta_O$ is
injective on $T^1$, and restriction to $O$ is injective on
$K_p^{\mathrm{cr}}=\delta_A(T^1)$.  The class $\delta_{A'}(x)$ is the Hodge
class of the ruling difference $\lambda(f_1-f_2)$ supplied by the
construction above.  The two rulings are represented by members of the
pencils $|h-e_1|$ and $|h-e_2|$ disjoint from $c$, which are the same curves
in $O$.  Thus $\delta_O(x)$ is also the restriction of the Hodge class of
$\lambda(e_2-e_1)\in K_p^{\mathrm{cr}}$, and injectivity gives
$\delta_A(x)=\lambda(e_2-e_1)$, the prescribed root component.

The construction of blow-down agrees with restriction on the common
regular locus.  Thus $d\beta(\eta)$ has the prescribed local
tangent on $X$, including at points where its coefficient is zero.

Given any desired vector $v\in V_S$ over $\kappa$, its difference
from $d\beta(\eta)$ belongs to $H^1(X,\Theta_X)$.  Choose a Stein
cover $\{U_i\}$ with exactly one member containing each singular
point and all other members avoiding the singular set.  Every overlap
lies in the locus where $\pi$ is an isomorphism.  Represent the
difference by a \v{C}ech cocycle $\{\xi_{ij}\}$ of vector fields.
Glue the trivial deformations of $\pi^{-1}(U_i)$ by the
automorphisms $1+\epsilon\xi_{ij}$ on the overlaps.  The additive
cocycle condition gives the gluing identity modulo $\epsilon^2$.
Pushing forward these trivial local deformations and their transitions
recovers the original cocycle on $X$.  Hence this locally trivial
global tangent also lifts, and adding its lift to $\eta$ gives $v$.
\end{proof}

\Needspace{7\baselineskip}
\subsection{Smoothness of a selected deformation base}

\begin{theorem}\label{thm04:smoothbase}
Suppose the projection $K_S\to R_{p,S_p}$ is nonzero at every
degree-six point $p$.  Then $\Def^S(X)$ is smooth, of dimension
\[
 h^1(X,\Theta_X)+\dim_{\QQ}K_S.
\]
Every tangent in $T_0\Def^S(X)$ is realized by a convergent curve in
this germ.  No nonzero-projection assumption is required at nodes,
degree-five or degree-seven points, or $\PP^1\times\PP^1$ cones.
\end{theorem}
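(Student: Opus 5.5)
The plan is to realize $\Def^S(X)$ as the image of the smooth formal germ $\widehat{\Def}(Z)$ under the blow-down morphism $\beta$ of Lemma~\ref{lem04:unobstructed}, and then to compare tangent spaces. Here $Z$ is the global partial resolution of Proposition~\ref{prop04:globalmodel} built from the profile $S$.

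\emph{Step 1: $\beta$ lands in $\Def^S(X)$.} By Lemma~\ref{lem04:unobstructed}, $\widehat{\Def}(Z)$ is the spectrum of a formal power series ring, so it is regular and integral. The composite of $\beta$ with the completed local classifying map at a singular point $p$ is a local homomorphism $\widehat{\cO}_{\Def(C_p),0}\to\CC[[x_1,\dots,x_N]]$. Its kernel is a prime ideal, so the map factors through the reduced structure. It also factors through a single irreducible component of the local base in \eqref{eq:localbases}.
\begin{itemize}
\item At nodes, degree-five points and $\PP^1\times\PP^1$ cones the local base is smooth, so there is nothing to check.
\item At a degree-seven point the only reduced component is the line, so the nilpotent $w$ is killed automatically.
\item At a degree-six point the component is either the line $u=v=0$ or the plane $s=0$. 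It is detected on tangent vectors, because these two components meet only at the origin. By Lemma~\ref{lem04:globaltangent}, $d\beta$ surjects onto $V_S$. The hypothesis that $K_S\to R_{p,S_p}$ is nonzero supplies a vector of $V_S$ whose local tangent at $p$ is a nonzero element of $R_{p,S_p}$. The image of $d\beta$ therefore meets the selected component away from the origin, which forces the factorization through $R_{p,S_p}$.
\end{itemize}
Since $\Def^S(X)$ is the scheme-theoretic inverse image of the product of the selected reduced components, $\beta$ factors through $\widehat{\Def}^S(X)$.

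\emph{Step 2: smoothness and dimension.} Embed $\widehat{\Def}^S(X)$ in a smooth formal germ $G$ with $T_0G=V_S$. The composite $\widehat{\Def}(Z)\to G$ has surjective differential by Lemma~\ref{lem04:globaltangent}. A morphism of formal power series rings with surjective differential makes the target coordinates part of a coordinate system on the source, so $\cO_G\to\cO_{\widehat{\Def}(Z)}$ is injective. The ideal of $\widehat{\Def}^S(X)$ in $G$ lies in this kernel, hence vanishes, so $\widehat{\Def}^S(X)=G$ is formally smooth. Smoothness of an analytic germ can be read off from its completion, so $\Def^S(X)$ is smooth. Its dimension is $\dim V_S=h^1(X,\Theta_X)+\dim_\QQ K_S$ by \eqref{eq04:tangent}.

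\emph{Step 3: convergent curves.} Since $\Def^S(X)$ is a smooth analytic germ, every tangent vector is the velocity of a linear disc in analytic coordinates. Restricting the semiuniversal family to that disc gives a convergent one-parameter deformation, and it lies on the selected branches by the definition of $\Def^S(X)$.

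The main obstacle is Step 1 at degree-six points: showing, scheme-theoretically, that the blow-down of an \emph{arbitrary} Artin deformation of the two- or three-node model lands on the correct reduced branch. The locus of $W_{6,2}$ alone has three independent nodal parameters, so I will argue globally through integrality of $\widehat{\Def}(Z)$ rather than locally. The nonzero-projection hypothesis is exactly what rules out the degenerate case in which every tangent of the image vanishes at $p$, where the line and the plane could not be distinguished. At the other singularity types, smoothness or reducedness of the selected local base makes this step automatic, which accounts for the final sentence of the theorem.
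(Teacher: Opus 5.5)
Your proposal is correct and follows the paper's proof essentially step for step. Integrality of the power-series ring of $\widehat{\Def}(Z)$ kills the degree-seven nilpotent $w$, and together with the nonzero linear term supplied by the nonzero-projection hypothesis and Lemma~\ref{lem04:globaltangent} it forces factorization through the selected degree-six branch; your minimal-prime phrasing is equivalent to the paper's direct use of $su=sv=0$ in the domain. Surjectivity of the differential then gives $I=0$, and the completion argument yields smoothness and convergent curves.

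One small point of wording: at a degree-six point, what you actually need is that the two branch tangent spaces meet only in $0$, not just that the branches do. Here the branches are linear subspaces, so the two statements coincide.
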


\begin{proof}
Let $B$ be the completed local ring of $\Def(Z)$.  By
Lemma~\ref{lem04:unobstructed} it is a formal power-series ring, in
particular a domain.  At a degree-seven point the local base is
$\CC[[s,w]]/(w^2,sw)$, so the pullback of $w$ to $B$ vanishes.
At a degree-six point the local base is
$\CC[[s,u,v]]/(su,sv)$, with $A_1$ component $u=v=0$ and
$A_2$ component $s=0$.  If $A_1$ was selected, the nonzero
projection hypothesis and Lemma~\ref{lem04:globaltangent} imply
that the pullback of $s$ has a nonzero linear term.  It is nonzero in
$B$, and the two equations force the pullbacks of $u,v$ to vanish.
For a selected $A_2$ component, one of $u,v$ has a nonzero linear
term, forcing the pullback of $s$ to vanish.  Thus $\beta$ factors
through the full selected formal subspace.  Nodes, degree-five
cones, and original $\PP^1\times\PP^1$ cones impose no branch equations, since their
entire local bases are retained.  Its differential onto
that subspace is surjective by Lemma~\ref{lem04:globaltangent}.

Write its completed target ring minimally as
$A=\CC[[x_1,\ldots,x_r]]/I$, with $I\subset(x_1,\ldots,x_r)^2$.
Surjectivity of the differential means that the images of the $x_i$
in $B$ have independent linear terms.  Extend them to a formal
coordinate system of $B$.  The composite
$\CC[[x_1,\ldots,x_r]]\to B$ is injective, whereas every element
of $I$ maps to zero.  Hence $I=0$.  Equation~\eqref{eq04:tangent}
gives the dimension.  An analytic local ring is regular if its
completion is regular, so the analytic subgerm $\Def^S(X)$ is
smooth.  Analytic coordinates on that germ realize every tangent by
a convergent curve.  This last step does not require convergence of
the formal map $\beta$.
\end{proof}

\subsection{Necessity from local periods}

We recall the form of the period argument that will be used after a
simultaneous partial resolution.  Its hypotheses require compactness;
the intermediate space need not be projective.

\begin{lemma}[period necessity]\label{lem04:periodnecessity}
Let $T$ be a connected compact normal Gorenstein threefold with
$\omega_T\cong\cO_T$, $H^1(T,\cO_T)=0$, and a smooth crepant
resolution $\widetilde T\to T$ that is small over the nodes and the blowup of
the vertex at each cone point.  Suppose its singularities are nodes
and anticanonical cones over $\dP_5$, $\dP_6$, $\dP_7$, or
$\PP^1\times\PP^1$, and that it has a
convergent proper smoothing which uses the $A_1$ component at every
degree-six point.  Then the relation space of its selected local
subspaces in $H_2(\widetilde T;\QQ)$ contains a vector nonzero
on every rank-one summand and satisfying
$\prod_{i=1}^5\lambda_i\ne0$ at every degree-five point.
\end{lemma}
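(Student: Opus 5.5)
We will run the standard vanishing-cycle period argument of \cite{PaperIV} for a relative volume form on the smoothing. We construct the relation vector one singular point at a time from the periods of its Milnor cycles, and then check the required nonvanishing conditions.

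Let $\mathcal T\to\Delta$ be the convergent proper smoothing, with general fibre $T_t$. Since $H^1(T,\cO_T)=0$ and $\omega_T\cong\cO_T$, after shrinking $\Delta$ the relative dualizing sheaf is trivial. We therefore choose a relative volume form $\Omega_t$ restricting to the fixed generator on $T$.

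For each singular point $p$ we take a Milnor ball $B_p$ with Milnor fibre $\Mil_p=T_t\cap B_p$ and boundary $L_p$. The Mayer--Vietoris (or specialization) sequence for $T_t=\bigl(T_t\setminus\bigcup_pB_p\bigr)\cup\bigcup_p\Mil_p$ gives the following. A class $\gamma\in\bigoplus_pH_3(\Mil_p,L_p;\QQ)$ has a boundary tuple $\partial\gamma\in\bigoplus_pH_2(L_p)$. This boundary tuple bounds globally in $T\setminus\Sing$, and hence in $\widetilde T$, exactly when $\gamma$ comes from a class in $H_3(T_t)$. Dually, the classes $\beta\in H_3(L_p)\to H_3(\Mil_p)$ are the vanishing cycles. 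The relation vector will be the linear functional
\[
 \kappa:\ \beta\longmapsto \lim_{t\to0}\tfrac1t\int_{\beta_t}\Omega_t ,
\]
which we transport to link homology using Lemma~\ref{lem:localhodge}.

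More precisely, the plan has three steps.

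\textbf{Step 1.} For each selected local component we identify the leading term of the vanishing periods with the local Hodge class of the classifying arc.
\begin{itemize}
\item At a node this is the classical nodal period.
\item At a $\PP^1\times\PP^1$ cone we use the antipodal quotient of Lemma~\ref{lem03:quadricmilnor}. The period over the $\mathbb{RP}^3$ cycle is half the nodal period upstairs, so it is nonzero to first nonvanishing order.
\item At a degree-five point we use Lemma~\ref{lem03:degree5periods}. Its five boundary classes $\beta_i$ have periods $\alpha_i(b(t))u_i$, with $u_i$ a unit, along any arc.
\item At a degree-six $A_1$ point we pull back the simultaneous two-node modification. This replaces the cone by two nodes with parameters $(\lambda,-\lambda)$, $\lambda=c_1s$.
\item At a degree-seven point we use the reduced line. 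Via Proposition~\ref{prop03:localmodels}(1) it reduces to the quadric case.
\end{itemize}
Since the arc may have high contact order, we do not use first derivatives. Instead, let $m$ be the minimal order in $t$ among all these period functions. We take the $t^m$ coefficients. Each local coefficient is then a nonzero leading term precisely where the local period does not vanish identically, and this is guaranteed because the fibre is smooth: a vanishing period would leave an unsmoothed node. This is exactly the role of the simultaneous modifications and of Lemma~\ref{lem03:degree5periods}.

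\textbf{Step 2.} We show that the resulting tuple of leading coefficients, viewed in $\bigoplus_pR_{p,S_p}\otimes\CC$, lies in $K_S\otimes\CC$. The argument is the standard one.
\begin{itemize}
\item Any global relation among the vanishing cycles in $H_3(T_t)$, that is, any element of $\ker\bigl(\bigoplus H_3(\Mil_p)\to H_3(T_t)\bigr)$, has total period zero.
\item By Poincaré--Lefschetz duality on $T_t$ and on $\widetilde T$, such relations are exactly the annihilator of $K$.
\item Because $\Omega_t$ is a global closed form, the period functional kills these relations. Hence the leading-coefficient tuple pairs trivially with the annihilator of $K_S$.
\end{itemize}
Here we use $H_3(L_p)\cong H_3(\Mil_p)$ in the degree-five and quadric cases, from Lemmas~\ref{lem03:degree5milnor} and~\ref{lem03:quadricmilnor}. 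Since the coefficients are complex but $K_S$ is rational, a complex vector of $K_S\otimes\CC$ that is nonzero on each summand yields, by genericity, a rational vector with the same finitely many nonvanishing properties (each condition is Zariski-open on $K_S$ and nonempty).

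\textbf{Main obstacle.} The main difficulty is Step 1 at points where the arc is tangent to the discriminant. There the $t^m$ leading coefficient of some summand may vanish even though the period is not identically zero, because its order exceeds $m$. We will handle this by not insisting on one common order. Each condition, such as the nonvanishing of a rank-one summand or of $\lambda_i$, is separately witnessed by a nonzero functional on $K_S\otimes\CC$ obtained from the leading order of that particular period. Each condition is then a nonzero linear functional on $K_S$, so a generic element satisfies all of them simultaneously.
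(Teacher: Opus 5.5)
Your global mechanism is the paper's: nonzero local periods of the closed form $\Omega_t$ force the local cycles to survive in $H_3(T_t;\QQ)$, Mayer--Vietoris boundaries of global classes lie in $K_S$, and genericity over $\QQ$ finishes. The degree-seven step, however, has a genuine gap.

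\textbf{The degree-seven gap.} Proposition~\ref{prop03:localmodels}(1) only constructs the central small morphism $W_7\to C_7$. It supplies no simultaneous partial resolution of the degree-seven smoothing family that is an isomorphism on noncentral fibres. So you cannot pull back an arbitrary smoothing arc and read the degree-seven vanishing period off the quadric period of Lemma~\ref{lem03:quadricmilnor}. At the $A_1$ point your reduction is legitimate, because the two-node modification \eqref{eq03:a1charts} is constructed over the whole branch. Your fallback, that a vanishing period ``would leave an unsmoothed node'', has no content at degree seven, since no node is present.

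The paper proves this nonvanishing directly:
\begin{itemize}
\item the smooth affine fibre is $M=V\setminus E$ with $V=\operatorname{Bl}_{\mathrm{pt}}\PP^3$;
\item the form $\frac{w}{(w-1)^2}\frac{dx}{x}\wedge\frac{dy}{y}\wedge\frac{dz}{z}$ generates $\omega_M$;
\item an explicit toric $3$-cycle has period $\pm4\pi^2$;
\item the homogeneous generator of the relative canonical sheaf restricts to a constant multiple of this form, because units on $V\setminus E$ are constant ($\operatorname{Pic}V$ is torsion-free and $[E]\ne0$).
\end{itemize}
Hence the period along any arc is $b(t)^a\bigl(c\,g(p,0)+o(1)\bigr)$, nonzero for small $t\ne0$. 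To keep your route you would have to build the simultaneous small modification yourself, for instance torically from the Cayley cone of the segment-plus-triangle decomposition. You would then need to check that it is flat and an isomorphism over noncentral fibres.

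\textbf{Simplifications and corrections.} First, you need no common order $m$, no $\lim_{t\to0}\frac1t$, and no separate leading orders. At one fixed small $t_0\ne0$ all finitely many local periods are simultaneously nonzero. The period vector at $t_0$, which is the Mayer--Vietoris boundary of the Poincar\'e dual of $[\Omega_{t_0}]$, already lies in $K_S\otimes\CC$ with every required component nonzero. This single complex class can replace the paper's choice of one rational dual class per local cycle.

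Your Step~1 claim that the leading period terms equal the local Hodge class of the arc is false for arcs with higher contact: the Kodaira--Spencer class can vanish while the periods do not. Nothing in your argument depends on it, so drop it.

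Step~2 needs only the inclusion of Mayer--Vietoris boundaries in $K_S$ (you wrote $K$), not equality with an annihilator. You also need injectivity of $\partial\colon H_3(\Mil_p,L_p;\QQ)\to H_2(L_p;\QQ)$, so that a nonzero local period forces a nonzero component. At degree five, say explicitly that pairing with $\beta_i$ is a nonzero multiple of $\lambda_i$, as in Lemma~\ref{lem03:degree5periods}.
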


\begin{proof}
We give the argument in a form applicable to the possibly nonprojective
space $T$; the local period computations and link markings are those
of \cite[\S\S3--4]{PaperIV}, with the
$\PP^1\times\PP^1$ case supplied below and the degree-five
case supplied by Lemma~\ref{lem03:degree5periods}.
Write $T_t$ for a smooth nearby fibre.
Remove disjoint small singular neighborhoods and let $W$ be the
common exterior with boundary $\bigsqcup_pL_p$.  Smooth local
triviality along $W$ gives decompositions
\[
 T_t=W\cup\bigsqcup_p M_p,\qquad
 \widetilde T=W\cup\bigsqcup_p N_p,
\]
where $M_p$ is the local Milnor fibre and $N_p$ a resolution
neighborhood, with the canonical common link identifications.

The smoothing has a relative holomorphic volume form after shrinking
the disc.  Here are the details needed without a polarization.  The
relative dualizing sheaf is invertible because the family is flat
with Gorenstein central fibre, and is compatible with base change.
Its first Chern class restricts to zero on $W$, by smooth triviality
and $\omega_T\cong\cO_T$, and on each $M_p$, by a local
trivialization near the central singularity.  The links satisfy
$H^1(L_p;\ZZ)=0$: for nodes this follows from
$L_p\cong S^2\times S^3$, and for the cone links it follows from
the homotopy sequence of the circle bundle with nonzero Euler class
$K_E$ over the simply connected del Pezzo surface.  Mayer--Vietoris
therefore injects $H^2(T_t;\ZZ)$ into the sum of the corresponding
groups of the pieces, so $c_1(\omega_{T_t})=0$.  Semicontinuity gives
$H^1(T_t,\cO_{T_t})=0$; the exponential sequence implies
$\omega_{T_t}\cong\cO_{T_t}$.  Connectedness and Grauert base
change now give a relative section $\Omega$ generating the canonical
sheaf on every fibre.  Its restriction to a smooth fibre is closed.

At a node with local equation $xy-zw=b(t)$, the period of the
residue form on the vanishing sphere is $c\,b(t)$, where $c\ne0$.
Indeed, after writing the quadratic form as $\sum_i z_i^2$, the
sphere is $\sqrt b\,S^3$ and the residue form scales by $b$;
its period at $b=1$ is $\pi^2$ up to orientation and the linear
change of coordinates.  The global form is a holomorphic unit times
this residue form.  Uniformly on the shrinking sphere its period is
therefore
\[
 b(t)\bigl(c\,g(p,0)+o(1)\bigr),\qquad g(p,0)\ne0,
\]
which is nonzero for every sufficiently small $t\ne0$, regardless
of the vanishing order of $b(t)$.

At a $\PP^1\times\PP^1$ cone, Lemma~\ref{lem03:quadric}
identifies the local smoothing with a pullback of
\eqref{eq03:quadfamily}, say with parameter $b(t)$.
The nodal residue form on its double cover is invariant under the
involution.  The total quotient $\AAA^4/\boldsymbol\mu_2$ is
Gorenstein because $-I\in\operatorname{SL}_4$; adjunction for its
parameter function shows that the descended residue generates the
relative dualizing sheaf, including at the central vertex.
This remains true after pullback along $b(t)$.
On the noncentral fibre the covering
is free, and the antipodal involution preserves the orientation of
the vanishing $3$-sphere.  Thus the period on the quotient cycle
$\sqrt{b(t)}\,S^3/\boldsymbol\mu_2$ is half its period on the
sphere.  The global volume form differs from the descended residue
form by a holomorphic unit $g$, and the cycle contracts to the
central vertex.  Consequently its period is
\[
 b(t)\bigl(\tfrac12 c\,g(p,0)+o(1)\bigr),\qquad c\ne0.
\]
It is nonzero for all sufficiently small $t\ne0$, for any
vanishing order of $b(t)$.  Lemma~\ref{lem03:quadricmilnor}
identifies this cycle with a generator of $H_3(M_p;\QQ)$ and
identifies the local boundary map with the ruling-difference line.
The link has $H^1(L_p;\ZZ)=0$ even though its fundamental group
is $\ZZ/2$, as required in the preceding canonical-bundle argument.

For the degree-six and degree-seven branches in the lemma, the smooth affine fibre is
$M=V\setminus E$, with $V=(\PP^1)^3$ for degree six and
$V=\operatorname{Bl}_{\mathrm{pt}}\PP^3$ for degree seven, and
$E\in|-\tfrac12K_V|$.  Every such pair $(V,E)$ is isomorphic to the toric
pair described below \cite[\S3.3]{PaperIV}.  The comparison with a compact
local Milnor fibre, including its boundary marking, is
\cite[Lem.~4.2]{PaperIV}.  Weak Lefschetz makes
$H^2(V;\QQ)\to H^2(E;\QQ)$ injective; the respective Picard
ranks are $(3,4)$ and $(2,3)$.  Since $H^3(V;\QQ)=0$, the pair
sequence gives $b_3(M)=1$.  More explicitly, take the character
$w=\chi^m$ with $m=(1,1,1)$ on $(\PP^1)^3$, and $m=(1,1,-1)$
on the blowup fan with rays $e_1,e_2,e_3,-e_1-e_2-e_3,e_1+e_2+e_3$.
The closure of $w=1$ is $E$, and
\[
 \Omega_M=\frac{w}{(w-1)^2}
      \frac{dx}{x}\wedge\frac{dy}{y}\wedge\frac{dz}{z}
\]
has order zero at each toric boundary divisor: the order is
$\langle m,v\rangle-2\min(\langle m,v\rangle,0)-1=0$, since
$\langle m,v\rangle=\pm1$.  Its only pole is the double pole at
$E$, so it generates $\omega_M$.  In adjacent toric charts it is
$\pm dp\wedge dq\wedge dz/(q-pz)^2$.  The transitions are
\[
 (p',q',z')=(pz^c,qz^c,z^{-1}),\qquad
 c=0\ \text{or}\ -1
\]
in the two respective cases.  Fix $\varepsilon>0$ and $R>1$.
A closed three-cycle consists of the two caps
\[
 \{p=0,\ q=\varepsilon R e^{i\psi},\ |z|\le R\},\qquad
 \{p'=\varepsilon R^c e^{i\psi},\ q'=0,\ |z'|\le R^{-1}\}
\]
and the intervening chain
\[
 p=r\varepsilon e^{i\theta},\quad
 q=-(1-r)\varepsilon R e^{i(\theta+\varphi)},\quad
 z=R e^{i\varphi},\qquad 0\le r\le1.
\]
The boundaries agree under the stated transition, with opposite
orientations.  On the chain $q-pz=-\varepsilon R
e^{i(\theta+\varphi)}$, so it misses $E$; the caps also miss $E$.
The form vanishes on the caps and pulls back on the chain to
$\pm dr\wedge d\theta\wedge d\varphi$.  Its period is consequently
$\pm4\pi^2$, in particular nonzero.

The one-dimensional local smoothing is equivariant with positive
weights on the cone coordinates and weight one on its parameter
$b$.  Graded Nakayama gives a homogeneous generator of its relative
canonical sheaf near the vertex.  Its nonvanishing locus is invariant
and contains the vertex, hence contains the entire fibre $b=1$:
every orbit in that fibre contracts to the vertex.  Its restriction
is therefore a constant multiple of the preceding form.
The multiplier is constant because an algebraic unit on $V\setminus E$
has divisor supported on $E$, and $\operatorname{Pic}(V)$ is
torsion-free with $[E]\ne0$.  Scaling the fixed compact cycle into
the fibre over $b(t)$ contracts it to the vertex.  If the homogeneous
form has weight $a$, its global period is
\[
 b(t)^a\bigl(c\,g(p,0)+o(1)\bigr),\qquad c\ne0.
\]
It is again nonzero for small $t\ne0$.
At a degree-five point, Lemma~\ref{lem03:degree5periods}
provides five classes $\nu_{p,i}$, the images of its boundary
classes $\beta_i$, with nonzero periods.  Their dual link
functionals are the five $\lambda_i$.
Every one of these specified local cycles survives in
$H_3(T_t;\QQ)$: otherwise it would bound a rational chain and
have zero period against the closed form $\Omega_t$.

By Poincar\'e duality choose, for each specified local cycle, a
rational class $\gamma\in H_3(T_t;\QQ)$ pairing nontrivially
with it.
Its Mayer--Vietoris boundary belongs to
\[
 \ker\left\{\bigoplus_q H_2(L_q;\QQ)
 \longrightarrow H_2(W;\QQ)\oplus\bigoplus_qH_2(M_q;\QQ)\right\}.
\]
It therefore gives a relation in $H_2(\widetilde T;\QQ)$ and
belongs at each point to the selected local subspace.
Lefschetz duality identifies the local restriction of $\gamma$
with its pairing against $H_3(M_p;\QQ)$.  At a rank-one germ,
the boundary map
$H_3(M_p,L_p;\QQ)\to H_2(L_p;\QQ)$ is an isomorphism onto the
root line: its image is that line by the local Milnor-fibre
calculation, and its domain has dimension one.  The corresponding
coefficient is therefore nonzero.  At a degree-five point,
\eqref{eq03:degree5boundary} is an isomorphism onto the full
link space.  Compatibility of the boundary with the Lefschetz
pairing identifies its pairing with $\beta_i$ with the preceding
nonzero pairing against $\nu_{p,i}$.  By
Lemma~\ref{lem03:degree5periods} this is a nonzero multiple of
$\lambda_i$ on the resulting local relation vector.
Thus each required scalar functional is nonzero on some rational
global relation.  Avoiding the finitely many proper kernels gives
one relation on which all of them are nonzero.
\end{proof}

\subsection{The exact criterion}

\begin{theorem}\label{thm04:criterion}
The threefold $X$ has a convergent one-parameter smoothing inducing
$S$ if and only if $K_S$ contains a vector nonzero on every
rank-one summand, satisfying $\prod_{i=1}^5\lambda_i(\kappa_p)\ne0$
at every degree-five point, and, at every $A_2$-assigned point, with
\[
 \kappa_p=\mu_1e_1+\mu_2e_2+\mu_3e_3,\qquad
 \mu_1+\mu_2+\mu_3=0,\qquad \mu_1\mu_2\mu_3\ne0.
\]
Equivalently, none of these individual scalar functionals vanishes
identically on $K_S$.
\end{theorem}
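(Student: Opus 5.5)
The proof splits into necessity and sufficiency. Both directions pass through the partial resolution $\pi\colon Z\to X$ of Proposition~\ref{prop04:globalmodel}. Its role is to replace every degree-six point by nodes and every degree-seven point by a $Q$ cone, so that Lemma~\ref{lem04:periodnecessity} applies with only $A_1$-type and rank-one data.

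\emph{Necessity.} Let $\mathcal X\to\Delta$ be a convergent smoothing inducing $S$. At each $A_2$-assigned point, the local classifying arc lands in the plane branch. Pull back the simultaneous partial resolution of Proposition~\ref{prop03:a2family} along this arc. Because that modification is an isomorphism over every noncentral parameter, the pullback replaces the cone by the three nodes of $W_{6,2}$ and leaves the smooth fibres unchanged. At $A_1$-assigned points do the same with the explicit family \eqref{eq03:a1function} and the two nodes of $W_{6,1}$. Degree-seven, degree-five and $Q$ points stay as they are.

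Gluing these local modifications to the unchanged family away from the points gives a proper family $\mathcal Z\to\Delta$. Its central fibre $T$ is compact, normal and Gorenstein, and satisfies $\omega_T\cong\cO_T$ and $H^1(T,\cO_T)=0$, as in \eqref{eq04:cy}. Its smooth fibres are those of $\mathcal X$, and it has no remaining degree-six points. Lemma~\ref{lem04:periodnecessity} applied to $T$ produces a relation in $H_2(\widetilde T;\QQ)$ that is nonzero on every rank-one summand, meaning the nodes, including the new ones, the $Q$ cones and the degree-seven lines, and that satisfies $\prod\lambda_i\ne0$ at degree-five points.

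Now push this relation to $Y_0$ using the curve identifications \eqref{eq03:nodaltransfer} and the flop comparison at degree seven. At an $A_2$ point the three nonzero nodal coefficients become $\sum\mu_ie_i$. The relation $\mu_1+\mu_2+\mu_3=0$ is forced by intersecting with $E$, exactly as in the $A_1$ discussion, since the classes $e_i$ all meet $E$ with degree $-1$. Hence $\mu_1\mu_2\mu_3\ne0$. At an $A_1$ point, the coefficients $(\lambda,-\lambda)$ are forced by the same pairing and give a nonzero $\ell$-coefficient.

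The resolution $\widetilde T$ and $Y_0$ agree off curves, so $H_2$ is identified via the common complement, as argued after Proposition~\ref{prop04:globalmodel}. This produces a vector of $K_S$ with all the required nonvanishing properties.

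\emph{Sufficiency.} Suppose all the stated functionals are not identically zero on $K_S$. Choose $\kappa\in K_S\otimes\CC$ on which they are simultaneously nonzero; this is possible because we avoid finitely many proper hyperplanes. In particular the degree-six projections are nonzero, so Theorem~\ref{thm04:smoothbase} applies and $\Def^S(X)$ is smooth.

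The key step is to argue on $Z$. By Lemma~\ref{lem04:globaltangent}, the tuple $\nu$ on $Z$ obtained from $\kappa$ is the local image of some $\eta\in\mathbb T^1_Z$, and $\mathbf{Def}_Z$ is unobstructed by Lemma~\ref{lem04:unobstructed}. Take an analytic semiuniversal base of $Z$; it is smooth, and by Artin approximation (or directly by its convergence) we choose a convergent arc $\gamma$ in $\Def(Z)$ tangent to $\eta$. Every singularity of $Z$ is a node, a $Q$ cone or a degree-five cone, and each has a smooth local base of which the nonzero tangent direction is known. Along $\gamma$, a node or $Q$ cone with nonzero first-order parameter is smoothed for small $t\ne0$. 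A degree-five point with $\prod\lambda_i(\nu_p)\ne0$ has tangent off the five discriminant hyperplanes, by Lemma~\ref{lem03:degree5marking}, so its local fibres are smooth for small $t\ne0$. The new nodes over an $A_2$ point receive coefficients $\mu_i\ne0$, and those over an $A_1$ point receive $\pm\lambda\ne0$. Hence the general fibre $Z_t$ is smooth.

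Blow the family down. Lemma~\ref{lem04:blowdown} handles Artin orders, but a convergent contraction is needed here. I would instead use relative contraction: $R^1\pi_*\cO_Z=0$ and the exceptional loci are local, so Grauert's direct image theorem and the rigidity of contractions give an analytic family $\mathcal X_t$ with central fibre $X$ and $\mathcal X_t\cong Z_t$ for $t\ne0$. The local pieces $W_p$ deform, and their pushforwards realize the branches. Since $d\beta(\eta)$ has local components in the selected branches (Lemma~\ref{lem04:globaltangent}), the arc induces $S$.

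The main obstacle is this convergent blow-down: making $\beta$ analytic, or at least realizing the arc. If it proves delicate, the fallback is to work entirely on $X$. Choose a convergent curve in the smooth germ $\Def^S(X)$ tangent to $d\beta(\eta)$. Its local classifying maps have first-order parameters equal to the Hodge components of $\kappa$, namely nonzero at nodes, $Q$ cones, degree-seven and $A_1$ points, off the three discriminant lines at $A_2$ points by Proposition~\ref{prop03:a2family}, and off the five hyperplanes at degree-five points. A first-order transverse arc has smooth noncentral local fibres, so the general fibre is smooth near every singular point. It is also smooth elsewhere, since smoothness is an open condition on the proper family. This fallback avoids convergence of $\beta$ entirely, and I expect it to be the route actually used.
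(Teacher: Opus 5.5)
Your proposal is correct and follows the paper's proof. For sufficiency, the paper uses exactly your fallback: it realizes a lift of $\kappa$ by a convergent curve in the smooth germ $\Def^S(X)$ of Theorem~\ref{thm04:smoothbase}, so the convergent blow-down along your $Z$-route is never needed. For necessity, the paper likewise pulls back the $A_2$ incidence modification, applies Lemma~\ref{lem04:periodnecessity}, and pairs with $E$. Your extra replacement of $A_1$ points by two nodes works, since pairing with $E$ gives $(\lambda,-\lambda)$, but it is unnecessary because Lemma~\ref{lem04:periodnecessity} already allows degree-six points on the $A_1$ component; in either case $\widetilde T=Y_0$ and no flop comparison enters.
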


\begin{proof}
Suppose first that such a relation exists.  It gives a vector of
$V_S$ by~\eqref{eq04:tangent} and implies the nonzero-projection
hypothesis of Theorem~\ref{thm04:smoothbase}.  Realize that tangent
by a convergent curve in $\Def^S(X)$.  At each rank-one point the
local smoothing parameter has nonzero derivative.  At each $A_2$
point all three local discriminant factors have nonzero derivatives,
by the normalized identifications of
Proposition~\ref{prop03:a2family}.  At each degree-five point
the five discriminant factors have nonzero derivatives by
Lemma~\ref{lem03:degree5marking}.  Thus the local fibres are
smooth for every sufficiently small nonzero parameter.  Properness
and openness of smoothness exclude additional singularities outside
neighborhoods of the finitely many original singularities.  The
curve therefore gives a smoothing of $X$.

Conversely, let $\mathcal X\to\Delta$ be any convergent proper
smoothing inducing $S$.  At each $A_2$ point pull back the
simultaneous incidence modification of
Proposition~\ref{prop03:a2family} along its actual analytic
classifying arc.  Since this arc is a smoothing, it meets the
central point of the local base only at $t=0$, after shrinking.
The pulled-back modification is proper and flat over $\Delta$,
is an isomorphism off the central cone point, and is an isomorphism
over every noncentral fibre.  Glue these morphisms to the identity
on the complement of those central points in $\mathcal X$.  The
Hausdorffness and properness argument of
Proposition~\ref{prop04:globalmodel} applies to this cover of the
total space; flatness is local on the source.  We obtain a proper
flat family $\mathcal T\to\Delta$ with $T_t\cong X_t$ for
$t\ne0$.

Its central fibre $T$ replaces each $A_2$ point by three nodes and
leaves all other germs unchanged.  Its smooth crepant resolution is
$Y_0$: the three new exceptional curves are $e_1,e_2,e_3$ in the
original del Pezzo surface.  Rationality and crepancy, as in
\eqref{eq04:cy}, give $\omega_T\cong\cO_T$ and
$H^1(T,\cO_T)=0$.  The remaining degree-six points use $A_1$.
Lemma~\ref{lem04:periodnecessity} therefore supplies a relation
nonzero at every node and every remaining rank-one cone subspace,
and with all five $\lambda_i$ nonzero at each degree-five point.  In
particular, the three new nodes over any one $A_2$ point have
nonzero coefficients $\mu_1,\mu_2,\mu_3$.

Pair this relation with the original exceptional divisor $E$ in
$Y_0$.  Every term belonging to another original singular point has
zero intersection with $E$, whereas
$E\cdot e_i=K_E\cdot e_i=-1$.  Hence
$\mu_1+\mu_2+\mu_3=0$.  Replacing the three nodal terms by
$\sum_i\mu_i e_i$ reconstructs a vector in $K_S$ with all the
required nonvanishing conditions.  This argument uses the entire
classifying arc; it imposes no assumption on its first nonzero order
and excludes an escape from a discriminant line at any higher order.
Finally, a finite-dimensional rational vector space is not a union
of finitely many proper linear subspaces, which proves the last
equivalence.
\end{proof}

In particular, when no degree-six points occur, every selected base
is smooth, even if a coordinate at a node, a degree-seven point,
or a $\PP^1\times\PP^1$ cone vanishes
identically on its tangent image.  Smoothness of a deformation base
and existence of a smooth fibre are distinct conclusions.  For a
degree-six point assigned $A_2$, a nonzero local vector alone does
not ensure a smoothing: each of its three scalar coefficients is
required by Theorem~\ref{thm04:criterion}.

If the singularities are only nodes and anticanonical cones over
$\dP_5$ or $\PP^1\times\PP^1$, the selected local bases are the entire
local bases.  Thus $\Def^S(X)=\Def(X)$ is smooth, of dimension
$h^1(X,\Theta_X)+\dim_\QQ K$.  Existence of a smoothing still
requires the nonzero relation of Theorem~\ref{thm04:criterion}.

\section{A nonsmoothable threefold and its deformation space}
\label{sec05:counterexample}

The three separate nonvanishing conditions on an $A_2$ summand cannot be
replaced by nonvanishing of the vector in that summand.  The example below
also shows that changing the choices of local components does not repair
this weaker condition.
Write $L$ for the $A_1$ line and $P$ for the $A_2$ plane.  The profiles
$LP$ and $PL$ specify these choices in the order of the two cone points.

\begin{theorem}\label{thm05:counterexample}
There is a normal projective Calabi--Yau threefold $X$ with exactly thirty
ordinary double points and two anticanonical $\dP_6$ cone points such that
$K_S$ contains a relation nonzero at every singular point for each of the
two mixed profiles $S=LP,PL$, but $X$ has no smoothing.  A smooth projective
crepant resolution $Y_0$ has
\[
 (h^{1,1}(Y_0),h^{2,1}(Y_0))=(23,21).
\]
\end{theorem}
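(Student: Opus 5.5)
The proof is by explicit construction: a Batyrev hypersurface in a toric fourfold, followed by a finite divisor-intersection computation on its crepant resolution, with Theorem~\ref{thm04:criterion} applied profile by profile. I will choose a reflexive four-dimensional polytope $\Delta$, with $25$ vertices as announced in the introduction, whose dual $\Delta^\circ$ has the following facet structure. Two two-dimensional faces are unit-edge hexagons with long dual edges, and these give the two $\dP_6$ cone points. A number of unit parallelogram faces, with multiplicities from their dual edge lengths, give thirty nodes in total. Every other cone of the normal fan is resolved crepantly and transversally to a general anticanonical member. Then $X$ is a general member of $|-K|$ on the toric variety $\PP_\Delta$ obtained from a maximal projective crepant subdivision that leaves exactly these faces unsubdivided. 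Normality, projectivity, $\omega_X\cong\cO_X$ and $H^1(\cO_X)=0$ follow from Batyrev's theory. The count of singular points follows from the lattice lengths of the dual edges. The Hodge numbers $(23,21)$ of the full crepant resolution $Y_0$ come from Batyrev's combinatorial formulas, and I would record them as an exact computation in Appendix~\ref{app:cert}.

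The core step is to compute $H_2(Y_0;\QQ)$ and the link map. $H_2(Y_0;\QQ)$ is dual to $H^2(Y_0;\QQ)$, which is generated by restrictions of toric divisors. A curve class is therefore determined by its intersection numbers with the toric divisors $D_\rho$. For each exceptional curve (the small-resolution curves of the nodes, and $e_1,e_2,e_3,\ell$ at each hexagon point) I compute these numbers from the fan. Each relation $\sum_\rho \langle m,v_\rho\rangle D_\rho=0$ is then a linear functional on the link sum that vanishes on $K$. From this I extract three divisor identities. \emph{(i)} One identity, restricted to a uniform profile ($LL$ or $PP$), forces a particular old nodal coefficient to vanish on $K_{LL}$ and on $K_{PP}$. \emph{(ii)} and \emph{(iii)} Two further identities, one for each mixed profile, force one of the three $A_2$ coefficients $\mu_i=(-a,b,a-b)$ at the $P$-point to vanish identically on $K_{LP}$, respectively $K_{PL}$. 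Theorem~\ref{thm04:criterion} then shows that no profile admits a smoothing. By the curve-selection argument in the introduction, $X$ is not smoothable at all. Separately, I exhibit an explicit rational relation in $K_{LP}$ and in $K_{PL}$ that is nonzero at all thirty nodes, nonzero on the $A_1$ line, and has $(a,b)\ne(0,0)$ on the $A_2$ plane. This is possible because the vanishing coefficient is only one of $-a,b,a-b$.

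The main obstacle is finding the polytope itself. It must simultaneously produce two hexagonal faces of the right kind, make the identities in (i) kill a nodal coefficient on the uniform profiles, and kill exactly one $A_2$ discriminant coefficient, rather than the whole plane, on each mixed profile. I would search over reflexive polytopes containing two hexagonal $2$-faces sharing facets, compute $K_S$ for each of the four profiles by exact rational linear algebra, and retain a polytope passing the tests. The identities can then be checked by hand from the listed rays. The remaining claims are routine once the polytope is fixed and the data recorded in the appendix: the dimension bookkeeping $\dim K_S$ and the relation to $h^{2,1}$ via Theorem~\ref{thm:D}.
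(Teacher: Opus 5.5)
Your outline has the same architecture as the paper's proof. It uses a Batyrev hypersurface, the fact that toric divisors span $H^2(Y_0;\QQ)$ so that $K=\{w:wM=0\}$, divisor pairings that kill one required coefficient on each profile, explicit witnesses on $LP$ and $PL$, and Batyrev's counts $28-5$ and $26-5$ for the Hodge numbers. Quoting the necessity half of Theorem~\ref{thm04:criterion} is legitimate. The paper reruns that argument inline so that the example does not depend on sufficiency: nodal necessity at node $13$ or $16$ on the uniform profiles, and on a mixed profile a pullback of the simultaneous $A_2$ partial resolution, nodal necessity at the new node $e_2$, and pairing with $H_M$.

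The gap is that this is an existence theorem whose entire content is the example, and you do not produce one. A search over reflexive polytopes, with no argument that it succeeds, proves nothing. What must be supplied is:
\begin{enumerate}
\item a specific polytope, here $\Delta_{25}$, with $30$ unit parallelograms and two unit-edge hexagons;
\item its relation matrix;
\item the three identities, valid on all of $K$ as pairings with the explicit divisor sums $H_L,H_M,H_P$,
\[
 n_{13}-a_0+b_0-a_1+b_1=0,\qquad b_0+b_1=0,\qquad n_{16}+\lambda_0+\lambda_1=0;
\]
\item explicit rational vectors in $K_{LP}$ and $K_{PL}$ that are nonzero at every singular point.
\end{enumerate}

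Several specifics of your plan are also wrong and would mislead the search.

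\begin{enumerate}
\item The hexagons must have dual edges of lattice length \emph{one}, not long dual edges. A dual edge of length $\ell$ yields $\ell$ cone points. An interior lattice point on it also makes Batyrev's correction term nonzero, so the toric divisors would no longer span $H^2(Y_0;\QQ)$, which your core step needs.
\item $X$ should be taken in the face-fan variety itself. A \emph{maximal} crepant subdivision inserts the hexagon centres and resolves the cone points.
\item The principal relations $\sum_\rho\langle m,v_\rho\rangle D_\rho$ pair to zero with every curve, so they give the zero functional. The identities come from pairing with non-principal divisor sums.
\item Your bookkeeping of identities cannot work. An identity whose remaining terms are all killed by the profile restrictions on both $K_{LL}$ and $K_{PP}$ can contain no other coordinate at all. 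It would then kill that nodal coefficient on all of $K$, contradicting your own witnesses. The paper instead needs two identities for the two uniform profiles. It uses a single identity, $b_0+b_1=0$, to kill $\mu_2$ at the $A_2$ point on both mixed profiles.
\end{enumerate}
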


We prove the nonsmoothability assertion directly
from nodal necessity and the simultaneous partial resolution of the
$A_2$ family.  Its proof is independent of the sufficiency assertion in
Theorem~\ref{thm04:criterion}.

\subsection{The threefold and its relation matrix}

Let $\Delta_{25}\subset\ZZ^4\otimes\RR$ be the convex hull of the following
vertices.  All indices in this section start at zero.
\begin{equation}\label{eq05:vertices}
\begin{gathered}
\begin{adjustbox}{max width=\textwidth,center}
\begin{tabular}{r@{\quad}c@{\qquad}r@{\quad}c@{\qquad}r@{\quad}c}
\toprule
$i$&$v_i$&$i$&$v_i$&$i$&$v_i$\\
\midrule
0&$(1,0,0,0)$&1&$(0,1,0,0)$&2&$(0,0,1,0)$\\
3&$(0,0,0,1)$&4&$(1,-1,1,-1)$&5&$(0,0,1,-1)$\\
6&$(1,-1,0,0)$&7&$(-1,1,0,0)$&8&$(0,0,-1,1)$\\
9&$(-1,1,-1,1)$&10&$(0,0,0,-1)$&11&$(0,0,-1,0)$\\
12&$(-1,1,-1,0)$&13&$(0,-1,1,0)$&14&$(-1,0,1,0)$\\
15&$(0,-1,0,1)$&16&$(-1,0,0,1)$&17&$(-1,0,1,-1)$\\
18&$(0,-1,1,-1)$&19&$(-1,0,-1,1)$&20&$(0,-1,-1,1)$\\
21&$(0,-1,-1,0)$&22&$(0,-1,0,-1)$&23&$(-1,0,0,-1)$\\
24&$(-1,0,-1,0)$&&&&\\
\bottomrule
\end{tabular}
\end{adjustbox}
\end{gathered}
\end{equation}
This is the fan polytope: the ambient fan consists of the cones over its
proper faces, and the anticanonical Newton polytope is $\Delta_{25}^\circ$.
The polytope $\Delta_{25}$ is reflexive.  It has $23$ facets, $71$ primitive edges,
and $69$ two-faces: $37$ unimodular triangles, $30$ unit parallelograms,
and two unit-edge hexagons.  The latter have cyclic orders
\begin{equation}\label{eq05:hexagons}
 F_0=(13,15,20,21,22,18),\qquad
 F_1=(14,16,19,24,23,17)
\end{equation}
and centres $(0,-1,0,0)$ and $(-1,0,0,0)$.  These two centres and the
origin are the only lattice points of $\Delta_{25}$ other than its vertices.
Every parallelogram and hexagon has dual edge of lattice length one.
The complete list of parallelograms, in the cyclic orders used below, is
\begin{equation}\label{eq05:nodes}
\begin{gathered}
\begin{adjustbox}{max width=\textwidth,center}
\begin{tabular}{r@{\quad}c@{\qquad}r@{\quad}c@{\qquad}r@{\quad}c}
\toprule
$j$&node face&$j$&node face&$j$&node face\\
\midrule
0&$(0,1,5,4)$&1&$(0,1,9,8)$&2&$(0,1,12,11)$\\
3&$(0,3,15,6)$&4&$(0,6,20,8)$&5&$(0,6,21,11)$\\
6&$(1,2,14,7)$&7&$(1,5,17,7)$&8&$(2,3,15,13)$\\
9&$(2,3,16,14)$&10&$(2,5,17,14)$&11&$(3,8,20,15)$\\
12&$(4,5,17,18)$&13&$(4,6,15,13)$&14&$(4,6,21,22)$\\
15&$(5,10,23,17)$&16&$(7,9,16,14)$&17&$(7,12,23,17)$\\
18&$(8,9,12,11)$&19&$(8,9,19,20)$&20&$(8,11,21,20)$\\
21&$(9,12,24,19)$&22&$(10,11,21,22)$&23&$(11,12,24,21)$\\
24&$(13,14,16,15)$&25&$(13,14,17,18)$&26&$(15,16,19,20)$\\
27&$(17,18,22,23)$&28&$(19,20,21,24)$&29&$(21,22,23,24)$\\
\bottomrule
\end{tabular}
\end{adjustbox}
\end{gathered}
\end{equation}
All these lattice assertions follow by computing the supporting
hyperplanes and their face incidences from \eqref{eq05:vertices}; the
listed singular faces also specify the local lattice tests.  In
particular, each parallelogram has primitive three-dimensional cone
lattice, and each triangle in the central subdivision of a hexagon is
unimodular.

Let $X$ be a general anticanonical hypersurface in the projective
Gorenstein toric Fano fourfold defined by this fan.  Nondegeneracy and
the dual-edge lengths give one node for each face in
\eqref{eq05:nodes} and one anticanonical $\dP_6$ cone for each face in
\eqref{eq05:hexagons}.  There are no other singularities.  The hypersurface
avoids the torus fixed points, and
\[
 \omega_X\cong\cO_X,\qquad H^1(X,\cO_X)=H^2(X,\cO_X)=0.
\]
Insert the two centre rays and choose a projective crepant simplicial
refinement retaining the central subdivisions of the hexagons; one exists by
pulling the two centres first and then refining regularly, as in
\cite[\S8.1]{PaperIV}.  Its
restriction gives a smooth projective crepant resolution $f:Y_0\to X$;
the remaining possible ambient singularities lie over torus fixed
points and do not meet $Y_0$.  These are the standard hypersurface and
resolution constructions of \cite[\S\S4.1--4.2]{Batyrev94}.  This choice
selects a small resolution at each node; independent arbitrary choices
of all thirty diagonals are not needed.

Write $E_i\cong\operatorname{Bl}_3\PP^2$ for the exceptional divisor over
$F_i$.  In each of the cyclic orders \eqref{eq05:hexagons}, mark its
boundary curves by
\begin{equation}\label{eq05:boundary}
 (C_0,\ldots,C_5)=
 (e_1,h-e_1-e_2,e_2,h-e_2-e_3,e_3,h-e_1-e_3).
\end{equation}
The intersection form on $(h,e_1,e_2,e_3)$ is
$\operatorname{diag}(1,-1,-1,-1)$.  With the root basis fixed in
Section~\ref{sec:deformation},
\[
 \ell=-h+e_1+e_2+e_3,\qquad A=-e_1+e_3,\qquad B=e_2-e_3,
\]
the boundary intersection rows are
\begin{equation}\label{eq05:rootrows}
\begin{aligned}
 r_\ell&=(-1,1,-1,1,-1,1),\\
 r_A&=(1,-1,0,1,-1,0),\\
 r_B&=(0,1,-1,0,1,-1).
\end{aligned}
\end{equation}
All three classes are perpendicular to $K_{E_i}$.

Define a $36\times25$ matrix $M$ as follows.  Its first thirty rows are
indexed by \eqref{eq05:nodes}: a cyclic square $(i,j,k,l)$ gives entries
$(1,-1,1,-1)$ in those columns and zero elsewhere.  These rows fix signed
generators $\gamma_j$ of the corresponding nodal link homology; replacing
a small resolution reverses the relevant sign.  The last six rows are
$r_\ell,r_A,r_B$ on $F_0$, followed by $r_\ell,r_A,r_B$ on $F_1$, placed
in the six vertex columns in the orders \eqref{eq05:hexagons}.  Equations
\eqref{eq05:nodes}--\eqref{eq05:rootrows} specify every entry of $M$.
The two additional columns for the centre rays are zero, because the
root classes pair trivially with $K_{E_i}$.

The ambient divisor intersections detect precisely the homological
relations.  Indeed, $\Delta_{25}$ has $28$ lattice points and no facet-interior
lattice points.  The only two-faces with interior points are the two
hexagons, and their dual edges have no interior lattice points.
Batyrev's divisor formula therefore has zero correction term and gives
\begin{equation}\label{eq05:h11}
 h^{1,1}(Y_0)=28-5=23.
\end{equation}
The same construction of the divisor classes shows that the $27$ ambient
boundary divisors span $H^2(Y_0,\QQ)$
\cite[Theorem~4.4.2 and Corollary~4.5.1]{Batyrev94}.  Thus, writing a
tuple of local classes as
\[
 w=(n_0,\ldots,n_{29},\lambda_0,a_0,b_0,\lambda_1,a_1,b_1),
\]
its total class
\begin{equation}\label{eq05:totalclass}
 \sum_{j=0}^{29}n_j\gamma_j+
 \sum_{i=0}^1(\lambda_i\ell_i+a_i A_i+b_i B_i)
\end{equation}
vanishes in $H_2(Y_0,\QQ)$ if and only if $wM=0$.
Consequently
\[
 K=\{w\in\QQ^{36}:wM=0\}
\]
is the full homological relation space.  Its restriction to a profile
$S$ is $K_S$.  Whenever these coefficients are used as tangent
coordinates, they are normalized by the fixed global volume form and
Lemma~\ref{lem:localhodge}; independent choices of raw local equation
parameters are not implicit in $M$.

\subsection{A nonzero relation and the obstruction to smooth fibres}

For the profile $LP$, take
\begin{equation}\label{eq05:weakwitness}
\begin{aligned}
 (\lambda_0,a_0,b_0,\lambda_1,a_1,b_1)&=(3,0,0,0,1,0),\\
 (n_0,\ldots,n_{29})={}&(-2,-1,2,2,-2,1,1,-2,2,2,\\
 &\hphantom{(}-3,6,-1,1,-2,-4,-3,2,2,-4,\\
 &\hphantom{(}5,-4,-4,6,3,3,1,1,-1,4).
\end{aligned}
\end{equation}
Multiplication by the explicitly defined matrix gives $wM=0$.
Every nodal coefficient is nonzero, and the two cone components are
$3\ell$ on $E_0$ and $A$ on $E_1$, both nonzero.  By
\eqref{eq05:totalclass}, this is a
homological relation, so the weaker nonvanishing condition holds.  An
explicit relation on $PL$ has cone coordinates $(0,1,0,3,0,0)$; its nodal
coordinates agree with \eqref{eq05:weakwitness} for $0\leq j\leq24$ and
are
\[
 (n_{25},n_{26},n_{27},n_{28},n_{29})=(-1,5,2,-2,7).
\]
It too annihilates $M$ and is nonzero at every singular point.

Let $D_i$ be the ambient divisor associated with $v_i$, restricted to
$Y_0$, and set
\begin{equation}\label{eq05:divisors}
\begin{aligned}
 H_L&=-\sum_{i\in\{0,1,2,4,5,6,7,13,14,17,18\}}D_i,\\
 H_M&=\sum_{i\in\{2,3,13,14,15,16\}}D_i,\\
 H_P&=\sum_{i\in\{0,1,2,3,4,5,6,7,13,14,15,16,17,18\}}D_i.
\end{aligned}
\end{equation}
Pairing \eqref{eq05:totalclass} with these three divisors gives, for
every $w\in K$,
\begin{equation}\label{eq05:identities}
\begin{aligned}
 n_{13}-a_0+b_0-a_1+b_1&=0,\\
 b_0+b_1&=0,\\
 n_{16}+\lambda_0+\lambda_1&=0.
\end{aligned}
\end{equation}
These equalities can be checked directly from the four-entry nodal rows
and the six-entry rows \eqref{eq05:rootrows}.  In particular they are
identities on the full relation space, before any local component is
selected.  They imply the following restrictions:
\begin{equation}\label{eq05:forced}
 \left.n_{13}\right|_{K_{LL}}=0,\qquad
 \left.b_1\right|_{K_{LP}}=0,\qquad
 \left.b_0\right|_{K_{PL}}=0,\qquad
 \left.n_{16}\right|_{K_{PP}}=0.
\end{equation}
For a plane component the interpretation of $b_i$ is intrinsic to the
marking:
\begin{equation}\label{eq05:threecoefficients}
 a_iA_i+b_iB_i=-a_i e_1+b_i e_2+(a_i-b_i)e_3.
\end{equation}
It is the coefficient of the second of the three nodal curves in the
simultaneous partial resolution.

\begin{proof}[Proof of the nonsmoothability assertion in
Theorem~\ref{thm05:counterexample}]
Suppose that $X$ has a convergent one-parameter smoothing.  At each cone
point the classifying map from the disc factors through one of the two
local components: in the domain $\CC\{t\}$, the equations
$\lambda_i a_i=\lambda_i b_i=0$ force either $\lambda_i=0$ or
$a_i=b_i=0$.  Hence the smoothing has one of the four profiles.

We use the following form of nodal necessity.  A smoothing and a chosen
ordinary double point produce a homological relation on a crepant
resolution whose coefficient at that node is nonzero; at the remaining
cone points its components belong to the selected branch subspaces.
To recall the argument, the local vanishing sphere has a nonzero period
of the Calabi--Yau volume form, and so is nonzero in the homology of a
smooth fibre.  A dual global three-cycle has a boundary tuple on the
links.  The boundary exact sequence makes its total class zero on the
resolution, and its component at the chosen node is nonzero.  The
boundary classes of the other local fillings give the stated branch
restrictions.  This is the period argument of
Lemma~\ref{lem04:periodnecessity} (compare \cite[\S1]{RT09}), with the cone
boundary description of \cite[Prop.~2.1 and Lem.~4.2]{PaperIV}; it concerns
the given smoothing, without a condition on its Kodaira--Spencer class.

For $LL$, apply this argument to node $13$.  The first identity in
\eqref{eq05:identities} forces its coefficient to be zero, a contradiction.
For $PP$, apply it to node $16$ and use the third identity.

For $LP$ or $PL$, apply the simultaneous modification of
Proposition~\ref{prop03:a2family} at the cone assigned to $P$.  More
explicitly, its local family consists of rank-at-most-one matrices with
diagonal entries $z+d_1,z+d_2,z+d_3$.  Remembering the column line and
writing the matrix as $\xi\eta^{\mathsf T}$ gives, on $\xi_k=1$,
\[
 \xi_i\eta_i-\xi_j\eta_j=d_i-d_j,
 \qquad \{i,j,k\}=\{1,2,3\}.
\]
The modification is proper and flat over the two-dimensional base.  Its
central fibre replaces the cone by three nodes with small-resolution
curves $e_1,e_2,e_3$.  It is an isomorphism over every noncentral base
point, since the zero matrix can occur only when all diagonal differences
vanish.  Pulling it back along the hypothetical smoothing and gluing to
the identity off the central cone point therefore gives a proper flat
family with the same smooth noncentral fibres.  Its central threefold
$Z$ has the thirty old nodes, these three new nodes, and the remaining
cone on its $L$ branch.  Crepancy and rationality give
$\omega_Z\cong\cO_Z$ and $H^1(Z,\cO_Z)=0$.

Apply nodal necessity to the new node represented by $e_2$.  The resulting
boundary relation on $Y_0$ has a nonzero coefficient at $e_2$.  However,
$H_M$ pairs trivially with every old nodal curve and with the remaining
$A_1$ root.  On the modified exceptional surface,
\[
 H_M|_{E_i}=e_1+(h-e_1-e_2)=h-e_2,
 \qquad (H_M\cdot e_1,H_M\cdot e_2,H_M\cdot e_3)=(0,1,0).
\]
Pairing the relation with $H_M$ forces its $e_2$ coefficient to vanish.
This contradiction excludes both mixed profiles.  The argument applies
also to smoothing arcs with arbitrarily high contact with the local
discriminant.  Together with \eqref{eq05:weakwitness}, it proves the
claimed failure of the weaker condition.  The Hodge numbers are
\eqref{eq05:h11} and Lemma~\ref{lem05:locallytrivial} below.
\end{proof}

\subsection{Tangent dimensions and the four selected deformation spaces}

\begin{lemma}\label{lem05:locallytrivial}
For this threefold,
\[
 h^1(X,\Theta_X)=h^{2,1}(Y_0)=21.
\]
\end{lemma}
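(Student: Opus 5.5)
The plan is to prove the two equalities separately: first $h^1(X,\Theta_X)=h^1(Y_0,\Theta_{Y_0})$, which uses only the local geometry of the crepant resolution, and then $h^1(Y_0,\Theta_{Y_0})=h^{2,1}(Y_0)=21$, which uses Batyrev's stringy formula for the toric data of $\Delta_{25}$.

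\emph{Step one: reduce to the resolution.} For a crepant resolution $f:Y_0\to X$ with trivial canonical bundle, contraction with the pulled-back volume form gives $\Theta_{Y_0}\cong\Omega^2_{Y_0}$, and $f_*\Theta_{Y_0}=\Theta_X$ by reflexivity. The Leray sequence gives
\[
 0\to H^1(X,f_*\Theta_{Y_0})\to H^1(Y_0,\Theta_{Y_0})\to H^0(X,R^1f_*\Theta_{Y_0})\to H^2(X,\Theta_X).
\]
The sheaf $R^1f_*\Theta_{Y_0}$ is supported at the singular points. Its stalks are computed from the local models.
\begin{itemize}
\item At a node, $H^1(\widehat Y_p,\Theta)=0$, as in the proof of Lemma~\ref{lem:localhodge}.
\item At a $\dP_6$ cone we have $\Theta\cong\Omega^2$ on $\operatorname{Tot}(K_E)$, and $\Omega^2$ sits in the sequence $0\to\Omega^2(\log E)(-E)\to\Omega^2\to\Omega^2_E\to0$. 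In that proof, $H^1$ of the first term was shown to vanish.
\end{itemize}
So the main obstacle is to control $H^1(\widehat Y_p,\Omega^2)$ at a cone. The rough bound is $H^1(E,\Omega^2_E)=H^1(E,K_E)=0$, but this rough bound is not enough by itself: the exact sequence only gives $H^1(\Omega^2)\hookleftarrow H^1(\Omega^2(\log E)(-E))$ followed by $H^1(\Omega^2_E)$, and one must also handle the formal completion along $E$.

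I would instead argue directly with the toric weight decomposition. On $\operatorname{Tot}(K_E)$, the group $H^1(\Theta)$ splits as $\bigoplus_{k\ge0}$ of twisted cohomology groups on $E$. Each summand vanishes by Kodaira vanishing plus Bott vanishing, as in the displayed computation of Lemma~\ref{lem:localhodge}, together with $H^1(E,\Theta_E)=0$ for the rigid surface $\dP_6$. Formal functions then give $R^1f_*\Theta_{Y_0}=0$. Hence $H^1(X,\Theta_X)\cong H^1(Y_0,\Theta_{Y_0})$.

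\emph{Step two: identify with $h^{2,1}$.} We have $H^1(Y_0,\Theta_{Y_0})\cong H^1(Y_0,\Omega^2_{Y_0})$, and its dimension is $h^{2,1}(Y_0)$ by Hodge symmetry, which holds because $Y_0$ is projective.

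\emph{Step three: compute the number.} I would use Batyrev's formula for the dual polytope:
\[
 h^{2,1}=\ell(\Delta_{25}^\circ)-5-\sum_{\mathrm{codim}\,1}\ell^*(\theta^\circ)+\sum_{\mathrm{codim}\,2}\ell^*(\theta^\circ)\ell^*(\theta).
\]
As a cross-check I would use $\chi(Y_0)=2(h^{1,1}-h^{2,1})$, with $h^{1,1}=23$ from \eqref{eq05:h11}. The Euler characteristic is obtained from the smoothing-free count
\[
 \chi(Y_0)=\chi(X_{\mathrm{gen}})+30\cdot2+2\,\chi\text{-corrections},
\]
where $X_{\mathrm{gen}}$ is the generic hypersurface and the corrections come from the nodes and the $\dP_6$ cones. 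Alternatively, I would read off the lattice-point count of $\Delta_{25}^\circ$ from the certificate data in Appendix~\ref{app:cert}.

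The expected main obstacle is the exact lattice count giving $21$. I would resolve it by quoting the appendix computation. The local vanishing in step one is routine by comparison.
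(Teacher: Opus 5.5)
Your three steps follow the paper's proof: the push-forward of $\Theta_{Y_0}$, vanishing of $R^1f_*\Theta_{Y_0}$ from graded pieces over $E$ and over the nodal curves, Leray plus contraction with the volume form, and Batyrev's formula. However, the first step is not justified as written. The equality $f_*\Theta_{Y_0}=\Theta_X$ does not follow from reflexivity.

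Reflexivity of $\Theta_X$ only gives $f_*\Theta_{Y_0}\subseteq\Theta_X$: a field on $Y_0$ descends to $X_{\mathrm{reg}}$ and extends across the points. The reverse inclusion says that every local vector field on $X$ lifts to $Y_0$, and you cannot skip it. The cokernel $Q$ is supported at the singular points. Since $H^0(X,\Theta_X)=0$, the sequence $0\to f_*\Theta_{Y_0}\to\Theta_X\to Q\to0$ gives $h^1(X,f_*\Theta_{Y_0})=h^1(X,\Theta_X)+\dim H^0(Q)$. So without the reverse inclusion, your Leray isomorphism computes the wrong group.

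The paper closes this by lifting flows. At a cone point the local flow preserves the vertex ideal, so it lifts to the vertex blow-up $\operatorname{Tot}(K_E)$. At a node it lifts to the blow-up and preserves each ruling of the exceptional quadric, since a one-parameter flow cannot interchange them; hence it lifts to the chosen small resolution. Citing an extension theorem making $f_*\Omega^2_{Y_0}$ reflexive for canonical singularities would also work, but that is a separate theorem, not reflexivity of $\Theta_X$.

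Smaller points:

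\emph{Bott vanishing.} Your use of $\Theta_E\otimes\cO_E(-kK_E)\cong\Omega^1_E\otimes\cO_E(-(k+1)K_E)$ and Bott vanishing is a valid alternative to the paper's twisted toric Euler sequence with the nefness of $C_j-kK_E$. It is the argument the paper itself uses in Corollary~\ref{cor:x19smooth}, and it makes the separate appeal to $H^1(E,\Theta_E)=0$ redundant.

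\emph{The vertical piece.} The vertical subsheaf of $\Theta_{\operatorname{Tot}K_E}$ is $\rho^*K_E$, not the $\cO$ of the logarithmic sequence in Lemma~\ref{lem:localhodge}. Its $k=0$ term $H^1(E,K_E)$ therefore needs Serre duality and $H^1(E,\cO_E)=0$, not Kodaira vanishing.

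\emph{The route you dismissed.} It actually works. In $H^1(\Omega^2(\log E)(-E))\to H^1(\Omega^2)\to H^1(E,\Omega^2_E)$ both outer groups vanish, by Lemma~\ref{lem:localhodge} and $H^1(E,K_E)=0$. Exactness in the middle then gives $H^1(\operatorname{Tot}K_E,\Omega^2)=0$ at once. The algebraic-to-analytic comparison is needed in either route.

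\emph{The count.} State the inputs instead of deferring to the appendix. $\Delta_{25}^\circ$ has $26$ lattice points and no facet-interior points, so the facet term vanishes. The codimension-two correction term vanishes because every edge of $\Delta_{25}$ is primitive. This leaves $h^{2,1}=26-5=21$.

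\emph{The Euler-characteristic cross-check.} Drop it. $X$ is itself the general anticanonical hypersurface, and by Theorem~\ref{thm05:counterexample} it has no smoothing. There is no smooth $X_{\mathrm{gen}}$ to compare with.

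\emph{Hodge symmetry.} It is not needed in step two, since $h^{2,1}=\dim H^1(Y_0,\Omega^2_{Y_0})$ by definition.
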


\begin{proof}
The polar $\Delta_{25}^\circ$ has $26$ lattice points and no facet-interior lattice
points.  Every edge of $\Delta_{25}$ is primitive, so the correction term in
Batyrev's formula for $h^{2,1}$ vanishes.  Thus
\[
 h^{2,1}(Y_0)=26-5=21
\]
by \cite[Corollary~4.5.1]{Batyrev94}.  The counts may be obtained directly
from $\langle m,v_i\rangle\geq-1$ using \eqref{eq05:vertices}.

We verify the tangent-sheaf comparison needed for the first equality.
A vector field at an isolated cone point vanishes at the vertex, and
its local flow preserves the vertex ideal.  It therefore lifts to the
vertex blow-up $\operatorname{Tot}(K_E)$.  At a node the local flow lifts
to the blow-up and preserves each of the two ruling components of its
exceptional quadric: a one-parameter flow cannot interchange these
discrete choices.  It consequently lifts to the chosen small
resolution.  Conversely a vector field on the resolution descends on
the regular locus and extends across the point by normality, or
equivalently by reflexivity of the tangent sheaf.  Hence
\begin{equation}\label{eq05:pushforwardtheta}
 f_*\Theta_{Y_0}=\Theta_X.
\end{equation}
This is the equivariant-resolution comparison of
\cite[Definition~1.11 and Lemma~1.15]{FL22a} in these local models.

We also have $R^1f_*\Theta_{Y_0}=0$.  At a degree-six point use the
filtration by powers of the zero-section ideal.  The tangent bundle
restricts as $\Theta_E\oplus K_E$, so its graded terms are
\begin{equation}\label{eq05:gradedtheta}
 \Theta_E\otimes\cO_E(-kK_E),\qquad
 K_E\otimes\cO_E(-kK_E),\qquad k\geq0.
\end{equation}
The second term has vanishing $H^1$: use Serre duality and rationality
for $k=0$, rationality for $k=1$, and Kodaira vanishing for $k\geq2$.
For the first term, twist the toric Euler sequence
\[
 0\longrightarrow\cO_E^{\oplus4}
 \longrightarrow\bigoplus_{j=0}^5\cO_E(C_j)
 \longrightarrow\Theta_E\longrightarrow0.
\]
At $k=0$, the sequence
$0\to\cO_E\to\cO_E(C_j)\to\cO_{C_j}(-1)\to0$ gives
$H^1(E,\cO_E(C_j))=0$.  For $k\geq1$, the divisor $C_j-kK_E$ is nef:
its intersections with the six generators \eqref{eq05:boundary} of
the effective cone of curves are nonnegative, since $C_j^2=-1$ and
$(-K_E)\cdot C_l=1$ for every $l$.  Toric vanishing gives
$H^1(E,\cO_E(C_j-kK_E))=0$.  Finally
$H^2(E,\cO_E(-kK_E))=0$ for every $k\geq0$, so the twisted Euler
sequence proves the other $H^1$ vanishing in \eqref{eq05:gradedtheta}.

At a nodal small resolution the exceptional curve has normal bundle
$\cO_{\PP^1}(-1)^{\oplus2}$, and the corresponding graded tangent
terms are sums of $\cO_{\PP^1}(k+2)$ and
$\cO_{\PP^1}(k-1)$ for $k\geq0$.  Their $H^1$ groups also vanish.
Induction on the infinitesimal neighborhoods of the exceptional sets
and the theorem on formal functions now give
$R^1f_*\Theta_{Y_0}=0$; these ideal filtrations are cofinal with the
pullbacks of the maximal-ideal filtrations at the singular points.
Leray, \eqref{eq05:pushforwardtheta}, and contraction with the crepant
volume form yield
\[
 H^1(X,\Theta_X)\cong H^1(Y_0,\Theta_{Y_0})
 \cong H^1(Y_0,\Omega^2_{Y_0}),
\]
which proves the assertion.
\end{proof}

Row reduction of the matrix specified above gives
\begin{equation}\label{eq05:fullranks}
 \rk M=20,\qquad \dim K=16,\qquad
 \rk M_{\mathrm{nodes}}=19.
\end{equation}
Here $M_{\mathrm{nodes}}$ consists of the first thirty rows.  Its relation
space has dimension $11$, so projection of $K$ to the six cone
coordinates has dimension $16-11=5$.  The second identity in
\eqref{eq05:identities} therefore describes its entire image:
\begin{equation}\label{eq05:coneimage}
 \operatorname{im}(K\longrightarrow\QQ^6)
 =\{(\lambda_0,a_0,b_0,\lambda_1,a_1,b_1):b_0+b_1=0\}.
\end{equation}

\Needspace{8\baselineskip}
For a profile $S$, let $M_S$ consist of the nodal rows and the selected
root rows, and let $\rho_i:K_S\to R_{i,S_i}$ be its projection to the
$i$th cone summand.  The ranks and resulting dimensions are listed in
Table~\ref{tab05:profiles}.
In particular both cone projections are nonzero on every profile.
Theorem~\ref{thm04:smoothbase} applies and proves that all four selected
deformation spaces are smooth, with the dimensions in the final column.
These dimensions use Lemma~\ref{lem05:locallytrivial} and the exact
sequence of Theorem~\ref{thm:D}.
\begin{table}[htbp]
\caption{Relation spaces and selected deformation spaces of $X$.}
\label{tab05:profiles}
\begin{adjustbox}{max width=\textwidth,center}
\begin{tabular}{c r r r c c r}
\toprule
$S$&rows of $M_S$&$\rk M_S$&$\dim K_S$&$\rk\rho_0$&$\rk\rho_1$
 &$\dim\Def^S(X)$\\
\midrule
$LL$&32&19&13&1&1&34\\
$LP$&33&20&13&1&1&34\\
$PL$&33&20&13&1&1&34\\
$PP$&34&20&14&2&2&35\\
\bottomrule
\end{tabular}
\end{adjustbox}
\end{table}

For the unrestricted deformation space the same theorem gives
\begin{equation}\label{eq05:globaltangent}
 0\longrightarrow H^1(X,\Theta_X)
 \longrightarrow T_0\Def(X)
 \longrightarrow K\otimes_\QQ\CC\longrightarrow0.
\end{equation}
Thus $\dim T_0\Def(X)=37$.  The kernel of its projection to the two
cone tangent spaces has dimension $21+11=32$.  The second summand in
this count consists of global tangent directions whose local
components are supported at the old nodes.

\subsection{The unrestricted analytic deformation germ}

\begin{theorem}\label{thm05:germ}
For the threefold of Theorem~\ref{thm05:counterexample} there is an
isomorphism of convergent local rings
\begin{equation}\label{eq05:normalform}
 \cO_{\Def(X),0}\cong
 \frac{\CC\{z_1,\ldots,z_{32},x,y,u,v,b\}}
 {(xy,xb,uv,ub)}.
\end{equation}
The completed local ring has the same presentation with formal power
series.  The four selected deformation spaces are precisely its
irreducible components.  They are smooth, and their ideals in the
ambient smooth germ are
\begin{equation}\label{eq05:componentideals}
 \begin{aligned}
 P_{LL}&=(y,v,b),& P_{LP}&=(y,b,u),\\
 P_{PL}&=(x,v,b),& P_{PP}&=(x,u).
 \end{aligned}
\end{equation}
In particular the full deformation germ is reduced, although its
deformation functor is obstructed.
\end{theorem}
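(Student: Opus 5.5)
The plan is to build the analytic germ $\Def(X)$ from its two local classifying maps to the degree-six bases $\CC\{s_i,u_i,v_i\}/(s_iu_i,s_iv_i)$, $i=0,1$, and from the smoothness of the four selected subgerms given by Theorem~\ref{thm04:smoothbase}. First, embed $\Def(X)$ minimally in a smooth germ of dimension $37=\dim T_0\Def(X)$, by \eqref{eq05:globaltangent}. By \eqref{eq05:coneimage}, the differential of the pair of classifying maps has image $\{b_0+b_1=0\}$ in the six cone coordinates. So the linear parts of the pullbacks $s_0,a_0,b_0,s_1,a_1,b_1$ span a five-dimensional space, and the linear part of $b_0+b_1$ vanishes. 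Choose ambient coordinates as follows: $x:=s_0$, $y$ a coordinate whose linear part is that of $a_0$, $u:=s_1$, $v$ with the linear part of $a_1$, and $b$ with the linear part of $b_1=-b_0$. Complete these with $32$ further coordinates $z_1,\ldots,z_{32}$, which can be taken as coordinates on the $32$-dimensional kernel of the cone projection. In these coordinates the branch equations pull back to $x\cdot a_0=x\cdot b_0=0$ and $u\cdot a_1=u\cdot b_1=0$ in $\cO_{\Def(X)}$.

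The key step is a convergent change of coordinates that makes the pullbacks exactly $a_0=y$, $a_1=v$, $b_1=b$ and $b_0=-b$. After that change the ideal $(xy,xb,uv,ub)$ lies in the defining ideal $I$. To get it, first use the fact that the pullbacks of $a_0$, $a_1$ and $b_1$ have independent linear terms, so they can themselves serve as coordinates $y,v,b$. The remaining problem is $b_0+b_1$. Its pullback has zero linear part, but it need not vanish. Here I would use the smooth selected components. On the component $P_{PP}$ (containing the locus $x=u=0$), Theorem~\ref{thm04:smoothbase} gives a smooth germ of dimension $35$, and the divisor identity $b_0+b_1=0$ holds on tangents. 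Necessity-type reasoning on the actual families should show it holds to all orders: $b_0+b_1$ pairs with the divisor $H_M$, and the nodal-necessity argument applied to the three-node modification forces the identity on every arc in $P_{PP}$. This makes $b_0+b_1$ vanish on $P_{PP}$. On the other components, one of $b_0,b_1$ already vanishes identically on $LP$, $PL$ or $LL$ by the branch equations together with \eqref{eq05:forced}, and the relevant other $b$ must vanish by the same period argument. Since the reduced germ is the union of the components, $b_0+b_1$ vanishes on the reduction. I expect this step, showing that the nonlinear relation $b_0=-b_1$ holds exactly on the germ and not only to first order, to be the main obstacle. If the reducedness cannot be obtained beforehand, a fallback is to work on the formal base $\beta:\widehat\Def(Z)\to\widehat\Def(X)$ for each profile's partial model $Z$, where the source is a domain, and check that $b_0+b_1$ pulls back to zero.

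Next I would identify $I$. The four ideals in \eqref{eq05:componentideals} are prime with smooth quotients, of dimensions $34,34,34,35$, matching Table~\ref{tab05:profiles}. Their intersection is $(xy,xb,uv,ub)$, as a direct monomial-ideal computation shows: $(y,b)\cap(x)$ together with the analogous intersection in $u,v,b$, organized as $(x,y b\text{-part})$. Each component is a subgerm of $\Def(X)$ by Theorem~\ref{thm04:smoothbase}, and it is contained in $V(I)$. Since $V(I)$ is contained in $V(xy,xb,uv,ub)$, and that set is exactly their union, we get $I\subseteq(xy,xb,uv,ub)$ up to radical. To upgrade this to equality of ideals, I would compare the components $\Def^S(X)$, which are scheme-theoretic inverse images: the smoothness of $\Def^S(X)$ means $I+P_S=P_S$ for each profile. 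A quadratic ideal $I\supseteq J:=(xy,xb,uv,ub)$ with $I+P_S=P_S$ for all four $S$ satisfies $I\subseteq\bigcap_S P_S=J$. This gives \eqref{eq05:normalform}. Reducedness follows because $J$ is radical, and the pairwise intersections are smooth because each sum $P_S+P_{S'}$ is generated by coordinates.
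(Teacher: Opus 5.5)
Your proposal has a genuine gap at the step you call the key step. You need the pullback of $b_0+b_1$ to lie in the defining ideal $I$, so that a single coordinate $b$ serves both cone points. Nothing in the paper supplies this, and for the coordinates used in the theorem it is false in general.

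The identity $b_0+b_1=0$ comes from pairing with $H_M$ and holds only on $K$, that is, on tangents. The local coordinates are fixed only up to automorphisms of the local bases with identity linear part. For instance, replacing $b_0$ by $b_0+a_0^2$ preserves $(\lambda_0a_0,\lambda_0b_0)$ and the Hodge normalization, but changes $b_0+b_1$ by $y^2\notin(xy,xb,uv,ub)$. So the claim cannot come from any argument that does not single out special coordinates.

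Your period argument does not single them out. Nodal necessity shows that some coefficient of a homological relation must vanish, i.e.\ that a given node cannot be smoothed. It never yields an exact equality between parameters at two different cone points. On $PP$ there is not even a vanishing statement to exploit: the only forced zero there is $n_{16}$, so a general arc smooths both $e_2$-nodes.

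The same coordinate dependence undermines your claim that $b_1$ vanishes identically on $LP$. The paper's normal form only gives $b_1|_{LP}=-qv$. Deducing $b_0+b_1\in I$ from vanishing on the reduction is also circular, since reducedness is part of the conclusion. The fallback through $\beta$ fails for the same reason: there is no reason for $b_0+b_1$ to pull back to zero.

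What is actually needed is a one-sided containment, not an exact identity. Put $b=b_0$ and $H=-b_1=b+O(\mathfrak m^2)$. Then $(xy,xb,uv,uH)\subseteq I$, and the scheme-theoretic profile ideals are $(y,b,v,H)$, $(y,b,u)$, $(x,v,H)$ and $(x,u)$.

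Since $\Def^{LL}(X)$ is smooth of dimension $34$, and $R/(y,b,v)\cong\CC\{z_1,\ldots,z_{32},x,u\}$ is a $34$-dimensional domain, $H$ must restrict to zero there. Hence $H=cb+py+qv$ with $c(0)=1$. The substitution $b'=cb+py$ gives $(xy,xb,uv,uH)=(xy,xb',uv,ub')$, because the discrepancies $py$ and $qv$ are absorbed by $xy$ and $uv$.

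Once this is in place, your final step is correct. Each $\Def^S(X)$ is smooth of the dimension of $V(P_S)$, which forces $I\subseteq P_S$, and therefore $I\subseteq\bigcap_SP_S=(xy,xb',uv,ub')$. This is a more direct substitute for the paper's initial-ideal and Krull-intersection argument. The paper instead first proves $I=(xy,xb,uv,uH)$ and then performs the same straightening of $H$.
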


\begin{proof}
Choose coordinates on the two analytic local bases so that
\begin{equation}\label{eq05:localbases}
 \cO_{\Def(X,p_i),0}=
 \CC\{\lambda_i,a_i,b_i\}/(\lambda_i a_i,\lambda_i b_i)
\end{equation}
and their differentials are the Hodge-normalized coordinates used in
$M$.  This normalization is possible by an invertible linear change
on the line and on the plane separately; it preserves the displayed
ideal.  Fix the local classifying maps used to define $\Def^S(X)$.

By \eqref{eq05:coneimage} and \eqref{eq05:globaltangent}, the pulled-back
functions
\[
 x=\lambda_0,\quad y=a_0,\quad u=\lambda_1,\quad v=a_1,\quad b=b_0
\]
have independent differentials.  Complete them to a minimal analytic
embedding with $32$ further coordinates $z_1,\ldots,z_{32}$, and write
\[
 \cO_{\Def(X),0}=R/I,\qquad
 R=\CC\{z_1,\ldots,z_{32},x,y,u,v,b\},\qquad
 I\subseteq\mathfrak m^2,
\]
where $\mathfrak m$ is the maximal ideal of $R$.
Choose a representative $H\in R$ for the negative of the pullback of
$b_1$.  Equation~\eqref{eq05:coneimage} says exactly that
$H=b+O(\mathfrak m^2)$.  The local equations
\eqref{eq05:localbases} imply
\begin{equation}\label{eq05:localideal}
 F:=(xy,xb,uv,uH)\subseteq I.
\end{equation}
We will prove equality and then remove the higher terms of $H$ by a
convergent change of coordinates.

Let $\operatorname{in}(I)$ denote the ideal of lowest nonzero homogeneous
parts of elements of $I$ in
$\operatorname{gr}_{\mathfrak m}R=\CC[z_1,\ldots,z_{32},x,y,u,v,b]$.
The four elements in \eqref{eq05:localideal} give
\[
 J:=(xy,xb,uv,ub)\subseteq\operatorname{in}(I).
\]
On the other hand, each of the four selected germs is a smooth closed
subgerm of $\Def(X)$, by Table~\ref{tab05:profiles} and
Theorem~\ref{thm04:smoothbase}.  Its tangent space has the corresponding
ideal in \eqref{eq05:componentideals}; this follows from
\eqref{eq05:globaltangent} and the linear relation $b_1=-b_0$.
If $g\in I$, substitute a regular local parametrization of one of these
smooth germs into $g$.  Since the result vanishes, the lowest homogeneous
part of $g$ vanishes on its tangent space.  Therefore
\[
 \operatorname{in}(I)\subseteq
 P_{LL}\cap P_{LP}\cap P_{PL}\cap P_{PP}.
\]
The elementary monomial-ideal identity
\begin{equation}\label{eq05:primeintersection}
 (y,v,b)\cap(y,b,u)\cap(x,v,b)\cap(x,u)
 =(xy,xb,uv,ub)
\end{equation}
now proves
\begin{equation}\label{eq05:initialideal}
 \operatorname{in}(I)=J.
\end{equation}
For example, \eqref{eq05:primeintersection} can be checked monomial by
monomial: membership in all four coordinate primes forces divisibility
by at least one of $xy,xb,uv,ub$, and each of those four monomials
belongs to every prime.

Since $J$ is an intersection of prime ideals, the associated graded
ring of $R/I$ is reduced.  Its maximal-ideal filtration is separated by
Krull's intersection theorem.  A nonzero nilpotent would have a nonzero
initial form, and a power of that initial form would be zero, which is
impossible.  The same argument applies to the completion, whose
associated graded ring is unchanged.  Thus both local rings are
reduced.  This step uses the full equality \eqref{eq05:initialideal};
smoothness of the four selected germs by itself would not exclude an
additional nilpotent structure.

Equation~\eqref{eq05:initialideal} also proves $I=F$ in the convergent
ring.  Indeed, for $g\in I$, express its lowest homogeneous part as a
homogeneous polynomial combination of $xy,xb,uv,ub$.  Subtract the same
polynomial combination of $xy,xb,uv,uH$.  The remainder is in $I$ and
has strictly higher order.  For every integer $N$ a finite iteration
therefore gives $g\in F+\mathfrak m^N$.  Krull intersection in the
Noetherian local ring $R/F$ implies $g\in F$.  Hence
\begin{equation}\label{eq05:exactideal}
 I=(xy,xb,uv,uH).
\end{equation}
No convergence of an infinite sequence of correcting coefficients is
needed for this ideal equality.

It remains to straighten $H$.  The scheme-theoretic $LL$ restriction
imposes $y=b=v=H=0$.  By \eqref{eq05:exactideal}, its local ring is
\[
 R/(y,b,v,H)\cong
 \CC\{z_1,\ldots,z_{32},x,u\}/(H|_{y=b=v=0}).
\]
This selected germ has dimension $34$, equal to the dimension of the
convergent power-series ring on the right.  That ring is a domain, so
the displayed restricting function must vanish identically.  It follows
that
\begin{equation}\label{eq05:Hdecomposition}
 H=cb+py+qv,\qquad c(0)=1,\quad p(0)=q(0)=0,
\end{equation}
with convergent coefficient functions $c,p,q$.  In particular $c$ is a
unit.  Put $b'=cb+py=H-qv$.  Its linear part is $b$, so the analytic
inverse function theorem allows $b'$ to replace $b$ as an ambient
coordinate, with the other $36$ coordinates fixed.  The identities
\[
\begin{aligned}
 xb'&=c(xb)+p(xy),& ub'&=uH-q(uv),\\
 c(xb)&=xb'-p(xy),& uH&=ub'+q(uv)
\end{aligned}
\]
give
\[
 (xy,xb,uv,uH)=(xy,xb',uv,ub').
\]
Renaming $b'$ as $b$ proves \eqref{eq05:normalform} by a convergent
coordinate change.

The change also identifies the scheme-theoretic profile restrictions.
Their ideals before the change are
\[
 (y,b,v,H),\quad(y,b,u),\quad(x,v,H),\quad(x,u),
\]
in the order $LL,LP,PL,PP$.  Equation~\eqref{eq05:Hdecomposition} and the
invertibility of $c$ send these ideals to those in
\eqref{eq05:componentideals}.  The four coordinate primes are pairwise
incomparable, and \eqref{eq05:primeintersection} is their intersection.
They are therefore exactly the irreducible components.
Finally a tangent with $x=y=1$ and $u=v=b=0$ cannot extend to second
order, because $xy$ has a nonzero coefficient of $t^2$.  Thus the full
deformation functor is obstructed.
\end{proof}

The $32$-dimensional smooth factor in \eqref{eq05:normalform} is not the
locally trivial deformation space of $X$.  Its tangent space is the
kernel of localization at the two cone points; it contains the $21$
locally trivial directions and $11$ additional independent directions
with local components at the old nodes.  The final coordinate change
has identity linear part, so this tangent interpretation is preserved.

\begin{corollary}\label{cor05:intersections}
All scheme-theoretic intersections of the components in
Theorem~\ref{thm05:germ} are smooth.  The pairwise dimensions are
\begin{center}
\begin{adjustbox}{max width=\textwidth,center}
\begin{tabular}{c c c c c c c}
\toprule
pair&$LL,LP$&$LL,PL$&$LL,PP$&$LP,PL$&$LP,PP$&$PL,PP$\\
\midrule
dimension&33&33&32&32&33&33\\
\bottomrule
\end{tabular}
\end{adjustbox}
\end{center}
Every intersection of three or four distinct components is the same
smooth $32$-dimensional germ $x=y=u=v=b=0$.
\end{corollary}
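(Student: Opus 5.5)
Since Theorem~\ref{thm05:germ} identifies the germ with the monomial model \eqref{eq05:normalform} and the four components with the coordinate subspaces cut out by the ideals \eqref{eq05:componentideals}, the proof will be pure commutative algebra in the ambient smooth germ with coordinates $z_1,\ldots,z_{32},x,y,u,v,b$. The scheme-theoretic intersection of two components with ideals $P,P'$ has local ring $R/(P+P')$. Each $P_S$ is generated by a subset of the coordinates $\{x,y,u,v,b\}$, so every sum $P+P'$ is again generated by coordinates. Hence each intersection is a coordinate linear subspace, and in particular smooth (and reduced). Its dimension is $37$ minus the number of distinct coordinates in the union.

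The plan is to list the unions in turn.
\begin{itemize}
\item $LL\cap LP$: $(y,v,b)+(y,b,u)=(y,v,b,u)$, four generators, dimension $33$.
\item $LL\cap PL$: $(y,v,b,x)$, dimension $33$.
\item $LL\cap PP$: $(y,v,b,x,u)$, dimension $32$.
\item $LP\cap PL$: $(y,b,u,x,v)$, dimension $32$.
\item $LP\cap PP$: $(y,b,u,x)$, dimension $33$.
\item $PL\cap PP$: $(x,v,b,u)$, dimension $33$.
\end{itemize}

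For triple and quadruple intersections, I will note that any three of the four ideals together contain all of $x,y,u,v,b$. The coordinate $x$ appears in $P_{PL}$ and $P_{PP}$. The coordinate $y$ appears in $P_{LL}$ and $P_{LP}$. The coordinates $u$ and $v$ each miss at most two of the ideals. The coordinate $b$ is missing only from $P_{PP}$. A three-element subset omits exactly one profile, so it always contains a member of each of these pairs. Indeed, $x$ is missing only from $P_{LL}$ and $P_{LP}$, $y$ only from $P_{PL}$ and $P_{PP}$, $u$ only from $P_{LL}$ and $P_{PL}$, and $v$ only from $P_{LP}$ and $P_{PP}$. Therefore every such intersection is the germ $x=y=u=v=b=0$, of dimension $32$.

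There is no real obstacle here. The only point needing care is that ``scheme-theoretic'' is meant in the ambient germ after the convergent coordinate change. The proof of Theorem~\ref{thm05:germ} already shows that this change carries the profile ideals to \eqref{eq05:componentideals}, so I will simply cite it.
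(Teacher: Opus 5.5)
Your proof is correct and is essentially the paper's own argument: the ideals in \eqref{eq05:componentideals} contain the defining ideal of the germ and are generated by coordinates. Each scheme-theoretic intersection is therefore cut out by a sum of coordinate ideals, making it a coordinate subspace of the smooth $37$-dimensional ambient germ; your case counts, and your observation that each of $x,y,u,v,b$ lies in at least two of the four ideals (so any three together contain all five), give exactly the stated dimensions, which the paper asserts in two lines without listing the cases.
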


\begin{proof}
Add the corresponding coordinate ideals in
\eqref{eq05:componentideals}.  Each sum is generated by a subset of the
five coordinates, giving the dimensions stated.  No reduction of these
intersection schemes is required.
\end{proof}

\begin{corollary}\label{cor05:integrability}
A global first-order deformation of $X$ is the tangent of a convergent
curve in $\Def(X)$ if and only if its five coordinates in
\eqref{eq05:normalform} satisfy
\[
 xy=xb=uv=ub=0.
\]
Equivalently, existence of a second-order extension of a tangent is
sufficient for existence of a convergent deformation with that tangent.
Each of the four components also contains a curve whose sufficiently
small noncentral fibres have exactly one ordinary double point and no
other singularities.
\end{corollary}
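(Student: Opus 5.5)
Both assertions of Corollary~\ref{cor05:intersections}'s successor reduce to the normal form \eqref{eq05:normalform} of Theorem~\ref{thm05:germ}, together with the local discriminant descriptions of Section~\ref{sec03:partial}.

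\emph{Integrability.} For the equivalence, work in the coordinates of \eqref{eq05:normalform}. A convergent curve $\gamma(t)$ in $\Def(X)$ with tangent $\tau$ has coordinates $x(t)=x_1t+O(t^2)$, and similarly for $y,u,v,b$. The $t^2$ coefficients of $xy,xb,uv,ub$ are $x_1y_1,x_1b_1,u_1v_1,u_1b_1$, so these products must vanish. The same computation shows that they vanish exactly when a second-order extension exists: over $\CC[t]/(t^3)$ one may take the second-order terms of the five coordinates to be zero, and the only obstruction is the quadratic part.

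Conversely, suppose $x_1y_1=x_1b_1=u_1v_1=u_1b_1=0$. Then $\tau$ lies in one of the four coordinate linear subspaces \eqref{eq05:componentideals}. Case check: if $x_1\ne0$, then $y_1=b_1=0$; if moreover $u_1\ne0$, then $v_1=0$ and $\tau\in P_{LP}$-locus $\{y=b=u=0\}$ fails because $u_1\ne0$. I would therefore redo the case split carefully. If $x_1\ne0$ and $u_1\ne0$, then $y=b=v=0$, which is the $LL$ locus. If $x_1\ne0$ and $u_1=0$, use $y=b=u=0$, the $LP$ locus. If $x_1=0$ and $u_1\ne0$, use $x=v=b=0$. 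If $x_1=u_1=0$, use $x=u=0$. In every case the straight line $t\tau$ lies in that smooth component and gives a convergent curve.

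\emph{Nodal curves.} For each component I would exhibit a curve whose noncentral fibres have exactly one ordinary double point. The plan is to choose a relation in $K_S$ on which exactly one of the relevant scalar functionals vanishes while all others are nonzero. Realize it as a curve by Theorem~\ref{thm04:smoothbase}, then read off the local fibres. At every point where the derivative of the local discriminant factor is nonzero, the local fibre is smooth, as in the sufficiency half of Theorem~\ref{thm04:criterion}. At the single exceptional point, the curve should be chosen inside the corresponding discriminant locus, so that the local fibre has exactly one node.

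The natural choices follow \eqref{eq05:forced}:
\begin{itemize}
\item On $LL$, $n_{13}$ vanishes identically. Take a curve in the coordinate subspace where the node-$13$ parameter is identically zero, i.e.\ keep node $13$ locally trivial. Its fibres then retain one node, provided all other coefficients can be made generic.
\item On $PP$, use node $16$ in the same way.
\item On $LP$, $b_1\equiv0$. By Proposition~\ref{prop03:a2family}, a nonzero point on the line $b=0$ of the $A_2$ plane gives exactly one node, provided $a_1\ne0$ and $a_1-b_1\ne0$.
\item $PL$ is symmetric to $LP$.
\end{itemize}

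The main obstacle is the genericity needed for each curve. First, on each $K_S$ all remaining functionals must be simultaneously nonzero. This is a finite linear-algebra check on $M$: exhibit an explicit rational vector, or use the fact that a vector space is not a finite union of proper subspaces, once each functional is shown not identically zero (the witnesses \eqref{eq05:weakwitness} help here). Second, on $LL$ and $PP$ the curve must have a node-$13$ (respectively node-$16$) local parameter that is identically zero, not merely zero to first order. Since the component is smooth and the pulled-back local parameter is a function on it with zero differential, I would instead pass to the subgerm cut out by that function. I expect this subgerm to be smooth of codimension zero, i.e.\ the function vanishes identically on the component. This is forced because the function is either identically zero or has a nonzero zero-order pattern incompatible with necessity: by nodal necessity (Lemma~\ref{lem04:periodnecessity}), any arc on which it is nonzero at a generic point would smooth that node, contradicting \eqref{eq05:forced}. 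Properness and openness of smoothness then exclude other singularities.
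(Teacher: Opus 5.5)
Your treatment of integrability and of the second-order statement is the paper's. Your plan for the nodal curves is also the paper's: take a tangent in $K_S$ on which only the coefficient in \eqref{eq05:forced} vanishes, integrate it on the smooth selected germ (Theorem~\ref{thm04:smoothbase}), and show that the one remaining point does not smooth.

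The gap is in that last step, which must hold to all orders along the curve. You argue it only for $LL$ and $PP$. For $LP$ and $PL$ you rely on ``$b_1\equiv0$'', but in \eqref{eq05:forced} that is a statement on $K_{LP}$, so it controls only the first-order term. Suppose $b_1$ were nonzero at some higher order along the curve. The local plane parameter would then leave the line $b=0$. Since $a_1$ and $a_1-b_1$ already have nonzero derivative, the $A_2$ fibre would be smooth by Proposition~\ref{prop03:a2family}, not nodal. The curve cannot simply be ``chosen inside the discriminant locus'': it is produced by integrating a tangent, and that it stays there has to be proved.

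The uniform repair, which is the paper's argument, works on the chosen curve itself. Every other local discriminant factor has nonzero derivative there. So if the last one (the parameter at node $13$ or node $16$, or $b_1$, or $b_0$) were not identically zero along the curve, all small noncentral fibres would be smooth. The curve would then be a smoothing of $X$, contradicting Theorem~\ref{thm05:counterexample}.

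Note which form of necessity this needs. Lemma~\ref{lem04:periodnecessity} as stated assumes the $A_1$ component at every degree-six point, so it covers only $LL$ directly. For $PP$ the theorem uses the variant of the nodal period argument that allows plane branches. For the mixed profiles it uses the $A_2$ simultaneous partial resolution.

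This also makes your detour through the whole component unnecessary. Its conclusion is true: apply the same contradiction to straight lines in generic directions and compare Taylor coefficients. But your stated reason is too broad. The claim that any arc on which the parameter is generically nonzero would contradict \eqref{eq05:forced} is valid only for arcs that also smooth every other singular point.
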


\begin{proof}
The quadratic equations give the necessity for a tangent by taking the
coefficients of $t^2$.  Conversely, a vector satisfying them belongs to
one of the four linear components in \eqref{eq05:componentideals}.
Its straight line in the convergent coordinates of
\eqref{eq05:normalform} integrates it.

For the last assertion, on each $K_S$ the coefficient in
\eqref{eq05:forced} is the only identically vanishing functional among
the old nodal coefficients, the selected line coefficients, and the
three separate plane coefficients \eqref{eq05:threecoefficients}.
This follows by row reduction of $M_S$ using the printed rows.
Avoiding the kernels of the other finitely many functionals gives a
tangent on the smooth selected germ with all these other coefficients
nonzero.  Integrate it there.  Every other local singularity then
smooths for sufficiently small nonzero parameter.  On $LL$ the remaining
point is old node $13$, and on $PP$ it is old node $16$.  On $LP$ or $PL$
the remaining plane parameter is noncentral; its only possible
singularity is the ordinary node on the corresponding discriminant
line, as in Proposition~\ref{prop03:a2family}.  Were the final point also
to smooth, the resulting curve would give a smoothing of $X$, contrary
to Theorem~\ref{thm05:counterexample}.  Compactness and openness of
smoothness exclude additional singularities away from the original
ones.
\end{proof}

The assertion about a prescribed tangent does not say that every
prescribed higher jet extends.  For example, $x=y=t^2$ and
$u=v=b=z_1=\cdots=z_{32}=0$ gives a deformation over
$\CC[t]/(t^4)$ which has no extension with those coefficients over
$\CC[t]/(t^5)$, since $xy=t^4$.  Its zero tangent nevertheless integrates.
Likewise, the four minimal equations in \eqref{eq05:normalform} do not
assert that $\operatorname{Ext}^2(\Omega_X^1,\cO_X)$ has dimension four.

The finite calculations used in this section are the face and lattice
counts for \eqref{eq05:vertices}, the divisor intersections
\eqref{eq05:identities}, and the ranks of the matrices specified by
\eqref{eq05:nodes}--\eqref{eq05:rootrows}.  Reproduction files and exact
rational outputs are supplied with the paper.\footnote{The geometric
data and intersection calculations are in
\texttt{a2\_branch\_selection\_counterexample.py}; the Hodge and selected
kernel calculations are in \texttt{branch\_smoothness\_inputs.py}; and
the unrestricted kernel and ideal calculations are in
\texttt{counterexample\_deformation\_germ.py}, all under
\texttt{certificates/}.  The independent standard-library calculation
\texttt{a2\_counterexample\_rational\_replay.py} reproduces the rational
matrix identities and ranks.}
The analytic ideal equality and coordinate change are proved above;
they do not follow from a finite truncation of the deformation equations.

\section{Germs from reflexive polytopes}\label{sec:germs}

We fix notation and recall the dictionary between $2$-faces of a reflexive
polytope and the singularities of its Calabi--Yau hypersurface.  Everything in
this section is standard; \cite{Batyrev94} and \cite{CoxLittleSchenck} are the
references.

For the toric applications write $\Xh=Y_0$ for the standard smooth
crepant resolution and retain the link and branch notation of
Section~\ref{sec:deformation}.  We recall the earlier rank-one necessity
statement because it suffices for the small-polytope classification.
Theorem~\ref{thm04:criterion} supplies the stronger criterion at $A_2$
points.

\begin{recollection}[mixed necessity; {\cite[Thm.~6.1]{PaperIV}}]\label{thm:companion}
Let $X$ be a connected compact complex threefold with $\omega_X\cong\cO_X$ and
$h^1(\cO_X)=0$ whose singular points are ordinary double points and
anticanonical cones over $\dP_6$ or $\dP_7$, let $\Xh\to X$ be a crepant
resolution that is small over the nodes and the blowup of the vertex at each
cone point, for instance $Y_0$, and suppose $X$ admits a
one-parameter smoothing.  Set
\[
  K(X)=\Bigl\{\kappa\in\ker\Bigl(\bigoplus_pH_2(L_p;\QQ)\to H_2(\Xh;\QQ)\Bigr)
  \ :\ \kappa_p\in \Rsm_p\text{ at every cone point }p\Bigr\}.
\]
Then for every node $q$ there is $\kappa\in K(X)$ with $\kappa_q\neq0$; for
every $\dP_7$ point $q$ there is $\kappa\in K(X)$ whose $q$-component is a
nonzero element of the $(-2)$-line $\Rsm_q$; and for every $\dP_6$ point $q$
smoothed through the one-parameter component there is $\kappa\in K(X)$ whose
$q$-component is a nonzero element of the $A_1$ summand $\Rsm_q$.
\end{recollection}

The step from ``no one-parameter smoothing'' to ``not smoothable'' is also the
companion's, and we quote it because Theorem~\ref{thm:C} rests on it.

\begin{recollection}[{\cite[Cor.~6.7]{PaperIV}}]\label{thm:noarc}
Let $X$ be as in Recollection~\ref{thm:companion} and admit no one-parameter
smoothing.  Then no fibre of a small representative of the miniversal
deformation of $X$ is smooth; consequently $X$ is not smoothable over any base
germ, and neither is any small deformation of $X$.  Likewise no formal arc in
the miniversal deformation meets all the local smoothing loci.
\end{recollection}

The mechanism is that a smoothing over an arbitrary base germ would, by the
curve selection lemma applied to the discriminant in the miniversal base,
produce a one-parameter one; ruling out the one-parameter case therefore rules
out all of them.

We say the criterion \emph{obstructs} $X$ when some coordinate at which
Recollection~\ref{thm:companion} asserts nonvanishing vanishes identically on
$K(X)$, so that no such $\kappa$ exists and $X$ has no one-parameter smoothing;
and that it is \emph{silent} when none does.  Theorem~\ref{thm04:criterion} strengthens this necessity statement
to an equivalence by imposing all three scalar conditions at an $A_2$ point.

\begin{convention}\label{conv:setup}
$N \cong \ZZ^4$ is a lattice, $M = \Hom(N,\ZZ)$ its dual,
$\langle \cdot,\cdot\rangle$ the pairing.  $\Delta \subset N_\RR$ is a
\emph{reflexive} polytope: a lattice polytope with the origin in its interior
whose polar
$\Delta^\circ = \{u \in M_\RR : \langle u,v\rangle \ge -1 \ \forall v \in \Delta\}$
is again a lattice polytope.  $\Sigma$ is the face fan of $\Delta$, whose
cones are $\sigma_F = \Cone(F)$ for $F$ a face, and
$\PP_\Delta = \TV(\Sigma)$ the associated toric fourfold, which is Gorenstein
Fano.  For a facet $F$ we write $u_F \in M$ for the primitive inner normal,
normalized by $\langle u_F, v\rangle = -1$ for $v \in F$; reflexivity is exactly
the statement that $u_F$ is a lattice point for every facet.  $X \subset
\PP_\Delta$ denotes an anticanonical hypersurface that is regular in the sense
of \cite[Def.~3.1.1]{Batyrev94} (Batyrev's polytope there is the Newton
polytope, here $\Delta^\circ$); it is a Calabi--Yau threefold with at worst
Gorenstein canonical singularities.
\end{convention}

\subsection{The dictionary}
Let $G$ be a $2$-face of $\Delta$ and $G^* \subset \Delta^\circ$ its dual edge.
Write $\ell(G^*)$ for the lattice length of $G^*$.  A $\Delta$-regular $X$ meets
the closed orbit of $\sigma_G$, a curve in $\PP_\Delta$, in exactly
$\ell(G^*)$ points, and near each of them
\begin{equation}\label{eq:germ}
  (X, x) \;\cong\; \bigl(\Cone(S_G),\,0\bigr),
\end{equation}
the transverse affine toric threefold defined by $\sigma_G$ in the
saturated rank-three lattice
$L_G=N\cap\operatorname{span}_{\RR}(\sigma_G)$.
The surface and its polarization are determined by this height-one
cone.  For a unit square this is the cone over
$(\PP^1\times\PP^1,\cO(1,1))$; for the pentagon and hexagon below it is
the anticanonical cone over the indicated del Pezzo surface.
Here the face polygon is a section of the ray cone, rather than the
moment polygon of its polarization.

The germ \eqref{eq:germ} is a singular point of $X$ precisely when $G$ is not a
unimodular triangle, and it is an \emph{isolated} singular point precisely when
$G$ has no lattice points on the interiors of its edges, i.e.\ when every edge
of $G$ has lattice length one.  We will only meet the following three:

\begin{center}
\begin{adjustbox}{max width=\textwidth, center}
\begin{tabular}{lll}
\toprule
$G$ & $S_G$ & germ \\
\midrule
unit square & $\PP^1 \times \PP^1$ & ordinary double point (node), $xy = zw$ \\
unit-edge pentagon, one interior point & $\dP_7 = \mathrm{Bl}_2\PP^2$ & $\Cone(\dP_7)$ \\
unit-edge hexagon, one interior point & $\dP_6 = \mathrm{Bl}_3\PP^2$ & $\Cone(\dP_6)$ \\
\bottomrule
\end{tabular}
\end{adjustbox}
\end{center}

A unit-edge pentagon with exactly one interior lattice point is reflexive after
translating that point to the origin, and its face fan is the fan of $\dP_7$;
this is the sense in which the second row is a del Pezzo cone.

\begin{definition}\label{def:framework}
$\Delta$ lies in the \emph{admissible framework} if every $2$-face of $\Delta$
is either a unimodular triangle, or a unit square, or a pentagon or hexagon with
one interior point; every edge of $\Delta$ has lattice length one; and every
singular $2$-face has $\ell(G^*) = 1$.  For such a $\Delta$, $\Sing(X)$ is a
finite set with one point per singular $2$-face, and every germ is a node or an
anticanonical del Pezzo cone of degree $6$ or $7$.
\end{definition}

This is the toric class studied in the remaining sections.  Its germs
are locally smoothable by the families described in
Section~\ref{sec03:partial} and \cite{Altmann97}.  The general
criterion of Theorem~\ref{thm04:criterion} also applies outside this
toric framework.

\subsection{Nodes, coloops, and when the nodal mechanism is unavailable}
Let $X$ have nodes $x_1,\dots,x_k$, and let $\widehat X \to X$ be a small
resolution of the nodes with exceptional curves $C_1,\dots,C_k$.  We call the
matroid of the vectors $[C_j]$ the \emph{node relation system}, and, as in
\cite[\S\S1.1 and~5.3]{PaperIV}, a node $x_j$ a \emph{coloop} when $[C_j]$ lies
in no circuit; equivalently, when every relation $\sum\lambda_i[C_i]=0$ has
$\lambda_j = 0$.

\begin{remark}\label{rem:coloop}
If some node is a coloop, no relation among the nodal exceptional curve
classes alone has every coefficient nonzero, as required by
\eqref{eq:friedman}.  The threefolds of interest here are exactly those where
this happens and a del Pezzo cone point is also present, so that a smoothing, if it exists,
must move both kinds of germ at once.  We call such an $X$ \emph{genuinely
mixed}.
\end{remark}

Both $X_{19}$ and $X_9$ are genuinely mixed:  the node system of $X_{19}$ has
rank $12$ with two coloops, and that of $X_9$ has rank $2$ with both nodes
coloops.

\section{Deformations of the ambient by Minkowski decomposition}\label{sec:mech}

We now recall the mechanism.  The reader who knows \cite{IV12} can skim to
Definition~\ref{def:singledegree}, which fixes the object the two theorems
quantify over.

\subsection{Altmann's construction, affine case}
Let $\widetilde N$ be the lattice of the germ, unrelated to the ambient $N$ of
Convention~\ref{conv:setup}, and $\widetilde M$ its dual.  Let
$\delta \subset \widetilde N_\RR$ be a pointed cone and $r \in \widetilde M$
a degree.  Altmann \cite{Altmann95,Altmann97} showed that Minkowski
decompositions
\[
  \delta_r := \delta \cap [r = 1] = \Delta_0 + \dots + \Delta_l
\]
into polyhedra with common tail cone $\sigma := \delta \cap [r=0]$ give
$l$-parameter deformations of the affine toric variety $\TV(\delta)$, with
deformation parameters in degree $-r$.  For an isolated Gorenstein toric
threefold germ, $T^1$ is concentrated in the single degree $-R^*$ and has
dimension $\#(\text{vertices}) - 3$ \cite[Thm.~6.5]{Altmann95}; for the
pentagon this is $2$, and the versal base is a line with an embedded component
\cite[(7.1.4)]{Altmann95}.  The pentagon has, up to translation, exactly one
nontrivial decomposition into lattice polyhedra, namely
\begin{equation}\label{eq:pentdecomp}
  \text{pentagon} \;=\; \text{primitive segment} \;+\; \text{unimodular triangle},
\end{equation}
and the corresponding one-parameter deformation smooths $\Cone(\dP_7)$, because
the cones over a primitive segment and over a unimodular triangle are both
smooth.

\subsection{The non-affine case: complexity-one \texorpdfstring{$T$}{T}-varieties}
Our ambient $\PP_\Delta$ is complete and, for the polytopes we need, not
simplicial, so the affine statement does not apply directly.  Ilten--Vollmert
\cite{IV12} give the required globalization, in the language of polyhedral
divisors of Altmann--Hausen \cite{AH06,AHS08}.

Fix a primitive degree $R \in M$ and \emph{downgrade} the torus action of
$\PP_\Delta$ along $R$: restrict from the full four-torus to the
three-dimensional subtorus $\ker R$.  By \cite[Rem.~1.8]{IV12} the resulting
complexity-one $T$-variety is described by a divisorial fan on $\PP^1$ whose
two nontrivial \emph{slices} are the cross-sections
\begin{equation}\label{eq:slices}
  \cS^{+} := \Sigma \cap [R = 1], \qquad \cS^{-} := \Sigma \cap [R = -1],
\end{equation}
polyhedral complexes in the rank-three lattice $R^\perp \cap N$ together with
the tail fan $\Sigma \cap [R=0]$.  The maximal cells of $\cS^{\pm}$ are the
cross-sections $\sigma_F \cap [R = \pm 1]$ of the maximal cones; a cell is
\emph{bounded} exactly when $R$ is positive (resp.\ negative) on every ray of
$\sigma_F$: a point of $\sigma_F\cap[R=1]$ is $\sum_i\lambda_iv_i$ with
$\lambda_i\ge0$ and $\sum_i\lambda_i\langle R,v_i\rangle=1$, so each
$\lambda_i\le\langle R,v_i\rangle^{-1}$ when all $\langle R,v_i\rangle>0$, and
the cell is empty when all are negative.  We call the union $\cS := \cS^+ \sqcup \cS^-$ the \emph{slice
complex} at $R$.

\begin{definition}[{\cite[Def.~2.1, 2.2, 4.1]{IV12}}]\label{def:iv}
A \emph{Minkowski decomposition of the slice complex} assigns, separately on
each of the subdivisions $\cS^+$ and $\cS^-$, to every cell $C$ a decomposition
$C=C^0+C^1$ into polyhedra with the same tail cone as $C$, subject to
\begin{enumerate}
\item[(D1)] both summands carry the full tail cone, and $C^0 + C^1 = C$;
\item[(D2)] \emph{admissibility}: for each lattice character in the dual of the
  tail cone, at most one of the corresponding faces of the summands fails to
  contain a lattice point;
\item[(D3)] if $C\cap D\ne\varnothing$, then
  $(C\cap D)^i=C^i\cap D^i$;
\item[(D4)] for nonempty collections of cells
  $\mathcal J\subseteq\mathcal I$ in the same subdivision and every
  $A\subseteq\{0,1\}$,
  \[
    \sum_{i\in A}\bigcap_{C\in\mathcal I}C^i
    \ \prec\
    \sum_{i\in A}\bigcap_{C\in\mathcal J}C^i.
  \]
  The empty polyhedron is allowed as a face; the empty index set $A$ is
  tautological.
\end{enumerate}
\end{definition}

The pairwise consequence of (D4) is the one that does the work in
Theorem~\ref{thm:A}: take $\mathcal J=\{\lambda\}$,
$\mathcal I=\{\kappa,\lambda\}$ and $A=\{i\}$ when
$\kappa\prec\lambda$.  Together with (D3), this says that the decomposition
induced on a face is a face of the decomposition of the cell above it.  In
particular, if a cell admits only homothetic summands, then so does every face
of it.

\begin{recollection}[{\cite[Thm.~4.4]{IV12}}]\label{thm:iv44}
A Minkowski decomposition of the slice complex in the sense of
Definition~\ref{def:iv} determines a flat family
$\pi \colon \cP \to B$ over a smooth base germ $B$, with one parameter for each
decomposed slice, with $\cP_0 = \PP_\Delta$ and with $T$-action on every fibre.
\end{recollection}

\begin{definition}\label{def:singledegree}
The family of Recollection~\ref{thm:iv44}, for a fixed primitive $R \in M$ and a
fixed decomposition of the slice complex at $R$, restricted to a curve germ
through the origin and assumed proper over it, is the \emph{single-degree
family at $R$}.  When only one slice is decomposed, properness holds
automatically \cite[Thm.~3.2]{HI11}; for decompositions of both slices it is
asserted in \cite[Rem.~3.3]{HI11}, and we include it in the definition.  Its fibre over $b\in B$ is denoted $\cP_b$, and the base
parameter is written $b$ throughout, the letter $t$ being reserved for edge
dilations.
\end{definition}

\begin{recollection}[{\cite[Cor.~2.12]{IV12}}]\label{thm:cor212}
Suppose the polyhedral divisor at a germ has \emph{affine locus}.  Then the
singularities of the general fibre at that germ are the cones over the summands
$C^0$, $C^1$.  In particular the germ is smoothed if and only if each summand
cone is smooth; for the bounded lattice summands used below, this means a point
or a unimodular simplex, in particular a primitive segment in dimension one.
\end{recollection}

\begin{recollection}[{\cite[Rem.~2.13]{IV12}}]\label{thm:rem213}
If a polyhedral divisor has \emph{complete locus} and its associated affine
$T$-variety is singular, no $T$-deformation obtained by decomposing that
polyhedral divisor is a smoothing.  Thus the general fibre is singular; the
remark does not assert that its analytic germ is unchanged.
\end{recollection}

The following two consequences pass from a singular ambient fibre to
its anticanonical hypersurfaces.

\begin{proposition}[Relative anticanonical ampleness]\label{prop:relativefano}
Let $\pi\colon\cP\to B$ be a single-degree family of $\PP_\Delta$, with
$\Delta$ reflexive and $B$ a curve germ.  After shrinking $B$, the morphism
$\pi$ is proper and Gorenstein of relative dimension $4$, and
\[
  \cL:=\omega_{\cP/B}^{-1}
\]
is $\pi$-ample and satisfies
$\cL|_{\cP_b}\cong\omega_{\cP_b}^{-1}$ for every $b\in B$.
\end{proposition}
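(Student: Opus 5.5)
The plan is to prove the three assertions locally on the base and then assemble them using the $\CC^*$-equivariance of the Ilten--Vollmert family. Shrinking $B$ and properness are built into Definition~\ref{def:singledegree}. Flatness is Recollection~\ref{thm:iv44}, so the fibres $\cP_b$ are four-dimensional.

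For the Gorenstein property I would argue as follows. The central fibre $\PP_\Delta$ is Gorenstein, since $\Delta$ is reflexive. A flat family with Gorenstein fibre at a point is Gorenstein in a neighbourhood of that point, because the Gorenstein locus of a flat morphism is open and contains the central fibre. By properness that neighbourhood contains $\pi^{-1}(B')$ for a smaller germ $B'$. Hence $\omega_{\cP/B}$ is invertible and commutes with base change to every fibre (the base change theorem for the relative dualizing sheaf of a flat Cohen--Macaulay morphism). This gives $\cL|_{\cP_b}\cong\omega_{\cP_b}^{-1}$ for every $b$.

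For relative ampleness I would use that $\omega_{\PP_\Delta}^{-1}$ is ample on the Gorenstein Fano central fibre. Ampleness on the central fibre of a proper flat family is open in the base (Grothendieck's criterion, or EGA III 4.7.1 in the analytic form via Grauert). So after shrinking, $\cL$ is ample on every fibre, hence $\pi$-ample over the germ. If one wants to avoid the analytic openness theorem, one can instead use the $\ker R$-torus action. Since $\omega^{-1}_{\PP_\Delta}$ is torus-equivariant and the family is built from polyhedral divisors, sections of $\cL^{m}$ are homogeneous. The lattice points of $m\Delta^\circ$ then deform with the family, because $H^1(\PP_\Delta,\omega^{-m})=0$ by Kodaira/toric vanishing and cohomology and base change give local freeness of $\pi_*\cL^m$ and surjectivity of restriction. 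Once $\pi_*\cL^{m}$ is locally free with fibre $H^0(\cP_b,\omega^{-m})$, a relative embedding into $\PP(\pi_*\cL^m)$ for $m$ with $\omega^{-m}$ very ample centrally extends by openness of closed immersions.

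The main obstacle is that $\cP$ need not be algebraic a priori. I would handle this by working with the analytic germ and using Grauert's direct image theorem, and by noting that the IV12 family is in fact algebraic over an affine line (graded construction), where the standard algebraic openness applies before passing to the analytic germ.
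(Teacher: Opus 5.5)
Your proposal is correct and follows essentially the same route as the paper. Properness comes from Definition~\ref{def:singledegree} and flatness from Recollection~\ref{thm:iv44}. The open fibrewise Gorenstein locus contains the central fibre, so properness lets you shrink $B$. The invertible relative dualizing sheaf commutes with base change. Openness of relative ampleness then starts from the ample anticanonical bundle of the Gorenstein Fano central fibre. Your alternative argument via toric vanishing, cohomology and base change, and a relative projective embedding is workable but unnecessary; the paper just cites the algebraic openness results and does not separately treat the analytic-versus-algebraic issue you raise.
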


\begin{proof}
The morphism $\pi$ is proper by Definition~\ref{def:singledegree}; flatness is
Recollection~\ref{thm:iv44}.  The special fibre $\PP_\Delta$ is Gorenstein.  The
fibrewise Gorenstein locus of a flat morphism of finite presentation is open
\cite[Tag~0C09]{Stacks}; it contains the whole proper fibre $\cP_0$, so
properness permits us to shrink $B$ until every fibre is Gorenstein.  Thus
$\pi$ is Gorenstein, and relative duality gives the asserted invertible
dualizing sheaf and its base-change isomorphisms \cite{Conrad00}.  Finally,
$\cL_0\cong\cO_{\PP_\Delta}(-K_{\PP_\Delta})$ is ample because $\Delta$ is
reflexive.  Relative ampleness is open for a line bundle under a proper
morphism, so another shrinking makes $\cL$ relatively ample
\cite[Tag~0D2S]{Stacks}.
\end{proof}

\begin{lemma}[Ample sections and singular curves]
\label{lem:singcurve}
Let $Z$ be a normal projective variety, let $L$ be ample, and suppose
$\Sing(Z)$ contains a complete curve $C$.  Every effective Cartier divisor
$D\in|L|$ is singular.
\end{lemma}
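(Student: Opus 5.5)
The plan is to argue by contradiction, using ampleness to force $D$ to meet the curve and then checking that $D$ is singular at any such meeting point. Since $L$ is ample and $C$ is a complete curve, $\deg(L|_C)>0$. An effective Cartier divisor $D\in|L|$ therefore cannot be disjoint from $C$: otherwise its defining section would restrict to a nowhere-vanishing section of $L|_C$, which forces $L|_C$ to be trivial and hence of degree zero. So there is a point $x\in D\cap C$. (If instead $C\subset D$, any point of $C$ will do.)

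It remains to show that $D$ is singular at $x$. We have $x\in\Sing(Z)$, and $D$ is a Cartier divisor through $x$, locally cut out by a single equation $f\in\mathfrak m_{Z,x}$. The basic inequality is
\[
\dim \mathfrak m_{D,x}/\mathfrak m_{D,x}^2\;\ge\;\dim \mathfrak m_{Z,x}/\mathfrak m_{Z,x}^2-1 .
\]
This holds because $\mathfrak m_{D,x}/\mathfrak m_{D,x}^2$ is the quotient of $\mathfrak m_{Z,x}/\mathfrak m_{Z,x}^2$ by the span of the class of $f$, which is at most one-dimensional. Since $x$ is a singular point of $Z$, the embedding dimension satisfies $\operatorname{edim}_x Z>\dim_x Z$. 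Writing $n=\dim Z$, the divisor $D$ has pure dimension $n-1$ at $x$, because $f$ is a nonzerodivisor: $Z$ is normal, hence integral, and $f\neq0$ locally. Therefore
\[
\operatorname{edim}_x D\ \ge\ \operatorname{edim}_x Z-1\ >\ n-1=\dim_x D,
\]
so $x$ is a singular point of $D$.

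Both steps are short. The only point that needs care is making sure $D$ actually meets $C$, and this comes from the positive degree of an ample bundle on a complete curve. The argument uses neither normality in an essential way (only that $Z$ is integral, so that $f$ is a nonzerodivisor) nor projectivity beyond the existence of the ample $L$.
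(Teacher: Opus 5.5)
Your proof is correct and follows the paper's argument. Ampleness forces $D$ to meet $C$; the paper makes the degree argument precise on the normalization of an irreducible component of $C$, and that is how your $\deg(L|_C)>0$ should be read. An embedding-dimension count at $x$ then shows that $\mathcal O_{D,x}$ cannot be regular, and your single inequality $\operatorname{edim}_xD\ge\operatorname{edim}_xZ-1>\dim_xD$ packages the paper's case split on whether the local equation lies in $\mathfrak m_{Z,x}^2$.
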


\begin{proof}
Pass to the normalization $\widetilde C$ of an irreducible component of $C$.
If the section defining $D$ vanishes identically on $C$, then $D$ contains
$C$.  Otherwise it restricts to a nonzero section of the positive-degree line
bundle $L|_{\widetilde C}$, and hence has a zero.  In either case choose a
closed point $x\in C\cap D$.

Put $A=\cO_{Z,x}$ and let $g\in A$ define $D$.  Since $D$ is Cartier, $g$ is a
nonzerodivisor.  If $A/(g)$ were regular, then $A$ would be regular: indeed,
if $g\notin\mathfrak m_A^2$, both embedding dimension and dimension rise by
one from $A/(g)$ to $A$; while if $g\in\mathfrak m_A^2$, regularity of
$A/(g)$ would give
$\operatorname{edim}A=\dim(A/(g))=\dim A-1$, impossible.  This contradicts
$x\in\Sing(Z)$, so $D$ is singular at $x$.
\end{proof}

For a cone $\sigma_G$ in the face fan the dichotomy is combinatorial.  The
locus is affine when $\langle R, \cdot\rangle$ is nonnegative or nonpositive
on the rays of $\sigma_G$ (we then say $\sigma_G$ is \emph{one-sided} at
$R$), and complete when $\sigma_G$ has rays strictly on both sides of
$R^\perp$.  If $R$ vanishes on the whole cone, neither slice acts on its
chart.  The following lemma gives the resulting persistence statement.

\section{Persistent ambient singularities and Minkowski rigidity}\label{sec:obstructions}

Throughout this section $\Delta$ is in the admissible framework and $G$ is a
singular $2$-face.

\subsection{Degrees taking both signs on a cone}

\begin{lemma}[Persistence for degrees taking both signs]\label{lem:reach}
Let $R \in M$ be primitive.  If $\sigma_G$ has rays strictly on both sides of
$R^{\perp}$, then the corresponding affine chart of the general fibre of every
single-degree family at $R$ has positive-dimensional singular locus.
\end{lemma}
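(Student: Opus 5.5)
The plan is to reduce the statement to Recollection~\ref{thm:rem213} and then show that the singular locus of the affected chart cannot shrink to isolated points.

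Since $\sigma_G$ has rays strictly on both sides of $R^\perp$, its polyhedral divisor has complete locus. Any single-degree family at $R$ restricts on this chart to a $T$-deformation obtained by decomposing that polyhedral divisor. The affine toric threefold chart $\TV(\sigma_G)$, taken as the product of the transverse germ with the one-dimensional orbit direction, is singular because $G$ is a singular $2$-face. Recollection~\ref{thm:rem213} then says the general fibre of the chart is singular.

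To upgrade singularity to a positive-dimensional singular locus, I would use the torus action. By Recollection~\ref{thm:iv44}, the three-dimensional torus $\ker R$ acts on every fibre, and on the chart of the four-dimensional cone $\sigma_G$ it acts with orbits of dimension at least one away from fixed points. The singular locus of the general fibre is closed and $T$-invariant.

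The key step is to exclude the case where the singular locus consists only of $T$-fixed points. I would try two complementary routes.
\begin{itemize}
\item The complete-locus mechanism of \cite[Rem.~2.13]{IV12} comes from the fibre over the complete curve $\PP^1$ in the quotient. Its singularity persists along the whole complete-locus fibre, which is a union of positive-dimensional $T$-orbits. These orbits contract to the relevant orbit closure of the chart.
\item Alternatively, $\sigma_G$ is the cone over a $2$-face of a $4$-dimensional polytope, hence three-dimensional in $N$. Its affine chart is then $\TV(\sigma_G)\times\CC^*$. The downgraded torus $\ker R$ contains a one-parameter subgroup acting nontrivially along the $\CC^*$ factor, because $R$ is nonzero on $\sigma_G$ and so $\ker R\not\supset$ the whole lattice. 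This one-parameter subgroup acts freely on the chart. Any singular point of the general fibre therefore lies on a singular orbit of dimension $\ge1$.
\end{itemize}

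I expect the main obstacle to be this second step: making precise that the deformed chart still carries a free one-dimensional subtorus action, that is, no $T$-fixed point exists on it. I would check this via the tail fan. The degree lattice of $\sigma_G^\vee$ meets $R^\perp$ in a positive-rank sublattice whose characters remain invertible in the deformed coordinate ring, so the invertible characters persist under the deformation. Granting this, the singular set is a nonempty union of orbits of positive dimension, and hence it is positive-dimensional.
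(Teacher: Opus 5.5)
Your first step matches the paper: two-sidedness gives complete locus, the transverse germ is singular, and Recollection~\ref{thm:rem213} makes the general fibre of this chart singular. Your strategy for the second step is also the paper's. The singular locus is $\ker R$-invariant, and a homogeneous unit of nonzero degree rules out torus-fixed points.

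The gap is that the fact you ``grant'' is the whole content of the second half, and your sketch for it does not work. Your observation concerns the undeformed chart $Y_G\times\mathbb G_m$, where the unit is $\chi^m$ with $m$ spanning $\sigma_G^\perp\cap M$. The ``therefore'' that carries this to the general fibre is unjustified. The coordinate ring of the deformed chart is $\bigoplus_u H^0(\PP^1,\cO(\cD_b(u)))\chi^u$, where $\cD_b$ is the deformed proper polyhedral divisor. So characters do not simply ``remain invertible''. An invertible element of degree $u$, if one exists, has the form $f\chi^u$ with $f\in\CC(\PP^1)$. It exists only if $\cD_b(u)$ is principal and $\cD_b(-u)=-\cD_b(u)$. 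Your tail-fan remark does not show this, and it conflates $R^\perp\subset N_\RR$ with the character lattice $M/\ZZ R$ of $\ker R$. Your first route is not an argument either, since Recollection~\ref{thm:rem213} only asserts that the general fibre is singular.

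The missing idea, which is how the paper closes the argument, is a short degree computation on $\PP^1$.
\begin{enumerate}
\item Let $\tau=\sigma_G\cap R^\perp$. It is two-dimensional in the rank-three lattice $N\cap R^\perp$, because $R$ takes both signs on $\sigma_G$.
\item Choose $u\ne0$ annihilating $\tau$, so that $\pm u\in\tau^\vee$. Properness of $\cD_b$ makes $\cD_b(\pm u)$ semiample, hence of nonnegative degree.
\item For each coefficient $\Delta_y$ of $\cD_b$ one has $\min_{\Delta_y}\langle u,\cdot\rangle+\min_{\Delta_y}\langle-u,\cdot\rangle\le0$. So the two degrees sum to at most zero.
\item Hence both degrees vanish and equality holds coefficientwise, giving $\cD_b(-u)=-\cD_b(u)$. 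After replacing $u$ by a multiple, this degree-zero divisor equals $-\operatorname{div}f$.
\item Then $f\chi^u$ and $f^{-1}\chi^{-u}$ are mutually inverse homogeneous elements of nonzero degree.
\end{enumerate}
A homogeneous function of nontrivial weight vanishes at a $\ker R$-fixed point, so the deformed chart has no fixed points. The nonempty invariant singular locus is therefore a union of positive-dimensional orbits.
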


\begin{proof}
The rays of $\sigma_G$ are the vertices of $G$.  If $\langle R,\cdot\rangle$
takes both signs on them, then in the downgrade along $R$ the polyhedral divisor
attached to $\sigma_G$ has nonempty coefficients at both $0$ and $\infty$, hence
complete locus.  By Definition~\ref{def:framework} the germ is a node or a del
Pezzo cone, in particular singular, so Recollection~\ref{thm:rem213} applies.

It remains to justify the asserted dimension.  The tail cone
$\tau=\sigma_G\cap R^\perp$ has dimension $2$ in the rank-three acting lattice
$N\cap R^\perp$.  Choose a nonzero character $u$ annihilating $\tau$.  Both
$u$ and $-u$ lie in $\tau^\vee$.  For the proper polyhedral divisor $\cD_b$ of
the general fibre, semiampleness gives
$\deg\cD_b(u),\deg\cD_b(-u)\ge0$.  Coefficient by coefficient,
\[
 \min_{v\in(\cD_b)_y}\langle u,v\rangle
 +\min_{v\in(\cD_b)_y}\langle-u,v\rangle\le0,
\]
so both degrees are zero, equality holds at every coefficient, and
$\cD_b(-u)=-\cD_b(u)$.  After multiplying $u$ to clear denominators, this
degree-zero semiample divisor on $\PP^1$ is principal.  The coordinate ring
therefore contains a nonconstant homogeneous unit $f\chi^u$ and its inverse.
No point can be fixed by the acting torus while this unit is nonzero there.
The singular locus is torus-invariant, so every one of its nonempty orbits has
positive dimension.

Altmann's theorem gives an independent check of the first step.  The chart is
$Y_G\times\mathbb G_m$, whose $T^1$ is concentrated in degrees restricting to
$-R_G^*$ on $L_G$ \cite[Thm.~(6.5)]{Altmann95}.  The parameters displacing
the two slices have opposite full-torus weights, so every homogeneous term
in the formal expansion has degree $jR$ for some $j\in\ZZ$.  No such degree
restricts to $-R_G^*$, because $R$ takes both signs on the rays of $\sigma_G$.
Consequently, at each formal order the possible local deformation class
vanishes on this chart.
\end{proof}

\begin{corollary}\label{cor:reach}
If for every primitive $R \in M$ some singular $2$-face of $\Delta$ is
two-sided at $R$, then no relative anticanonical effective Cartier divisor in
any single-degree family has smooth general fibre.
\end{corollary}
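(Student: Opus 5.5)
The plan is to fix an arbitrary primitive $R$ and a single-degree family $\pi\colon\cP\to B$ at $R$, then show that every relative anticanonical effective Cartier divisor $\mathcal D\subset\cP$ has singular general fibre. By hypothesis some singular $2$-face $G$ of $\Delta$ is two-sided at $R$. Lemma~\ref{lem:reach} then says that, in the general fibre $\cP_b$, the affine chart corresponding to $\sigma_G$ has a positive-dimensional singular locus. By the torus-orbit argument in that proof, this locus is a union of positive-dimensional orbits of the acting torus.

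The first step is to produce a \emph{complete} curve inside $\Sing(\cP_b)$. Let $\Sigma_b$ be the closure in $\cP_b$ of a positive-dimensional singular orbit in the $\sigma_G$-chart. Since $\cP_b$ is proper (Proposition~\ref{prop:relativefano}) and $\Sing(\cP_b)$ is closed, $\Sigma_b$ lies in $\Sing(\cP_b)$ and is complete of dimension $\ge1$. It therefore contains a complete curve $C$; a general complete curve through two of its points will do.

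The second step applies Lemma~\ref{lem:singcurve} on the fibre $Z=\cP_b$ with $L=\cL|_{\cP_b}\cong\omega_{\cP_b}^{-1}$. This is ample after shrinking $B$, by Proposition~\ref{prop:relativefano}. The fibre $\mathcal D_b$ of a relative Cartier divisor flat over $B$ is an effective Cartier divisor in $|L|$, since a flat relative Cartier divisor restricts to Cartier divisors on fibres. The lemma then shows $\mathcal D_b$ is singular. The lemma also needs $Z$ normal projective. Projectivity comes from relative ampleness. Normality of the general fibre follows from openness of normality in flat proper families with normal central fibre $\PP_\Delta$, or can be read off directly from the $T$-variety description, since fibres of the Ilten--Vollmert family are normal $T$-varieties.

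The main obstacle is the first step: making sure the singular locus supplied by Lemma~\ref{lem:reach} yields a complete curve inside $\Sing(\cP_b)$ rather than only an affine one. The first approach is the closure argument above, which needs only that $\Sing$ is closed and $\cP_b$ proper; the step it rests on is that the closure of a positive-dimensional orbit is itself positive-dimensional and complete. A second subtlety is that $b$ must be general for every family simultaneously. Since each family is treated separately and the chart statement of Lemma~\ref{lem:reach} holds for the general fibre, it suffices to shrink $B$ so that this holds on $B\setminus\{0\}$.
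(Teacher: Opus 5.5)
Your proposal is correct and follows the paper's own argument. Lemma~\ref{lem:reach} gives a positive-dimensional singular locus in the general fibre; its closure in the proper fibre is a positive-dimensional projective subvariety of $\Sing(\cP_b)$, which therefore contains a complete curve; and Proposition~\ref{prop:relativefano} together with Lemma~\ref{lem:singcurve} finishes the proof, with your remarks on normality and on restricting the relative Cartier divisor to a fibre supplying details the paper leaves implicit.
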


\begin{proof}
By Lemma~\ref{lem:reach}, the singular locus of the general ambient fibre has
a positive-dimensional irreducible component.  Its closure in the projective
fibre is a positive-dimensional projective subvariety contained in the
singular locus, and hence contains a complete curve.  Apply
Proposition~\ref{prop:relativefano} and Lemma~\ref{lem:singcurve}.
\end{proof}

\begin{remark}\label{rem:reachscope}
The quantifier in Corollary~\ref{cor:reach} must range over \emph{all} primitive
degrees, not over the degrees $R = u_F$ dual to a facet $F$.  With the
normalization $\langle u_F,v\rangle=-1$ these are exactly the vertices of the
polar polytope $\Delta^\circ$, and we call them \emph{vertex degrees} for that
reason; at such an $R$ the cell of $F$ is the unique maximal bounded cell of
the level-$(-1)$ slice, and is $F$ itself, which is why we call it the
\emph{lattice facet}.  On
$\Delta_{19}$ the vertex degrees leave at most $13$ of the $15$ singular
$2$-faces one-sided, while a general primitive degree, for instance
$R = (-3,0,0,1)$, leaves all $15$ one-sided.  Thus the sign condition in Lemma~\ref{lem:reach} does not obstruct
$X_{19}$; its ambient obstruction follows instead from Minkowski
rigidity, as proved in Theorem~\ref{thm:A}.  A test restricted to
vertex degrees would miss this distinction.
\end{remark}

\subsection{Restriction to the local Gorenstein degree}

Fix a unit-edge pentagonal $2$-face $G$ and put
\[
 L_G=N\cap\operatorname{span}_{\RR}(\sigma_G),\qquad
 Y_G=\TV_{L_G}(\sigma_G).
\]
Then $L_G$ is saturated of rank $3$, $Y_G$ is the isolated Gorenstein
threefold cone over $\dP_7$, and a choice of splitting gives
\[
 U_{\sigma_G}\cong Y_G\times\mathbb G_m.
\]
Let $R_G^*\in L_G^\vee$ be the Gorenstein degree, characterized by
$\langle R_G^*,v\rangle=1$ on every ray of $\sigma_G$.

\begin{definition}\label{def:crosscut}
If $r\in M$ is nonnegative and nonzero on $\sigma_G$, its \emph{crosscut} is
\[
 Q_G(r):=\sigma_G\cap[r=1].
\]
It is a polyhedron with tail $\sigma_G\cap r^\perp$.  When $r$ is positive on
all five rays it is the rational polygon
$\conv\{v/\langle r,v\rangle:v\in V(G)\}$.
\end{definition}

\begin{lemma}[Restriction to the Gorenstein degree]\label{lem:degreegate}
Let $R\in M$ be primitive and suppose $\sigma_G$ is one-sided at $R$ but
$R|_{L_G}\ne0$.  Choose $\epsilon\in\{1,-1\}$ so that
$r=\epsilon R$ is nonnegative on $\sigma_G$.  For every single-degree family
at $R$:
\begin{enumerate}
\item if $r|_{L_G}\ne R_G^*$, the induced deformation of
  $U_{\sigma_G}$ is trivial;
\item if $r|_{L_G}=R_G^*$, then
  $\langle r,v\rangle=1$ for all $v\in V(G)$ and every admissible induced
  decomposition of $G=Q_G(r)$ is a decomposition into lattice polygons.
  If that decomposition is trivial, the induced deformation of
  $U_{\sigma_G}$ is trivial.
\end{enumerate}
Thus a one-sided single-degree family can move the transverse $\dP_7$ cone only
on the affine lattice
\[
 \cA_G:=\{r\in M:\langle r,v\rangle=1\text{ for every }v\in V(G)\}.
\]
\end{lemma}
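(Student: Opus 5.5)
The plan is to reduce everything to Altmann's grading on the transverse germ.

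First, fix the splitting $U_{\sigma_G}\cong Y_G\times\mathbb G_m$ and record how degrees restrict. Because $\sigma_G$ is one-sided at $R$, the polyhedral divisor on this chart has affine locus. The induced deformation of the chart is therefore the affine Altmann deformation of $\TV(\sigma_G)$ in degree $-r$, coming from the decomposition of the crosscut $Q_G(r)$. It is not a deformation from both slices, since only the slice on the side where $r>0$ meets this chart. Every homogeneous term in the formal expansion of the family then has full-torus degree $-jr$ for some $j\ge1$. Its restriction to $L_G$ is $-j\,r|_{L_G}$.

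For part (1), use that $T^1_{U_{\sigma_G}}$ is $T^1_{Y_G}$ tensored with the functions on $\mathbb G_m$. By Altmann's theorem \cite[Thm.~6.5]{Altmann95}, its $L_G$-graded pieces are concentrated in the single degree $-R_G^*$. I would argue order by order, exactly as in the last paragraph of the proof of Lemma~\ref{lem:reach}. Suppose the deformation is trivial to order $k-1$. The obstruction to trivializing the next order is a class in $T^1$ of degree $-j\,r|_{L_G}$. If $r|_{L_G}\ne R_G^*$, this can equal $-R_G^*$ only if $j\,r|_{L_G}=R_G^*$ for some $j\ge2$. That is impossible: $\langle r,v\rangle$ is a nonnegative integer on every ray $v$ of $G$, while $\langle R_G^*,v\rangle=1$. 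Hence every order is trivial, and so is the formal deformation. Convergence then makes the analytic deformation trivial by Artin approximation or by the versality of the local base.

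For part (2), suppose $r|_{L_G}=R_G^*$. Then $\langle r,v\rangle=1$ on every vertex of $G$, and the crosscut $Q_G(r)$ is $G$ itself, a lattice pentagon. Admissibility (D2) applied to the vertex characters shows that each summand's vertices can be translated to lattice points, so the decomposition is into lattice polygons, as in Altmann's lattice Minkowski summands. If the decomposition is trivial, one summand is a point and the other a translate of $G$. The Ilten--Vollmert family on this chart is then the product family, up to translation by a character, and hence trivial. The final assertion about $\cA_G$ combines both parts: a nontrivial local deformation forces $r\in\cA_G$.

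The main obstacle is part (1) beyond first order. The obstruction classes at higher orders sit in Altmann's grading, so I must check that $T^1$ concentration alone, and not also $T^2$, suffices to trivialize order by order. The cleanest route would be to identify the restricted family with a pullback of Altmann's versal family, whose parameters all have degree $-R_G^*$.
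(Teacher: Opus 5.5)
Your proof is correct, but part (1) takes a different route from the paper. The paper uses only first-order information: the Kodaira--Spencer class has degree $-r|_{L_G}$, so it vanishes unless $r|_{L_G}=R_G^*$. It then invokes Altmann's classification of deformation data with zero Kodaira--Spencer map. Only types A and B occur, type A gives a trivial family, and type B is excluded because $Y_G$ is smooth in codimension two. The paper then passes to the Laurent extension.

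You instead apply the grading at every order. This is the same degree-by-degree mechanism the paper uses only as an independent check at the end of Lemma~\ref{lem:reach}. Your route needs nothing beyond $T^1$ concentration and the weight of the parameter, and it does not care how the arc meets the origin. The paper's route gives triviality of the actual algebraic family in one step, at the price of depending on Altmann's structural results.

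Your worry about $T^2$ is unfounded. You are comparing two liftings of a trivial deformation, and these form a torsor under $T^1\otimes(t^k)$. Three points should be made explicit.
\begin{enumerate}
\item The order-$k$ class is homogeneous, of degree exactly $-kr$, only if the trivialization at order $k-1$ is chosen equivariantly. This is possible for a torus, for instance by taking degree-zero components.
\item The passage from formal to actual triviality is better done by grading than by Artin approximation or versality, which concern germs rather than the affine chart. The parameter has degree $r$, opposite to its Kodaira--Spencer class, so on $Y_G$ everything is positively graded for a cocharacter in the interior of $\sigma_G$. An equivariant formal trivialization is then polynomial in $t$, and the Laurent extension inherits it.
\item Equivariance under the downgraded torus $\ker R$ alone would already suffice, and this avoids identifying the chart family with a homogeneous one-parameter pullback. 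It places every class in degrees $jR$, and one-sidedness then forces $|j|=1$ and $r|_{L_G}=R_G^*$.
\end{enumerate}

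In part (2), you apply (D2) at generic characters: each vertex of $G$ is a lattice point, so if one summand vertex is integral, so is the other. This gives a self-contained proof of the direction of Altmann's Thm.~(6.6) that the paper cites. Your product-family observation for the trivial decomposition replaces the citation of (5.7).
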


\begin{proof}
The deformation datum lives in the cone
$\sigma_G\subset (L_G)_\RR$.  After choosing the splitting
$U_{\sigma_G}=Y_G\times\mathbb G_m$, the restriction of the
Ilten--Vollmert construction to this chart is the Laurent extension of the
homogeneous Altmann deformation of $Y_G$ associated with the same crosscut
\cite[\S3]{IV12}.  Its Kodaira--Spencer class in $T^1(Y_G)$ has degree
$-r|_{L_G}$ \cite[(5.3)]{Altmann95}.  Altmann's theorem for an isolated
three-dimensional Gorenstein toric singularity says that $T^1(Y_G)$ is
concentrated in degree $-R_G^*$ \cite[Thm.~(6.5)]{Altmann95}.  Hence the
Kodaira--Spencer map is zero unless $r|_{L_G}=R_G^*$.

Zero Kodaira--Spencer class is enough here, although it is not enough for an
arbitrary deformation.  Altmann's classification of the transverse admissible
deformation datum with zero Kodaira--Spencer map gives only types A and B
\cite[Prop.~(5.6)]{Altmann95}.  Type A yields a trivial family
\cite[(5.7)]{Altmann95}; and a nontrivial type B datum does not exist when the
toric variety is smooth in codimension two \cite[(5.8)]{Altmann95}.
The transverse variety $Y_G$ is smooth in codimension two because its
singularity is isolated.  Thus its Altmann family is trivial, and so is its
Laurent extension to $U_{\sigma_G}$.  This proves (1).

In the remaining degree the crosscut is $G$ itself.  Altmann's Gorenstein
specialization says precisely that admissibility is equivalent to all
summands being lattice polytopes \cite[Thm.~(6.6)]{Altmann95}.  A trivial
decomposition is type A, so (5.7) again makes the induced family trivial.
\end{proof}

\subsection{Locking: a combinatorial rigidity criterion}

A Minkowski summand cone is computed from one cross-section.  We use a
rigidity criterion depending only on the face lattice, which therefore applies
to all bounded cross-sections at once.

\begin{definition}\label{def:rigid}
A positive-dimensional polytope $C$ is \emph{Minkowski-rigid} if every
Minkowski summand is a homothet of $C$ up to translation.  Equivalently,
its \emph{Minkowski summand cone} has dimension one.  This cone is the
solution cone of the \emph{edge-cycle system}: one variable $t_e\ge0$
for each edge, and for each $2$-face with cyclic edge vectors
$e_1,\ldots,e_k$ the closing condition
\begin{equation}\label{eq:closing}
  \sum_{i=1}^k t_i e_i=0.
\end{equation}
\end{definition}

The all-ones vector represents homothety and satisfies every closing
condition.  Thus the summand cone has dimension at least one.
Only bounded positive-dimensional cross-sections are used below.

\begin{remark}\label{rem:dimmoves}
A computation of the Minkowski summand cone at one degree concerns that
cross-section only, and the degrees form an infinite set.  The criterion below
depends only on the face lattice of the facet and on which of its $2$-faces are
triangles, so by Lemma~\ref{lem:crosssection} it applies uniformly to all
bounded cross-sections.
\end{remark}

The reason such a criterion exists is elementary.

\begin{lemma}[Cross-section]\label{lem:crosssection}
Let $\sigma \subset N_\RR$ be a pointed cone of dimension $n$ and let
$R \in \operatorname{int}\sigma^\vee$.  Then $C_R := \sigma \cap [R=1]$ is a
polytope of dimension $n-1$, its faces are the cross-sections of the nonzero
faces of $\sigma$, and its face lattice is that of $\sigma$ with the apex
removed.  In particular, for any two $R, R' \in \operatorname{int}\sigma^\vee$
the polytopes $C_R$ and $C_{R'}$ are combinatorially isomorphic, by the
isomorphism matching cross-sections of the same face.
\end{lemma}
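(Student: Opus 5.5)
The plan is to realize the cross-section as the base of a cone by a linear-algebra argument, and then transfer faces through the standard correspondence between faces of a cone and faces of a hyperplane slice. First I will show that $C_R$ is bounded and nonempty of dimension $n-1$. Since $R\in\operatorname{int}\sigma^\vee$ and $\sigma$ is pointed, $R$ is strictly positive on $\sigma\smallsetminus\{0\}$. Take generators $v_1,\dots,v_k$ of $\sigma$, so every ray meets $[R=1]$ in the single point $v_i/\langle R,v_i\rangle$. Then
\[
 C_R=\conv\{v_i/\langle R,v_i\rangle\},
\]
which is a polytope, by the same computation as in the slice discussion preceding Definition~\ref{def:iv}. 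For the dimension, the map $x\mapsto x/\langle R,x\rangle$ from $\operatorname{int}\sigma$ onto the relative interior of $C_R$ is a submersion onto the affine hyperplane $[R=1]$. Equivalently, $\sigma=\RR_{\ge0}C_R$, so $\dim C_R=\dim\sigma-1=n-1$.

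Next I will establish the face correspondence. For a nonzero face $\tau=\sigma\cap u^\perp$ with $u\in\sigma^\vee$, the set $\tau\cap[R=1]$ is the face of $C_R$ cut out by the same supporting functional $u$, which is nonnegative on $C_R$. Conversely, let $\phi$ be a face of $C_R$ with supporting affine functional $a$, nonnegative on $C_R$ and vanishing on $\phi$. I will homogenize it to the linear functional
\[
 u=a-a(0)R+\ldots ,
\]
more precisely the unique linear functional agreeing with $a$ on $[R=1]$. This $u$ is nonnegative on $\sigma=\RR_{\ge0}C_R$, so $\phi=C_R\cap u^\perp$ is the slice of the face $\sigma\cap u^\perp$. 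The empty face of $C_R$ corresponds to the apex $\{0\}$, since the apex misses $[R=1]$.

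The two maps $\tau\mapsto\tau\cap[R=1]$ and $\phi\mapsto\RR_{\ge0}\phi$ are mutually inverse, because $\tau=\RR_{\ge0}(\tau\cap[R=1])$ for $\tau\ne0$, and both preserve inclusion. This gives the isomorphism between the face lattice of $C_R$ and the face lattice of $\sigma$ with the apex removed (or with the apex matched to $\varnothing$). The combinatorial isomorphism between $C_R$ and $C_{R'}$ then follows by composing one correspondence with the inverse of the other, and it matches the slices of the same face by construction.

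The only delicate point I expect is the homogenization step, since the affine functional must extend to a linear one that is nonnegative on all of $\sigma$. This needs the positivity of $R$ on $\sigma\smallsetminus0$, which is exactly where $R\in\operatorname{int}\sigma^\vee$ and pointedness are used.
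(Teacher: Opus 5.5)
Your proof is correct and follows the paper's argument: positivity of $R$ on $\sigma\smallsetminus\{0\}$ gives boundedness and the inclusion-preserving correspondence $\tau\mapsto\tau\cap[R=1]$, and you write out the converse direction explicitly by homogenizing a supporting functional. One small fix: the displayed homogenization should read $u=(a-a(0))+a(0)R$, which is exactly the functional your ``more precisely'' clause describes.
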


\begin{proof}
$R$ is positive on $\sigma \smallsetminus \{0\}$, so every ray meets $[R=1]$ in
exactly one point and $C_R$ is bounded; the correspondence
$\sigma' \mapsto \sigma' \cap [R=1]$ between nonzero faces of $\sigma$ and faces
of $C_R$ is inclusion-preserving in both directions.
\end{proof}

The edge \emph{directions} of $C_R$ do change with $R$: the vertex of $C_R$ on
the ray through $v$ is $v/\langle R,v\rangle$, so an edge from $v$ to $v'$ has
direction $v'/\langle R,v'\rangle - v/\langle R,v\rangle$.  The point is that
the forcing rules below never use the directions, only the following two facts,
both of which hold in every cross-section:
\begin{enumerate}
\item[(a)] the edge vectors of a bounded $2$-face sum to zero around the cycle;
\item[(b)] two consecutive edges of a convex polygon are linearly independent.
\end{enumerate}

\begin{proposition}[The forcing rules]\label{prop:rules}
Let $C$ be a polyhedron, $f$ a bounded $2$-face of $C$ with edge vectors
$e_1,\dots,e_k$ in cyclic order, and $t_1,\dots,t_k$ a solution of the
edge-cycle system.
\begin{enumerate}
\item \textbf{Triangle.} If $k = 3$ then $t_1 = t_2 = t_3$.
\item \textbf{Single.} If $t_i = \lambda$ for every $i$ except one, say $i=k$,
  then $t_k = \lambda$.
\item \textbf{Adjacent.} If $t_i = \lambda$ for every $i$ except two
  \emph{consecutive} indices, say $i = 1, 2$, then $t_1 = t_2 = \lambda$.
\end{enumerate}
None of the three refers to the lattice, to the degree, or to the edge
directions beyond (b).
\end{proposition}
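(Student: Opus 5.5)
Each rule follows from (a) and (b), through the closing condition \eqref{eq:closing} for the single face $f$; nothing from the rest of $C$ is used. First subtract the homothety solution $\lambda(1,\ldots,1)$. It satisfies \eqref{eq:closing} because the edge vectors of a bounded $2$-face sum to zero (fact (a)). Put $s_i=t_i-\lambda$; by linearity the closing condition for $f$ becomes $\sum_i s_ie_i=0$. Each rule then reduces to a statement about the zero pattern of $(s_i)$.

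\textbf{Single.} Here $s_i=0$ for $i\ne k$, so the condition reads $s_ke_k=0$. An edge vector of a bounded face is nonzero, hence $s_k=0$ and $t_k=\lambda$.

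\textbf{Adjacent.} Here only $s_1,s_2$ can be nonzero, and $s_1e_1+s_2e_2=0$. Consecutive edges of a convex polygon are linearly independent by (b), so $s_1=s_2=0$.

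\textbf{Triangle.} Take $\lambda=t_3$. Then $s_3=0$, and the remaining nonzero entries $s_1,s_2$ sit on consecutive edges, so the adjacent case applies. One could also argue directly: any two edges of a triangle are independent, and $e_3=-e_1-e_2$.

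The only point needing care is that (a) and (b) hold in every bounded cross-section, as noted after Lemma~\ref{lem:crosssection}. It is (a) that makes the all-ones vector a solution to subtract. No lattice, degree, or direction data beyond independence of consecutive edges enters the argument. I do not expect a genuine obstacle here: the rules are linear-algebra facts about a single cycle relation.
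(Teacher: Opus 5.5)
Your proposal is correct and follows the paper's proof: the paper also uses (a) to rewrite the closing condition as $(t_k-\lambda)e_k=0$, respectively $(t_1-\lambda)e_1+(t_2-\lambda)e_2=0$, which is exactly what subtracting the homothety solution achieves, and then concludes from $e_k\ne0$ and (b). Your triangle case, taking $\lambda=t_3$ and applying the adjacent rule, is the same computation the paper writes out directly using $e_3=-e_1-e_2$.
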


\begin{proof}
(1) By (a), $e_3 = -e_1 - e_2$, so \eqref{eq:closing} reads
$(t_1-t_3)e_1 + (t_2-t_3)e_2 = 0$, and $e_1, e_2$ are independent by (b).
(2) By (a), $\sum_{i<k} e_i = -e_k$, so \eqref{eq:closing} reads
$(t_k - \lambda)e_k = 0$ and $e_k \ne 0$.
(3) By (a), $\sum_{i \ge 3} e_i = -(e_1+e_2)$, so \eqref{eq:closing} reads
$(t_1-\lambda)e_1 + (t_2-\lambda)e_2 = 0$, and $e_1, e_2$ are consecutive,
hence independent by (b).
\end{proof}

\begin{remark}\label{rem:adjacent}
The hypothesis that the edges in (3) are consecutive is necessary.
Two opposite
edges of a quadrilateral may be parallel, and then the same computation gives
only one equation for two unknowns.  Applying rule (3) to opposite
pairs would incorrectly imply rigidity of the cube; the cube is the Minkowski sum of three
segments and has Minkowski summand cone of dimension three.
\end{remark}

\begin{definition}\label{def:locked}
Let $F$ be a $3$-polytope and $S$ a set of vertices of $F$.  Maintain a
partition of the edge set of $F$, initially into singletons, and call two edges
\emph{tied} when they lie in a common class.  Using only those $2$-faces all of
whose vertices lie in $S$, apply the following three rules until none applies.
\begin{enumerate}
\item[(T)] \emph{Triangle.}  Merge the three edges of a triangular $2$-face into
  one class.
\item[(S)] \emph{Single edge.}  If all but one edge of a $2$-face are tied to
  one another, merge the remaining edge into their class.
\item[(A)] \emph{Adjacent pair.}  If all but two edges of a $2$-face are tied to
  one another and those two are consecutive, merge both into their class.
\end{enumerate}
Each application either changes nothing or decreases the number of classes, so
the propagation terminates.  Its outcome does not depend on the order in which
the rules are applied: each rule's hypothesis is preserved under coarsening the
partition, so the three rules define a monotone closure operator and the
terminal partition is its least fixed point.  Say that a set $E$ of edges is \emph{locked in $(F,S)$} if at
termination all of $E$ lies in a single class; that a facet $F$ is
\emph{locked} if its whole edge set is locked in $(F,V(F))$; and that a
$2$-face $G$ lying in the facets $F_1,F_2$ is \emph{locked} if both $F_1$ and
$F_2$ are locked.  This last is hypothesis~{\rm(L)} of
Theorem~\ref{thm:Aprime}, and it is what the census of
Section~\ref{sec:census} counts.
\end{definition}

The requirement in (S) and (A) that the other edges be tied \emph{to one
another}, and not merely each already forced, is what makes the rules valid:
what they propagate is the value of a common dilation parameter, and two edges
carrying forced but unequal parameters license nothing.  A partition, rather
than a set of forced edges, is what records that.

The propagation needs a seed, and (T) is the only rule that fires from
singletons, so a $3$-polytope with no triangular $2$-face is never locked.

\begin{proposition}\label{prop:locked}
Let $F$ be a facet of $\Delta$ and $R \in M$ with
$\langle R, \cdot\rangle > 0$ on every vertex of $F$.  If $F$ is locked then
the cell $\sigma_F \cap [R=1]$ is Minkowski-rigid.
\end{proposition}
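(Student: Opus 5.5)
The plan is to use Lemma~\ref{lem:crosssection} to transfer the face lattice of $F$ to the cell $C=\sigma_F\cap[R=1]$. Then we show that every solution of the edge-cycle system of $C$ is constant, because the locking propagation on $F$ is mirrored by valid deductions of Proposition~\ref{prop:rules} on $C$.

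First, since $R$ is positive on every vertex of $F$, it lies in the interior of $\sigma_F^\vee$. The cell $C$ is therefore a bounded $3$-polytope, and Lemma~\ref{lem:crosssection} gives an inclusion-preserving bijection between faces of $F$ and faces of $C$, matching the cross-section of each face of $\sigma_F$. In particular, edges of $F$ correspond to edges of $C$, and each $2$-face of $F$ corresponds to a bounded $2$-face of $C$. That $2$-face of $C$ has the same number of edges and the same cyclic order of edges: consecutive edges share a vertex in $F$ iff they do in $C$. Triangles of $F$ go to triangles of $C$. Every $2$-face of $F$ has all its vertices in $V(F)$, so the propagation in Definition~\ref{def:locked} may use every $2$-face.

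Next, fix a solution $(t_e)$ of the edge-cycle system \eqref{eq:closing} for $C$, and let $P_0,P_1,\ldots,P_m$ be the successive partitions produced by the propagation on $F$, transported to edges of $C$. We prove by induction on $j$ the invariant: if two edges are tied in $P_j$, then their parameters $t_e$ are equal. The base case $P_0$ consists of singletons, so the invariant holds trivially. For the inductive step there are three cases.
\begin{enumerate}
\item A merge by (T) is justified by Proposition~\ref{prop:rules}(1).
\item A merge by (S) uses a $2$-face all of whose edges but one are tied to one another. By induction these edges carry a common value $\lambda$, and Proposition~\ref{prop:rules}(2) gives the remaining edge the value $\lambda$.
\item A merge by (A) is handled the same way using Proposition~\ref{prop:rules}(3). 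Consecutiveness is preserved by the combinatorial isomorphism, so the independence hypothesis (b) holds in $C$.
\end{enumerate}
Merging classes whose members already carry equal values preserves the invariant. These rules use only facts (a) and (b), which hold in every cross-section, so the changed edge directions of $C$ are irrelevant.

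Finally, since $F$ is locked, the terminal partition has a single class, so every solution $(t_e)$ is a constant vector. The summand cone of $C$ is therefore the ray of the all-ones vector. By Definition~\ref{def:rigid} this means $C$ is Minkowski-rigid, where we use the identification, stated in that definition, of summands of a polytope with solutions of its edge-cycle system up to translation.

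The main obstacle I expect is this last identification for a $3$-polytope. One must check that the closing conditions over the $2$-faces alone characterize Minkowski summands up to translation, so that no edge-length vector outside the constant ray is missed. I will take this equivalence from Definition~\ref{def:rigid} as stated. The only point I would verify carefully is that the transported cyclic edge orders on $C$ are honest polygon orders, so that rules (2) and (3) apply to faces of $C$ exactly as on $F$.
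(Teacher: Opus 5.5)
Your proposal is correct and follows the paper's own proof. The paper likewise uses Lemma~\ref{lem:crosssection} to make the cell combinatorially isomorphic to $F$, and Proposition~\ref{prop:rules} to make each propagation step a valid deduction about any solution of the edge-cycle system; your explicit induction, with the invariant that tied edges carry equal parameters, simply spells out that one-line argument.
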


\begin{proof}
By Lemma~\ref{lem:crosssection} the cell is a polytope combinatorially
isomorphic to $F$, with the same triangular $2$-faces and the same
edge-adjacency.  By Proposition~\ref{prop:rules} every step of the propagation
is a valid deduction about any solution of the edge-cycle system, so all
$t_e$ are equal and the Minkowski summand cone has dimension one.
\end{proof}

\begin{corollary}\label{cor:lockedrel}
Let $G$ be a unit-edge pentagonal $2$-face, let $r\in\cA_G$, and let $F$ be a
facet containing $G$.  If $r$ is positive on every vertex of $F$ and $F$ is
locked, then every decomposition of the slice complex induces the trivial
lattice Minkowski decomposition on $G$.  Consequently the induced deformation
of $U_{\sigma_G}$ is trivial.
\end{corollary}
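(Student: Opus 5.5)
The argument combines Proposition~\ref{prop:locked} with the pairwise face consequence of (D3)--(D4), and then finishes with Lemma~\ref{lem:degreegate}(2). Since $r\in\cA_G$ and $r$ is positive on every vertex of $F$, $R=\pm r$ is one-sided on $\sigma_F$. The cross-section $C=\sigma_F\cap[r=1]$ is therefore a bounded maximal cell of the slice $\cS^{+}$ (or of $\cS^-$, after replacing $R$ by $-R$). By Lemma~\ref{lem:crosssection} it is a $3$-polytope combinatorially isomorphic to $F$. Because $\langle r,v\rangle=1$ on $V(G)$, its face over $\sigma_G$ is exactly $Q_G(r)=G$.

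\textbf{Step 1: rigidity of the facet cell.} Fix a decomposition of the slice complex, and let $C=C^0+C^1$ be the summands it assigns to $C$. By (D1) both summands have the tail cone of $C$, which is $\{0\}$, so they are polytopes. Each $C^i$ is a Minkowski summand of $C$. Proposition~\ref{prop:locked} applies because $F$ is locked and $r$ is positive on $V(F)$, so $C$ is Minkowski-rigid. Hence $C^i=\mu_iC+w_i$ with $\mu_i\ge0$ and $\mu_0+\mu_1=1$. One point needs care: the Minkowski summand cone is phrased through the edge-cycle system. I would note that every Minkowski summand of a polytope has an edge-dilation vector $(t_e)\ge0$ satisfying \eqref{eq:closing}, and that all-equal dilations give a homothet. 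This is the standard correspondence and needs only a sentence.

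\textbf{Step 2: passing to the face $G$.} Apply (D3) and the pairwise consequence of (D4) with $\kappa=G$ and $\lambda=C$. The induced decomposition on $G$ is $G^i=$ the face of $C^i$ corresponding to $G$. For a homothet $\mu_iC+w_i$ with $\mu_i>0$, that face is $\mu_iG+w_i$; for $\mu_i=0$ it is the point $w_i$. Therefore $G=G^0+G^1$ with each $G^i$ a translate of a nonnegative multiple of $G$. This is the remark after Definition~\ref{def:iv}: homothetic summands pass to faces.

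\textbf{Step 3: triviality.} By Lemma~\ref{lem:degreegate}(2) the induced decomposition of $G$ is into lattice polygons. The unit-edge pentagon has primitive edges, so a lattice homothet $\mu G+w$ requires $\mu\in\ZZ_{\ge0}$. Combined with $\mu_0+\mu_1=1$, one summand is $G$ and the other is a lattice point. The decomposition is thus trivial, and the second sentence of Lemma~\ref{lem:degreegate}(2) gives triviality of the induced deformation of $U_{\sigma_G}$.

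The main obstacle is Step 2. The induced decomposition on $G$ must be the face of the decomposition of the cell $C$, and not an independent choice. This depends on the cell of $\sigma_G$ in the slice complex being exactly $G$, viewed as a face of $C$ inside the same subdivision $\cS^{\pm}$. It also depends on reading (D4) with the correct $\mathcal J\subseteq\mathcal I$ orientation, so that the summand on the smaller cell is a face of the summand on $C$. I would first verify that orientation carefully against the pairwise specialization recorded after Definition~\ref{def:iv}.
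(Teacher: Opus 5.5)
Your proposal is correct and follows the paper's proof essentially step for step: Minkowski-rigidity of the cell $\sigma_F\cap[r=1]$ from Proposition~\ref{prop:locked}, transfer of the homothetic summands to the face $G$ via (D3) and the pairwise consequence of (D4), and integrality of the dilation factors from Lemma~\ref{lem:degreegate}(2) together with primitivity of the edges of $G$. Your extra care about which slice $\cS^{\pm}$ contains the cell, and about the degenerate case $\mu_i=0$, only makes explicit what the paper leaves implicit.
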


\begin{proof}
Proposition~\ref{prop:locked} makes
$C=\sigma_F\cap[r=1]$ Minkowski-rigid, so in a decomposition
$C=C^0+C^1$ we have $C^i=\lambda_iC+t_i$ with
$\lambda_0+\lambda_1=1$.  Condition (D3) and the pairwise consequence of (D4)
then give
$G^i=\lambda_iG+t'_i$.  Lemma~\ref{lem:degreegate} says that both $G^i$ are
lattice polytopes.  Every edge of $G$ is primitive, so the lattice length of a
corresponding edge of $G^i$ is $\lambda_i$; hence
$\lambda_i\in\ZZ_{\ge0}$.  Their sum is one, so one summand is a translate of
$G$ and the other is a point.  This is the trivial decomposition, and the last
assertion is Lemma~\ref{lem:degreegate}(2).
\end{proof}

\subsection{The sign lemma}

The affine lattice $\cA_G$ is a line.  The following determines how the two
facets containing $G$ behave along it; it is a statement about reflexive
polytopes and does not involve deformation theory.

\begin{lemma}[Sign lemma]\label{lem:sign}
Let $\Delta$ be reflexive, $G$ a $2$-face, and $F_1, F_2$ the two facets
containing $G$.  The annihilator of $\mathrm{span}(G)$ in $M$ has rank one; let
$d$ generate it, normalized so that $u_{F_1} - u_{F_2} = c\,d$ with $c > 0$.
Then
\[
  d(w) \le -\tfrac1c < 0 \ \text{ for } w \in V(F_1)\smallsetminus V(G),
  \qquad
  d(w') \ge \tfrac1c > 0 \ \text{ for } w' \in V(F_2)\smallsetminus V(G).
\]
\end{lemma}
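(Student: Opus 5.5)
The plan is to reduce the whole statement to two facts about reflexivity and then read off the bounds by linear algebra.

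First I establish the rank-one claim. The span of $G$ is a 2-dimensional affine plane in $N_\RR$. It does not contain the origin, because $G$ is a proper face and lies in a supporting hyperplane $[u_{F_1}=-1]$. So the linear span of $\sigma_G$ is 3-dimensional. Its annihilator in $M$ is the saturated rank-one sublattice. (I read ``$\operatorname{span}(G)$'' as the linear span of the cone $\sigma_G$; the affine span would give a condition that is not linear.)

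Both $u_{F_1}$ and $u_{F_2}$ take the value $-1$ on every vertex of $G$. Hence $u_{F_1}-u_{F_2}$ vanishes on the linear span of $\sigma_G$, and so equals $c\,d$ for a generator $d$ and some integer $c$.

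Next I show $c\neq0$. Since $F_1\neq F_2$ are distinct facets, their inner normals are distinct, so $c\ne0$. I then choose the sign of $d$ to make $c>0$. Because $u_{F_i}\in M$ and $d$ is primitive, $c$ is in fact a positive integer. I do not expect to need integrality, though.

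For the main inequalities, take $w\in V(F_1)\smallsetminus V(G)$. Then $\langle u_{F_1},w\rangle=-1$. Since $w\in\Delta$ and $u_{F_2}\in\Delta^\circ$, we have $\langle u_{F_2},w\rangle\ge-1$.

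I need this inequality to be strict, and I will use lattice integrality. Suppose equality held. Then $w$ would lie on $F_2$, and so $w\in F_1\cap F_2$. This intersection is exactly $G$, because $G$ is a codimension-two face contained in precisely these two facets, and $F_1\cap F_2$ is a face containing $G$ and properly contained in each $F_i$. So $w$ would be a vertex of $G$, a contradiction.

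So $\langle u_{F_2},w\rangle>-1$, and since $u_{F_2}\in M$ and $w\in N$ this value is an integer. Hence $\langle u_{F_2},w\rangle\ge0$. Therefore
\[
 c\,d(w)=\langle u_{F_1}-u_{F_2},w\rangle\le -1,
\]
giving $d(w)\le -1/c<0$.

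The argument for $w'\in V(F_2)\smallsetminus V(G)$ is symmetric. We have $\langle u_{F_2},w'\rangle=-1$ and $\langle u_{F_1},w'\rangle\ge0$, so $c\,d(w')\ge1$.

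The main (mild) obstacle is justifying $F_1\cap F_2=G$, together with the integrality step. I handle the first by the diamond property of face lattices of polytopes: a codimension-two face lies in exactly two facets, and their intersection is that face. For the second, integrality requires the vertices of $\Delta$ to be lattice points, which holds since $\Delta$ is a lattice polytope, and $u_{F_i}\in M$, which holds by reflexivity.
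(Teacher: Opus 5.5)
Your proposal is correct and follows the paper's proof: the difference $u_{F_1}-u_{F_2}$ kills the three-dimensional linear span of $G$, and for a vertex $w$ of $F_1$ off $G$, reflexivity and integrality give $u_{F_2}(w)\ge0$, hence $c\,d(w)\le-1$. You spell out two points the paper leaves implicit: that $c\neq0$ because the two facets are distinct, and that $F_1\cap F_2=G$, which is needed to conclude $w\notin F_2$.
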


\begin{proof}
$\mathrm{span}(G)$ is three-dimensional, so its annihilator has rank one.  Both
$u_{F_1}$ and $u_{F_2}$ take the value $-1$ on $G$, so their difference kills
$\mathrm{span}(G)$ and is an integer multiple of $d$; this defines $c \ne 0$ and
fixes the sign of $d$.  For $w \in V(F_1) \smallsetminus V(G)$ we have
$u_{F_1}(w) = -1$, and $w \notin F_2$ with $\Delta$ reflexive gives
$u_{F_2}(w) \ge 0$, whence $c\,d(w) = -1 - u_{F_2}(w) \le -1$.  Symmetrically
$c\,d(w') = u_{F_1}(w') + 1 \ge 1$.
\end{proof}

\begin{corollary}\label{cor:thresholds}
With the notation of Lemma~\ref{lem:sign}, suppose $R_0\in M_\RR$ is
strictly positive on every vertex of $G$, and put
$R_\tau=R_0+\tau d$ for $\tau\in\RR$.  The nonempty positive-slice cell
$\sigma_{F_1}\cap[R_\tau=1]$ is bounded exactly when $\tau<T_1$, and
$\sigma_{F_2}\cap[R_\tau=1]$ is bounded exactly when $\tau>T_2$, where
\[
 T_1=\min_{w\in V(F_1)\setminus V(G)}\frac{-R_0(w)}{d(w)},
 \qquad
 T_2=\max_{w\in V(F_2)\setminus V(G)}\frac{-R_0(w)}{d(w)}.
\]
If $m$ is the number of distinct thresholds $-R_0(w)/d(w)$ for
$w\in(V(F_1)\cup V(F_2))\setminus V(G)$, then the sign pattern on
$V(F_1)\cup V(F_2)$ takes at most $2m+1$ values, including the patterns
at the thresholds.  On the complementary open intervals it takes at
most $m+1$ values.
\end{corollary}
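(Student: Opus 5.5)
Fix a vertex $w$ of $F_1\cup F_2$ outside $G$ and follow how the sign of $R_\tau(w)=R_0(w)+\tau d(w)$ changes with $\tau$. By Lemma~\ref{lem:sign}, $d(w)<0$ for $w\in V(F_1)\setminus V(G)$ and $d(w)>0$ for $w\in V(F_2)\setminus V(G)$. So $R_\tau(w)$ is strictly monotone in $\tau$ and vanishes only at the threshold $\theta_w=-R_0(w)/d(w)$. For $w\in V(F_1)\setminus V(G)$ it is positive exactly when $\tau<\theta_w$; for $w\in V(F_2)\setminus V(G)$ it is positive exactly when $\tau>\theta_w$. On the vertices of $G$, $d$ vanishes, because $d$ annihilates $\mathrm{span}(G)$, so $R_\tau=R_0>0$ there for every $\tau$.

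For boundedness I will use the criterion stated after \eqref{eq:slices}: a cell $\sigma_F\cap[R=1]$ of the positive slice is bounded exactly when $R$ is positive on every ray of $\sigma_F$, and it is nonempty because $R_\tau$ is positive on the vertices of $G\subset F$. One direction is the proof already given there. For the converse I will note that if $R\le0$ on some vertex $v$ of $F$, then $v$ lies in the tail $\sigma_F\cap[R\le0]$, so adding multiples of $v$ to a point of the cell either stays in the cell or can be compensated. Concretely, if $R(v)=0$ then the ray $x+\RR_{\ge0}v$ stays in the cell. If $R(v)<0$, I combine $v$ with a vertex $g$ of $G$ (where $R(g)>0$) to obtain the recession direction $R(g)v-R(v)g$, on which $R$ vanishes, and which is a nonzero element of $\sigma_F$. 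Hence $\sigma_{F_1}\cap[R_\tau=1]$ is bounded iff $\tau<\theta_w$ for all $w\in V(F_1)\setminus V(G)$, that is iff $\tau<T_1$. Symmetrically, $\sigma_{F_2}\cap[R_\tau=1]$ is bounded iff $\tau>T_2$.

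For the count, the sign pattern of $R_\tau$ on $V(F_1)\cup V(F_2)$ is determined by the position of $\tau$ relative to the $m$ distinct thresholds. The vertices of $G$ are always positive, and each other vertex changes sign only at its own threshold. The real line is cut by the $m$ thresholds into $m+1$ open intervals, on each of which the pattern is constant. Together with the $m$ threshold points themselves this gives at most $2m+1$ patterns, and at most $m+1$ on the open intervals.

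The only step needing care is the unboundedness direction when $R(v)<0$, namely checking that the recession direction constructed above really lies in $\sigma_F$ and is nonzero. It lies in $\sigma_F$ as a nonnegative combination of rays of $\sigma_F$. It is nonzero because $v$ and $g$ are distinct vertices of a facet not through the origin, hence linearly independent.
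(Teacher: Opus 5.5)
Your proposal is correct and follows the paper's proof: positivity of $R_\tau$ on the vertices of $G$ gives nonemptiness, boundedness is equivalent to strict positivity on every ray of the facet cone, Lemma~\ref{lem:sign} turns this into $\tau<T_1$ or $\tau>T_2$, and the count comes from the signs being constant between the $m$ thresholds. The one addition is that you prove the unboundedness half of the ray criterion, using the recession direction $R(g)v-R(v)g$; the paper simply asserts this half, since its remark after \eqref{eq:slices} only argues the bounded direction.
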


\begin{proof}
The pairings on $G$ are constant and positive.  Each slice is therefore
nonempty, and it is bounded if and only if the functional is strictly
positive on every ray of its facet cone.  Lemma~\ref{lem:sign} turns
these inequalities into the two stated threshold inequalities.
All signs are constant between successive thresholds.  The $m$
threshold points can contribute $m$ additional patterns, giving the
bound $2m+1$.  In particular, restricting to lattice degrees does not
justify omitting the patterns with a zero pairing.
\end{proof}

\section{Obstructions to single-degree ambient smoothings}\label{sec:thmA}

We can now state the obstruction for a class of polytopes rather than for one.

\begin{theorem}\label{thm:Aprime}
Let $\Delta$ be a reflexive $4$-polytope in the admissible framework and let
$G$ be a unit-edge pentagonal $2$-face, with containing facets $F_1,F_2$.
Suppose that
\begin{enumerate}
\item[\rm(L)] both $F_1$ and $F_2$ are locked;
\item[\rm(C)] for every $r\in\cA_G$, the functional $r$ is positive on every
  vertex of $F_1$ or on every vertex of $F_2$.
\end{enumerate}
Then, for every primitive $R\in M$, the general ambient fibre of every
single-degree family at $R$ has positive-dimensional singular locus.  After
shrinking the base, no relative anticanonical effective Cartier divisor in
such a family has smooth general fibre.

The hypothesis is decidable: $\cA_G$ is an affine lattice line, and condition
{\rm(C)} is a finite comparison of the thresholds in
Corollary~\ref{cor:thresholds}.
\end{theorem}

\begin{proof}
Let $R$ be primitive.  If $\sigma_G$ is two-sided at $R$, Lemma~\ref{lem:reach}
already gives a singular curve in the corresponding chart.  If $R$ vanishes
on $L_G$, the coefficients of the polyhedral divisor of $\sigma_G$ are
empty or equal to its tail, and the decomposition is trivial on them; the
corresponding chart of the general fibre is the product of $Y_G$ with an open
curve, and is singular along a curve.

It remains to consider the one-sided case with nonzero restriction.  Choose
$r=R$ or $-R$ nonnegative on $\sigma_G$.  If
$r|_{L_G}\ne R_G^*$, Lemma~\ref{lem:degreegate}(1) makes the local family
trivial.  If $r|_{L_G}=R_G^*$, then $r\in\cA_G$; by (C), one containing facet
is positive at $r$, and by (L) it is locked.  Corollary~\ref{cor:lockedrel}
again makes the local family trivial.  Thus in every one-sided case the general
fibre contains the singular curve $\Sing(Y_G)\times\mathbb G_m$ in this chart.

Its closure in the proper ambient fibre is a complete curve contained in the
singular locus in the one-sided case.  In the two-sided case,
Lemma~\ref{lem:reach} gives a positive-dimensional singular subvariety; its
projective closure contains a complete curve in the singular locus, as in
Corollary~\ref{cor:reach}.  Proposition~\ref{prop:relativefano} and
Lemma~\ref{lem:singcurve} now give the assertion about anticanonical Cartier
divisors.
\end{proof}

\subsection{The ambient obstruction for \texorpdfstring{$X_{19}$}{X19}}
Let $\Delta_{19}$ be the polytope of \cite[\S5.3]{PaperIII}:
\begin{equation}\label{eq:V19}
\begin{aligned}
\Delta_{19} = \conv\bigl\{
&(1,0,0,0),\,(0,1,0,0),\,(0,0,1,0),\,(0,0,-1,0),\,(0,-1,0,0),\,(1,-1,-1,0),\\
&(0,0,0,1),\,(-1,1,1,-1),\,(0,-1,0,1),\,(0,0,-1,1),\,(-1,1,0,-1),\,(-1,0,1,-1),\\
&(-1,0,1,0),\,(-1,1,0,0),\,(0,-1,-1,0),\,(0,-1,-1,1),\,(-1,0,0,-1),\\
&(-2,1,1,-1),\,(-1,0,0,1)\bigr\},
\end{aligned}
\end{equation}
and label the displayed vertices $v_0,\ldots,v_{18}$ in that order.  This is
a reflexive polytope with $19$ vertices, $27$ facets and $64$ edges, all of
lattice length one.  It lies in the admissible framework: its singular
$2$-faces are fourteen unit squares and one pentagon
$P = \{v_{14},v_{15},v_{16},v_{17},v_{18}\}$ with one interior point, each with
dual edge of length one.  So $X_{19}$ has fourteen nodes and one $\dP_7$ cone
point, all isolated.  The pentagon lies in exactly two facets, which we
denote by $F_{21}$ and $F_{24}$.  Their
supporting equations $\langle u,\cdot\rangle=1$ and vertex sets are
\[
\begin{aligned}
F_{21}&:\ u=(-1,0,-1,0), &&\{v_3,v_9,v_{10},v_{13},v_{14},v_{15},v_{16},v_{17},v_{18}\},\\
F_{24}&:\ u=(-1,-1,0,0), &&\{v_4,v_8,v_{11},v_{12},v_{14},v_{15},v_{16},v_{17},v_{18}\},
\end{aligned}
\]

\begin{theorem}\label{thm:A}
$\Delta_{19}$ satisfies the hypothesis of Theorem~\ref{thm:Aprime} at $P$.
Consequently, for every primitive $R\in M$ the general fibre of every
single-degree family of $\PP_{\Delta_{19}}$ at $R$ has a singular curve.  No
relative anticanonical effective Cartier divisor in such a single-degree
ambient family, with central fibre $X_{19}$, has smooth general fibre.
\end{theorem}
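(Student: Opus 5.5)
Theorem~\ref{thm:A} follows once both hypotheses of Theorem~\ref{thm:Aprime} are checked at $P$; the rest is then exactly that theorem's conclusion. So the proof reduces to two finite verifications on $\Delta_{19}$, which I would do in this order.

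\emph{Condition (L).} Start from the vertex sets of $F_{21}$ and $F_{24}$. For each facet, compute its $2$-faces as the sets of its vertices cut out by the supporting hyperplanes of the neighbouring facets, together with their cyclic edge orders and its edge set. Then run the propagation of Definition~\ref{def:locked}. Seeding comes from the triangular $2$-faces via rule (T). Next, apply (S) on the quadrilaterals and then (A) on faces with two consecutive untied edges, the pentagon $P$ included, until every edge is in one class. Each facet has nine vertices, and the only non-triangular $2$-faces are $P$ and possibly a few squares, since the singular faces are listed. I expect the triangles to tie most edges directly, with (S) and (A) finishing off $P$ and any square faces. Because the closure is order independent, exhibiting one explicit merging sequence for each facet is enough.

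\emph{Condition (C).} First compute $\cA_G$ for $G=P$: solve $\langle r,v_i\rangle=1$ for $i=14,\dots,18$. This is a rank-three system in $M\cong\ZZ^4$, so $\cA_G=r_0+\ZZ d$, where $d$ generates the annihilator of $\operatorname{span}(P)$. Orient $d$ as in Lemma~\ref{lem:sign}, using $u_{F_{21}}-u_{F_{24}}=(0,1,-1,0)$. Note that this vector already kills the pentagon, since $P$ lies on $\langle u,\cdot\rangle=1$ for both $u$. Next, apply Corollary~\ref{cor:thresholds} with $R_0=r_0$, which is positive on $V(P)$. Compute the thresholds $-r_0(w)/d(w)$ for the four remaining vertices of each facet, which gives $T_1$ and $T_2$. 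Condition (C) then becomes: every integer $\tau$ satisfies $\tau<T_1$ or $\tau>T_2$ (with $F_1,F_2$ labelled by the sign of $d$). This holds if there is no integer in $[T_1,T_2]$, which requires $T_2-T_1<1$ in a suitable position, or in particular $T_1>T_2$. At the endpoints the pairing vanishes, so I must treat closed intervals carefully, as the corollary warns.

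The main obstacle is (C). Lemma~\ref{lem:sign} only forces $d<0$ off $P$ on $F_1$ and $d>0$ on $F_2$; it gives no automatic overlap of the two bounded ranges, so whether some lattice $r$ is nonpositive somewhere on both facets depends on the actual numbers. I would compute the eight thresholds explicitly and check that no integer falls in the gap. If a boundary integer did appear, I would not fall back on Corollary~\ref{cor:lockedrel}. Instead I would check at that degree whether the cell is unbounded but still rigid on the relevant face. (L) is mechanical but long, and I would present it as a table of merges, citing the appendix computations for the face lists.
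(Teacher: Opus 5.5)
Your proposal is correct and follows the paper's proof. For (L), the paper ties the eleven edges of the five triangles of $F_{21}$ by (T), finishes with (A) on $\{3,9,10,13\}$ and $\{3,9,14,15\}$ and (S) on $\{3,10,14,16\}$, and obtains $F_{24}$ from the symmetry swapping the second and third coordinates. For (C), it solves for $\cA_P=\{(-1,s,-1-s,0)\}$, whose pairings $s+1$ on $F_{21}\smallsetminus P$ and $-s$ on $F_{24}\smallsetminus P$ are exactly your thresholds with $T_2<T_1$, so no boundary integer or fallback arises. One small slip: the displayed $u$ are outer normals, so the inner-normal difference in Lemma~\ref{lem:sign} is $(0,-1,1,0)$ rather than $(0,1,-1,0)$, but labelling $F_1,F_2$ by the sign of $d$, as you propose, absorbs this.
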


\begin{proof}
Both facets are locked: each has $9$ vertices, $16$ edges and nine $2$-faces,
of degrees $(3,3,3,3,3,4,4,4,5)$ summing to $32=2\cdot16$.
For $F_{21}$, the five triangular faces, written by vertex indices, are
\[
 \{9,13,18\},\quad\{9,15,18\},\quad\{10,13,17\},\quad
 \{10,16,17\},\quad\{13,17,18\}.
\]
Rule (T) ties their eleven edges to one another.  Rule (A) on
$\{3,9,10,13\}$ then ties the edges $\{3,9\}$ and $\{3,10\}$ to this
class.  Applying (A) on $\{3,9,14,15\}$ adds $\{3,14\}$ and
$\{14,15\}$; applying (S) on $\{3,10,14,16\}$ adds $\{14,16\}$.
These are all sixteen edges.  Interchanging the second and third
coordinates preserves $\Delta_{19}$ and carries $F_{21}$ to $F_{24}$,
so the same argument applies there.

It remains to check (C), and here there is no enumeration.  Solving
$\langle r,v_i\rangle=1$ for $i=14,\ldots,18$ gives exactly
\[
  \cA_P=\{r_s=(-1,s,-1-s,0):s\in\ZZ\}.
\]
The five pentagon vertices pair to $1$.  Each of the other four vertices of
$F_{21}$ pairs to $s+1$, and each of the other four vertices of $F_{24}$ pairs
to $-s$.  Hence $r_s$ is positive on every vertex of $F_{21}$ when $s\ge0$ and
on every vertex of $F_{24}$ when $s\le-1$.  These two ranges cover every
integer $s$, proving (C).  Theorem~\ref{thm:Aprime} completes the proof.
\end{proof}

\begin{corollary}\label{cor:x19smooth}
The full semiuniversal deformation space of a $\Delta_{19}$-regular
anticanonical hypersurface $X_{19}$ is smooth of dimension $30$.
The threefold $X_{19}$ has a convergent smoothing.
\end{corollary}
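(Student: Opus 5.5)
The plan is to apply Theorem~\ref{thm04:smoothbase} and Theorem~\ref{thm04:criterion} to $X_{19}$. The singularities are fourteen nodes and one $\dP_7$ point, and there are no degree-six points. So there is exactly one branch profile $S$: the full line at each node and the unique reduced branch at the degree-seven point. The nonzero-projection hypothesis of Theorem~\ref{thm04:smoothbase} is vacuous, so $\Def^S(X_{19})$ is smooth of dimension $h^1(X_{19},\Theta_{X_{19}})+\dim_\QQ K_S$.

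To identify this with the full semiuniversal space, one has to show that the transverse nonreduced direction $w$ of the local base $\CC\{s,w\}/(w^2,sw)$ does not appear in $T_0\Def(X_{19})$. By Theorem~\ref{thm:D}, the image of $\mathbb T^1_X$ in the local tangent at the cone point is the projection of $K$ to $H_2(L_p;\QQ)=K_E^\perp$. Hence the first step is to show that $K$ projects into the root line $\QQ(e_1-e_2)$. Equivalently, the complementary coordinate must vanish identically on $K$, so that $K=K_S$ and $\mathbb T^1_X=V_S$.

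Once this holds, the closed embedding $\Def^S(X)\subset\Def(X)$ has the same tangent space. Write $\Def(X)$ minimally as $\CC\{z\}/I$. Its tangent dimension equals that of the smooth subgerm $\Def^S(X)$, so the subgerm's ideal contains no linear terms and is therefore zero. This forces $I=0$, and $\Def(X)=\Def^S(X)$ is smooth.

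The remaining inputs are numerical.
\begin{itemize}
\item Compute $h^1(X,\Theta_X)=h^{2,1}(Y_0)$, arguing as in Lemma~\ref{lem05:locallytrivial}: Batyrev's formula applied to $\Delta_{19}^\circ$, together with the pushforward/$R^1f_*$ argument, now run for the $\dP_7$ cone with toric Euler sequences.
\item Compute $\dim K_S$ from the relation matrix with rows for the fourteen unit squares and the two root rows on the pentagon. The ambient divisors span $H^2(Y_0;\QQ)$ by Batyrev, so $K$ is again the left kernel of an explicit integer matrix.
\end{itemize}
The target is that these two numbers sum to $30$, and that the row of the non-root class pairs to zero against $K$. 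That second check is the one that kills the $w$ direction.

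For smoothability, apply Theorem~\ref{thm04:criterion}. There are no $A_2$ or degree-five points, so one only needs a vector in $K_S$ that is nonzero at all fourteen nodes and at the root line. By the final equivalence in that theorem, it suffices that each of these fifteen coordinate functionals is not identically zero on $K_S$. The node system has two coloops by Remark~\ref{rem:coloop}, so for those two nodes the witness must use the cone coordinate. I would exhibit an explicit integral relation, or for each coordinate a basis vector of the kernel on which it is nonzero, via the certificate computation. A convergent smoothing then follows from Theorem~\ref{thm04:criterion}.

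The main obstacle is the linear algebra for $\Delta_{19}$: establishing both that no relation involves the non-root class of $K_{E}^\perp$ (needed for full smoothness) and that the two coloop nodes do appear in relations together with the $\dP_7$ root. I would first try to find divisor identities analogous to \eqref{eq05:identities} that force the non-root coefficient to vanish, and then search for a rational witness relation directly.
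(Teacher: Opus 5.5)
Your proposal is correct and follows the paper's proof essentially step for step. The paper also proves $K=K_S$ by your rank test: the nodal rows together with the root row have rank $12$, and adding a complementary row of $K_E^\perp$ raises the rank to $13$, so both kernels are three-dimensional. It computes $h^1(X_{19},\Theta_{X_{19}})=h^{2,1}(Y_0)=32-5=27$ from Batyrev's formula and the vanishing of $R^1f_{0*}\Theta_{Y_0}$, and exhibits an explicit relation that is nonzero in all fifteen coordinates. The only cosmetic difference is that the paper reruns the independent-coordinate argument of Theorem~\ref{thm04:smoothbase} directly on the full ring once $V_S=\mathbb T^1_{X_{19}}$, whereas you deduce $\Def(X)=\Def^S(X)$ because a smooth closed subgerm has the full tangent space; the two arguments are equivalent.
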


\begin{proof}
We give the finite relation calculation explicitly.  Order its nodal
faces by the following vertex sets, using \eqref{eq:V19}:
\[
\begin{array}{rrrr@{\qquad}rrrr@{\qquad}rrrr}
0&1&3&5&0&2&4&5&6&8&9&15\\
7&10&11&16&1&6&13&18&1&7&13&17\\
2&6&12&18&2&7&12&17&3&9&10&13\\
4&8&11&12&3&9&14&15&3&10&14&16\\
4&8&14&15&4&11&14&16&&&&
\end{array}
\]
For each set take the primitive relation on its four rays whose
first nonzero coefficient is positive, and extend it by zero on the
other rays.  Let $N$ be the resulting $14\times19$ matrix.  The
degree-seven root has divisor row (the pentagon has cyclic order
$(v_{17},v_{16},v_{14},v_{15},v_{18})$, with $v_{14}$ the middle $(-1)$-curve)
\[
 r=(0,0,0,0,0,0,0,0,0,0,0,0,0,0,0,1,-1,1,-1).
\]
Exact row reduction gives
\[
 \rk N=12,\qquad \rk\begin{pmatrix}N\\r\end{pmatrix}=12.
\]
A relation in the latter matrix, nonzero in all fifteen coordinates, is
\[
 (-2,2,-1,1,1,-3,-2,4,-2,2,3,1,-1,-3,-1).
\]
Replace $r$ by a basis of the full two-dimensional relation space
on the pentagon rays $v_{14},\ldots,v_{18}$.  The resulting
$16\times19$ matrix has rank $13$.  Thus both the selected and the
full local relation kernels have dimension three; they are equal.
These matrices are reconstructed from the printed vertices and face
sets in the exact computation listed in Appendix~\ref{app:cert}.

The rows detect homology relations.  Indeed, the correction term for
non-toric divisor classes on a smooth crepant resolution vanishes:
every singular two-face has a dual edge of lattice length one and
every other two-face is a unimodular triangle.  Toric divisors
therefore span $H^2$ by Batyrev's formula \cite{Batyrev94}.  The
inserted divisor over the pentagon pairs to zero with $K_E^\perp$.
Consequently the full link kernel $K$ is already $K_S$, and the
displayed vector proves smoothing by Theorem~\ref{thm04:criterion}.

For the dimension, the polar has $32$ lattice points, no
facet-interior lattice points, and zero correction term in Batyrev's
$h^{2,1}$ formula, since the original edges are primitive.  Hence
$h^{2,1}(Y_0)=32-5=27$.  One also has
\[
 f_{0*}\Theta_{Y_0}=\Theta_{X_{19}},\qquad
 R^1f_{0*}\Theta_{Y_0}=0.
\]
Vector fields lift to the vertex blow-up of a cone and to a chosen
nodal small resolution.  For the higher direct image, formal
functions reduces the degree-seven assertion to the vanishings
\[
 H^1(E,\Theta_E\otimes\cO_E(-kK_E))=0,\qquad
 H^1(E,\cO_E((1-k)K_E))=0\qquad(k\ge0).
\]
The first follows from
$\Theta_E\cong\Omega^1_E\otimes\cO_E(-K_E)$ and toric Bott
vanishing.  The second follows from rationality and Serre duality
for $k=0,1$, and Kodaira vanishing for $k\ge2$.  At a node the
graded terms are copies of $\cO(k+2)$ and $\cO(k-1)$ on $\PP^1$,
whose first cohomology vanishes.  Leray and contraction with the
volume form give $h^1(X_{19},\Theta_{X_{19}})=27$.
Theorem~\ref{thm:D} now gives $\dim\mathbb T^1_{X_{19}}=27+3=30$.

The construction in the proof of Theorem~\ref{thm04:smoothbase}
gives a smooth formal source whose blow-down differential covers
$V_S$.  Here $V_S=\mathbb T^1_{X_{19}}$ because $K_S=K$.
The same independent-coordinate argument therefore makes the
\emph{full} deformation ring regular, not only its selected
quotient.  Regularity of the completed analytic ring implies
smoothness of the analytic germ, with the dimension just computed.
\end{proof}

Theorem~\ref{thm:A} and Corollary~\ref{cor:x19smooth} thus concern
different deformation constructions: $X_{19}$ is smoothable, while
every single-degree ambient family in the stated class retains a
singular anticanonical fibre.

\begin{remark}\label{rem:whatfailedfirst}
The affine line in the proof contains every one-sided degree that can
move the transverse cone.  This follows from Altmann's concentration
of $T^1$ in the Gorenstein degree and his classification of deformation
data with zero Kodaira--Spencer class.  Smoothness of cones over
summands alone does not impose the admissibility condition (D2), and
therefore does not yield this necessary degree restriction.
\end{remark}

\section{A census of the facet rigidity condition}\label{sec:census}

We apply the combinatorial criterion to the Kreuzer--Skarke
classification \cite{KS00}, restricted here to at most nine vertices.
The input files belong to the complete database copy whose reflexivity,
absence of repetitions up to lattice equivalence and completeness are
verified in \cite[\S6.1]{PaperI}; the verification report and input
manifest accompany the ancillary programs.  The broad census tests locking, condition
{\rm(L)} of Theorem~\ref{thm:Aprime}; it does not test the degree
coverage condition {\rm(C)}.  It also records rigidity of the
containing lattice facets when the locking criterion is inconclusive.
These counts concern facet rigidity.  The separate smoothability
classification in Section~\ref{sec:classify} uses the homological
obstruction of \cite{PaperIV} and the explicit family constructed below.

\subsection{Enumeration and rigidity classes}
For each unit-edge pentagonal $2$-face with one interior lattice point,
we consider its two containing facets.  Definition~\ref{def:locked}
tests rigidity by propagation of equalities among edge dilations.
For either facet where that criterion is inconclusive, the
edge-cycle system determines its Minkowski summand cone directly.
This gives three disjoint classes of pentagonal faces:
\begin{enumerate}
\item \emph{locked}: both containing facets satisfy
Definition~\ref{def:locked};
\item \emph{rigid, not locked}: both containing lattice facets are
Minkowski-rigid, but at least one does not satisfy that definition;
\item \emph{decomposable}: at least one containing lattice facet has
a nonhomothetic Minkowski summand.
\end{enumerate}
The first condition implies rigidity for all bounded cross-sections
of both facet cones.  The second asserts rigidity only at their
lattice cross-sections; in the range searched it does not occur.

\begin{table}[ht]
\caption{Facet rigidity in the Kreuzer--Skarke classification.
Each pentagonal face is counted once.  ``Locked'' means both facets
satisfy the edge-propagation criterion; ``rigid, not locked'' means
both lattice facets are rigid but the criterion is inconclusive;
``decomposable'' means at least one has a nonhomothetic summand.}\label{tab:census}
\centering
\begin{adjustbox}{max width=\textwidth, center}
\begin{tabular}{lrrrrr}
\toprule
vertices & polytopes & pentagons & locked & rigid, not locked & decomposable \\
\midrule
$5$ & $1\,561$ & $0$ & n/a & n/a & n/a \\
$6$ & $24\,189$ & $0$ & n/a & n/a & n/a \\
$7$ & $177\,446$ & $30$ & $30$ & $0$ & $0$ \\
$8$ & $834\,638$ & $882$ & $763$ & $0$ & $119$ \\
$9$ & $2\,867\,955$ & $11\,596$ & $8\,710$ & $0$ & $2\,886$ \\
\midrule
total & $3\,905\,789$ & $12\,508$ & $9\,503$ & $0$ & $3\,005$ \\
\bottomrule
\end{tabular}
\end{adjustbox}
\end{table}

The classes exhaust the pentagonal faces:
$8\,710+2\,886=11\,596$ at nine vertices, and
$9\,503+3\,005=12\,508$ in total.  The middle class is empty.

Exactly $9\,503$ of the $12\,508$ pentagonal faces satisfy
{\rm(L)}, and these are also exactly the faces whose two containing
lattice facets are Minkowski-rigid.  By Proposition~\ref{prop:locked},
locking gives rigidity at every bounded cross-section of both facet cones.
The count is nevertheless not an all-degree immovability count,
because the broad census does not impose {\rm(C)}.
The $77$ polytopes of Section~\ref{sec:classify} are counted separately.

\begin{remark}\label{rem:flatset}
Definition~\ref{def:locked} records a partition of the edges because
rules (S) and (A) require equality of the other dilation parameters.
Knowing only which parameters are constrained does not suffice.
\end{remark}

For a construction at a lattice facet degree, a decomposing facet is
necessary but not sufficient, because by
Definition~\ref{def:iv}(D4) what matters is the decomposition
\emph{induced} on the pentagon, and by \eqref{eq:pentdecomp} that must be
segment~$+$~triangle with all dilation parameters nonnegative across the facet.
Of the $119$ at eight vertices: all $119$ induce a nontrivial decomposition of
the pentagon, $49$ attain the segment-plus-triangle datum with nonnegative
dilations, and $0$ lie in a polytope whose whole germ inventory lies in the
framework of Definition~\ref{def:framework}.

\begin{remark}\label{rem:censuscorrection}
A filter that classified a $2$-face by its numbers of vertices and interior
points alone would admit polytopes at eight vertices that are not in the
framework, because it misses the lattice smoothness condition.  A triangle with
no interior point need not be smooth: it is smooth only if \emph{unimodular},
and one with an edge of lattice length two gives a non-isolated germ.
\end{remark}

\subsection{Nine vertices, and \texorpdfstring{$\Delta_9$}{Delta9}}
At nine vertices the database contains $11\,596$ pentagonal $2$-faces.  Of
these, $63$ lie in a polytope inside the admissible framework, and $2\,886$ lie
in a facet that decomposes.  Exactly one does both, and it is the pentagon of
$\Delta_9$.  Section~\ref{sec:thmB} constructs a smoothing from the compatible
Minkowski decomposition of this polytope.

Across all five vertex counts the number of pentagons in a framework polytope
is
$77$, and the number lying in a decomposing facet is $3\,005$; the single
polytope in both is $\Delta_9$.  Thus this range contains one candidate satisfying both combinatorial
conditions.

\subsection{The searched range, classified}\label{sec:classify}
Restricting the enumeration to polytopes that lie in the admissible framework and
carry a pentagonal $2$-face returns exactly $77$ of them, one pentagon apiece:
one at seven vertices, thirteen at eight, sixty-three at nine.  The negative classification in \cite[Prop.~9.2]{PaperIV}, together
with the smoothing below, decides all $77$ cases.

\begin{theorem}\label{thm:C}
Let $\Delta$ be a reflexive $4$-polytope with at most nine vertices lying in the
admissible framework and carrying a pentagonal $2$-face.  There are $77$ such
$\Delta$, and:
\begin{enumerate}
\item for $76$ of them the criterion of \cite{PaperIV} obstructs, so $X$ admits no
  one-parameter smoothing \cite[Prop.~9.2]{PaperIV} and hence, by
  \cite[Cor.~6.7]{PaperIV}, is \emph{not smoothable} over any base germ; $67$ of these have a single
  singular point, always a lone $\dP_7$ cone point, and $9$ have one node in
  addition;
\item for the remaining one, which is $\Delta_9$, the criterion is silent, and
  $X_9$ \emph{is} smoothable by Theorem~\ref{thm:B};
\item exactly $76$ of the $77$ pentagons are locked, and the unlocked one is
  $\Delta_9$'s.
\end{enumerate}
\end{theorem}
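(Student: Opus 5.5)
The proof is a finite enumeration plus citation, with one geometric input taken from Theorem~\ref{thm:B}. I will split it into the counting step, the negative step, the positive step, and the locking check.

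\emph{Counting.} Run the Kreuzer--Skarke enumeration of Section~\ref{sec:census} over the reflexive $4$-polytopes with at most nine vertices. For each one, test the admissible framework of Definition~\ref{def:framework} face by face, using exact lattice arithmetic:
\begin{itemize}
\item every edge of $\Delta$ is primitive;
\item every $2$-face is a unimodular triangle, a unit parallelogram, or a unit-edge pentagon or hexagon with one interior point;
\item every singular $2$-face has dual edge of lattice length one.
\end{itemize}
Remark~\ref{rem:censuscorrection} is the reason for checking unimodularity rather than counting vertices and interior points. Keeping only the polytopes with a pentagonal $2$-face should give $1+13+63=77$, each with exactly one pentagon. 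The verified completeness and non-redundancy of the database input \cite[\S6.1]{PaperI} turn this list into a classification.

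\emph{Negative cases.} For each of the $77$ polytopes, build the relation matrix exactly as for $\Delta_{19}$ in Corollary~\ref{cor:x19smooth}:
\begin{itemize}
\item nodal rows are the primitive relation on the four rays of each unit parallelogram;
\item the pentagon contributes its root row.
\end{itemize}
Batyrev's formula with zero correction term shows that toric divisors span $H^2$, so the kernel of this matrix is $K(X)$. Exact row reduction then decides, for each coordinate of Recollection~\ref{thm:companion}, whether it vanishes identically on $K(X)$. For $76$ polytopes some coordinate does, so the criterion obstructs, as recorded in \cite[Prop.~9.2]{PaperIV}. Recollection~\ref{thm:noarc} then upgrades "no one-parameter smoothing" to "not smoothable over any base germ". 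Tallying the singular points of these $76$ cases gives the split $67+9$: a lone $\dP_7$ point, or a $\dP_7$ point plus one node.

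\emph{Positive case.} For the remaining polytope, identify it as $\Delta_9$ and check that no coordinate vanishes identically on $K(X_9)$, so the criterion is silent. Smoothability is Theorem~\ref{thm:B}. As a consistency check, Theorem~\ref{thm04:criterion} should independently find a relation that is nonzero at both nodes and on the root line.

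\emph{Locking.} Apply the propagation of Definition~\ref{def:locked} to the two facets containing each pentagon. This is a terminating closure computation, and its result does not depend on the order in which rules are applied. I expect $76$ pentagons to come out locked and $\Delta_9$'s to come out unlocked. The census already says $\Delta_9$ is the unique framework polytope with a decomposing facet. A locked facet is Minkowski-rigid by Proposition~\ref{prop:locked}, so a decomposing facet cannot be locked, which confirms that $\Delta_9$'s pentagon is unlocked. The remaining $76$ must be verified directly by running the propagation.

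The main obstacle is the reliability of the enumeration, not any single deduction. The framework filter, the divisor-row construction (signs and cyclic orders at the pentagon), and the rank computations must all be done in exact arithmetic. I would make the computation reproducible, as in Appendix~\ref{app:cert}, and cross-check the $9$ two-point cases by exhibiting the explicit vanishing coordinate for each.
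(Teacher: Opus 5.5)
Your proposal is correct and follows essentially the same route as the paper: the exact enumeration giving $1+13+63=77$ framework polytopes with one pentagon each, then \cite[Prop.~9.2]{PaperIV} and \cite[Cor.~6.7]{PaperIV} for the $76$ obstructed cases (your relation-matrix row reduction is the computation that proposition records), then Proposition~\ref{prop:mixed9} with Theorem~\ref{thm:B} for $\Delta_9$, and finally the edge partitions of Definition~\ref{def:locked} for part~(3). Your observation that a decomposing facet cannot be locked (Proposition~\ref{prop:locked} at $R=-u_F$) is valid, and combined with the empty ``rigid, not locked'' class of Table~\ref{tab:census} it gives part~(3) directly from the census, so no separate propagation run is needed.
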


\begin{proof}
The classification contains one framework polytope at seven vertices,
thirteen at eight, and sixty-three at nine, each with one pentagonal
face.  The nonsmoothability of the $76$ cases in part~(1), including
their division into $67$ single-cone and $9$ node-plus-cone cases,
is \cite[Prop.~9.2]{PaperIV}.  Its Corollary~6.7 gives the assertion
over an arbitrary base germ.  Part~(2) follows from
Proposition~\ref{prop:mixed9} and Theorem~\ref{thm:B}.
The edge partitions of Definition~\ref{def:locked} identify precisely
those $76$ pentagons as locked, proving part~(3).
The exact enumeration, relation matrices and partitions are
reproduced by the programs listed in Appendix~\ref{app:cert}.
\end{proof}

\begin{remark}
The $76$ nonsmoothability assertions in part~(1) are proved in
\cite[Prop.~9.2]{PaperIV}.  The explicit family constructed here
treats the remaining polytope and completes the classification.
Its abstract smoothability also follows from
Theorem~\ref{thm04:criterion}.  Part~(3) is a separate combinatorial
observation: locking agrees with the negative classification in
this range.  The agreement does not extend to $X_{19}$, which is
locked and smoothable by Corollary~\ref{cor:x19smooth}.
\end{remark}

\section{An ambient smoothing of \texorpdfstring{$X_9$}{X9}}\label{sec:thmB}

\subsection{The polytope and its germs}
Let $\Delta_9$ be the convex hull of the following vertices:
\[
\begin{aligned}
&v_0 = (1,0,0,0), && v_1 = (0,1,0,0), && v_2 = (-1,-1,0,0), \\
&v_3 = (0,0,1,0), && v_4 = (0,0,0,1), && v_5 = (-4,-2,-1,0), \\
&v_6 = (-4,-2,0,-1), && v_7 = (3,1,1,1), && v_8 = (2,1,1,1).
\end{aligned}
\]
It is reflexive, with $12$ facets and $27$ two-faces, and it lies in the
admissible framework.

\begin{proposition}\label{prop:sing9}
$\Sing(X_9)$ consists of exactly three points: two ordinary double points, at
the $2$-faces $\{v_2,v_3,v_6,v_7\}$ and $\{v_2,v_4,v_5,v_7\}$, and one $\dP_7$
cone point, at the pentagon $\{v_3,v_4,v_5,v_6,v_8\}$.
\end{proposition}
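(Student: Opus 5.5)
The plan is to read off $\Sing(X_9)$ from the dictionary \eqref{eq:germ} and Definition~\ref{def:framework}. For a $\Delta_9$-regular hypersurface, the singular points are in bijection with the points where $X_9$ meets the one-dimensional orbits of the cones $\sigma_G$ over the non-unimodular $2$-faces $G$, counted with multiplicity $\ell(G^*)$. The regular hypersurface avoids the torus fixed points, and it meets the lower-codimension orbits transversally, in smooth points. So the proof reduces to a finite lattice computation on $\Delta_9$. I will carry it out in three steps.

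\emph{Facets and $2$-faces.} First I will compute the supporting hyperplanes. For each facet I find the primitive $u_F\in M$ with $\langle u_F,v\rangle=-1$ on that facet and $\ge-1$ on all nine vertices. This gives the $12$ facets, and reflexivity follows from integrality of every $u_F$. Next I will list the $27$ two-faces as the maximal vertex sets lying on exactly two facets. Each $2$-face is then classified by lattice type:
\begin{itemize}
\item unimodular triangle: three vertices spanning a saturated rank-three sublattice with determinant $\pm1$;
\item unit parallelogram: four vertices with a relation $v_a-v_b+v_c-v_d=0$ in cyclic order, and any three of them spanning a saturated lattice;
\item pentagon: one interior lattice point and unit edges.
\end{itemize}
For the two quadrilaterals I will check the relations
\[
 v_2+v_7=v_3+v_6=(-1,-1,1,0)+\dots,
\]
that is, $v_2-v_3+v_7-v_6=0$ and the analogous one for $\{v_2,v_4,v_5,v_7\}$, directly from the coordinates. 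For the pentagon $\{v_3,v_4,v_5,v_6,v_8\}$ I will exhibit the interior point and check that its edges are primitive. Every one of the $64$-type edge checks, i.e.\ all edges of $\Delta_9$ being primitive, is a gcd computation on vertex differences.

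\emph{Dual edge lengths.} For each of the three singular $2$-faces $G$ with containing facets $F_1,F_2$, the dual edge $G^*$ is the segment from $u_{F_1}$ to $u_{F_2}$. Its lattice length is the gcd of the coordinates of $u_{F_1}-u_{F_2}$, and I expect it to equal $1$ in each case. By \eqref{eq:germ}, each such face therefore contributes exactly one point. The transverse germ is the node $xy=zw$ for a unit square, and $\Cone(\dP_7)$ for the unit-edge pentagon with one interior point. All remaining $2$-faces are checked to be unimodular triangles, and these contribute smooth points.

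The main obstacle is bookkeeping rather than theory: the whole argument rests on the correctness of the face enumeration. I would guard against error in two ways. First, I would check the Euler relation $f_0-f_1+f_2-f_3=0$ for the face numbers $(9,f_1,27,12)$. Second, I would confirm that each facet's $2$-faces close up into a $3$-polytope, with every edge lying in exactly two of its $2$-faces. These are the same computations recorded in Appendix~\ref{app:cert}.
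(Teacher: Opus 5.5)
Your route is the same as the paper's: a finite lattice verification through the dictionary \eqref{eq:germ}. You classify the $27$ two-faces, check that exactly three are singular with dual edges of lattice length one, and check that $X_9$ is smooth elsewhere.

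\textbf{A false identity.} The one identity you actually display is wrong, and it is the one meant to certify the first node face. One has $v_2-v_3+v_7-v_6=(6,2,0,2)\neq0$. The value $(-1,-1,1,0)$ you wrote is $v_2+v_3=v_6+v_7$. So the diagonals are $\{v_2,v_3\}$ and $\{v_6,v_7\}$, the cyclic order is $(v_2,v_6,v_3,v_7)$, and the parallelogram relation is $v_2-v_6+v_3-v_7=0$. For the second square the correct relation is $v_2+v_4=v_5+v_7=(-1,-1,0,1)$, with cyclic order $(v_2,v_5,v_4,v_7)$; these are the orders drawn in Figure~\ref{fig:F11}. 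Your unit-parallelogram test is stated in terms of the cyclic order, so the check as written would fail and must be redone with these pairings.

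\textbf{Edge count.} $\Delta_9$ has $24$ edges, which is what your Euler check $9-f_1+27-12=0$ returns. The number $64$ is the edge count of $\Delta_{19}$.

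\textbf{Two steps you only assert.} These should be derived from your computations, as the paper does.
\begin{itemize}
\item \emph{Avoidance of torus-fixed points.} The fixed point of $\sigma_F$ lies off $X_9$ because the vertex $u_F$ of $\Delta_9^\circ$ is a lattice point, hence one of the $162$ monomials. A general section has nonzero coefficient there. This is where your integrality check of the $u_F$ enters.
\item \emph{Smoothness along orbits of rays and two-dimensional cones.} Transversality alone is not enough; the ambient fourfold must be smooth there. Vertices are primitive because $\langle u_F,v\rangle=-1$. Each edge lies at level $-1$ of an integral $u_F$, so unit lattice length of the edge is equivalent to its endpoints forming a basis of the saturated plane lattice they span. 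That is exactly smoothness of the edge's cone, so your gcd checks on edges give what is needed.
\end{itemize}

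With these links made explicit and the relations corrected, your plan is the paper's proof.
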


\begin{proof}
Every vertex of $\Delta_9$ is primitive, so every ray of $\Sigma$ is a smooth
cone.  Every edge has lattice length one and unit greatest common divisor of
its $2 \times 2$ minors, so every two-dimensional cone is smooth; hence
$\PP_{\Delta_9}$ is smooth along the orbits of all cones of dimension $\le 2$.
Of the $27$ two-faces exactly the three listed have a singular cone, and each
has dual edge of lattice length one, so contributes exactly one point.
Finally, $\#(\Delta_9^\circ \cap M) = 162$ and this set contains every vertex of
$\Delta_9^\circ$, so a generic anticanonical section is nonzero at every
torus-fixed point and $X_9$ misses the zero-dimensional strata.
\end{proof}

\begin{proposition}\label{prop:mixed9}
The node relation system of $X_9$ has rank $2$ and both nodes are coloops.  The
criterion of \cite{PaperIV} is silent on $X_9$: at the $\dP_7$ point the branch
subspace is a single line, since $K_E^\perp$ has discriminant $7$ and carries a
unique $(-2)$-class up to sign, so the branch restriction there is one linear
condition; the branch-restricted kernel is one-dimensional, and no coordinate
vanishes identically on it.
\end{proposition}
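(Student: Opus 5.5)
The plan is to reduce everything to finite linear algebra on the relation matrix, in the same way as for $\Delta_{19}$ in Corollary~\ref{cor:x19smooth} and for $\Delta_{25}$ in Section~\ref{sec05:counterexample}.

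\textbf{Step 1: the relation rows.} For each of the two nodal faces $\{v_2,v_3,v_6,v_7\}$ and $\{v_2,v_4,v_5,v_7\}$ of Proposition~\ref{prop:sing9}, I would write down the primitive linear relation among its four rays, with the sign fixed by the convention used for $\Delta_{19}$. Extending by zero gives two rows in $\QQ^9$. For the pentagon $\{v_3,v_4,v_5,v_6,v_8\}$, I would determine its cyclic order and mark the boundary of $E=\dP_7$ as $e_1,h-e_1-e_2,e_2,\ldots$. I would then identify the middle $(-1)$-curve $h-e_1-e_2$; it is the curve flopped in Proposition~\ref{prop03:localmodels}. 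The root $e_2-e_1$ then has boundary intersection row with entries $(1,-1,1,-1)$ on the four non-middle vertices and $0$ on the middle one, as in the $\Delta_{19}$ computation. I would also record a basis of the full two-dimensional space $K_E^\perp$ on the pentagon rays.

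\textbf{Step 2: these rows detect homology.} As for $\Delta_{19}$, every non-singular $2$-face is a unimodular triangle, and every singular face has dual edge of length one. Hence Batyrev's correction term vanishes and the toric divisors span $H^2(Y_0;\QQ)$. The inserted divisor over the pentagon pairs trivially with $K_E^\perp$. Consequently a tuple of local classes vanishes in $H_2(Y_0;\QQ)$ exactly when the corresponding combination of rows is zero.

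\textbf{Step 3: the rank and kernel computations.} With the two nodal rows alone, rank $2$ means the two classes $[C_j]$ are independent. So the only nodal relation is zero, and both nodes are coloops. Adjoining the root row $r$, I would show that $r$ lies in the row span, $r=\alpha n_1+\beta n_2$ with $\alpha\beta\ne0$. This gives a one-dimensional branch-restricted kernel spanned by $(\alpha,\beta,-1)$, on which no coordinate vanishes, so the criterion of \cite{PaperIV} is silent.

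The discriminant remark is intrinsic to the lattice. $K_E^\perp\subset H_2(\dP_7)$ is a rank-two lattice with Gram matrix $\begin{pmatrix}-2&1\\1&-4\end{pmatrix}$, of discriminant $7$. Its vectors of norm $-2$ are exactly $\pm(e_1-e_2)$, which justifies that the branch subspace is a single line.

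\textbf{Main obstacle.} The hard part is the bookkeeping of signs and of which pentagon vertex is the middle curve. If the root row were misplaced, it could leave the span and destroy the relation. I would check this by computing directly that the nodal span meets the pentagon block in exactly one line, and confirming that this line is the $(-2)$-line by evaluating its norm under the $\dP_7$ intersection form. All remaining steps are exact integer row reduction.
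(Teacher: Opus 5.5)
Your proposal is correct and is essentially the paper's own argument: the two nodal rows, from $v_2+v_3=v_6+v_7$ and $v_2+v_4=v_5+v_7$, are independent, and with $v_8$ as the middle $(-1)$-curve of the pentagon the root row comes out as $\pm(n_1-n_2)$. The branch-restricted kernel is therefore the line the paper records as generated by $(-2,-2,2)$, up to sign conventions for the generators, and all its entries are nonzero. Your lattice check of discriminant $7$ and of the uniqueness of $\pm(e_1-e_2)$ is also correct, and your proposed cross-check that the nodal span meets the pentagon block in exactly one line additionally shows that the full kernel $K$ equals $K_S$ here.
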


\begin{proof}
The two nodal rows are independent.  Adding the degree-seven root
row gives a matrix with one-dimensional relation kernel, generated
by $(-2,-2,2)$, with entries indexed by the two nodes and the
$(-2)$-line at the cone point.  Every entry is nonzero, so the
criterion of \cite{PaperIV} imposes no forced-zero condition.
The vertex and ray-relation computations are reproduced in
Appendix~\ref{app:cert}.
\end{proof}

Thus $X_9$ is genuinely mixed in the sense of Remark~\ref{rem:coloop}.
The displayed relation and Theorem~\ref{thm04:criterion} prove that it
is smoothable.  We now construct a smoothing in an ambient family.

\subsection{The family}
Take the vertex degree
\[
  R \;=\; u_{11} \;=\; (0,1,-1,-1),
  \qquad \langle R,v\rangle=-1 \text{ for every } v \in F_{11},
\]
dual to the facet $F_{11} = \{v_2,v_3,v_4,v_5,v_6,v_7,v_8\}$, which contains all
three singular $2$-faces.  Downgrading along $R$, the bounded cell of the
level-$(-1)$ slice is the lattice facet $F_{11}$ itself.  We use the
following integral dilation vector; its two summands have lattice
vertices, which will give admissibility.

The Minkowski summand cone of $F_{11}$ has dimension $2$.  We take the decomposition
given by the dilation vector
\[
  t = (0,0,1,1,0,0,1,0,0,0,0)
\]
on the eleven edges of $F_{11}$, listed in lexicographic order of their vertex
index pairs as
\[
  (2,5),\,(2,6),\,(2,7),\,(3,6),\,(3,7),\,(3,8),\,(4,5),\,(4,7),\,(4,8),\,
  (5,6),\,(7,8),
\]
so the dilation is $1$ on the three edges $v_2v_7$, $v_3v_6$, $v_4v_5$ and $0$
on the other eight.  Figure~\ref{fig:F11} shows $F_{11}$, its three singular
$2$-faces and the three dilated edges.

\begin{figure}[ht]
\centering
\begin{adjustbox}{max width=\textwidth, center}
\begin{tikzpicture}[scale=1.35,
  vtx/.style={circle,fill=black,inner sep=1.5pt},
  dil/.style={very thick,red},
  lbl/.style={font=\footnotesize}]
% outer pentagon 3-6-5-4-8, the dP_7 face
\coordinate (v3) at (0,2);
\coordinate (v6) at (-1.902,0.618);
\coordinate (v5) at (-1.176,-1.618);
\coordinate (v4) at (1.176,-1.618);
\coordinate (v8) at (1.902,0.618);
\coordinate (v2) at (-0.75,-0.35);
\coordinate (v7) at (0.55,0.15);
% shade the two nodal squares; the triangles stay white, the pentagon is the
% outer face and so has no bounded region of its own
\fill[blue!12] (v2) -- (v6) -- (v3) -- (v7) -- cycle;
\fill[blue!12] (v2) -- (v5) -- (v4) -- (v7) -- cycle;
% edges, dilation 0
\draw (v3) -- (v8) -- (v4);
\draw (v6) -- (v5);
\draw (v2) -- (v5); \draw (v2) -- (v6);
\draw (v7) -- (v3); \draw (v7) -- (v4); \draw (v7) -- (v8);
% edges, dilation 1
\draw[dil] (v3) -- (v6);
\draw[dil] (v5) -- (v4);
\draw[dil] (v2) -- (v7);
\foreach \i in {2,3,4,5,6,7,8} \node[vtx] at (v\i) {};
\node[lbl,above]      at (v3) {$v_3$};
\node[lbl,left]       at (v6) {$v_6$};
\node[lbl,below left] at (v5) {$v_5$};
\node[lbl,below right]at (v4) {$v_4$};
\node[lbl,right]      at (v8) {$v_8$};
\node[lbl,below left] at (v2) {$v_2$};
\node[lbl,above right]at (v7) {$v_7$};
\node[lbl] at (0,-0.95) {node};
\node[lbl] at (-0.30,0.95) {node};
\node[lbl] at (0,-2.15) {outer face: $\Cone(\dP_7)$};
\end{tikzpicture}
\end{adjustbox}
\caption{Schlegel diagram of the facet $F_{11}$, projected from beyond its
pentagonal $2$-face, which becomes the outer boundary.  The two shaded faces
are the squares $\{v_2,v_3,v_6,v_7\}$ and $\{v_2,v_4,v_5,v_7\}$ carrying
the nodes; the third singular face is the pentagon
$\{v_3,v_4,v_5,v_6,v_8\}$ carrying the $\dP_7$ cone point, which the projection
sends to the outer boundary.  The three unshaded faces are triangles, and the
germ there is smooth.  The three heavy edges are those on which the dilation
vector $t$ is $1$; it is $0$ on the other eight.}\label{fig:F11}
\end{figure}

A dilation vector determines a summand through the \emph{displacement maps}
$\varphi_0,\varphi_1$ on the vertices of $F_{11}$: fix the base vertex $v_2$,
set $\varphi_0(v_2)=0$, and along each edge $v_av_b$ put
\[
  \varphi_0(v_b)=\varphi_0(v_a)+t_{ab}\,(v_b-v_a),
\]
which is well defined precisely because $t$ solves the edge-cycle system, that
being what makes the value independent of the path from $v_2$.  Put
$\varphi_1(v_a)=(v_a-v_2)-\varphi_0(v_a)$.  Then
$\conv\varphi_0(V(F_{11}))+\conv\varphi_1(V(F_{11}))=F_{11}-v_2$, so the two
maps exhibit the summands.  Their restrictions to the vertices of each
face, with its tail cone added, define the cellwise decomposition after
the affine identification specified below.  The three singular
$2$-faces inherit the following decompositions:

\begin{table}[ht]
\caption{The decomposition induced at each germ.}\label{tab:decomp}
\begin{adjustbox}{max width=\textwidth, center}
\begin{tabular}{lll}
\toprule
$2$-face & germ & induced decomposition \\
\midrule
$\{v_2,v_3,v_6,v_7\}$ & node & primitive segment $+$ primitive segment \\
$\{v_2,v_4,v_5,v_7\}$ & node & primitive segment $+$ primitive segment \\
$\{v_3,v_4,v_5,v_6,v_8\}$ & $\Cone(\dP_7)$ & primitive segment $+$ unimodular triangle \\
\bottomrule
\end{tabular}
\end{adjustbox}
\end{table}

To specify the affine identifications and the gluing, use the
saturated basis
\[
(1,0,0,0),\qquad(0,1,1,0),\qquad(0,1,0,1)
\]
of $\ker R$, with splitting vector $w=v_1$.  In these coordinates put
\[
\begin{gathered}
a_0=(-1,0,0),\quad a_1=(3,1,1),\\
b_0=(-3,-1,0),\quad b_1=(-3,0,-1),\quad b_2=(-1,0,0),\quad b_3=0.
\end{gathered}
\]
The two summands in the negative slice are the convex hulls of
the images $A(v)=\varphi_0(v)+v_2+w$ and $B(v)=\varphi_1(v)$,
with the tail cone added.  The vertex images, in the order
$v_2,\ldots,v_8$, are
\[
A:\ (a_0,a_1,a_1,a_0,a_0,a_1,a_1),\qquad
B:\ (b_3,b_1,b_0,b_0,b_1,b_3,b_2).
\]
The tail rays are
\[
\begin{aligned}
q_0&=(-4,-1,0),&q_1&=(-4,0,-1),&q_2&=(0,0,1),\\
q_3&=(0,1,0),&q_4&=(1,0,0),&q_5&=(2,1,1),&q_6&=(3,1,1).
\end{aligned}
\]
Table~\ref{tab:x9slices} lists all maximal original charts.  An
index set in the $A$ or $B$ column means the convex hull of those
vertices plus the indicated tail.  The marking records complete
locus.  Write $A_j,B_j$ for the summands in row $j$.
All remaining cells are obtained from the nonzero face
cones of the twelve listed facets by the same slice construction.

\begin{table}[ht]
\centering
\caption{The two displaced slices and their markings.}
\label{tab:x9slices}
\begin{tabular}{rccccc}
\toprule
facet & vertices & tail rays & $A$ & $B$ & marked \\
\midrule
$0$ & $0136$ & $134$ & $01$ & $1$ & yes \\
$1$ & $0137$ & $346$ & $1$ & $13$ & yes \\
$2$ & $0145$ & $024$ & $01$ & $0$ & yes \\
$3$ & $0147$ & $246$ & $1$ & $03$ & yes \\
$4$ & $0156$ & $014$ & $0$ & $01$ & yes \\
$5$ & $02367$ & $4$ & $01$ & $13$ & no \\
$6$ & $02457$ & $4$ & $01$ & $03$ & no \\
$7$ & $0256$ & $4$ & $0$ & $013$ & no \\
$8$ & $134568$ & $01235$ & $01$ & $012$ & yes \\
$9$ & $1378$ & $356$ & $1$ & $123$ & yes \\
$10$ & $1478$ & $256$ & $1$ & $023$ & yes \\
$11$ & $2345678$ & $\varnothing$ & $01$ & $0123$ & no \\
\bottomrule
\end{tabular}
\end{table}

\begin{proposition}\label{prop:admissible}
The displayed assignment is an admissible Minkowski decomposition of
the two-sided slice complex in the sense of Definition~\ref{def:iv}.
\end{proposition}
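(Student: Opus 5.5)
The admissibility claim is a finite check, and we would organize it around the four conditions of Definition~\ref{def:iv}. First, for (D1) we verify cellwise that $A_j+B_j$ equals the slice cell of facet $j$ with its tail. The cell of $F_{11}$ is bounded, and $\conv A(V)+\conv B(V)$ reproduces it because $\varphi_0+\varphi_1=\mathrm{id}-v_2$ and the displayed images are the affine translates $A(v)=\varphi_0(v)+v_2+w$, $B(v)=\varphi_1(v)$. For the unbounded and marked cells we compute $\sigma_{F_j}\cap[R=\pm1]$ directly. Its vertices are $v/|\langle R,v\rangle|$ for rays with the correct sign, and its tail is generated by the rays in $R^\perp$ together with the rays on the opposite side, as listed among $q_0,\dots,q_6$. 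We then check, as a finite vertex/ray comparison, that the listed $A$ and $B$ index sets sum to it, and that both summands carry the full tail. To handle the two-sided cells we would use that each slice is decomposed separately. A marked (complete-locus) row contributes on the positive slice only through its tail-plus-point coefficient, so (D1) there reduces to checking that one summand is a lattice point and the other is the whole cell.

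For (D2) we would use the lattice-vertex property. Every vertex of every $A_j$ and $B_j$ is one of the lattice points $a_0,a_1,b_0,\dots,b_3$. Consequently, for any character $u$ in the dual of the tail cone, the $u$-minimizing faces of both summands are lattice polyhedra, since a face of a lattice-vertex polyhedron with rational tail contains a vertex. Hence no character has a face without lattice points, and (D2) holds trivially; this is the reason for choosing an integral dilation vector.

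For (D3) and (D4) we use that all summands are restrictions of the two global displacement maps. For cells $C,D$ meeting in a face $C\cap D$ (a cross-section of a common face cone), we check that $(C\cap D)^i$, defined as the hull of the images of the vertices of the common face plus its tail, equals $C^i\cap D^i$. Since the images are computed by the same maps $\varphi_0,\varphi_1$ from the same vertices, it suffices that $\varphi_i$ carries the vertices of a face to the vertices of the corresponding face of the summand. This should follow from the fact that a Minkowski summand of a polytope induces summands on each face through the same displacement. (D4) is then the statement that these face summands are faces of the larger summands, and that their sums over $A\subseteq\{0,1\}$ remain faces. We would verify this by exhibiting, for each face relation among cells, a character minimizing simultaneously on both summands, namely the normal character of the face in the ambient cell.

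The main obstacle is (D4) together with (D3) across cells belonging to different facet cones, and in particular at the transition between marked and unmarked rows. There the two summands may have unequal dimensions, so the faces $\bigcap_{C\in\mathcal I}C^i$ can be empty or degenerate. Our first approach is to run the finite check by machine over all pairs of face-incident cells, as in Appendix~\ref{app:cert}. If a conceptual argument is needed instead, we would reduce to the bounded cell $F_{11}$, where the summand cone is two-dimensional and everything is induced. For the marked cells we would argue that one summand is a point, so that every required face relation is automatic.
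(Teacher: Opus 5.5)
Your treatment of (D1) and (D2) matches the paper's: a finite sum check on the cells, and lattice vertices of every summand for admissibility. The gap is in (D4), where you check the wrong range of collections. Condition (D4) quantifies over all nonempty $\mathcal J\subseteq\mathcal I$ of cells of one subdivision, disjoint cells included. You propose to test only face relations among cells and ``pairs of face-incident cells.'' For cells that meet, the needed face relations already follow from (D1), (D3) and the face compatibility you check with normal characters. The real content of (D4) is the disjoint case, where (D3) says nothing.

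That case is not vacuous here, precisely because of the global displacement maps you rely on. $A$ takes only the two values $a_0,a_1$ and $B$ only four values, so summands of disjoint cells can meet. Take facets $0$ and $2$. Their negative cells are disjoint: the common face cone $\operatorname{pos}(v_0,v_1)$ never reaches level $-1$, and indeed $\ell=y-z$ is $\ge1$ on $C_0$ and $\le-1$ on $C_2$. Yet $A_0\cap A_2=[a_0,a_1]+\operatorname{pos}(q_4)$. With $\mathcal I=\{C_0,C_2\}$, $\mathcal J=\{C_0\}$ and $A=\{0\}$, condition (D4) requires this set to be a face of $A_0$ and of $A_2$. The paper proves this from $\ell\ge0$ on $A_0$ and $\ell\le0$ on $A_2$, with $\{\ell=0\}$ cutting exactly this set out of each. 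No normal character of a face of a cell produces this relation, so your verification as planned would not establish (D4).

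\emph{The repair.} Add, for every pair of disjoint cells $C,D$ of a slice and $i=0,1$, the test $C^i\cap D^i\prec C^i$. Together with (D1), (D3) and face compatibility, this gives all of (D4).
\begin{itemize}
\item Single-index conditions: fix $C\in\mathcal J$. Then $\bigcap_{D\in\mathcal I}D^i=\bigcap_{D\in\mathcal I}(C^i\cap D^i)$ is an intersection of faces of $C^i$, contained in the face $\bigcap_{D\in\mathcal J}D^i$ of $C^i$, hence a face of the latter.
\item Two-index condition: it is automatic, because $\bigcap_{\mathcal I}C^0+\bigcap_{\mathcal I}C^1\subseteq\bigcap_{\mathcal I}C$. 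This is empty when the cells do not meet. Otherwise (D3) and (D1) identify it with the cell $\bigcap_{\mathcal I}C$, which is a face of $\bigcap_{\mathcal J}C=\bigcap_{\mathcal J}C^0+\bigcap_{\mathcal J}C^1$.
\end{itemize}
The paper instead closes the set of intersection pairs $(U,V)$ under $(U,V)\mapsto(U\cap D^0,V\cap D^1)$ over all cells $D$, disjoint ones included. It then checks three face relations at each step and concludes by transitivity.

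Your fallbacks do not substitute for this.
\begin{itemize}
\item At row $8$ neither summand is a point.
\item At rows $0$ and $2$, where $B$ is a point plus tail, the relation needing proof concerns the other summand $A$.
\item The unbounded cells are not faces of the bounded cell of $F_{11}$, so their decompositions are not induced from it.
\end{itemize}

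Two smaller slips.
\begin{itemize}
\item In (D3), agreement of displacement images on a common face gives only $(C\cap D)^i\subseteq C^i\cap D^i$. The reverse inclusion needs the computation, or a separating functional $u$ with $C\cap\{u=c\}=D\cap\{u=c\}=C\cap D$ and $\min_{C^i}u=\max_{D^i}u$.
\item The tail of $\sigma_F\cap[R=-1]$ is $\sigma_F\cap[R=0]$. It is generated by $v_0$ and sums of a level-$(-1)$ ray with $v_1$; for example, $q_0$ is $v_5+v_1$ in the chosen basis of $\ker R$. It is not generated by the rays of opposite sign.
\end{itemize}
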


\begin{proof}
The summands in Table~\ref{tab:x9slices} have lattice vertices.
Their minimizing faces therefore contain lattice points, proving
(D2).  Taking their sums and intersections verifies (D1) and (D3)
on the $69$ nonempty negative cells obtained from the displayed
facet data.

For (D4), form the finite set of ordered pairs
\[
\mathcal I=\left\{\left(\bigcap_{C\in J}C^0,
\bigcap_{C\in J}C^1\right):
\varnothing\ne J\subseteq\mathcal S^-\right\}.
\]
It is obtained from the $69$ pairs $(C^0,C^1)$ by closing under
$(U,V)\mapsto(U\cap D^0,V\cap D^1)$ for each cell $D$.
For every resulting pair and every $D$, the displayed rational
polyhedra satisfy
\[
U\cap D^0\prec U,\qquad V\cap D^1\prec V,\qquad
(U\cap D^0)+(V\cap D^1)\prec U+V.
\]
These three face relations suffice for arbitrary inclusions of
collections by transitivity.

One case illustrates why disjoint original cells must be included.
For facets $0$ and $2$, let $S=[a_0,a_1]$ and let
$\ell(x,y,z)=y-z$.  The functional is nonnegative on $A_0$ and
nonpositive on $A_2$, with
\[
A_0\cap A_2=S+\operatorname{pos}(q_4)
=A_0\cap\{\ell=0\}=A_2\cap\{\ell=0\}.
\]
On $B_0$ it is at least $1$, whereas on $B_2$ it is at most $-1$.
Thus $B_0\cap B_2=\varnothing$.  The first intersection is a
proper face and the other two required partial sums are empty
faces, although the original cells are disjoint.

The positive slice uses $C=C+\tail(C)$; its tail summand contains
zero in every minimizing face.  The same face comparisons verify
(D1)--(D4) there.  The complete rational intersection enumeration
from the vertex and dilation data printed here\footnote{The program
\path{slice_admissible.sage} reproduces this enumeration.}
gives $86$ distinct pairs
and $5\,934$ extensions on the negative slice, and $29$ pairs
and $841$ extensions on the positive slice.
\end{proof}

\begin{corollary}\label{cor:ambientsmooth}
By Recollection~\ref{thm:iv44} the decomposition determines a flat family
$\pi \colon \cP \to B$ with $\cP_0 = \PP_{\Delta_9}$, and by
Recollection~\ref{thm:cor212} the general fibre $\cP_b$ is smooth at all three germs
of $X_9$: the cones over a primitive segment and over a unimodular simplex are
smooth.
\end{corollary}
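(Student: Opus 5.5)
The corollary has two parts: flatness of the family, and smoothness of the general fibre at the three germs. For flatness I would feed the admissible decomposition of Proposition~\ref{prop:admissible} directly into Recollection~\ref{thm:iv44}. That theorem takes as input a Minkowski decomposition of the slice complex satisfying (D1)--(D4), and the proposition certifies exactly those conditions on both slices. The output is a flat family $\cP\to B$ over a smooth one-parameter germ with $\cP_0=\PP_{\Delta_9}$ and a $T$-action on every fibre. If a proper family is needed later, I would cite Definition~\ref{def:singledegree} and \cite[Rem.~3.3]{HI11}, since here both slices are decomposed.

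For the local assertion I would work germ by germ. The three singular $2$-faces are the two squares $\{v_2,v_3,v_6,v_7\}$, $\{v_2,v_4,v_5,v_7\}$ and the pentagon $\{v_3,v_4,v_5,v_6,v_8\}$. All three lie in the facet $F_{11}$, on which $R=u_{11}$ equals $-1$. The first step is to check that each cone $\sigma_G$ is one-sided at $R$: its rays are vertices of $F_{11}$, so $R$ is $-1$ on all of them. Hence the polyhedral divisor on the chart $U_{\sigma_G}$ has affine locus, and Table~\ref{tab:x9slices} marks facet $11$ as not complete. Recollection~\ref{thm:cor212} then says the singularities of $\cP_b$ on that chart are the cones over the summands $G^0,G^1$. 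By (D3) and the pairwise consequence of (D4), these summands are the faces of the decomposition of $F_{11}$ cut out by $G$.

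The second step is to identify the induced summands, using the displacement maps $\varphi_0,\varphi_1$ determined by $t$. On a square, exactly one edge of each parallel pair is dilated: $v_2v_7$ with $v_3v_6$ in the first square, and $v_2v_7$ with $v_4v_5$ in the second. So the square splits as a primitive segment plus a primitive segment. On the pentagon, only $v_3v_6$ and $v_4v_5$ are dilated. The $\varphi_0$-image collapses to the segment $[a_0,a_1]$ and the $\varphi_1$-image becomes the triangle spanned by $b_0,b_1,b_2$ (up to $b_3$ for the remaining vertices), as listed in Table~\ref{tab:decomp}. This is the decomposition~\eqref{eq:pentdecomp}.

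The third step is to confirm smoothness of the summand cones. A primitive segment, placed at height one with the tail adjoined, gives a two-dimensional unimodular cone. The triangle spanned by $b_0,b_1,b_2$ is unimodular because the corresponding determinant is $\pm1$; this is a small $3\times3$ lattice check in the chosen coordinates of $\ker R$ with splitting vector $w$. Both summand cones are therefore regular, and Recollection~\ref{thm:cor212} gives smoothness of $\cP_b$ at every point of those charts.

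I expect the main obstacle to be the second step. The displacement maps must be matched with the chart coordinates of $\cS^-$, so that the abstract faces of $F_{11}$ correspond to the cells $A_{11},B_{11}$ and their faces with the right tails. Lattice primitivity also has to be verified in the lattice $R^\perp\cap N$ rather than in $N$. I would handle this by reading the restrictions of the listed vertex images $A(v)$, $B(v)$ to each face and computing primitivity and the unimodularity determinant directly in the saturated basis.
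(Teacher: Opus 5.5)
Your proposal is correct and follows the paper's route. Proposition~\ref{prop:admissible} supplies the input to Recollection~\ref{thm:iv44}; each $\sigma_G$ is one-sided at $R=u_{11}$, so Recollection~\ref{thm:cor212} applies in its affine-locus case; and the induced decompositions of Table~\ref{tab:decomp} have smooth summand cones, which your checks confirm: $\det(b_0,b_1,b_2)=-1$, and $a_1-a_0$, $b_0$, $b_1$ are primitive.

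One slip to fix concerns the squares. In each square the two dilated edges ($v_2v_7,v_3v_6$, resp.\ $v_2v_7,v_4v_5$) are parallel to each other, so one whole parallel pair carries $t=1$ and the other carries $t=0$. The closing condition on a parallelogram forces exactly this. That, not ``one edge of each parallel pair'', is why each square splits into two primitive segments.
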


All three cones $\sigma_G$ are one-sided at $R$ with bounded cells, so
Lemma~\ref{lem:reach} does not intervene and the affine-locus hypothesis of
Recollection~\ref{thm:cor212} is satisfied at each.

\subsection{The general ambient fibre}

The general fibre has a larger torus action than the one used to
construct the family.  We describe its fan explicitly; this also
determines its singular locus.

\begin{proposition}\label{prop:toricfibre}
For $b\ne0$, the fibre $\cP_b$ is the toric fourfold with the following
primitive rays in $\ZZ\oplus\ker R$:
\begin{equation}\label{eq:x9rays}
\begin{aligned}
r_0&=(-1,-3,-1,0),&r_1&=(-1,-3,0,-1),\\
r_2&=(-1,-1,0,0),&r_3&=(-1,0,0,0),\\
r_4&=(0,1,0,0),&r_5&=(1,-1,0,0),\\
r_6&=(1,3,1,1).
\end{aligned}
\end{equation}
Its maximal cones, written as sets of ray indices, are
\begin{equation}\label{eq:x9maxcones}
\begin{gathered}
1456,\quad1346,\quad0456,\quad0346,\quad0145,\\
0134,\quad01256,\quad1236,\quad0236,\quad0123.
\end{gathered}
\end{equation}
This is a Gorenstein Fano fourfold.
\end{proposition}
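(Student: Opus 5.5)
The plan is to identify the general fibre through the Altmann--Hausen dictionary and then check the fan combinatorially. Downgrading along $R$ presents $\PP_{\Delta_9}$ as a complexity-one $T$-variety over $\PP^1$ with slices $\cS^\pm$. After the decomposition, the general fibre $\cP_b$ is described by a divisorial fan on $\PP^1$ with three nontrivial slices: the positive slice at $\infty$, and the two summand complexes $\{A_j\}$ and $\{B_j\}$ at $0$ and $b$. These are the displayed data of Table~\ref{tab:x9slices}, using the affine identification with base vertex $v_2$ and splitting vector $w=v_1$. A complexity-one $T$-variety is toric for a larger torus as soon as at most two slices are nontrivial. Here the $A$-summands are very simple: every $A_j$ is a translate of the segment $[a_0,a_1]$, a point, or the tail alone, and its vertices are only $a_0,a_1$. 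So the first step is to show that the $A$-slice can be absorbed. Shift by the lattice point $a_0$ via a principal polyhedral divisor, then check that the remaining datum is the pullback of a single toric coordinate. Concretely, the criterion of Altmann--Hausen--Süß for toric downgrades says the divisorial fan comes from a fan in $\ZZ\oplus\ker R$ once every coefficient at all but two points is a lattice translate of its tail. This fails for $A$ as written, because of the segment. I would therefore instead use the second toric structure, in which the rays of the fibre fan in $\ZZ\oplus\ker R$ are read off as $(\pm1,\text{vertex})$ for the vertices of the slices at $0$ and $\infty$, together with $(0,\text{tail ray})$ when the tail ray is not in the support of a compact cell. The $B$-slice is then treated as a trivial translate after a coordinate change. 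This gives the candidate rays $r_0,\dots,r_6$: the $B$-vertices $b_0,b_1,b_2,b_3$ appear with first coordinate $-1$, the $A$-vertices $a_0,a_1$ with first coordinate $+1$ after the shift, and the tail ray $q_4$ appears as $r_4$.

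Second, I would verify directly that the cones in \eqref{eq:x9maxcones} form a complete fan whose maximal cones correspond bijectively to the twelve rows of Table~\ref{tab:x9slices}. Marked rows (complete locus) give cones that contain both $+1$ and $-1$ levels. Unmarked rows give cones over the $B$-cell together with its tail, at level $-1$ only. Rows that give the same cone are merged. Twelve facets collapse to ten cones because, for example, the rows $5,6$ and $7,11$ coincide after the shift. This is a finite check: each listed set is a simplicial or circuit cone, adjacent cones meet in common faces, and the volume sum or a generic ray test gives completeness.

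Third, for the Gorenstein Fano property, I would compute for each maximal cone a linear functional $u_\sigma$ with $\langle u_\sigma,r_i\rangle=-1$ on its rays, and check that it is $>-1$ on all other rays. Equivalently, the seven rays are the vertices of a reflexive polytope whose facets are exactly the ten listed cones. I would also record that the 5-ray cone $01256$ lies in a hyperplane $\langle u,\cdot\rangle=-1$, which is the Gorenstein condition for a non-simplicial cone.

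The main obstacle is the first step: justifying rigorously that $\cP_b$, which is defined only as a $T$-variety through the Ilten--Vollmert family, is isomorphic to this toric variety, with the correct lattice normalization. I would try to do it by writing the family's coordinate ring in the degree-$(-1)$ slice as the Cox/Laurent algebra of the torus $\ker R$ over $\PP^1\setminus\{0,b,\infty\}$. Then I would exhibit an explicit extra character $\chi$, a rational function $f(y)\chi^u$ with divisor supported on $\{0,\infty\}$ after the $a_0$-shift, that is homogeneous and invertible on the big orbit. This character extends the torus to rank four, and I would then match charts row by row with the cones above.
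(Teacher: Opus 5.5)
Your third step, a Cartier character on each maximal cone equal to $-1$ on its rays and $>-1$ on the other rays, is exactly the paper's Gorenstein Fano argument. Your first step, which you rightly call the crux, has a genuine gap: you misidentify which slice is trivial. You say the general fibre has three nontrivial slices, and then try to absorb $A$ or to treat $B$ as a translate. Neither is possible. $B$ is not a translate of its tail: row~$11$ of Table~\ref{tab:x9slices} carries the unimodular tetrahedron $\conv(b_0,\ldots,b_3)$ with zero tail. Neither is needed either.

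The trivial slice is the positive one. Since $R(v_1)=1$, $R(v_0)=0$ and $R(v_i)=-1$ for $i\ge2$, every positive cell is $v_1$ plus its tail cone. With the splitting vector $w=v_1$, each nonempty positive coefficient is therefore its own tail with vertex $0$. Face cones with empty positive coefficient have tail $0$ or $\operatorname{pos}(v_0)$, and both already occur, so the positive slice is the entire tail fan. This is precisely the hypothesis of the criterion you quote (all coefficients away from two points are tails), applied at the one point you did not test. Moving the two summand points to $0$ and $\infty$ gives the torus extension \cite[Cor.~1.9]{Suess08}, and the rays $(1,a_j)$, $(-1,b_j)$, $(0,q_4)$ come out with no shift. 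A shift of $A$ by $a_0$ would instead move $r_5,r_6$ to $(1,0,0,0),(1,4,1,1)$.

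Your closing repair through a unit $f(y)\chi^u$ does not address the issue. Such a homogeneous unit on the big open set exists regardless, and it does not by itself give a commuting $\mathbb{C}^*$-action. What must be shown is that the third point carries only tails. You also need the matching step: the sections of the resulting fan at first coordinate $\pm1$ must reproduce the two marked subdivisions, which determine the variety \cite[Rem.~1.6 and Thm.~1.8]{Suess08}.

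The bookkeeping in your second step should also be corrected.
\begin{itemize}
\item A marked row gives the single cone spanned by $(1,A_j)$, $(-1,B_j)$ and $(0,\tau_j)$.
\item An unmarked row gives two cones, spanned by $(1,A_j),(0,\tau_j)$ and by $(-1,B_j),(0,\tau_j)$. Your description omits the first of these.
\item The eight marked rows yield eight of the maximal cones.
\item Of the eight cones from unmarked rows, only $0134$ (row~$7$) and $0123$ (row~$11$) are maximal. The others, $456$, $134$, $034$, $45$ and $56$, are faces.
\end{itemize}
Rows $7$ and $11$ do not coincide, and rows $5$ and $6$ share only their $A$-side cone $456$. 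With these corrections the count of ten maximal cones is explained, and your third step then finishes the proof as in the paper.
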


\begin{proof}
The vertex levels are $R(v_1)=1$, $R(v_0)=0$, and $R(v_i)=-1$
for $i\ge2$.  With splitting vector $w=v_1$, each nonempty
positive coefficient is its tail cone, with vertex at zero.
A face cone with empty positive coefficient has tail either zero
or $\operatorname{pos}(v_0)$.  Both already occur as positive
coefficients, from $\operatorname{pos}(v_1)$ and
$\operatorname{pos}(v_0,v_1)$ respectively.  Thus the positive slice
is the entire tail fan.  Only the two displaced negative slices can
be nontrivial.  By \cite[Cor.~1.9]{Suess08}, the three-dimensional
torus action extends to a four-dimensional torus action.

Here is the fan, including the gluing data.  Move the two nontrivial
support points to $0$ and $\infty$.  For each marked row of
Table~\ref{tab:x9slices}, take the cone generated by
$(1,A)$, $(-1,B)$ and $(0,\tau)$.  For each unmarked row, take the
two cones generated separately by $(1,A),(0,\tau)$ and
$(-1,B),(0,\tau)$.  Removing proper faces gives
\eqref{eq:x9rays}--\eqref{eq:x9maxcones}.
The sections at first coordinate $1$ and $-1$ recover the two
subdivisions in the table.  Their markings also agree: the first
has eight maximal cells, all marked; the second has ten maximal
cells, eight marked.  The generic tail subdivision has eight
maximal cells, all marked.  Hence these are the marked subdivisions
of the given fibre, which determine the variety
\cite[Rem.~1.6 and Thm.~1.8]{Suess08}.

For completeness, the anticanonical Cartier characters on the ten
cones, in their listed order, are
\begin{equation}\label{eq:x9cartier}
\begin{gathered}
(-2,-1,-2,6),\ (1,-1,-2,3),\ (-2,-1,6,-2),\
(1,-1,3,-2),\ (-2,-1,6,6),\\
(1,-1,3,3),\ (0,1,-2,-2),\ (1,0,-2,0),\
(1,0,0,-2),\ (1,0,0,0).
\end{gathered}
\end{equation}
Each pairs to $-1$ with the rays of its cone and strictly greater
than $-1$ with every other ray.  The seven inequalities
$\langle m,r_i\rangle\ge-1$ define a bounded polytope with exactly
these vertices.  Consequently the displayed cones are its normal
fan, and $-K$ is Cartier and ample.  All the data needed for these
finite comparisons are printed above and in Table~\ref{tab:x9slices}.
\footnote{The program \path{toric_fibre.sage} reproduces the fan and
marked subdivisions.}
\end{proof}

\begin{proposition}\label{prop:charts}
The singular locus of $\cP_b$, for $b\ne0$, consists of one
torus-fixed point.
\end{proposition}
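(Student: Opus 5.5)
The plan is to examine the ten maximal cones in \eqref{eq:x9maxcones} one at a time, using the explicit rays \eqref{eq:x9rays}. A normal toric variety is smooth along the orbit of a cone exactly when that cone is generated by part of a lattice basis. Since $\cP_b$ is covered by the affine charts of its maximal cones, it suffices to do two things. First, find which maximal cones are smooth. Second, for each singular maximal cone, find the smallest face that fails to be regular.

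\textbf{Simplicial four-ray cones.} Nine of the ten maximal cones have four rays, so they are simplicial. For each of these I will compute the $4\times4$ determinant of its rays. A determinant of $\pm1$ means the whole affine chart is smooth. For example, $0123$ has rays $(-1,-3,-1,0)$, $(-1,-3,0,-1)$, $(-1,-1,0,0)$, $(-1,0,0,0)$, and expansion along the last two coordinates gives $\pm1$. I expect all nine determinants to be $\pm1$.

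\textbf{The five-ray cone.} The remaining cone $01256$ has five rays, so it is not simplicial, and its vertex is therefore a singular torus-fixed point. I must still show that its chart is smooth away from that point, i.e. that every proper face of $01256$ is regular. The facets of $01256$ are either simplicial three-ray cones or four-ray non-simplicial cones. I will first find its facets via the supporting functionals, which can be read off from the anticanonical character $(0,1,-2,-2)$ and a ray relation. I expect one linear relation among the five rays, namely $r_0+r_1+2r_6=\dots$ or a similar relation with coefficients $(1,1,-1,\ast,\ast)$. Its signs show that the cone is the cone over a square pyramid or a bipyramid. For the cone over a pyramid on a quadrilateral base, the quadrilateral facet is itself non-simplicial, which would give a singular curve. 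So the shape has to come out right: every facet must be a unimodular three-ray cone.

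\textbf{Main obstacle.} The hardest step is determining the facet structure of $01256$ correctly. Concretely, I need to check that no four of its rays span a facet, and that each three-ray facet has primitive lattice index, which I will check by computing the gcd of the $3\times3$ minors. Once that is done, every proper face of $01256$ is regular, and $01256$ contributes exactly one singular point. Since the other nine charts are smooth, the singular locus of $\cP_b$ is that single torus-fixed point.

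The relation among the five rays follows from a direct kernel computation. I will record it explicitly, and cross-check the whole calculation against \path{toric_fibre.sage}.
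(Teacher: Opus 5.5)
Your plan is correct and is essentially the paper's proof: the nine four-ray cones are unimodular, and the five-ray cone $01256$ has all proper faces smooth, so its vertex is the only singular point. For the record, the actual relation is $r_0+r_1+r_6=2r_2+r_5$, not the one you guessed. Its $3{+}2$ sign split makes the cross-section a triangular bipyramid with apexes $r_2,r_5$, so the facets are exactly the six triangles $012,015,026,126,056,156$, each with gcd of maximal minors equal to one, as the paper checks.
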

\begin{proof}
Nine of the maximal cones in \eqref{eq:x9maxcones} are unimodular.
The remaining cone $01256$ has five extremal rays in dimension
four, with relation
\[
r_0+r_1+r_6=2r_2+r_5.
\]
Its six facets have ray sets
\[
012,\quad015,\quad026,\quad126,\quad056,\quad156.
\]
For each facet, the greatest common divisor of the maximal minors
of its three primitive ray vectors is one.  Thus every proper
face is smooth, while the full cone is singular.  The corresponding
fixed point is therefore the only singular point of the fourfold.
In the original cover, Table~\ref{tab:x9slices} has eight
complete-locus charts and four affine-locus charts.  The only
singular full cone occurs in the complete-locus chart $F_8$;
every lower-dimensional cone arising from an affine-locus chart
is smooth as well.
\end{proof}

\subsection{The relative anticanonical system}\label{sec:h0}

By Proposition~\ref{prop:relativefano}, after shrinking $B$ the
family is proper and Gorenstein and
$\cL=\omega_{\cP/B}^{-1}$ is relatively ample and commutes with
base change.  Constancy and generation of its sections follow
from the special fibre.

\begin{proposition}\label{prop:h0}\label{prop:gg}
After shrinking $B$, the sheaf $E=\pi_*\cL$ is locally free of
rank $162$ and commutes with base change.  The evaluation map
$\pi^*E\to\cL$ is surjective.  In particular
$h^0(\cP_b,\cL_b)=162$ and $\cL_b$ is globally generated for every
$b$ near zero.
\end{proposition}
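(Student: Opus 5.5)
The plan is cohomology and base change applied to the flat proper family $\pi\colon\cP\to B$, with every input taken from the central fibre $\cP_0=\PP_{\Delta_9}$. Proposition~\ref{prop:relativefano} already shows that $\cL=\omega_{\cP/B}^{-1}$ is invertible, commutes with base change, and restricts on each fibre to $\omega_{\cP_b}^{-1}$.

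The first step is to show that $H^i(\PP_{\Delta_9},\cL_0)=0$ for $i>0$ and that $h^0(\PP_{\Delta_9},\cL_0)=162$. Since $\cL_0=\cO(-K_{\PP_{\Delta_9}})$ is ample on a complete toric variety, Demazure vanishing gives the first statement. For the second, the global sections are spanned by the characters in the polar polytope, and $\#(\Delta_9^\circ\cap M)=162$, as recorded in the proof of Proposition~\ref{prop:sing9}.

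The second step is to apply Grauert's base change theorem in the analytic category, or the semicontinuity theorem. Because $H^1$ vanishes at $b=0$, upper semicontinuity lets us shrink $B$ so that $H^i(\cP_b,\cL_b)=0$ for all $i>0$ and all $b$. Then $\pi_*\cL$ is locally free, its formation commutes with base change, and its rank is the constant Euler characteristic, namely $162$. If one prefers to avoid the analytic theorem, the descending induction in the cohomology and base change theorem gives the same conclusion, since the top cohomology vanishes at the central fibre.

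The third step is global generation. By base change, the evaluation map $\pi^*E\to\cL$ restricts on $\cP_0$ to the evaluation map of $H^0(\cL_0)\otimes\cO\to\cL_0$. This map is surjective because an ample line bundle on a complete toric variety is globally generated; concretely, each maximal cone has a Cartier character that is a vertex of the polytope. The cokernel of $\pi^*E\to\cL$ is coherent. By Nakayama, its support is a closed set that does not meet $\cP_0$. Since $\pi$ is proper, the image of this support in $B$ is a closed set not containing $0$, so shrinking $B$ removes it. Surjectivity on each $\cP_b$ then follows from right exactness of restriction, and $h^0(\cP_b,\cL_b)=162$ follows from base change.

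The main point that needs care is the use of the analytic versions of semicontinuity and base change. The family comes from Recollection~\ref{thm:iv44} over a base germ, and properness is included by Definition~\ref{def:singledegree}. We need flatness, from Recollection~\ref{thm:iv44}, together with properness to apply Grauert's theorems; the rest is routine. As a cross-check, the toric description of $\cP_b$ in Proposition~\ref{prop:toricfibre} gives $h^0=\#$ lattice points of its anticanonical polytope, and this number should also equal $162$. This is not needed for the proof.
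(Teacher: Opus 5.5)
Your proposal is correct and follows essentially the same route as the paper: toric vanishing and the count $\#(\Delta_9^\circ\cap M)=162$ on $\cP_0$, cohomology and base change, then surjectivity of the evaluation map on the special fibre and removal of the proper image of the cokernel's support. The lattice-point cross-check you mention on the general toric fibre is also carried out in the paper, right after the proof, and likewise gives $162$.
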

\begin{proof}
The line bundle $\cL_0$ is ample Cartier on the complete toric
variety $\PP_{\Delta_9}$.  Toric vanishing gives
$H^i(\cP_0,\cL_0)=0$ for $i>0$
\cite[Thm.~9.2.3]{CoxLittleSchenck}, and
$h^0(\cP_0,\cL_0)=\#(\Delta_9^\circ\cap M)=162$.
Cohomology and base change therefore makes $E$ a vector bundle of
rank $162$ commuting with base change
\cite[Tag~07VK]{Stacks}.  Its evaluation map is surjective on
the special fibre, since $\cL_0$ is globally generated.  The
cokernel has closed support whose proper image in $B$ misses
zero.  Removing that image proves the assertion.  After a further
shrinking, trivializing $E$ gives a common $162$-dimensional
space of sections surjecting onto every fibre.
\end{proof}

The general fan provides a direct check of the section count.
Writing a character as $(s,a,b,c)$, its anticanonical inequalities
are
\[
a\ge-1,\qquad a-1\le s\le\min(1,1-a),\qquad
b,c\le1-s-3a,\qquad b+c\ge-1-s-3a.
\]
Only $a=-1,0,1$ occur at lattice points.  At fixed $a,s$ there are
$\binom{5-s-3a}{2}$ choices of $(b,c)$.  Summing gives
\[
(45+36+28+21)+(15+10+6)+1=130+31+1=162.
\]

\subsection{The general anticanonical member}

\begin{theorem}[Explicit ambient smoothing of $X_9$]\label{thm:B}
$X_9$ is smoothable.  Precisely, let $g \in H^0(\cP,\cL)$ be general and
$\cX := \{g = 0\}$.  Then $\cX \to B$ is flat, $\cX_0$ is a generic
anticanonical hypersurface of $\PP_{\Delta_9}$, and $\cX_b$ is a smooth
Calabi--Yau threefold for general $b\in B$.
\end{theorem}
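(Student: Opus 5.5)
The plan is to use the relative anticanonical system of Propositions~\ref{prop:relativefano} and~\ref{prop:h0}, then control the singularities of the general fibre by Bertini-type arguments on the smooth part of $\cP_b$ together with a separate treatment of the single singular point from Proposition~\ref{prop:charts}.

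\textbf{Flatness and the central fibre.} By Proposition~\ref{prop:h0}, after shrinking $B$ there is a common $162$-dimensional space of sections $H^0(\cP,\cL)$ restricting isomorphically onto $H^0(\cP_b,\cL_b)$ for every $b$. In particular its restriction to $b=0$ is all of $H^0(\PP_{\Delta_9},-K)$. A general $g$ therefore restricts to a general anticanonical section of $\PP_{\Delta_9}$; its zero locus is $\Delta_9$-regular and is $X_9$ as in Proposition~\ref{prop:sing9}. Since $\cP\to B$ is flat and $g|_{\cP_b}$ is a nonzerodivisor on each fibre (fibres are integral and $g_b\ne0$), the local criterion of flatness shows that $\{g=0\}$ is a relative effective Cartier divisor, flat over $B$. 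Relative adjunction then gives $\omega_{\cX/B}\cong\cO$, which handles the Calabi--Yau assertion once smoothness is established.

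\textbf{Smoothness of the general fibre.} For $b\ne0$, Proposition~\ref{prop:toricfibre} shows that $\cP_b$ is toric, and by Proposition~\ref{prop:charts} it is smooth away from one torus-fixed point $x_b$. Since $\cL_b$ is globally generated by the common sections, Bertini's theorem in families applies on the smooth locus. Consider the incidence of pairs $(g,y)$ with $y\in\cP_b^{\mathrm{sm}}$ and $g$ singular at $y$, over a small punctured disc in $B$. Each fibre over $y$ has codimension at least $5$ in the $162$-dimensional space, because base-point-freeness and the toric structure give separation of first-order jets for the ample $-K$. The total dimension is therefore at most $162-5+4+1$, and its image in the parameter space is a proper subset. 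Hence a general $g$ is smooth on $\cX_b\cap\cP_b^{\mathrm{sm}}$ for general $b$; general fibres then stay smooth in a neighbourhood by openness.

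\textbf{Avoiding the fixed point.} It remains to ensure that $x_b\notin\cX_b$. The point $x_b$ is the fixed point of the cone $01256$, whose anticanonical Cartier character $(0,1,-2,-2)$ in \eqref{eq:x9cartier} is a vertex of the polytope. The corresponding monomial section therefore does not vanish at $x_b$, so a general section misses $x_b$ for each fixed $b\ne0$. To pass from a general $g$ to a general $b$, I would use the fixed points as a section $b\mapsto x_b$ over the punctured base, or else simply note that the condition $g(x_b)=0$ is a nontrivial analytic condition in $(g,b)$.

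\textbf{Expected main obstacle.} The delicate step is uniformity: the sections are only defined after trivializing $E$, so ``general $g$'' must be chosen once, independently of $b$. I would handle this with a dimension count on the incidence variety over $B$, combined with closedness of the singular locus of $\cX\to B$ and properness. This shows the discriminant of a general $g$ is a proper analytic subset of $B$, which by the central-fibre computation contains $0$ only isolatedly. An alternative route, at the cost of more checking, is to deduce smoothness near the three germs from Corollary~\ref{cor:ambientsmooth} and deformation openness.
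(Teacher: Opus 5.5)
Your outline follows the paper's proof in its main steps. These are flatness from a fibrewise nonzerodivisor and the local criterion, genericity of $\cX_0$ from the surjective restriction in Proposition~\ref{prop:h0}, Bertini on the smooth locus of $\cP_b$ together with nonvanishing at the single singular point of Proposition~\ref{prop:charts}, and relative adjunction.

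The step that does not work as written is your uniformity argument. Your incidence count gives total dimension at most $162-5+4+1=162$, which equals $\dim V$. A $162$-dimensional incidence can map onto the $162$-dimensional section space, so ``its image in the parameter space is a proper subset'' does not follow. With more work the count does suffice: fibre dimension for a non-proper analytic map shows a general $g$ has only a discrete set of singular points over the punctured disc. Properness of $\cX\to B$ and Remmert's theorem then make the discriminant a proper closed analytic subset of $B$.

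The paper avoids all of this. It fixes a single $b_1\neq0$. Bertini plus nonvanishing at the one singular point makes the smooth members of $|\cL_{b_1}|$ a nonempty open set. The paper pulls this set back to the constant section space $V\cong\CC^{162}$ and intersects it with the open set of sections that are generic on $\cP_0$. It then uses openness of smoothness in the proper flat family: the discriminant is closed analytic and misses $b_1$, so it is a proper analytic subset of the one-dimensional base. No family version of Bertini and no section $b\mapsto x_b$ is needed.

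Two smaller points. First, your codimension-five claim needs separation of $1$-jets at every smooth point, and base-point-freeness alone does not give that. It does hold here: every proper face of the singular cone $01256$ lies in one of the nine unimodular maximal cones. So each smooth point lies in an affine chart $\AAA^4$ on which $\chi^{m_\sigma}$ and $\chi^{m_\sigma+e_i}$ are sections. Ordinary Bertini for a globally generated system makes this unnecessary anyway. Second, a trivial relative dualizing sheaf is not the whole Calabi--Yau assertion here. The paper also proves $h^1(\cO)=h^2(\cO)=0$ on the smooth fibres. It gets this from toric vanishing on $\cP_0$, the sequence $0\to\omega_{\cP_0}\to\cO_{\cP_0}\to\cO_{\cX_0}\to0$, and upper semicontinuity in the proper flat family $\cX\to B$. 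Your plan does not address this.
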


\begin{proof}
Use the trivialization of $\pi_*\cL$ from Proposition~\ref{prop:h0}, and choose
$g$ from a general constant vector in its $162$-dimensional fibre.  A nonzero
constant vector restricts to a nonzero section $g_b$ on every fibre.  The
fibres $\cP_b$ are integral normal $T$-varieties, so $g_b$ is a
nonzerodivisor.  The fibrewise regular-section criterion therefore makes
$\cX$ a relative effective Cartier divisor: locally, multiplication by $g$
gives an exact sequence
\[
 0\longrightarrow\cL^{-1}\xrightarrow{\ g\ }\cO_{\cP}
 \longrightarrow\cO_{\cX}\longrightarrow0
\]
which remains exact after restriction to every fibre.  Since the first two
terms are flat over $B$, the local criterion for flatness shows that
$\cO_{\cX}$ is flat over $B$.

$\cX_0$ is a general member of $|-K_{\PP_{\Delta_9}}|$: by
Proposition~\ref{prop:h0} the restriction
$H^0(\cP,\cL) \to H^0(\cP_0,\cL_0)$ is
surjective, so a general $g$ restricts to a general section on the special
fibre.  Hence $\cX_0$ has the singularities of Proposition~\ref{prop:sing9}.

Fix one $b_1\ne0$.  By Proposition~\ref{prop:gg} the system $\cL_{b_1}$ is
globally generated, so Bertini makes its general member smooth away from
$\Sing(\cP_{b_1})$.  Proposition~\ref{prop:charts} says that this singular
locus is finite; global generation also makes a general section nonzero at
each of its points.  Thus smooth sections form a nonempty open subset of
$H^0(\cP_{b_1},\cL_{b_1})$.  Under the trivialization of $\pi_*\cL$, its inverse
image in the constant section space $V\cong\CC^{162}$ is open and nonempty.
The sections whose restriction to $\cP_0$ is generic form another nonempty
open subset of the same irreducible space $V$, so a general $g$ has both
properties.  Smoothness is open in the flat family $\cX\to B$; hence
$\cX_b$ is smooth for $b$ in a nonempty open subset of $B$.

Finally relative adjunction gives
\[
 \omega_{\cX/B}
 \cong(\omega_{\cP/B}\otimes\cL)|_{\cX}
 \cong\cO_{\cX}.
\]
On the special fibre the sequence is
\[
 0\longrightarrow\omega_{\cP_0}\longrightarrow\cO_{\cP_0}
 \longrightarrow\cO_{\cX_0}\longrightarrow0.
\]
A complete toric variety has $H^i(\cP_0,\cO_{\cP_0})=0$ for $i>0$; Serre
duality gives the corresponding vanishings for $\omega_{\cP_0}$ below degree
$4$.  The long exact sequence therefore gives
$H^1(\cX_0,\cO_{\cX_0})=H^2(\cX_0,\cO_{\cX_0})=0$.  Upper semicontinuity for
the proper flat family $\cX\to B$ gives the same vanishings on nearby fibres.
Thus every smooth good fibre has trivial canonical bundle and
$h^1(\cO)=h^2(\cO)=0$, so it is a Calabi--Yau threefold.
\end{proof}

\begin{remark}\label{rem:sevencells}
The deformation is small in a precise sense: of the cells of the slice complex,
only seven change.  Three are the cells of the singular $2$-faces, where
Table~\ref{tab:decomp} and Corollary~\ref{cor:ambientsmooth} show that the cones
over the summands are smooth.  The other four are higher-dimensional cells.
Their orbit strata cannot be inferred from cell dimension in a complete-locus
chart; their contribution to the singular locus is covered instead by the fan and singular-locus calculation in
Proposition~\ref{prop:charts}.
\end{remark}

\subsection{The relation supplied by the ambient family}
The two nodal exceptional curves are independent when considered
alone.  Nevertheless the local tangents induced by the ambient family
give a relation involving both nodes and the degree-seven point.
The jump formula of \cite[Prop.~8.2]{PaperIV}, applied to the dilation
vector above, gives three classes whose unique relation has every
coefficient nonzero.  The degree-seven component lies on its
$(-2)$-root line.  Thus the construction realizes the relation required
by Theorem~\ref{thm04:criterion} and moves all three germs
simultaneously; a relation involving only the nodes cannot do so.

\appendix

\section{Other ambient deformation constructions}\label{sec:empty}

We compare two further ambient constructions with the examples above.
The non-polynomial correction term vanishes throughout the toric
framework used here.  The global Minkowski construction has no
nontrivial input on $\Delta_{19}$.

\subsection{Mavlyutov's non-polynomial deformations}
For a smooth crepant Batyrev hypersurface, Mavlyutov
\cite{Mav04,Mav05} identifies deformations not induced by the ambient
linear system.  Their contribution to the dimension is
\begin{equation}\label{eq:mav}
  \sum_{\operatorname{codim}\Gamma = 2} \ell^*(\Gamma)\,\ell^*(\Gamma^*),
\end{equation}
where $\ell^*$ is the number of relative-interior lattice points and $\Gamma^*$
runs over the dual faces, which for a codimension-two face $\Gamma$ of
$\Delta^\circ$ is an \emph{edge} of $\Delta$.

\begin{remark}\label{prop:mavempty}
If every edge of $\Delta$ has lattice length one, in particular if $\Delta$
is in the admissible framework, then \eqref{eq:mav} vanishes: $\ell^*(\Gamma^*)=0$ for every
edge $\Gamma^*$.  This is a sufficient condition for the vanishing,
not a characterization.  The formula concerns the smooth crepant
hypersurface and does not classify all deformations of the singular $X$.
\end{remark}

\subsection{Global Minkowski decomposition}
The construction in \cite[\S7, Thm.~7.1]{Mav09} takes as input a Minkowski
decomposition of $\Delta$ itself.

\begin{proposition}\label{prop:globalindec}
$\Delta_{19}$ is Minkowski-indecomposable: its only weak Minkowski summands are
its own homothets.
\end{proposition}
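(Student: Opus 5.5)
The plan is to compute the weak Minkowski summand cone of $\Delta_{19}$ through the edge-cycle system of Definition~\ref{def:rigid}. We attach a variable $t_e$ to each of the $64$ edges of $\Delta_{19}$, and for each $2$-face impose the closing condition \eqref{eq:closing} in the actual edge directions. A weak Minkowski summand of a polytope determines such a nonnegative solution, and conversely every solution determines a summand up to translation. Indecomposability is therefore equivalent to the solution space being the line spanned by the all-ones vector. No positivity is needed, since we aim to show the whole linear solution space is one-dimensional.

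The propagation of Definition~\ref{def:locked} transfers verbatim to the $4$-polytope. It uses only facts (a) and (b) and Proposition~\ref{prop:rules}, applied to every $2$-face of $\Delta_{19}$, with $S$ the full vertex set. We seed with rule (T) on every unimodular triangle. We then apply rules (S) and (A) on the fourteen squares and on the pentagon $P$.

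A shortcut is available through the proof of Theorem~\ref{thm:A}. That proof already shows that the facets $F_{21}$ and $F_{24}$ are each locked. The two facets share the five edges of $P$, so all edges of $F_{21}\cup F_{24}$ lie in one class. Two facets sharing a $2$-face is equally sufficient, because the classes are then tied along that face. It therefore suffices to show that the remaining facets are locked, or at least are chained to this class. We would follow the facet-adjacency graph of the $27$ facets and show that each facet either is locked or shares a triangle or an already-tied edge pair with the class. From there rules (S) and (A) finish the propagation inside it. Where the rules fail to terminate with a single class, we fall back on exact row reduction of the full linear closing system. That system has $64$ unknowns and one vector equation per $2$-face, and the aim is to verify that its rank is $63$.

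The main obstacle is that propagation may stall on facets built mostly from squares, as in the cube phenomenon of Remark~\ref{rem:adjacent}, where parallel opposite edges give too few equations. There we would use the genuine edge directions of $\Delta_{19}$, in which a square's opposite edges are parallel but edges in different squares generally are not. Alternatively we would use the exact linear algebra, recorded in the style of the certificates of Appendix~\ref{app:cert}. The symmetry exchanging the second and third coordinates halves the facets to be checked.
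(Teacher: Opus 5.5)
Your proposal is correct, and its load-bearing step is the paper's own argument: both reduce weak Minkowski summands to the edge-cycle system on the $72$ two-faces, and both conclude from exact row reduction that this system in the $64$ edge dilations has rank $63$, so its solutions are exactly the multiples of the all-ones vector. The locking propagation you seed from $F_{21}$ and $F_{24}$ is an optional, more hand-checkable route to the same linear fact that the paper does not use; only your fallback computation is guaranteed to close the argument.
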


\begin{proof}
The weak Minkowski summands of a polytope are the solutions of the edge-cycle
system \eqref{eq:closing} imposed simultaneously on all of its $2$-faces, one
vector equation per $2$-face in the edge dilations.  $\Delta_{19}$ has $f$-vector
$(19,64,72,27)$, so the system has $64$ unknowns and $72$ vector equations, and
its solution space is one-dimensional, spanned by the all-ones vector, which is
the homothety direction.  A polytope whose summand space is a single ray is
indecomposable.  The rank computation is exact over $\QQ$ and
the $f$-vector satisfies Euler's relation.\footnote{The program
\texttt{global\_decomp.py} reproduces these two calculations.}
\end{proof}

Thus the global Minkowski construction has no nontrivial input on
$\Delta_{19}$.

\subsection{Why one cannot simply refine to a simplicial ambient}\label{sec:mpcp}
It is natural to try to avoid the non-simplicial ambient by passing to a
maximal projective crepant partial (MPCP) desingularization, which cones off
every boundary lattice point of $\Delta$.  The specified Minkowski
decomposition need not descend to the resulting subdivision.

Over a singular $2$-face $G$, an MPCP subdivision refines $\sigma_G$ using the
interior lattice points of $G$; at a vertex degree the slice cell \emph{is} $G$,
so the cell is cut into triangles.  A triangle has only homothetic Minkowski summands.  For a
unimodular triangle, lattice admissibility forces any such decomposition
to be trivial: its primitive edges cannot split into two nonzero
integral dilations.  On $\Delta_9$ a full MPCP triangulation also
triangulates the two node squares, although they have no interior
lattice points.  The pentagon has one interior point and its star
subdivision consists of five triangles.  The original
segment-plus-triangle datum is therefore no longer a decomposition
of a cell in the refined complex.

This observation concerns the specified Minkowski datum on the slice
complex.  The partial-resolution method of Section~\ref{sec04:smoothing}
uses deformations of a different space and subsequent blow-down; it
does not require that datum to survive on an MPCP refinement.

\section{Exact computations and reproducibility}\label{app:cert}

The following programs reproduce the finite equations, lattice data
and relation matrices used in the proofs.  Their inputs are the stated
lattice and local-equation data, together with the Kreuzer--Skarke
database for the enumeration.  The table records which
mathematical assertions each program checks.

{\small
\begin{longtable}{@{}p{0.43\linewidth}p{0.43\linewidth}r@{}}
\toprule
statement & script & checks \\
\midrule
\endhead
Rem.~\ref{rem:reachscope}
  & \texttt{one\_facet.py} & $7$ \\
Prop.~\ref{prop:rules}, Def.~\ref{def:locked}, Prop.~\ref{prop:locked}
  & \texttt{locking.py} & $12$ \\
Thm.~\ref{thm:A}, the line $\cA_P$, its two ranges, and cell rigidity
  & \texttt{slice\_rigidity.py} & $49$ \\
\quad independent reimplementation
  & \texttt{slice\_rigidity.sage} & $24$ \\
\quad face compatibility in Corollary~\ref{cor:lockedrel}
  & \texttt{def41\_check.sage} & $16$ \\
Prop.~\ref{prop:globalindec}
  & \texttt{global\_decomp.py} & $12$ \\
Section~\ref{sec:census}, the enumeration
  & \texttt{dp7\_facet\_sweep.py} & n/a \\
\quad induced pentagon decompositions
  & \texttt{candidates.sage} & n/a \\
Appendix~\ref{sec:mpcp}
  & \texttt{refine.py} & $5$ \\
Props.~\ref{prop:sing9}, \ref{prop:mixed9}
  & \texttt{v09\_candidate.py}, \texttt{sing\_locus.py} & $6$ \\
Prop.~\ref{prop:admissible}, Cor.~\ref{cor:ambientsmooth}
  & \texttt{slice\_admissible.sage} & $36$ \\
Cor.~\ref{cor:ambientsmooth}, completeness of the general fibre
  & \texttt{general\_fibre.sage} & $6$ \\
Prop.~\ref{prop:h0}; independent semiampleness diagnostic
  & \texttt{hzero.sage} & n/a \\
Prop.~\ref{prop:toricfibre}, \ref{prop:charts}
  & \texttt{toric\_fibre.sage} & n/a \\
Prop.~\ref{prop:charts}
  & \texttt{charts.sage} & $3$ \\
Rem.~\ref{rem:sevencells}
  & \texttt{final\_check.sage} & $3$ \\
Local branch tangents correspond to $\Rsm_p$
  & \texttt{branch\_locus.py} & $17$ \\
\quad comparison with cyclic period coordinates
  & \texttt{smoothing\_locus.py} & $13$ \\
Thm.~\ref{thm:C}, and \cite[Prop.~9.2]{PaperIV}
  & \texttt{classification.py}$^{\dagger}$ & $10$ \\
the relation classes of $X_9$ and $X_{19}$
  & \texttt{relation\_class.py}$^{\dagger}$ & $36$ \\
\quad from Theorem~\ref{thm:B}'s family
  & \texttt{theoremB\_class.py} & $10$ \\
\bottomrule
\end{longtable}
}

The degree restriction is proved in Lemma~\ref{lem:degreegate}.
Its finite input is the affine degree line and the two pairing
formulas, reproduced by \path{slice_rigidity.py}.  Smoothness of
cones over summands alone does not impose admissibility (D2) and is
not used as a necessary degree test.

The two marked $\dagger$ certify statements of \cite{PaperIV} as well as
statements here, and live in that paper's certificate directory of the shared
repository, from which the scripts of this paper already import their exact
toric modules.  Two further scripts there,
\texttt{kernel\_equality.py} and \texttt{lone\_germ.py}, certify
\cite[Prop.~8.3]{PaperIV} and \cite[\S9.2]{PaperIV}, used in the companion computations.

\texttt{slice\_rigidity.py} and \texttt{slice\_rigidity.sage} implement the same
statement in different systems, pure Python with integer edge equalities and
Sage with convex hulls over $\QQ$ and then over $\QQ(s)$, and are
compared against each other and against an independent facet computation.

\texttt{hzero.sage} implements the invariant-divisor formula of
\cite{PS11} and is validated on that source's worked example,
reproducing the printed value $h^0(-K_{\PP(\Omega_{\PP^2})}) = 27$, and then
cross-checked on the special fibre against the toric lattice-point count.

On the special fibre, \path{charts.sage} recovers the
positive-dimensional singular loci in the four charts containing
singular $2$-faces.  On the general fibre, it verifies the smoothness
of every cone corresponding to a positive-dimensional orbit.

The jump formula of \cite[Prop.~8.2]{PaperIV}, applied to the
Minkowski data of Theorem~\ref{thm:B}, gives the three local tangent
classes and their relation with every coefficient nonzero.
The file \path{theoremB_class.py} reconstructs this relation and
checks that the degree-seven component lies on the $(-2)$-line.

The file \path{locking.py} validates the rules of
Proposition~\ref{prop:rules} on polytopes with known summand cones, checks the
necessity of consecutive edges, and verifies for the two facets of
$\Delta_{19}$ containing the pentagon that their cross-sections at lattice
degrees positive on the facet are Minkowski-rigid, as
Proposition~\ref{prop:locked} predicts.  Its edge partitions agree with
those computed by \path{dp7_facet_sweep.py} on all $27$ facets
of $\Delta_{19}$.

\subsection{Certificates for the smoothing criterion and deformation spaces}
The following additional computations use exact arithmetic.  Their
geometric implications are proved in the corresponding sections;
the scripts verify the finite local equations, face data, and
relation matrices used there.
\begin{itemize}
\item \path{a2_simultaneous_partial_resolution.py} checks the incidence
charts, nodal Hessians, and inverse maps of
Proposition~\ref{prop03:a2family} ($33$ assertions).
\item \path{dp6_nodal_partial_resolutions.py} checks the two degree-six
subdivisions, normal bundles, root classes, and local parameter maps
of Proposition~\ref{prop03:localmodels} ($43$ assertions).
\item \path{a2_branch_selection_counterexample.py} reconstructs the
$25$-vertex polytope, its faces, divisor rows, four relation kernels,
and three obstruction identities in Section~\ref{sec05:counterexample}
($272$ assertions).
\item \path{branch_smoothness_inputs.py} verifies the Hodge data,
degree-six vanishing inputs, nonzero local projections, and the
relation-space comparisons under nodal replacement ($52$ assertions).
\item \path{counterexample_deformation_germ.py} checks the full
relation matrix, cone image, component tangent ideals, their
intersection, and the resulting dimensions ($57$ assertions).
\item \path{a2_counterexample_rational_replay.py} independently
repeats the counterexample's rational linear algebra and Hodge
input with Python's standard-library rational arithmetic
($50$ assertions).
\item \path{flop_weight_zero.py} computes the weight-zero Hodge cohomology
of the degree-seven cone, its two crepant resolutions and their common open
set, used in the proof of Lemma~\ref{lem04:globaltangent} ($12$ assertions).
\item \path{d19_quadric_partial_resolution.py} checks the $X_{19}$
partial fan, local quadric-cone cohomology, Hodge input, and the
matrices printed in Corollary~\ref{cor:x19smooth}.
\end{itemize}
The Sage-dependent files run with \texttt{sage -python}; the rational
replay runs with \texttt{python3}.  They resolve their inputs relative
to the repository.  The finite computations do not replace the
analytic lifting, blow-down, or necessity proofs.

The additional files in \path{figures/code/} run with Python's
standard library.  The file \path{compute01_degree5_local.py}
verifies the five conic-pencil classes and their $A_4$ coordinates,
the Cremona permutation, the Grassmannian Pieri integrals and
Chern classes, and the two exterior-product ranks used in
Section~\ref{sec03:degree5}.  The file
\path{compute01_degree8_local.py} verifies the Cayley lattice,
projection monomials, and link lattice in the quadric-cone proof.

\bigskip
\noindent{\sc Bernd Johannes Wuebben, New York, NY,}
\texttt{wuebben@gmail.com}

\end{document}